\documentclass[11pt]{article}
\usepackage{amscd,amsmath,amsfonts,amssymb,amsthm,cite,mathrsfs}
\usepackage[T1]{fontenc}
\usepackage{lmodern}
\usepackage{appendix}
\usepackage{graphicx}
\usepackage{hyperref}
\usepackage{comment}
\usepackage[left=2.cm, right=2.cm, top=2.5cm, bottom=2.5cm]{geometry}
\usepackage{xcolor}
\usepackage{enumitem}
\usepackage{setspace}
\usepackage{authblk}
\usepackage{tabularx}
\usepackage{booktabs}
\usepackage{multirow}
\usepackage{subcaption}

\allowdisplaybreaks

\numberwithin{equation}{section}

\theoremstyle{plain}
\newtheorem{theorem}{Theorem}[section]
\newtheorem{lemma}[theorem]{Lemma}
\newtheorem{corollary}[theorem]{Corollary}
\newtheorem{proposition}[theorem]{Proposition}

\theoremstyle{definition}
\newtheorem{definition}[theorem]{Definition}
\newtheorem{remark}{Remark}[section]
\newcommand{\tr}{\mathrm{Tr}}

\title{\bf Edge Universality for Inhomogeneous Random Matrices II: Markov Chain Comparison and Critical Statistics
}

\author[1]{Dang-Zheng Liu
\thanks{dzliu@ustc.edu.cn}}
\author[2]{Guangyi Zou
\thanks{zouguangyi2001@gmail.com}}

\affil[1]{School of Mathematical Sciences, University of Science and Technology of China}
\affil[2]{Department of Mathematics, University of California, Irvine}

\date{\today}

\begin{document}

\maketitle

\begin{abstract}
The first paper in this series introduced a \emph{short-to-long mixing} condition that captures mean-field GOE/GUE edge universality in the supercritical sparsity regime, for symmetric/Hermitian random matrices with independent entries and a Markov variance profile. This condition reduces the universality problem to the mixing   properties of the underlying Markov chains.

In this paper, we develop new \emph{short-to-long comparison} conditions that   extend the analysis to the subcritical and critical sparsity regimes.  Specifically, we prove that two inhomogeneous random matrices exhibit the same universal edge statistics whenever their variance-profile Markov chains are comparable,  regardless of the fine details of the matrix entries. To illustrate the power of our Markov chain comparison theorem, we derive the spectral edge statistics for several prototypical models: random band matrices, the Wegner orbital model, and Hankel-profile random matrices. These comparisons uncover a rich landscape of both universal and non-universal phenomena---shaped by geometric structure, spike patterns, and domains of stable attraction---features that lie fundamentally beyond the reach of classical random matrix theory.
\end{abstract}

\tableofcontents

\section{Introduction}

\subsection{Random Matrix Theory: Universality and Beyond}

Classical random matrix theory has centered primarily on mean-field ensembles, such as Wigner matrices and sample covariance matrices, whose entries are   independent up to Hermitian symmetry and   have comparable variances. For such highly homogeneous models, remarkably detailed  spectral properties  have  been established over recent decades,  encompassing global universality (Wigner semicircle and Mar\v{c}enko--Pastur laws)  as well as local universality (sine and Airy point processes). Notably, fluctuations at the spectral edge follow the Tracy--Widom distributions, featuring a characteristic $2/3$ scaling   exponent.
 These results have deep roots and have found profound applications across pure mathematics, statistics, numerical analysis, physics, computer science and  data science\cite{akemann2011oxford}.

Yet, beyond these idealized settings lies a broader and more realistic landscape. A rapidly growing body of research has turned to structured or inhomogeneous random matrices, in which entry variances are allowed to vary across positions. Such inhomogeneous models are not only mathematically richer in structure but also indispensable for modeling real-world data and physical systems.
In functional analysis, the boundedness of infinite random operators naturally imposes that matrix entries be non-identically distributed \cite{davidson2001local}.
In mathematical physics, random band matrices with decaying variance profiles interpolate between Anderson models and mean-field Wigner matrices \cite{Wigner1955Characteristic,wigner1967random,spencer2011random}; see, e.g., \cite{liu2025edge} for a detailed explanation. Meanwhile, in quantum statistical mechanics and many-body systems, the Eigenstate Thermalization Hypothesis requires random matrices with structured off-diagonal elements \cite{d2016quantum,deutsch2018eigenstate,srednicki1999approach}.
In applied mathematics—ranging from   numerical linear algebra to computer and data science—such random matrices are frequently highly inhomogeneous and entirely lack natural symmetries \cite{bandeira2025topics,tropp2015introduction}.
Consequently, developing a general   theory that captures the spectral properties of   arbitrarily structured (inhomogeneous) random  matrices has emerged as a central challenge.
Recently, inhomogeneous random matrices have received considerable attention, with significant progress established in the study of spectral norms \cite{MR3878726,MR3837269}, matrix concentration and deviation inequalities \cite{MR4635836,brailovskaya2024extremal,MR4823211}, and spectral outliers \cite{bandeira2024matrix, geng2024outliers,MR4234995}. Despite these developments,  the universality of local eigenvalue statistics—a central object of interest for both theoretical understanding and practical applications—remains far less well understood in inhomogeneous settings; see, for instance, \cite{liu2025edge}.

Universality, a term originating in statistical mechanics, lies at the heart of random matrix theory.  The universality principle roughly asserts that, up to appropriate moment conditions, eigenvalue statistics become asymptotically independent of the fine details of the matrix entries and fall into the GOE/GUE universality class.
These results have been firmly established for Wigner matrices, with foundational breakthroughs achieved over the past three decades \cite{erdHos2010bulk, erdHos2011universality, johansson2001universality, soshnikov1999universality, tao2010random, tao2011random}; see, e.g., \cite{bourgade2018random} for a comprehensive survey. For random
$d$-regular graphs, bulk and edge universality have been established in \cite{MR3729611, huang2024ramanujan}, while for random band matrices, important contributions include \cite{bourgade2020random, benaych2014largest, erdHos2011quantum, EK11Quantum, erdHos2013delocalization, he2019diffusion, liu2023edge, sodin2010spectral, Shcherbina2021, shcherbina2014second, yang2021delocalization, yang2022delocalization, MR4736267} (this list is far from exhaustive). Very recent breakthrough results have been obtained in \cite{dubova2025delocalization, dubova2025delocalization2, erdHos2025zigzag, fan2025blockreductionmethodrandom, liu2023edge, MR4736267, yang2025delocalization2, yau2025delocalization}. Most of these works focus on specific block variance profiles or complex Gaussian entries, with the notable exception of the one-dimensional case \cite{erdHos2025zigzag}. For state-of-the-art results and further references, see \cite{dubova2025delocalization, erdHos2025zigzag, liu2023edge}.
The desire to assess the applicability of universality results  in random matrix theory has motivated the need to go beyond the classical GOE/GUE  universality setting—in particular, to understand the influence of the variance profile and the effects of entry non-Gaussianity.

However,  one cannot generally expect GOE/GUE universality to persist  for arbitrarily inhomogeneous random matrices; notably, even a diagonal  matrix with i.i.d. entries resides   outside this universality class.
This constraint prompts two fundamental questions regarding the  boundary of the mean-field regime:
\begin{enumerate}[label=(\Roman*), leftmargin=2em]
\item\label{question1} \textit{Under what conditions do inhomogeneous random matrices manifest the universal eigenvalue statistics characteristic of mean-field GOE/GUE ensembles?}
\item\label{question2} \textit{When this universality breaks down, what emergent laws govern the edge statistics, and do these laws constitute a novel, non-mean-field universality class?}
\end{enumerate}
To address these questions, we focus on inhomogeneous random matrices with independent entries (subject to Hermitian symmetry) whose variance profile is given by a symmetric Markov matrix. Nonetheless, the presence of strong inhomogeneity and locality continues to pose substantial analytical challenges. Our primary objective is to establish a Lindeberg-type central limit theorem under suitable conditions on the Markov variance profile. Such a result would uncover a novel universal pattern in random matrix theory---one that is entirely robust against the microscopic details of the entries.

 In \textbf{Part I of this series} \cite{liu2025edge}, we provide a complete characterization of edge universality for arbitrarily inhomogeneous random matrices. This result is achieved through a combination of Markov chain comparison techniques and short-to-long mixing conditions, thereby addressing question \ref{question1}. In the supercritical sparsity regime, we observe   that global mixing washes out microscopic details and restores  the classical Tracy--Widom law. In this paper, which constitutes \textbf{Part II of this series}, we continue our analysis of inhomogeneous random matrices viewed through the lens of Markov transition matrices governed by variance profiles. Our focus turns to the local spectral statistics at the spectral edge (equivalently, the spectral norm) within the subcritical and critical regimes, addressing question \ref{question2} directly.
In stark contrast to the supercritical regime---where inhomogeneity acts merely as a perturbative correction---the interplay between sparsity and the geometric structure of the variance profile here gives rise to a rich variety of novel critical phenomena. These findings elucidate precisely how inhomogeneity deforms spectral behavior beyond the confines of the GOE/GUE universal limit. Crucially, while these phenomena are robust against the microscopic details of the matrix entries, they represent a fundamental departure from the mean-field Tracy--Widom paradigm.

The central contribution of this paper is the discovery of a \textit{Universal Reduction} principle that governs local edge eigenvalue statistics. We demonstrate that the spectral complexity of large random matrices is entirely encoded in the geometric and probabilistic structures of their variance profiles, yielding a strikingly simple correspondence:
\begin{quote}
    \textbf{\textit{One CLT, One Statistics:}} \textit{Every specific Local Central Limit Theorem for the variance-profile Markov chain dictates a corresponding universal pattern of edge statistics.}
\end{quote}
To illustrate the power of this ``\textit{One CLT, One Statistics}'' framework, we establish a Markov chain comparison theorem (Theorem \ref{prop:super_4} below) for pairs of inhomogeneous random matrices governed by distinct transition kernels.
By comparing with carefully chosen canonical models that lie beyond mean-field descriptions, we leverage this theorem to extract universal asymptotic laws.

\subsection{Models and main results}
We begin by defining the class of inhomogeneous random matrices under consideration. This class encompasses several prototypical models of independent interest: generalized Wigner matrices, random band matrices, sparse random matrices with structured variance profiles, inhomogeneous chiral Wishart-type ensembles, and Wegner orbital models.

\begin{definition}[Inhomogeneous  random matrices, IRM] \label{def:inhomo}
  An inhomogeneous  random matrix and its  deformation via  the entrywise product  “\(\circ\)”  are defined as
\begin{equation} \label{HN}
H_N = \Sigma_N \circ W_N,
\end{equation}
and
\begin{equation}\label{eq:deformed-iid}
X_N = H_N + A_N,
\end{equation}
where  the  three $N\times N$ matrices $W_N$, $\Sigma_N$ and $A_N$ satisfy the following conditions.

\begin{enumerate}[leftmargin=*,label=(A\arabic*)]
\item \label{itm:A1} \textbf{(Wigner matrix)}
The entries $\{W_{ij}\}_{1\leq i\le j\leq N}$ of $W_N$ are independent and  symmetrically distributed,  satisfying    the moment assumptions
\begin{equation}
\begin{cases}
\mathbb{E}[W_{ii}^2]=2,\;\mathbb{E}[W_{ij}^2]=1, & \text{real case},\\
\mathbb{E}[W_{ii}^2]=1,\;\mathbb{E}[|W_{ij}|^2]=1,\;\mathbb{E}[W_{ij}^2]=0, & \text{complex case},
\end{cases}
\end{equation}
and  for all $k\ge2$
\begin{equation}
\mathbb{E}\bigl[|W_{ij}\bigr|^{2k}] \;\le\; \theta^{\,k-1}(2k-1)!!.
\end{equation}
Here,  $\theta\ge 1$ is a  constant, which may depend on $N$ if required.

\item \label{itm:A2} \textbf{(Markov variance profile)}
The variance profile  $\Sigma_N=(\sigma_{ij})_{i,j=1}^N$ is a symmetric matrix with   nonnegative entries such that
$P_N:=(\sigma_{ij}^2)_{i,j=1}^N$ is a symmetric Markov transition matrix.

\item \label{itm:A3} \textbf{(Finite-rank deformation)}
The perturbation $A_N$ is symmetric (respectively, Hermitian) with  rank $r$, and admits the spectral decomposition
\begin{equation}\label{SpetralA}
A_N = Q\,\Lambda\,Q^*, \qquad \Lambda = \mathrm{diag}(a_1,\dots,a_r,0,\dots,0),
\end{equation}
for    some  orthogonal  (respectively, unitary) matrix $Q$.
\end{enumerate}

In particular, when $W_N$ is a GOE/GUE matrix, $H_N$ is referred to as an IRM Gaussian matrix.
Similarly, a second IRM matrix is obtained by replacing the profile matrix $\Sigma$ solely with $\widetilde{\Sigma}=(\tilde{\sigma}_{ij})_{i,j=1}^N$. This substitution defines the modified quantities $\widetilde{P}_N=(\tilde{\sigma}_{ij}^2)_{i,j=1}^N$, $\widetilde{H}_N=\widetilde{\Sigma}_N \circ W_N$, and consequently $\widetilde{X}_N = \widetilde{H}_N + A_N$.
\end{definition}

Given the two transition matrices $P_N=(\sigma_{ij}^2)_{i,j=1}^N$ and $\widetilde{P}_N=(\tilde{\sigma}_{ij}^2)_{i,j=1}^N$ defined above, we obtain two Markov chains on the state space $[N]:=\{1,2,\ldots,N\}$, denoted by $([N], P_N)$ and $([N], \widetilde{P}_N)$.
Their respective $n$-step transition probabilities are written as $p_n(x,y)$ and $\tilde{p}_n(x,y)$.
We emphasize that $[N]$ may be replaced by any finite state space $S$ with $|S| = N$, simply by identifying each index $i \in [N]$ with an element of $S$.

To establish edge universality in both the subcritical and critical regimes, we introduce a key notion: the \textit{short-to-long comparison} of two Markov chains. This comparison captures the asymptotic spectral rigidity and the emergence of universal patterns.
\begin{definition}[{\bf Short-to-Long Comparison}] \label{mixingdef}
Two  Markov chains $([N], P_N)$ and $([N], \widetilde{P}_N)$  are
\emph{Short-to-Long Comparable}, if their $n$-step transition probabilities  $p_n(x,y)$ and $\widetilde{p}_n(x,y)$ satisfy the following two conditions:

\begin{enumerate}[leftmargin=*,label=(B\arabic*)]
\item \label{itm:B1} \textbf{(Average upper bound)}
There exists a sequence $\{b_{n}\}$ of positive  numbers,   where $b_n=b_{n,N}$ may depend on $N$, such that
\begin{equation}\max_{x,y\in [N]}\sum_{i=1}^{n}p_i(x, y) \lor \tilde{p}_i(x, y) \le b_n.
\end{equation}

\item \label{itm:B2} \textbf{(Average $\ell^1$-$\ell^{\infty}$ distance)}
There exist  two sequences   $\{\epsilon_n\}$ and $\{\delta_n\}$ such that
\begin{equation}
\max_{x\in [N]}\sum_{y\in [N]}\left| p_n(x,y) - \tilde{p}_n(x,y) \right| \le \epsilon_n,
\end{equation}
and
\begin{equation}
    \max_{x,y\in [N]}|p_n(x,y) - \tilde{p}_n(x,y)|\le \delta_n.
\end{equation}
  Let  $\mathcal{E}_n=\sum_{i=1}^{n}\epsilon_i$ and $\Delta_n=\sum_{i=1}^{n}\delta_i$ the associated cumulative errors.
\end{enumerate}

\end{definition}

\begin{theorem}[\bf Markov Chain   Comparison Theorem] \label{prop:super_4}
Given
fixed integers $s \geq 1$, $k_1,\ldots,k_s>0$ and $r\geq 0$, let   $n_{i,j}$ be  positive  integers and
let $n=\sum_{1\leq i\leq s}\sum_{1\leq j\leq k_i}n_{i,j}$.  For two IRM matrices $X$ and $\widetilde{X}$ defined in Definition \ref{def:inhomo},
suppose that  Markov chains $([N], P_N)$ and $([N], \widetilde{P}_N)$ are
\emph{Short-to-Long Comparable}  as $N\to \infty$, with
\begin{equation}\label{equ:negligible}
    \frac{\mathcal{E}_n}{n}\rightarrow 0, ~~~ \frac{\Delta_n}{b_n}\rightarrow 0.
\end{equation}
Moreover, assume  that    for some  constant $C>0$,
\begin{equation} \label{UPB}
    n^2b_n\le C, \quad  \|A_N\|_{\mathrm{op}} \leq 1 + Cn^{-1},
\end{equation}
  and, in the non-Gaussian case, additionally
\begin{equation}\label{equ:1.11}
    \theta n^2 \cdot  \max_{x,y\in [N]}\sigma_{xy}^2 \ll 1.
\end{equation}
Then
\begin{equation}
    \mathbb{E}\bigg[\prod_{i=1}^s \tr \Big(\prod_{j=1}^{k_i} U_{n_{i,j}}\big(\frac{X}{2}\big)\Big)\bigg]
    =  \mathbb{E}\bigg[\prod_{i=1}^s \tr \Big(\prod_{j=1}^{k_i} U_{n_{i,j}}\big(\frac{\widetilde{X}}{2}\big)\Big)\bigg] + o\big((Nb_n)^{\sum_{i=1}^sk_i}\big).
\end{equation}
\end{theorem}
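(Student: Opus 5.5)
We would prove the theorem by a moment/path expansion of the Chebyshev polynomials of the second kind, following the strategy of Part I \cite{liu2025edge}. We first expand each $U_{n}(X/2)$ in the non-backtracking (or "Chebyshev-reduced") walk representation. For a Wigner-type matrix with Markov variance profile, $U_n(H/2)$ equals a sum over non-backtracking paths of length $n$ plus correction terms coming from the diagonal normalisation $\sum_y \sigma_{xy}^2 = 1$. These corrections are exactly what makes $U_n$ the natural basis: backtracking steps cancel to leading order. The finite-rank deformation $A_N$ is handled by expanding $U_n((H+A)/2)$ into words alternating between $H$-blocks and $A$-insertions. Each $A$-insertion has operator norm at most $1+Cn^{-1}$, so the bound $(1+C/n)^n\le e^C$ keeps the combinatorial weights bounded.

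Taking expectations, the product of traces becomes a sum over closed multi-path configurations (graphs) whose edges are each traversed at least twice. By (A1) and symmetry, only even multiplicities contribute. We then organise the sum by the skeleton of the graph: a reduced multigraph with $O(1)$ vertices of degree $\ge 3$ (plus the marked $A$-insertion points), connected by chains of simple edges of total length $\le n$. Summing over chain interiors turns each chain of length $m$ into a transition probability $p_m(x,y)$, and the result is an expression of the form
\begin{equation*}
\sum_{\text{skeletons}}\ \sum_{\text{lengths}} \ \sum_{\text{skeleton vertices}} \prod_{\text{chains}} p_{m_e}(x_e,y_e).
\end{equation*}
The same structure holds with $\tilde p$ for $\widetilde X$.

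In the Gaussian case, the GOE/GUE Wick expansion gives exactly this structure. In the non-Gaussian case, edges with multiplicity $\ge 4$ produce higher cumulants. Each such edge costs an extra factor $\theta\max\sigma_{xy}^2$ per collapsed pair and gains at most $n^2$ in choices, so by (\ref{equ:1.11}) these contributions are negligible. This reduces both sides to Gaussian-like skeleton sums.

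The key estimate compares the two skeleton sums. We telescope the product $\prod_e p_{m_e}-\prod_e\tilde p_{m_e}$, replacing one chain at a time. For the replaced chain $e$ we use either the $\ell^1$ bound $\epsilon_{m_e}$ (when summing over an endpoint is free) or the $\ell^\infty$ bound $\delta_{m_e}$ (when both endpoints are pinned by other chains). All other chains are bounded via (B1): summing $p_m(x,y)$ over $m\le n$ gives at most $b_n$, and summing over a free endpoint gives $1$. A skeleton with $k=\sum k_i$ cycle components and excess $L$ then contributes order $N^{k}\,n^{k}\,(n^2 b_n)^{L}\, b_n^{k}$-type quantities. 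Since $n^2b_n\le C$, each extra loop costs $O(1)$ and the number of skeletons at fixed $L$ is finite. The error term replaces one factor $b_n$ by $\Delta_n$, or one factor $n$ (from the free length sum) by $\mathcal{E}_n$. By (\ref{equ:negligible}) this is $o((Nb_n)^{k})$, after normalising against the leading $(Nb_n)^k$ scale.

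The main obstacle is making the skeleton bound uniform in $L$. The number of skeletons grows like $L!$-ish, while each loop only gives $O(1)$ rather than a small factor, since $n^2b_n$ is only bounded and not small. We would handle this as in Part I: use the decay coming from combinatorial weights of Chebyshev polynomials, in which the $U_n$ basis cancels tree-like backtracking so that skeleton counts grow like $(Cn^2b_n)^L/L!$-compatible bounds. This should give a summable series, so that dominated convergence in $L$ lets the termwise $o(\cdot)$ comparison pass to the full sum.
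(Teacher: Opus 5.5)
Your core mechanism is the paper's. You sum chains into $p_{w_e}$ and telescope one chain at a time, paying $\Delta_n$ for a chain outside a spanning forest and $\epsilon_{w}$ (after summing a free endpoint) for a forest chain; this gives relative error $\Delta_n/b_n+\mathcal{E}_n/n$, as in Theorem~\ref{thm:F=tildeF}. Your non-Gaussian reduction, driven by \eqref{equ:1.11}, also matches.

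\textbf{Main gap: products inside the traces.} You never deal with the products \emph{inside} the traces. The Gaussian identity you rely on (Theorem~\ref{Chebyshevmoment}, Lemma~\ref{thm:u=F}) expands $\mathbb{E}\prod_j\mathrm{Tr}\,U_{n_j}(X/2)$. The trace of $\prod_{j=1}^{k_i}U_{n_{i,j}}(X/2)$ is a single closed walk, not $k_i$ cycle components, and that identity does not cover it. Your ``$k=\sum_i k_i$ cycle components'' in effect proves the statement for $\prod_{i,j}\mathrm{Tr}\,U_{n_{i,j}}$ instead.

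The missing step is the linearization \eqref{prodexpansion}. Iterating $U_aU_b=\sum_{j=0}^{\min(a,b)}U_{a+b-2j}$ gives $\prod_jU_{n_{i,j}}=\sum_m c_i(m)U_m$, with nonnegative integer coefficients and $\sum_m c_i(m)\le (n+1)^{k_i-1}$. One then applies the $s$-trace comparison of Theorem~\ref{prop:U_asy}(ii) uniformly in $(m_1,\dots,m_s)$. This is legitimate because the error bounds of Theorem~\ref{thm:F=tildeF} involve only the monotone quantities $n,b_n,\Delta_n,\mathcal{E}_n$. The result is $o((Nb_n)^s)\,n^{\sum_ik_i-s}$, and $n\le Nb_n$ (Remark~\ref{remarkB1}) turns this into the claimed bound. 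That is where the exponent $\sum_ik_i$ comes from.

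Your power count is also off. A one-cycle diagram gives $N\sum_{w\le n}p_w(x,x)\le Nb_n$, because the length sum is already inside $b_n$. So your $N^kn^kb_n^k$ carries a spurious $n^k$, and taken literally it would only give $o((Nnb_n)^k)$.

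If you carry out the linearization, treat the $m_i=0$ terms separately. $\mathrm{Tr}\,U_0=N$ is deterministic and lies outside the $n_j\ge1$ expansion. These terms cancel for $s=1$. For $s\ge2$ they contribute $N c_i(0)$ times the difference of the remaining traces. That fits within $(Nb_n)^{k_i}$ when $k_i\ge3$ and $n^2b_n\asymp1$, since then $c_i(0)\le (n+1)^{k_i-3}$. It does not fit for $k_i=2$ with $n_{i,1}=n_{i,2}$: there $c_i(0)=1$, while $(Nb_n)^2\le C^2N^2n^{-4}\ll N$ once $n\gg N^{1/4}$. The paper's one-paragraph reduction is silent on this case as well, so it needs a separate argument or a weaker error term.

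\textbf{Secondary issue: uniformity in $L$.} Chebyshev ``combinatorial weights'' are not what resolves the uniformity you flag as the main obstacle. It comes from a factor you dropped from your count. The $U_n$ basis only guarantees that unmarked vertices have degree $\ge3$. Vertex splitting (Lemma~\ref{lem:vertex_split}: $|E|-|V|$ preserved, $|V|$ increased, at most $2^{|E|}$ preimages) then maps every diagram to a typical one with $|V|=2\ell+s$ and $|E|=3\ell+s$. By Lemma~\ref{diagramno} there are at most $(C(\ell+1))^{\ell+1}/(s-1)!$ of these.

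The decay comes from the edge lengths. After the $\ell+1$ non-forest chains are summed into $b_n^{\ell+1}$, the remaining $|V|-1$ forest lengths with $\sum w_e\le n$ number at most $n^{2\ell+s-1}/(2\ell+s-1)!$ (Proposition~\ref{prop:F_upper_bound}). Since $(C(\ell+1))^{\ell+1}/(2\ell+s-1)!\lesssim C^{\ell}/\ell!$, a connected $s$-trace cumulant is bounded by $C^s n^{s-1}Nb_n\sum_{\ell}(Cn^2b_n)^{\ell}/\ell!$. This absolute convergence is what lets you truncate to finitely many diagrams and pass the termwise comparison to the full sum. Without it, your heuristic ``each loop costs $O(1)$, skeleton counts grow like $L!$'' would not close.
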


  Here it is worthwhile to provide a brief remark regarding Condition \ref{itm:B1} and the first assumption in \eqref{UPB}.
 \begin{remark} \label{remarkB1}
From Condition \ref{itm:B1}, we obtain the bound $b_n \geq n/N$, as derived below:
\begin{equation} \begin{aligned}
       \max_{x\in [N]} \max_{y\in [N]}\sum_{i=1}^{n} p_i(x, y)\vee\tilde{p}_i(x, y)
     &\geq \max_{x\in [N]} \frac{1}{N}\sum_{y\in [N]}\sum_{i=1}^{n}p_i(x, y) \lor \tilde{p}_i(x, y)\\
     & \geq \max_{x\in [N]} \frac{1}{N}\sum_{i=1}^{n} \Big(\sum_{y\in [N]}p_i(x, y)\Big) =\frac{n}{N},
 \end{aligned}
\end{equation}
where the final equality follows from the fact that the transition matrices are Markovian. Combining this with the first assumption in \eqref{UPB} yields $n = O(N^{1/3})$. The $n \sim N^{1/3}$ scaling of the total power is necessary to establish Tracy–Widom universality, as demonstrated in \cite{liu2025edge}.
 \end{remark}

Combining the comparison theorem established above with the spectral analysis of random band matrices on the one-dimensional discrete torus, as established in Theorems~\ref{thm:band_main} and~\ref{thm:triple_critical_convergence}, immediately allows us to characterize the edge universality and the phase transition between the subcritical and supercritical sparsity regimes.
\begin{theorem}[Edge Universality and Phase Transition]\label{thm:edge_universality}
 For  $\alpha > 1$, set 
\begin{equation}
     \tilde{p}_n(x, y)=\frac{1}{M}\sum_{k\in \mathbb{Z}}f\Big(\frac{x-y+kN}{W}\Big), \quad M:=\sum_{k\in \mathbb{Z}} f\Big(\frac{k}{W}\Big),
\end{equation}
where the density profile $f$ satisfies the comparison conditions in Definition~\ref{ass:ft} with respect to the $\alpha$-stable density $f_{\alpha}$. 
Under the same assumptions on the Markov chains $([N], P_N)$ and $([N], \widetilde{P}_N)$ as in Theorem~\ref{prop:super_4}, the scaling limits of the correlation measures established in Theorem~\ref{thm:convergence_proof} ($\alpha$-stable case) also hold for the matrix $X$. In particular, a new tricritical point process emerges at the critical bandwidth and critical external source, as described in Theorem~\ref{thm:triple_critical_convergence}.
\end{theorem}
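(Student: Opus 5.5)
The plan is to transfer the edge limits from the canonical band model $\widetilde{X}$ to $X$ through Theorem~\ref{prop:super_4}, with the spectral analysis of $\widetilde X$ taken from Theorems~\ref{thm:band_main}, \ref{thm:convergence_proof} and \ref{thm:triple_critical_convergence}. The scaling limits of correlation measures for $\widetilde X$ are proved there by the moment method. One computes mixed moments of traces of Chebyshev polynomials $U_{n}(\widetilde X/2)$ with $n\sim t\,\tau_N$, where $\tau_N$ is the edge time scale dictated by the local CLT for the $\alpha$-stable walk ($n^2 b_n\asymp 1$, i.e.\ $b_n\asymp n^{1/\alpha}/W$ in the subcritical regime). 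After normalising by $(Nb_n)^{\sum k_i}$, these moments converge to the Laplace-type transforms that characterise the limiting point process. So the first step is to restate those results as convergence of
\[
(Nb_n)^{-\sum_i k_i}\,\mathbb{E}\Big[\prod_{i=1}^s \tr\Big(\prod_{j=1}^{k_i} U_{n_{i,j}}\big(\tfrac{\widetilde X}{2}\big)\Big)\Big]
\]
to an explicit limit, jointly over $n_{i,j}= t_{i,j}\tau_N$. This includes the external source $A_N$ with $\|A_N\|_{\mathrm{op}}\le 1+Cn^{-1}$, which produces the tricritical regime. I would also recall from those sections that these moment limits determine the limiting correlation measures, via the Chebyshev-transform/inversion argument used there.

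Next I will verify the hypotheses of Theorem~\ref{prop:super_4} for the pair. For $\widetilde P_N$, condition \ref{itm:B1} follows from a local limit theorem for the torus-wrapped walk with step density $f$ in the domain of attraction of $f_\alpha$. Definition~\ref{ass:ft} should give $\tilde p_i(x,y)\lesssim i^{-1/\alpha}W^{-1}+N^{-1}$, so $b_n\lesssim n^{1-1/\alpha}W^{-1}+n/N$. For $P_N$, condition \ref{itm:B1} together with $\mathcal E_n/n\to0$, $\Delta_n/b_n\to0$, \eqref{UPB} and \eqref{equ:1.11} are exactly the standing assumptions ("the same assumptions as in Theorem~\ref{prop:super_4}"). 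Hence the theorem applies, and it gives equality of the normalised mixed moments up to $o(1)$.

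Finally I combine the two steps. The moments of $X$ have the same limits as those of $\widetilde X$, and moment convergence identifies the correlation measures, so the scaling limits of Theorem~\ref{thm:convergence_proof} transfer to $X$. At critical bandwidth and critical source they give the tricritical process of Theorem~\ref{thm:triple_critical_convergence}.

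The main obstacle is uniformity. Theorem~\ref{prop:super_4} is stated for fixed integer sequences, but the moment-to-correlation inversion needs the error $o((Nb_n)^{\sum k_i})$ to hold uniformly over $n_{i,j}$ in compact ranges $t_{i,j}\in[\eta,\eta^{-1}]$. It also needs a tightness bound to control large $t$. I would handle this by checking that every error term in the proof of Theorem~\ref{prop:super_4} depends only on $n$ through $\mathcal E_n/n$, $\Delta_n/b_n$ and $n^2b_n$, all of which are monotone in $n$. For the tail I would use the a priori bound $\mathbb{E}\,\tr U_n(X/2)^2\lesssim (Nb_n)^2$, which follows from the same path expansion.
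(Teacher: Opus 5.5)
Your architecture matches what the paper intends: compare $X$ with the band model $\widetilde X$ through Theorem~\ref{prop:super_4}, then import the band-matrix limits. It goes through in the supercritical, critical and tricritical regimes, where $Nb_n\asymp Nn^{-2}\asymp n$.

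\textbf{Gap: the subcritical regime.} The statement also covers this case (part~(iii) of Theorem~\ref{thm:convergence_proof}), and there your argument fails.
\begin{itemize}
\item The scales: $n\asymp W^{\alpha/(3\alpha-1)}$ and $b_n\asymp n^{1-1/\alpha}W^{-1}\asymp n^{-2}$ (not $n^{1/\alpha}/W$ as you wrote). So $Nb_n\asymp n\gamma_N$ with $\gamma_N=NW^{-3\alpha/(3\alpha-1)}\to\infty$.
\item The sinc-transform inversion uses $P_i=\big((n_i+1)^{-1}U_{n_i}(X/2)\big)^m$ with $m\in\{4,8,10\}$, i.e.\ $k_i=m$ in Theorem~\ref{prop:super_4}.
\item By Theorem~\ref{thm:band_main}(iii), $\mathbb{E}\prod_{i=1}^s\operatorname{Tr}P_i$ is of order $\gamma_N^{s}$. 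The unnormalised moment therefore has size $\gamma_N^{s}\prod_i n_i^{m}$.
\item The comparison error is only $o\big((Nb_n)^{ms}\big)=o\big((n\gamma_N)^{ms}\big)$, which is larger by a factor $\gamma_N^{(m-1)s}$.
\end{itemize}
So your claim that the $(Nb_n)^{-\sum k_i}$-normalised moments converge to transforms characterising the limit is false here: they tend to $0$. The comparison then says nothing about $R_1^{(\mathrm{Sub})}$ or about the $\gamma_N^{-k}$-normalised factorisation.

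\textbf{Missing step.} Compare at the level of single Chebyshev polynomials, where the error has the correct scale, and only then linearise. This is how the paper actually proves Theorem~\ref{thm:band_main}, on which the statement rests.
\begin{itemize}
\item For a connected typical diagram with $|E|-|V|=\ell$, Theorem~\ref{thm:F=tildeF} gives an error $Nb_n^{\ell+1}n^{2\ell+s-1}\big(\Delta_n/b_n+\mathcal{E}_n/n\big)=o(\gamma_N n^{s})$. This is the order of $\prod_j n_j\,\mathcal{L}_\Gamma$, so the cumulants of $X$ and $\widetilde X$ agree to relative $o(1)$.
\item Equivalently, use Theorem~\ref{prop:super_4} with all $k_i=1$. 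Its error $o((Nb_n)^s)$ is of the true size of $\mathbb{E}\prod_i\operatorname{Tr}U_{n_i}$.
\item Feed either into the expansion of Theorem~\ref{prop:mix_prod}. There the weight of $\operatorname{Tr}U_k$ in $\operatorname{Tr}P_i$ is $c_m(n_i;k)/(k+1)=O(n_i^{-2})$ uniformly in $k$ (cf.\ Lemma~\ref{lem:c_asymptotics}).
\item This yields $\mathbb{E}\prod_{i\in I}\operatorname{Tr}P_i$ up to $o(\gamma_N^{|I|})$, which is exactly what the subcritical factorisation requires.
\end{itemize}

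\textbf{Two smaller points.}
\begin{itemize}
\item Uniformity over compact $t$-ranges does not follow from monotonicity of $\mathcal{E}_n/n$ and $\Delta_n/b_n$, which need not hold. Use instead $\mathcal{E}_{n'}\le\mathcal{E}_n$ and $\Delta_{n'}\le\Delta_n$ for $n'\le n$. Add that $b_{n'}\asymp b_n$ for $n'\asymp n$, which follows from the lower bound forced by $\tilde p$ together with $n^2b_n\le C$.
\item Theorem~\ref{thm:triple_critical_convergence} is proved only for the $\alpha$-stable profile. The tricritical claim therefore needs an extra deformed comparison $f\to f_\alpha$, using Propositions~\ref{thm:clt-upper} and~\ref{prop:L1_difference} together with Theorem~\ref{thm:F=tildeF}(i). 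Theorem~\ref{thm:band_main} does not provide this.
\end{itemize}
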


Theorem \ref{thm:edge_universality} serves as a concrete realization of the {\it{Universal Reduction}} principle, illustrating how the diffusion properties of the Markov variance profile dictate the spectral limit. More generally, our framework allows for a systematic analysis of edge statistics across a diverse range of structured ensembles, as summarized below:
\begin{itemize}

    \item \textbf{Tricritical phenomena of random band matrices:}  For deformed random band matrices, we analyze the joint scaling limits of the bandwidth, the spectral edge scale, and finite-rank perturbations (spikes). This analysis yields a unified description of the phase transition intrinsic to band matrices and characterizes the emergent critical point processes in terms of the interplay between the spike parameters and the geometric structure of the variance profile; the complete phase diagram is presented in Table~\ref{tab:final_phase_diagram}.

    \item \textbf{Wegner orbital model:} Consider block-tridiagonal random matrices of the form \(X = \sqrt{1-\lambda}H + \sqrt{\lambda}\Lambda\), where \(H\) is composed of independent GOE/GUE blocks and \(\Lambda\) encodes the coupling between adjacent blocks. As the coupling strength \(\lambda\) varies, the associated Markov chain on the block indices undergoes a sequence of transitions: first a \textbf{frozen regime}, then a discrete \textbf{Skellam regime}, followed by a diffusive \textbf{Gaussian regime}, and ultimately a \textbf{mixing regime}. We demonstrate that the spectral edge statistics undergo an analogous transition, leading to point processes that are characterized by the local central limit theorem of the underlying random walk (Corollary \ref{coro:Wegner_orbital}).

    \item \textbf{Hankel-profile random matrices:}
    Choose   a  variance profile of the form $\sigma_{xy}^2 = f(x+y)$, which are not translation invariant. The associated Markov chain corresponds to a random walk with parity reflection, in contrast to the translation-invariant walk on the torus $\mathbb{T}$. The resulting local central limit theorem gives rise to a distinct class of critical edge statistics, which are determined by the global geometry and boundary symmetries of the variance profile (Section \ref{sec:hankel}).
    \end{itemize}

\subsection{Structure of this paper}

The remainder of this paper is organized as follows.   Section \ref{sec:section3}  reviews the Chebyshev expansion for  inhomogeneous  Gaussian matrices developed in \cite{liu2025edge} and establishes upper bounds and comparison estimates for the associated diagrammatic functions. Section \ref{sec:section4} is devoted to proving the asymptotic equivalence of mixed Chebyshev moments, which culminates in the proof of the Markov chain comparison theorem (Theorem \ref{prop:super_4}).

Section~\ref{sectionrbm} applies our framework to random band matrices, identifying a hierarchy of edge statistics across the subcritical, critical, supercritical, and tricritical regimes, and further utilizes Theorem~\ref{prop:super_4} to extend these results to random matrices with general profile functions. Section~\ref{sec:application} presents further applications of the comparison theorem to the block Wegner orbital model and Hankel-profile random matrices, and derives a non-asymptotic deviation inequality characterizing the fluctuation scale at the spectral edge. Finally, concluding remarks and open questions are discussed in Section~\ref{sec:remarks}. Technical preliminaries and auxiliary results---concerning the combinatorics of Chebyshev polynomials, properties of the Jacobi $\theta_\alpha$ function, and local limit theorems for Markov chains---are deferred to Appendices~\ref{sec:chebyshev} through~\ref{sec:markov_chain}.
  \textbf{Notation.} Throughout this paper, unless indicated otherwise, constants $c, C, C_1, \dots$ denote universal positive constants independent of $N$ whose values may vary between occurrences.
Asymptotic notation:
$f(N) = o(g(N))$ or $f(N) \ll g(N)$ means $f(N)/g(N) \to 0$ as $N \to \infty$;
$h(N) \sim g(N)$ means $h(N)/g(N) \to 1$;
and $h(N) = O(g(N))$ means $h(N)/g(N)$ is bounded.
For real numbers $a, b$, let $a \wedge b = \min\{a, b\}$ and $a \vee b = \max\{a, b\}$.

\section{Analysis of diagram functions}\label{sec:section3}

We begin by reviewing the necessary background on ribbon graphs and diagram functions, as well as the formulas for Chebyshev moments of  Gaussian IRM matrices established in \cite{geng2024outliers, liu2025edge}. Our aim is to study the diagram functions associated with connected diagrams, with particular attention to two central analytic properties: sharp upper-bound estimates and their detailed asymptotic behavior.
\subsection{Ribbon graphs   and diagrams} \label{sec:ribbow_expansion}
The mixed moments \(\prod_{j=1}^{s} \operatorname{tr} (X^{m_j})\) for a Gaussian IRM matrix can be evaluated via Wick's formula in conjunction with the associated ribbon graph expansion. The link between ribbon graphs on Riemann surfaces and classical random matrix ensembles---GUE being the prototypical example---is by now well established. We refer the reader to \cite{harer1986euler,kontsevich1992intersection,okounkov2000random,lando2004graphs,mulase2003duality} for comprehensive treatments of this connection. In \cite{liu2025edge}, Chebyshev polynomials are brought into play, and an exact identity is derived that reduces the enumeration of ribbon graphs to the evaluation of diagram functions. The reader may safely skip this subsection on a first reading.

\begin{figure}[htbp]
    \centering
    \begin{minipage}[t]{0.48\textwidth}
        \centering
        \includegraphics[width=\textwidth]{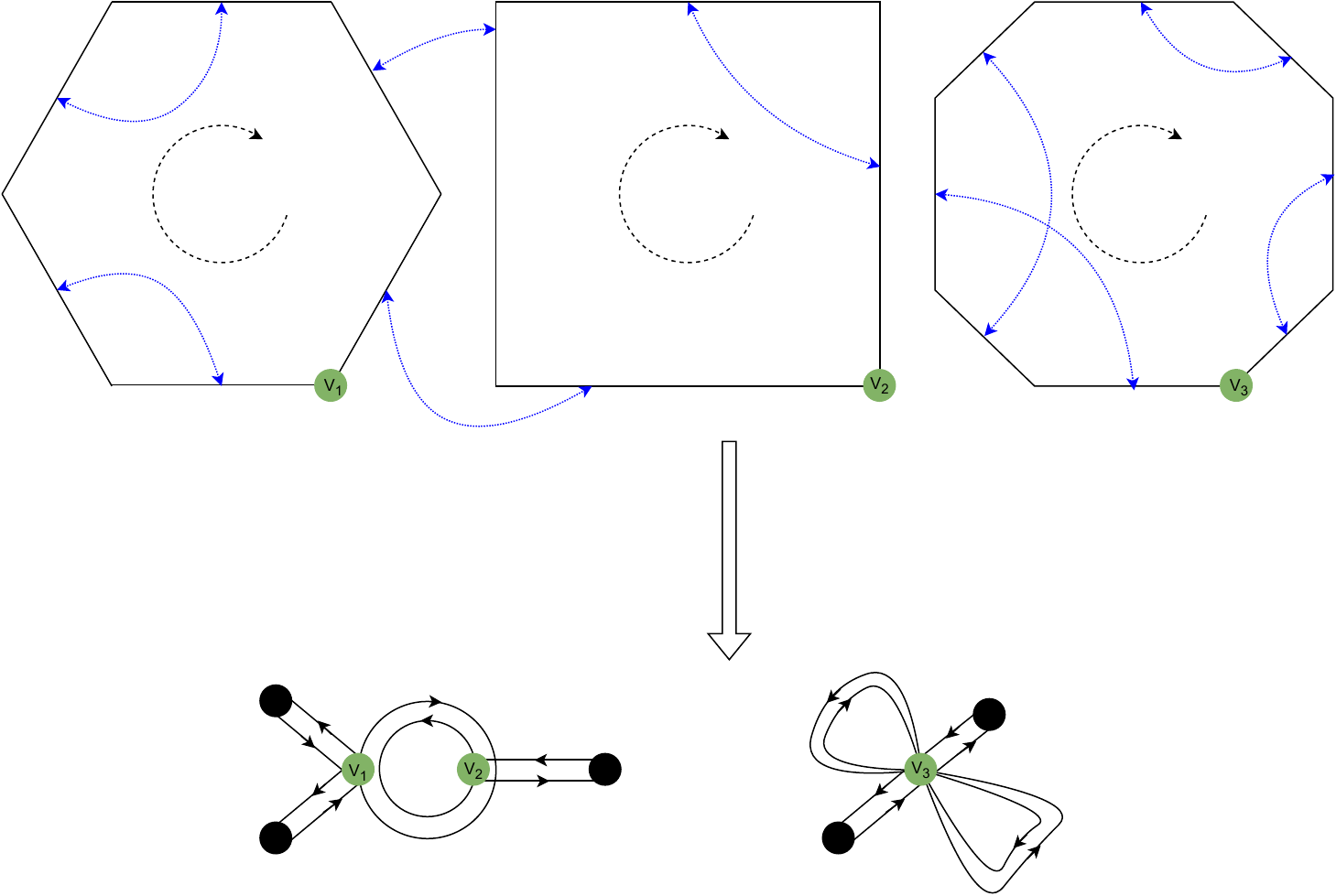}
        \caption{Example of one possible gluing in Hermitian case of $\mathbb{E}[\tr(X^{6})\tr(X^{4})\tr(X^{8})]$. The green vertices are marked vertices.}
        \label{fig:ribbon_graph_example}
    \end{minipage}
    \hfill
    \begin{minipage}[t]{0.48\textwidth}
    \centering
    \includegraphics[width=0.8\textwidth]{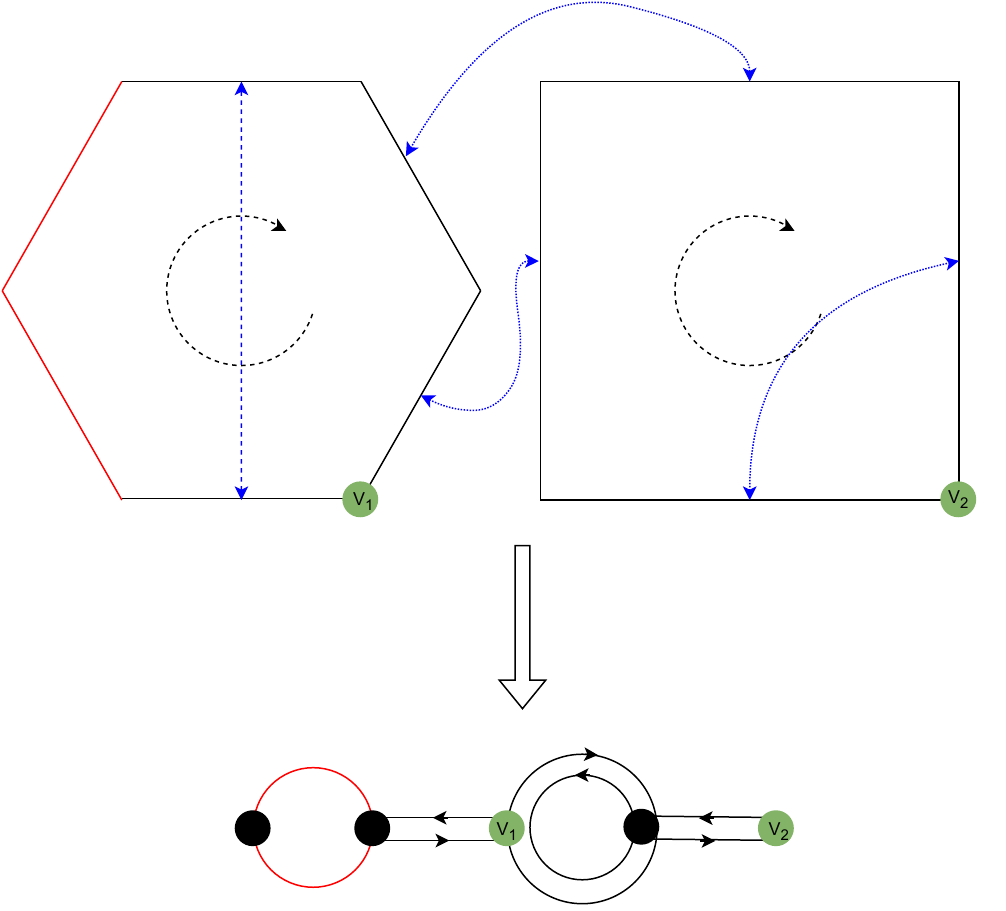}
    \caption{Example of one possible gluing of $\mathbb{E}[\tr(X^{6})\tr(X^{4})]$ with open edges.}
    \label{fig:ribbon_graph_example_open}
    \end{minipage}
\end{figure}

We recall here, for completeness, the definitions of ribbon graphs and diagrams as developed in \cite{geng2024outliers, liu2025edge}.
\begin{definition}[\bf Ribbon graph]
Let $\Sigma$ be a compact surface with or without boundary. A (punctured) ribbon graph $\Upsilon$ with $s$ faces and perimeters $(m_1,\dots,m_s)$ is a quadruple $(\mathcal{V}(\Upsilon), \mathcal{E}(\Upsilon), \iota, \phi)$, where
\begin{enumerate}
    \item[(1)] $(\mathcal{V}, \mathcal{E})$ is a graph;
    \item[(2)] $\iota: (\mathcal{V}, \mathcal{E}) \hookrightarrow \Sigma$ is an embedding;
    \item[(3)] $\phi: [s]=\{1,2,\ldots,s\} \to \mathcal{V}$ assigns a marked vertex to each face;
\end{enumerate}
such that
\begin{itemize}
    \item The boundary of the surface lies in the graph: $\partial \Sigma \subset \iota(\Upsilon)$;
    \item The complement $\Sigma \setminus \iota(\Upsilon) = \bigsqcup_{i=1}^{s} D_i$, where each $D_i$ is an oriented $m_i$-gon and $\phi(i) \in \partial D_i$.
\end{itemize}
\end{definition}

\begin{definition}[\bf Diagram/Reduced ribbon graph]\label{def:diagram}
A \emph{diagram} $\Gamma$ is a ribbon graph satisfying the following two conditions:
\begin{itemize}
    \item[(i)] Every unmarked vertex has degree at least 3;
    \item[(ii)] Every marked vertex has degree at least 2.
\end{itemize}

Let $E(\partial \Sigma), V(\partial\Sigma)$ be the edge and vertex set of $\partial \Sigma$. For the diagram, we will frequently use   the following notation:
\begin{itemize}
    \item  {\bf Edge set}: let \(E(\Gamma)\) denote the set of all edges of \(\Gamma\), with \(E_{\mathrm{int}}(\Gamma)=E(\Gamma)\setminus E(\partial\Sigma)\) and \(E_{\mathrm{b}}(\Gamma)= E(\partial\Sigma)\) denoting the sets of interior and boundary edges, respectively;
    \item {\bf Vertex   set}: let \(V(\Gamma)\), \(V_{\mathrm{int}}(\Gamma)=V(\Gamma)\setminus V(\partial\Sigma)\) and \(V_{\mathrm{b}}(\Gamma)=V(\partial\Sigma)\) denote the sets of all, interior, and boundary vertices, respectively;
    \item  {\bf Boundary  set}: for each \(j \in [s]\), let \(\partial D_j \subset E(\Gamma)\) denote the set of edges forming the boundary of face \(j\).
     \item  {\bf Diagram  set}: let $\mathscr{D}_{s;\beta}$ be the set of all $s$-cell diagrams (on orientable surfaces when $\beta=2$) and  \(\mathscr{D}_{s;\beta}^* \subset \mathscr{D}_{s;\beta}\) be the subset of connected diagrams.
\end{itemize}
\end{definition}

\subsection{Chebyshev moments for Gaussian matrices}\label{sec:section2}

\begin{definition}[Diagram function] \label{defsDF}

For each diagram $\Gamma$, we define its associated diagram function as
\begin{equation}\label{equ:F_formula}
F_{\Gamma}(\{n_j\}) := F_{\Gamma,\Sigma}(\{n_j\}) =
\sum_{\eta: V(\Gamma) \to [N]}
\sum_{\substack{
t_j \ge 0,\; w_e \ge 1 \\
2t_j + \sum\limits_{e \in \partial D_j} w_e = n_j
}}
\Bigg(
\prod_{(x, y) \in E_{\mathrm{int}}}
p_{w_e}\big(\eta(x), \eta(y)\big)
\prod_{(z, w) \in E_{\mathrm{b}}}
\big(A^{w_e}\big)_{\eta(z)\eta(w)}
\Bigg),
\end{equation}
where the second summation runs over all integers \( t_j \ge 0 \) and \( w_e \ge 1 \) satisfying the constraint \( 2t_j + \sum_{e \in \partial D_j} w_e = n_j \) for every $j$.
Analogously, the corresponding diagram function for the variance profile $\widetilde{\Sigma}$ is denoted by $\widetilde{F}_{\Gamma}(\{n_j\})= F_{\Gamma,\widetilde{\Sigma}}(\{n_j\})$.

\end{definition}

Our starting point, and an essential tool throughout this paper, is the graphical expansion for the mixed moments of Chebyshev polynomials of the second kind.
\begin{theorem}[{\cite[Theorem 2.6]{liu2025edge}}]\label{Chebyshevmoment}

Let $X$ be  the deformed Gaussian IRM  matrix in Definition \ref{def:inhomo}, then for any non-negative integers $n_j\ge 0$,
\begin{equation}
\mathbb{E}\Big[\prod_{j=1}^s \tr \big({U}_{n_j}\big(\frac{X}{2}\big)\big)\Big]=\sum_{\Gamma\in \mathscr{D}_{s;\beta}}F_{\Gamma}(\{n_j\}).
\end{equation}
\end{theorem}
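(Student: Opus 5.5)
The statement to prove is the one displayed last, Theorem \ref{Chebyshevmoment}, quoted from \cite[Theorem 2.6]{liu2025edge}. The plan is to prove it by Wick's formula plus a resummation that turns ribbon graphs into diagrams.

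\textbf{Step 1: Wick expansion.} First expand each $U_{n_j}(X/2)$ in the monomial basis,
\begin{equation*}
U_n(x/2)=\sum_{t\ge0}(-1)^t\binom{n-t}{t}x^{n-2t},
\end{equation*}
so that $\mathbb{E}\prod_j\tr U_{n_j}(X/2)$ becomes a signed combination of mixed moments $\mathbb{E}\prod_j\tr X^{m_j}$. Write $X=H+A$ and expand each trace as a sum over closed words in which every letter is either an $H$-step or an $A$-step. Wick's formula pairs the $H$-letters. Each pairing, together with the cyclic orders of the $s$ words, defines a ribbon graph with $s$ faces of perimeters $m_j$. The $H$-pairings glue into interior edges weighted by $\sigma_{xy}^2=P_{xy}$. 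Maximal runs of $A$-letters remain unglued and form boundary edges weighted by $(A^{w})_{xy}$. In the real case, twisted gluings are included; when $\beta=2$, only orientable ones appear.

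\textbf{Step 2: Reduce ribbon graphs to diagrams.} Repeatedly erase unmarked vertices of degree $1$ (leaves, i.e. back-and-forth steps) and contract unmarked vertices of degree $2$. Also handle marked vertices of degree $1$ by sliding the marking. This yields a unique core diagram $\Gamma$, with every unmarked vertex of degree $\ge3$ and every marked vertex of degree $\ge2$. Each edge $e$ of $\Gamma$ corresponds to a path of $w_e\ge1$ original edges. Summing the weights along such a path over its interior indices gives $p_{w_e}$ for interior edges and $(A^{w_e})$ for boundary edges, because $P$ is a transition matrix. The erased trees attached to face $j$ contribute a total of $2t$ steps for that face.

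\textbf{Step 3: Chebyshev cancellation (main obstacle).} The crux is to show that the combination of the tree counts with the alternating coefficients $(-1)^t\binom{n-t}{t}$ collapses to the constraint $2t_j+\sum_{e\in\partial D_j}w_e=n_j$ with multiplicity exactly one. The tree weights need no care here: since $\sum_y\sigma_{xy}^2=1$, every hanging tree contributes weight $1$ regardless of its labels. Consequently the tree enumeration is identical to that for the semicircle, namely Catalan-type insertion counts. The required identity is then the classical fact that $U_n(x/2)$ are the orthogonal polynomials for the semicircle, whose moment and inverse expansions are related by Catalan/ballot numbers. Equivalently, in generating functions $\sum_n U_n(x/2)z^n=(1-xz+z^2)^{-1}$, and substituting the Catalan generating function for tree insertions cancels all of them.

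I expect the delicate part to be the bookkeeping at the marked vertices and for faces with boundary ($A$-)edges. There one must check that the cancellation is local to each face and that marked vertices of degree $\ge2$ can absorb the residual $t_j$ freedom. I would verify this first for $s=1$ with no diagram edges, which is the pure semicircle identity, and then argue face by face, since tree insertions in distinct corners are independent.
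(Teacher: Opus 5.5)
This identity is quoted from Part I and not reproved in this paper, so there is no local proof to match against. Your route (Wick expansion, pruning each ribbon graph to its core diagram, then a Chebyshev/Catalan resummation face by face) is the natural one, and it does work. The gap is Step 3, which carries all the content: it is not established, and the reason you give for it is the wrong one.

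\textbf{What your justification misses.} Semicircle orthogonality only covers a component that is entirely a tree. There the start vertex is the root, the count is $C_{m/2}$, and the Chebyshev combination gives $N\delta_{n_j,0}$; this is why a bare marked vertex is excluded for $n_j\ge1$. For a core walk of length $L\ge1$ the trees do \emph{not} all cancel. If you keep only the configurations in which the trace starts at a core vertex, you get $n_j=L$ alone, with no free $t_j$. Every term with $t_j\ge1$ comes from configurations in which the trace's starting point lies \emph{inside} a tree hanging at the marked corner. So ``sliding the marking'' is exactly where $t_j$ is created, and it must be counted, not ``absorbed''.

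\textbf{The missing computation.} Let $T(z)=\sum_{k\ge0}C_kz^{2k}$, so $T=1+z^2T^2$. Fix face $j$, with core boundary walk of length $L=\sum_{e\in\partial D_j}w_e\ge1$ (an edge counted twice if both its sides lie on $D_j$) and a marked corner. A word of length $m$ reducing to this data consists of a Dyck path at each of the $L$ corners, together with a position in the Dyck path at the marked corner. A position at height $h$ splits that path into two ballot paths, each with generating function $z^hT^{h+1}$. Hence
\begin{equation*}
A_L(z)=\sum_{h\ge0}z^LT^{L-1}\,z^{2h}T^{2h+2}=\frac{z^LT^{L+1}}{1-z^2T^2}.
\end{equation*}
Let $\ell$ be the linear functional with $\ell(x^m)=[z^m]A_L(z)$. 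From
\begin{equation*}
\sum_n U_n(x/2)y^n=(1+y^2)^{-1}\sum_m x^m\big(y/(1+y^2)\big)^m
\end{equation*}
we get $\sum_n\ell(U_n(x/2))y^n=(1+y^2)^{-1}A_L\big(y/(1+y^2)\big)$. Substitute $y=zT(z)$; then $y/(1+y^2)=z$ and $1+y^2=T$. The height-$h$ term therefore becomes exactly $(zT)^{L+2h}=y^{L+2h}$. So $\ell(U_n(x/2))=\#\{t\ge0:\,2t+L=n\}$, and $t_j$ is literally the height of the starting point above the core. Faces are independent because each hanging tree sits in a single corner of a single face, so the identity multiplies over faces.

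\textbf{Smaller bookkeeping fixes.}
\begin{itemize}
\item Take each $A$-letter, not each maximal run, as a unit boundary step. Trees can sit between consecutive $A$-letters, so runs merge after pruning and $w_e$ must count letters.
\item Record the marking as a corner, not just a vertex.
\item In the real case, a twisted gluing of consecutive letters produces a loop (a core edge), not a leaf. Trees therefore consist only of straight gluings, and the Catalan count is the same for $\beta=1$ and $\beta=2$.
\end{itemize}
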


In order to convert sums over full diagrams into sums over connected diagrams, we introduce the cluster decomposition of mixed Chebyshev  moments via cumulants.

\begin{definition}[Cumulants]\label{Cumu}
    The joint cumulants of mixed Chebyshev  moments are defined recursively by
    \begin{equation}\label{equ:T}
        \begin{gathered}
            \kappa_X(n_1)=\mathbb{E}\!\Big[\tr\big(U_{n_1}(\tfrac{X}{2})\big)\Big],\\
            \kappa_X(n_1,n_2)=\mathbb{E}\!\Big[\tr\big(U_{n_1}(\tfrac{X}{2})\big)\tr\big(U_{n_2}(\tfrac{X}{2})\big)\Big]-\kappa_X(n_1)\kappa_X(n_2),\\
            \cdots\\
            \kappa_X(n_1,\dots,n_s)=\mathbb{E}\!\Big[\tr\big(U_{n_1}(\tfrac{X}{2})\big)\tr\big(U_{n_2}(\tfrac{X}{2})\big)\cdots\tr\big(U_{n_s}(\tfrac{X}{2})\big)\Big]-\sum_{\Pi}\prod_{P\in\Pi}\kappa_X\big(\{n_j\}_{j\in P}\big),
        \end{gathered}
    \end{equation}
    where the sum runs over all nontrivial partitions $\Pi$ of $[s]$, i.e., excluding the partition $[s]$ itself.
\end{definition}

\begin{lemma}[{\cite[Lemma 2.8]{liu2025edge}}]\label{thm:u=F}
    For all integers $n_j\ge 1$, $j=1,\dots,s$,
    \begin{equation}
        \kappa_X(n_1,\dots,n_s)=\sum_{\Gamma\in\mathscr{D}_{s;\beta}^*} F_{\Gamma}\big(\{n_j\}_{j=1}^s\big),
    \end{equation}
    where $\mathscr{D}_{s;\beta}^*$ denotes the set of all connected $s$-cell diagrams.
\end{lemma}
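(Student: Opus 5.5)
The plan is to deduce the lemma from Theorem~\ref{Chebyshevmoment} by the standard moment--cumulant inversion, with induction on $s$. For $s=1$ every one-face diagram is connected, so $\kappa_X(n_1)=\mathbb{E}[\tr U_{n_1}(X/2)]=\sum_{\Gamma\in\mathscr{D}_{1;\beta}}F_\Gamma(n_1)=\sum_{\Gamma\in\mathscr{D}^*_{1;\beta}}F_\Gamma(n_1)$. For general $s$, the recursive definition \eqref{equ:T} says that the moments equal $\sum_{\Pi}\prod_{P\in\Pi}\kappa_X(\{n_j\}_{j\in P})$ over \emph{all} partitions $\Pi$ of $[s]$. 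It therefore suffices to prove the combinatorial identity
\begin{equation*}
\sum_{\Gamma\in\mathscr{D}_{s;\beta}}F_\Gamma(\{n_j\})=\sum_{\Pi}\prod_{P\in\Pi}\ \sum_{\Gamma_P\in\mathscr{D}^*_{|P|;\beta}}F_{\Gamma_P}(\{n_j\}_{j\in P}).
\end{equation*}
Once this holds, the inductive hypothesis identifies every factor with $|P|<s$ as the corresponding cumulant, and the term $\Pi=\{[s]\}$ is what remains, which is the claim.

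The identity rests on two facts.

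\textbf{Decomposition of diagrams.} I would show that $\Gamma\mapsto(\Pi,(\Gamma_P)_{P\in\Pi})$ is a bijection between $\mathscr{D}_{s;\beta}$ and pairs consisting of a partition $\Pi$ of $[s]$ and connected diagrams $\Gamma_P\in\mathscr{D}^*_{|P|;\beta}$ with faces labelled by $P$. Here $\Pi$ records which faces lie in the same connected component of the underlying surface.
\begin{itemize}
\item Each face has perimeter $n_j\ge1$, hence some boundary edge, so it lies in exactly one component.
\item The degree conditions (i)--(ii) of Definition~\ref{def:diagram} are local at vertices, so they hold for a component if and only if they hold in $\Gamma$.
\item Orientability in the case $\beta=2$ is likewise checked componentwise.
\item Marked vertices $\phi(j)$ lie on $\partial D_j$ and so stay in the component of face $j$.
\item Conversely, a disjoint union of connected diagrams, with faces relabelled by the blocks of $\Pi$, is a diagram. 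The labelling of faces makes the decomposition unique, so no symmetry factors appear.
\end{itemize}

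\textbf{Factorization of $F_\Gamma$.} In \eqref{equ:F_formula}, the map $\eta:V(\Gamma)\to[N]$ is the product of its restrictions to the vertex sets of the components. Each constraint $2t_j+\sum_{e\in\partial D_j}w_e=n_j$ involves only $t_j$ and edges bounding face $j$, which lie in a single component. The weights $p_{w_e}$ and $(A^{w_e})$ are products over edges. Hence the summation splits as a product, giving $F_\Gamma(\{n_j\})=\prod_{P\in\Pi}F_{\Gamma_P}(\{n_j\}_{j\in P})$. This holds with respect to the same surface decomposition, because the interior and boundary edge sets also split by component.

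Summing the factorization over the bijection gives the displayed identity, and the induction closes. I expect the main obstacle to be the bookkeeping in the decomposition step. One must check that the boundary $\partial\Sigma$ and the open (boundary) edges carrying $A^{w_e}$ are distributed among components consistently with how Theorem~\ref{Chebyshevmoment} counts them. One must also confirm that no face can be empty, so that every $j$ belongs to exactly one block; this is where the hypothesis $n_j\ge1$ enters. Everything else is formal.
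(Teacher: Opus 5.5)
Your proposal is correct and is essentially the standard argument behind the cited \cite[Lemma 2.8]{liu2025edge}: expand the moments by Theorem~\ref{Chebyshevmoment}, decompose each labelled diagram into its connected components, factor $F_\Gamma=\prod_{P\in\Pi}F_{\Gamma_P}$, and invert the moment--cumulant relation \eqref{equ:T} by induction on $s$. One small correction: face $j$ lies in a single component because $\partial D_j$ is one closed walk through the marked vertex $\phi(j)$, which has degree at least $2$ by condition (ii); it is not a consequence of $n_j\ge 1$, since the reduced lengths only satisfy $2t_j+\sum_{e\in\partial D_j}w_e=n_j$, and that bound alone does not force $\partial D_j\neq\emptyset$.
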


\subsection{Upper bound estimates}
\begin{lemma}
Let $A$ be a Hermitian matrix with $\|A\|_{\mathrm{op}}\le a$ and $\mathrm{rank}{(A)}\le r$,  then
\begin{equation}\label{equ:3.11}
    \sum_{x_1,\ldots, x_k}
    \left|
        (A^{s_1})_{x_1x_2} (A^{s_2})_{x_2x_3} \cdots (A^{s_k})_{x_kx_1}
    \right|\le a^{s_1 + \cdots + s_k} \, r^k.
\end{equation}
\end{lemma}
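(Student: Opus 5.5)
We would prove the bound by expanding every power of $A$ in its spectral decomposition. Writing $A=\sum_{\ell=1}^{r}\lambda_\ell u_\ell u_\ell^*$ with orthonormal eigenvectors $u_\ell$ and $|\lambda_\ell|\le a$, we have $(A^{s})_{xy}=\sum_{\ell=1}^r \lambda_\ell^{s}\,u_\ell(x)\overline{u_\ell(y)}$ for every $s\ge 1$. This holds because $A$ is Hermitian and the rank bound lets us keep only $r$ terms; this is the same decomposition as in \eqref{SpetralA}.

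\textbf{Step 1: bound each matrix entry.} Taking absolute values gives the entrywise estimate
\[
|(A^{s})_{xy}|\le a^{s}\sum_{\ell=1}^r |u_\ell(x)|\,|u_\ell(y)|.
\]
Substituting this for each factor in the cyclic product yields
\[
\sum_{x_1,\ldots,x_k}\prod_{i=1}^k |(A^{s_i})_{x_ix_{i+1}}|
\le a^{s_1+\cdots+s_k}\sum_{\ell_1,\ldots,\ell_k=1}^r\ \sum_{x_1,\ldots,x_k}\prod_{i=1}^k |u_{\ell_i}(x_i)|\,|u_{\ell_i}(x_{i+1})|,
\]
with the cyclic convention $x_{k+1}=x_1$.

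\textbf{Step 2: factor the sum over vertices.} For fixed labels $(\ell_1,\dots,\ell_k)$, each vertex $x_i$ appears in exactly two factors, $|u_{\ell_{i-1}}(x_i)|$ and $|u_{\ell_i}(x_i)|$ (indices mod $k$). The sum over the $x_i$ therefore factorizes as
\[
\prod_{i=1}^k \sum_{x}|u_{\ell_{i-1}}(x)|\,|u_{\ell_i}(x)| .
\]
By Cauchy--Schwarz and $\|u_\ell\|_2=1$, each inner sum is at most $1$.

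\textbf{Step 3: count the labels.} There are $r^k$ choices of $(\ell_1,\dots,\ell_k)$, each contributing at most $a^{s_1+\cdots+s_k}$. This gives \eqref{equ:3.11}.

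\textbf{Degenerate cases and possible obstacles.} For $k=1$, Step 2 reads $\sum_x |u_{\ell}(x)|^2=1$, so the argument goes through unchanged. If some $s_i=0$, the factor $A^0=I$ is not of rank $r$, and this expansion does not apply. In the paper's use, however, $w_e\ge1$, so every $s_i\ge 1$ and this case does not arise. The only step that needs any care is the observation that each vertex is shared by exactly two consecutive factors, since this is what makes the sum factorize; the argument is otherwise routine.
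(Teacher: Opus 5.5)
Your proof is correct and follows essentially the same route as the paper: you expand each $A^{s_i}$ via the rank-$r$ spectral decomposition and pull out $a^{s_1+\cdots+s_k}$ by the triangle inequality. You then regroup the eigenvector factors vertex by vertex so the $x$-sum factorizes, bound each factor by $1$ via Cauchy--Schwarz, and count $r^k$ label choices. Your remark that the bound needs every $s_i\ge 1$ (it fails when some factor is $A^0=I$) is a worthwhile caveat that the paper leaves implicit; it holds in the paper's application since $w_e\ge 1$.
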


\begin{proof}
By the spectral decomposition $A_N=\sum_{t=1}^r a_t \mathbf{q}_{t}^{}\mathbf{q}_{t}^{*}$ with $Q=(\mathbf{q}_{1},\ldots,\mathbf{q}_{N})$ defined in  \eqref{SpetralA},  we have
\begin{align}
    &\sum_{x_1,\ldots, x_k}
    \left|
        (A^{s_1})_{x_1x_2} (A^{s_2})_{x_2x_3} \cdots (A^{s_k})_{x_kx_1}
    \right|
    =
    \sum_{x_1,\ldots, x_k}
    \Big|
        \sum_{1 \le t_1, \ldots, t_k \le r}
        \prod_{i=1}^{k}
        a_{t_i}^{s_i} \,
        \big(\mathbf{q}_{t_i} \mathbf{q}_{t_i}^{*}\big)_{x_i x_{i+1}}
    \Big|\notag \\[4pt]
    &\le
    \sum_{1 \le t_1, \ldots, t_k \le r}
    \sum_{x_1,\ldots, x_k}
        \prod_{i=1}^{k} \left|
        a_{t_i}^{s_i} \,
        \big(\mathbf{q}_{t_i} \mathbf{q}_{t_i}^*\big)_{x_i x_{i+1}}
    \right| \le
    a^{s_1 + \cdots + s_k}
    \sum_{1 \le t_1, \ldots, t_k \le r}
    \sum_{x_1, \ldots, x_k}
    \Big|
        \prod_{i=1}^{k} \big(\mathbf{q}_{t_i} \mathbf{q}_{t_{i+1}}^*\big)_{x_i x_{i}}
    \Big| \notag
    \\[4pt]
    &\le
    a^{s_1 + \cdots + s_k}
    \sum_{1 \le t_1, \ldots, t_k \le r} 1  =
    a^{s_1 + \cdots + s_k} \, r^k,
\end{align}
where  the  Cauchy-Schwarz inequality has been used in the last inequality.
\end{proof}

\begin{proposition}\label{prop:F_upper_bound}
Let $\|A_N\|_{\mathrm{op}} = a$. Under Assumption \ref{itm:B1} in Definition \ref{mixingdef}, the diagram functions defined in \eqref{equ:F_formula} satisfy the following upper bounds.
\begin{enumerate}[label=(\roman*)]
    \item \label{item:upper_noboundary}
        If $\Gamma\in \mathscr{D}_{s;\beta}^*$  is a diagram without open edges,  then
         \begin{equation}\label{equ:3.2}
        \big|F_{\Gamma}(\{n_j\})\big|\le N\big(b_n\big)^{|E|-|V|+1}\frac{n^{|V|-1}}{(|V|-1)!}.
    \end{equation}
    \item \label{item:upper_boundary} If   $\Gamma\in\mathscr{D}_{s;\beta}^{*}$ is a diagram with at least one open edge,  then  \begin{equation}\label{equ:3.3}
        \big|F_{\Gamma}(\{n_j\})\big|\le (1+a^n)r^{|V_{\mathrm{b}}|}\big(b_n\big)^{|E_{\mathrm{int}}|-|V_{\mathrm{int}}|}\frac{n^{|V|}}{(|V|)!}.
    \end{equation}
\end{enumerate}
\end{proposition}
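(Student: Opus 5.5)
The plan is to bound the vertex sum and the weight sum in \eqref{equ:F_formula} separately. Since every summand is bounded in absolute value by replacing each factor with its modulus, we may treat the constraint $2t_j+\sum_{e\in\partial D_j}w_e=n_j$ loosely. All weights are positive integers, and each edge lies on the boundary of some face. Hence the total weight satisfies $\sum_e w_e\le\sum_j n_j=n$, and every individual $w_e\le n$.

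For part \ref{item:upper_noboundary}, fix a spanning tree $T$ of the connected graph $(V,E)$. It has $|V|-1$ edges and leaves $|E|-|V|+1$ non-tree edges. For fixed weights, bound each non-tree factor $p_{w_e}(\eta(x),\eta(y))$ by $\max_{x,y}p_{w_e}(x,y)$. Then sum out the vertex labels along $T$, starting from a root and moving leafward. Each tree edge contributes $\sum_y p_{w_e}(x,y)=1$ by the Markov property, and the root contributes the factor $N$. This leaves
\begin{equation*}
|F_\Gamma|\le N\sum_{\{w_e\}}\ \prod_{e\notin T}\max_{x,y}p_{w_e}(x,y).
\end{equation*}
The non-tree weights can now be summed independently, each over $1\le w_e\le n$, giving at most $\max_{x,y}\sum_{i\le n}p_i(x,y)\le b_n$ per edge by \ref{itm:B1}. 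It remains to count the tree weights $\{w_e\}_{e\in T}$. These are $|V|-1$ positive integers whose sum is at most $n$, so there are at most $\binom{n}{|V|-1}\le n^{|V|-1}/(|V|-1)!$ of them. The non-tree weights are dropped from this constraint, which only enlarges the bound. The $t_j$ are determined by the $w_e$, so no further counting is needed. This gives \eqref{equ:3.2}.

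For part \ref{item:upper_boundary}, I would apply the same scheme, but root the summation at the boundary and treat the open edges through the preceding lemma. The boundary edges form cycles (the boundary components of $\Sigma$). Their labels are $\eta$ restricted to $V_{\mathrm b}$, and each factor $(A^{w_e})$ is controlled by \eqref{equ:3.11}. First contract the boundary vertices, and pick a spanning forest of the interior edges in which every interior vertex is connected to $V_{\mathrm b}$. This is possible since $\Gamma$ is connected. The forest has $|V_{\mathrm{int}}|$ edges, one per interior vertex, and the remaining $|E_{\mathrm{int}}|-|V_{\mathrm{int}}|$ interior edges are bounded by $\max p$, each contributing $b_n$ after summation over its weight.

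Summing the interior labels leafward along the forest again gives factors $1$, leaving a sum over boundary labels of products of $|(A^{w_e})_{\eta(z)\eta(w)}|$ along the boundary cycles. By \eqref{equ:3.11}, applied to each boundary cycle, this is at most $a^{\sum_{e\in E_{\mathrm b}}w_e}\,r^{|V_{\mathrm b}|}$. The exponent is at most $n$, and $a^m\le 1+a^n$ for $0\le m\le n$. Finally, counting the forest and boundary weights, at most $|V|$ positive integers with total at most $n$, gives the factor $n^{|V|}/|V|!$.

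The main obstacle is the bookkeeping in part \ref{item:upper_boundary}. One must check that the lemma applies cycle by cycle, with each boundary component closed, so that the factor is $r^{\#\text{boundary edges}}=r^{|V_{\mathrm b}|}$, and that the weight count still fits under $n^{|V|}/|V|!$. If the number of boundary edges exceeds what is needed, I would count only $|V_{\mathrm{int}}|$ forest weights plus one weight per boundary vertex, and use $\binom{n}{k}$ monotonicity to absorb the rest.
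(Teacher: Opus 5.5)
Your outline has the same architecture as the paper's proof. You relax the face constraints to $\sum_e w_e\le n$ and take a spanning tree, or a spanning forest of $E_{\mathrm{int}}$ with one boundary root per component. You pay $b_n$ for each non-forest edge, collapse the forest using $\sum_y p_w(x,y)=1$, bound the boundary sum by \eqref{equ:3.11} together with $a^m\le 1+a^n$, and count the $|V|$ remaining weights by $\binom{n}{|V|}$.

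However, one step fails as written, in both parts. You first replace each non-tree (non-forest) factor by $\max_{x,y}p_{w_e}(x,y)$, sum out the labels, and only then sum over $w_e$. You then claim $\sum_{w=1}^{n}\max_{x,y}p_w(x,y)\le\max_{x,y}\sum_{i\le n}p_i(x,y)\le b_n$. The first inequality is reversed: for nonnegative terms one always has $\max_{x,y}\sum_{i}p_i(x,y)\le\sum_i\max_{x,y}p_i(x,y)$. The gap can be large when the maximizing pair depends on $i$. For example, a block-diagonal symmetric chain whose blocks have return probabilities peaking at different times gives a loss of order $\log n$. Assumption \ref{itm:B1} is a time-averaged bound for \emph{fixed} endpoints, so it does not control $\sum_i\max_{x,y}p_i(x,y)$. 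With your ordering you would need a strictly stronger hypothesis.

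The repair is to swap the order, which is exactly what the paper does. Relax the constraint so that each $w_e$ with $e\in\mathcal{F}^c$ (the non-forest interior edges) ranges freely over $\{1,\dots,n\}$. Then fix the labeling $\eta$ and the forest/boundary weights, and sum the non-forest weights first. This gives $\prod_{e\in\mathcal{F}^c}\sum_{w_e=1}^{n}p_{w_e}(\eta(x),\eta(y))\le b_n^{|\mathcal{F}^c|}$ uniformly in $\eta$. Only afterwards sum the labels along the forest; its factors are untouched and each contributes $1$.

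With this reordering the rest of your argument goes through. In particular, $|E_{\mathrm b}|=|V_{\mathrm b}|$ because $\partial\Sigma$ is a disjoint union of embedded cycles, so the fallback in your last sentence is not needed.
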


\begin{proof}
Starting from the exact expression  of $F_{\Gamma}$ defined in \eqref{equ:F_formula}, we  take the absolute value and divide vertices into interior and open ones to derive
\begin{align}\label{equ:3.5}
    \left|F_{\Gamma}(\{n_j\})\right|
    &\le
    \sum_{\eta: V_{\mathrm{b}}(\Gamma) \to [N]}
    \sum_{\substack{
        2 t_j + \sum_{e \in\partial D_j} w_e = n_j
    }}
         \prod_{(z,w) \in E_{\mathrm{b}}}  \left|(A^{w_e})_{\eta(z)\eta(w)}
     \right|
    \sum_{\eta: V_{\mathrm{int}}(\Gamma) \to [N]}
    \prod_{(x,y) \in E_{\mathrm{int}}} p_{w_e}(\eta(x), \eta(y)) \notag \\
    &\le
    \sum_{\eta: V_{\mathrm{b}}(\Gamma) \to [N]}
    \sum_{\substack{
        \sum_{e \in E} w_e \le n
    }}
        \prod_{(z,w) \in E_{\mathrm{b}}} \left|(A^{w_e})_{\eta(z)\eta(w)}
    \right|
    \sum_{\eta: V_{\mathrm{int}}(\Gamma) \to [N]}
    \prod_{(x,y) \in E_{\mathrm{int}}} p_{w_e}(\eta(x), \eta(y)).
\end{align} Here $n:=\sum_{j=1}^s n_j$,  we have added  all $s$ linear restrictions together and ignore the evenness of the sum restriction  in the last inequality. That is,
\begin{align} \label{ub-eq1}
    \left|F_{\Gamma}(\{n_j\})\right|
    &\le
    \sum_{ w_e: \sum w_e \le n}
    \sum_{\eta: V(\Gamma) \to [N]}
    \prod_{e \in E_{\mathrm{int}}} p_{w_e}(\eta(x), \eta(y))
    \prod_{(z,w) \in E_{\mathrm{b}}} \left| (A^{w_e})_{\eta(z) \eta(w)} \right|.
    \end{align}

Given the interior edge set $E_{\mathrm{int}}$, which can be  viewed as a graph, we select one spanning forest $\mathcal{F}$ such that each  of its   connected components   contains exactly one open (boundary) vertex, see Figure \ref{fig:forest} for example.   Denote $\mathcal{F}^c=E_{\mathrm{int}}\setminus \mathcal{F}$.
 The assumption  \ref{itm:B1}   in Definition \ref{mixingdef} shows that  \begin{equation}\label{equ:3.6}
 \sum\limits_{w_e=1}^{n}p_{w_e}(\eta(x),\eta(y))\le b_n, \quad \forall e\in \mathcal{F}^c,\end{equation}
  from which summing over $w_e$  from 1 to $n$ (relaxing the sum restriction) on the right-side hand of \eqref{ub-eq1} for every  edge $e\in \mathcal{F}^c$,  we obtain
 \begin{align}
    \left|F_{\Gamma}(\{n_j\})\right|
    &\le
    \sum_{\substack{\sum\limits_{e \in E_{\mathrm{b}} \cup \mathcal{F}} w_e \le n  }}
    (b_n)^{|\mathcal{F}^c|}
    \sum_{\eta: V(\Gamma) \to [N]}
    \prod_{e \in \mathcal{F}} p_{w_e}(\eta(x), \eta(y))
    \prod_{(z,w) \in E_{\mathrm{b}}} \left|(A^{w_e})_{\eta(z) \eta(w)}\right|
    \notag \\[6pt]
    &=
    \sum_{\substack{\sum\limits_{e \in E_{\mathrm{b}} \cup \mathcal{F}} w_e \le n}}
    (b_n)^{|\mathcal{F}^c|}
    \sum_{\eta: V_{\mathrm{b}}(\Gamma) \to [N]}
    \prod_{(z,w) \in E_{\mathrm{b}}} \left|(A^{w_e})_{\eta(z) \eta(w)}\right|
    \notag\\[6pt]
    &\le
    \sum_{\substack{\sum\limits_{e \in E_{\mathrm{b}} \cup \mathcal{F}} w_e \le n}}
    (b_n)^{|\mathcal{F}^c|} (1 + a^n) r^{|V_{\mathrm{b}}|}
      \notag\\[6pt]
    &\le
    (1 + a^n) r^{|V_{\mathrm{b}}|} (b_n)^{|\mathcal{F}^c|}
    \frac{n^{|E_{\mathrm{b}} \cup \mathcal{F}|}}{(|E_{\mathrm{b}} \cup \mathcal{F}|)!}
    \notag \\[6pt]
    &=
    (1 + a^n) r^{|V_{\mathrm{b}}|} (b_n)^{|E_{\mathrm{int}}| - |V_{\mathrm{int}}|}
    \frac{n^{|E_{\mathrm{b}}| + |V_{\mathrm{int}}|}}{(|E_{\mathrm{b}}| + |V_{\mathrm{int}}|)!}.
    \label{equ:3.16}
 \end{align}
 Here  we sum over the evaluation of $\eta$ along the forest    in the second equality  and use  \eqref{equ:3.11} in the third inequality.

Since $|E_{\mathrm{b}}|=|V_{\mathrm{b}}|$, this indeed completes the general case, particularly Case \ref{item:upper_boundary}.

\begin{figure}
    \centering
    \includegraphics[width=\linewidth]{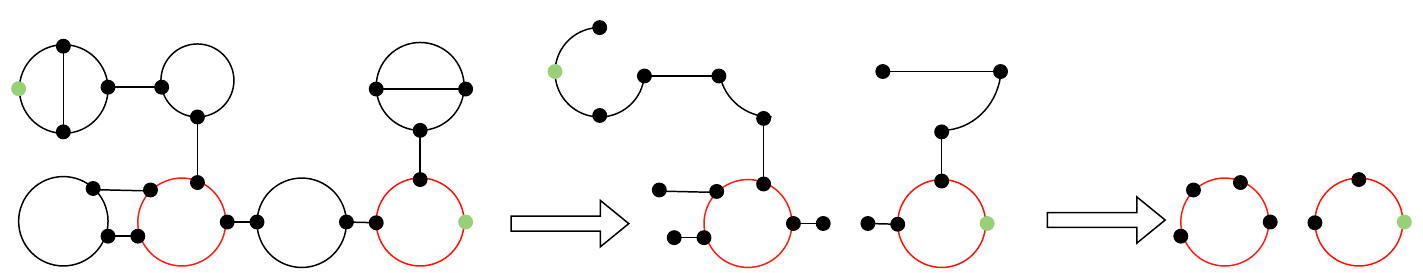}
    \caption{\textbf{Step 1:} Remove all edges in \(\mathcal{F}^c\), leaving a spanning forest \(\mathcal{F}\) where each tree has exactly one ``open’’ root vertex. \textbf{Step 2:} Sum over labels on each tree (collapsing them), so only the open edges and open vertices remain.
    }
    \label{fig:forest}
\end{figure}
For a diagram without boundary, we fix the labeling of a distinguished vertex and classify the remaining vertices as internal, denoted by \( V_{\mathrm{int}} \), with their incident edges denoted  by \( E_{\mathrm{int}} \). Since \( |V_{\mathrm{b}}| = |E_{\mathrm{b}}| = 0 \) in this setting, the configuration corresponds to the special case \( a = 0 \). We proceed as in the derivation of \eqref{equ:3.16} to obtain
\begin{align}
    F_{\Gamma}(\{n_j\})&\le N (b_n)^{|E_{\mathrm{int}}|-|V_{\mathrm{int}}|}\frac{n^{|E_{\mathrm{b}}|+|V_{\mathrm{int}}|}}{(|E_{\mathrm{b}}|+|V_{\mathrm{int}}|)!}\notag\\
    &=N (b_n)^{|E|-|V|+1}\frac{n^{|V|-1}}{(|V|-1)!}\label{equ:no_boundary_case}.
\end{align}

This completes Case \ref{item:upper_noboundary},  and thus   the entire   proof.
\end{proof}
\subsection{Asymptotic equivalence of diagram functions}
We establish asymptotic approximations for pairs of diagram functions corresponding to the variance profile matrices $\Sigma$ and $\widetilde{\Sigma}$.
\begin{theorem}\label{thm:F=tildeF}

Suppose that $\| A_N\|_{\mathrm{op}} \le 1+\tau n^{-1}$, where $n = \sum_{j=1}^s n_j$ and $\tau > 0$ is a fixed constant. Under Assumptions \ref{itm:B1}--\ref{itm:B2} of Definition~\ref{mixingdef}, for any fixed diagram $\Gamma$ the diagram functions in Definition~\ref{defsDF} satisfy the following asymptotic equivalence:

\begin{enumerate}
    \item[(i)] If $\Gamma$ has at least one open edge,  then\begin{equation}\label{equ:F_asy_equal}
F_{\Gamma}(\{n_j\})=\widetilde{F}_{\Gamma}(\{n_j\}_{j=1}^s)+O_{|E|}\Big(\left\{\big(b_n\big)^{|E_{\mathrm{int}}|-|V_{\mathrm{int}}|-1}n^{|V|-1}\right\}\big(n\Delta_n +b_n\mathcal{E}_n \big)\Big).
\end{equation}
\item[(ii)] If $\Gamma$ has no open edge, then
\begin{equation}\label{equ:F_asy_equal_no_open}
F_{\Gamma}(\{n_j\})=\widetilde{F}_{\Gamma}(\{n_j\}_{j=1}^s)+O_{|E|}\Big(\left\{\big(b_n\big)^{|E|-|V|}n^{|V|-2}N\right\}(n\Delta_n +b_n\mathcal{E}_n )\Big).
\end{equation}
\end{enumerate}

\end{theorem}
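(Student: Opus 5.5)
The plan is a telescoping (hybrid) argument over the interior edges. I enumerate $E_{\mathrm{int}}=\{e_1,\dots,e_m\}$. For $0\le k\le m$, let $F^{(k)}_\Gamma$ denote the diagram function \eqref{equ:F_formula} in which the first $k$ interior edges carry $\tilde p_{w_e}$ and the remaining ones carry $p_{w_e}$. The boundary weights $(A^{w_e})$ are common to both models. Then $F^{(0)}_\Gamma=F_\Gamma$ and $F^{(m)}_\Gamma=\widetilde F_\Gamma$, so
\[
F_\Gamma-\widetilde F_\Gamma=\sum_{k=1}^m \big(F^{(k-1)}_\Gamma-F^{(k)}_\Gamma\big).
\]
Each summand has the same structure as $F_\Gamma$, except that the single edge $e_k$ carries the signed weight $d_{w}(x,y):=p_{w}(x,y)-\tilde p_{w}(x,y)$. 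Since $m\le |E|$ is fixed, the constant in $O_{|E|}$ absorbs the number of summands. It therefore suffices to bound one such term with absolute values, repeating the argument of Proposition~\ref{prop:F_upper_bound}. That argument uses only \ref{itm:B1}, which holds for $p$, for $\tilde p$, and hence for any mixture of the two.

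For a fixed $k$, write $e=e_k=(u,v)$. I would give two estimates, one for each error term.

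\textbf{The $n\Delta_n$ term.} Choose the spanning forest $\mathcal F$ of Proposition~\ref{prop:F_upper_bound} with $e\in\mathcal F^c$. Bound the sum over $w_e$ by $\sum_{w\le n}|d_w(\eta(u),\eta(v))|\le\Delta_n$ instead of $b_n$. The other edges of $\mathcal F^c$ each contribute $b_n$, and the edges in $\mathcal F\cup E_{\mathrm b}$ are handled exactly as before. This gives the bound of \eqref{equ:3.3} with one factor $b_n$ replaced by $\Delta_n$, namely
\[
(1+a^n)\,r^{|V_{\mathrm b}|}\,b_n^{|E_{\mathrm{int}}|-|V_{\mathrm{int}}|-1}\,\Delta_n\,\frac{n^{|V|}}{|V|!}.
\]
Since $a^n\le(1+\tau/n)^n\le e^\tau$, this is of the stated form $b_n^{|E_{\mathrm{int}}|-|V_{\mathrm{int}}|-1}n^{|V|-1}\cdot n\Delta_n$.

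\textbf{The $b_n\mathcal E_n$ term.} This covers the case where $e$ is a bridge-like edge that must lie in $\mathcal F$, or where one prefers to put it there. I would put $e\in\mathcal F$ as a leaf edge of its tree, so that collapsing the tree sums over the label of the leaf endpoint first. At that leaf, instead of $\sum_y p_w(x,y)=1$, we get $\sum_y|d_w(x,y)|\le\epsilon_w$. Summing over $w_e$ with the other weights fixed replaces one factor $n$ by $\sum_{w\le n}\epsilon_w=\mathcal E_n$. This gives
\[
b_n^{|E_{\mathrm{int}}|-|V_{\mathrm{int}}|}\,n^{|V|-1}\,\mathcal E_n,
\]
which is $b_n^{|E_{\mathrm{int}}|-|V_{\mathrm{int}}|-1}n^{|V|-1}\cdot b_n\mathcal E_n$.

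For each $e$ I use whichever of the two estimates applies; both are dominated by the stated error, so summing over $k$ proves (i). Case (ii) is identical after fixing the label of a distinguished vertex, which produces the factor $N$ and shifts the exponents exactly as in \eqref{equ:no_boundary_case}. The result is $N b_n^{|E|-|V|}n^{|V|-2}(n\Delta_n+b_n\mathcal E_n)$.

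\textbf{Main obstacle.} The delicate point is the leaf/forest bookkeeping, that is, making sure a spanning forest with the required property exists. For the first estimate one needs $e\in\mathcal F^c$, which is possible exactly when $e$ is not a bridge separating an interior component from the boundary. When that fails, one needs $e$ to be a leaf edge, and one must check that the sum over the weights of the remaining forest edges still gives $n^{|V|-1}/(|V|-1)!$ after the $w_e$ sum has been pulled out. I would handle this with a tree ordering: collapse leaves from the outside inward, so that $e$ is collapsed first. One must also check that the signed weight $d_w$ is never needed with the Markov identity $\sum_y=1$, since $d_w$ does not satisfy it; bounding by absolute values as above avoids this.
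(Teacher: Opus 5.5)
Your proposal is correct and follows the paper's proof essentially step for step: the same telescoping over $E_{\mathrm{int}}$ through the hybrid functions $F^{(k)}_\Gamma$, and the same case split on the edge $e_k$. If $e_k\in\mathcal{F}^c$, one factor $b_n$ is replaced by $\Delta_n$; if $e_k$ lies in the spanning forest $\mathcal{F}$, summing out an endpoint of $e_k$ replaces $\sum_y p_{w}(x,y)=1$ by $\epsilon_{w}$, and the sum over weights yields $\mathcal{E}_n\, n^{|V|-1}/(|V|-1)!$.

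One point to tidy: $e_k$ need not be a leaf of its tree (for example, a bridge between two large subtrees). In that case, first sum out the subtree beyond $e_k$ using the stochasticity of $p$ and $\tilde p$. After that, $e_k$ is a leaf edge and your argument applies unchanged.
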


\begin{proof}
We focus on the case where $\Gamma$ has at least one open edge, since   the case without    open edges can  be handled analogously. In this setting, the   identity $|V| - |E| = |V_{\mathrm{int}}| - |E_{\mathrm{int}}|$ holds.

For convenience, we impose an ordering on $E_{\mathrm{int}}$ by writing $E_{\mathrm{int}}=\{e_1,e_2,\ldots,e_{|E_{\mathrm{int}}|}\}$ and introduce,    for $k=0,1,\ldots, |E_{\mathrm{int}}|$,
\begin{equation}
    F_{\Gamma}^{(k)}(\{n_j\})=\sum_{\eta : V(\Gamma) \to [N]}
    \sum_{\substack{2t_j + \sum_{e \in \partial D_j} w_e = n_j}}
    \prod_{i=1}^{k}\tilde{p}_{w_{e_i}}(x,y)\prod_{i=k+1}^{|E_{\mathrm{int}}|}{p}_{w_{e_i}}(x,y)
    \prod_{(z, w) \in E_{\mathrm{b}}} (A^{w_e})_{\eta(z)\eta(w)}.
\end{equation}
First,  we observe that
\begin{equation}
    F_{\Gamma}(\{n_j\})=F_{\Gamma}^{(0)}(\{n_j\}), ~~~ \widetilde{F}_{\Gamma}(\{n_j\})=F_{\Gamma}^{(|E_{\mathrm{int}}|)}(\{n_j\}),
\end{equation}
and
\begin{equation}
    \widetilde{F}_{\Gamma}(\{n_j\})-F_{\Gamma}(\{n_j\})=\sum_{k=0}^{|E_{\mathrm{int}}|-1}\left( F_{\Gamma}^{(k+1)}(\{n_j\})-F_{\Gamma}^{(k)}(\{n_j\}) \right).
\end{equation}
Next, we analyze the difference between two consecutive terms in the above interpolation.
\begin{equation}
    \begin{aligned}
        &\left|F_{\Gamma}^{(k)}(\{n_j\})-F_{\Gamma}^{(k-1)}(\{n_j\})\right|=\Big|\sum_{\eta : V(\Gamma) \to [N]}
    \sum_{\substack{2t_j + \sum_{e \in \partial D_j} w_e = n_j}}\\
        &
    \Big(\prod_{i=1}^{k-1}\tilde{p}_{w_{e_i}}(x,y)\Big)\Big(\tilde{p}_{w_{e_k}}(x,y)-{p}_{w_{e_k}}(x,y)\Big)\Big(\prod_{i=k+1}^{|E_{\mathrm{int}}|}{p}_{w_{e_i}}(x,y)\Big)
    \prod_{(z, w) \in E_{\mathrm{b}}} (A^{w_e})_{\eta(z)\eta(w)}\Big|\\
    &\le \sum_{\eta : V(\Gamma) \to [N]}
    \sum_{\sum_{e\in E(\Gamma)}w_e\le n}\left(
    \prod_{i=1}^{k-1}\tilde{p}_{w_{e_i}}(x,y)\left|\tilde{p}_{w_{e_k}}(x,y)-{p}_{w_{e_k}}(x,y)\right|\prod_{i=k+1}^{|E_{\mathrm{int}}|}{p}_{w_{e_i}}(x,y)
    \prod_{(z, w) \in E_{\mathrm{b}}} \left|(A^{w_e})_{\eta(z)\eta(w)}\right|\right).
    \end{aligned}
\end{equation}
Now consider a spanning forest $\mathcal{F}$ of $\Gamma$ and carry out  a procedure analogous to that in the derivation of \eqref{equ:3.16}. We first relax
\begin{equation}
    \sum_{\sum_{e\in E(\Gamma)}w_e\le n}\Big(\cdots\Big)\le \sum_{\sum\limits_{e\in E_{\mathrm{b}}\cup \mathcal{F}}w_e\le n}\quad\sum_{\forall e\in \mathcal{F}^c, ~w_e\le n} \Big(\cdots\Big),
\end{equation}
and thus two distinct cases arise.

\begin{enumerate}
    \item[{\bf Case I:}]   $e_k\in \mathcal{F}^c$.
    By the assumption  \ref{itm:B2}   in Definition \ref{mixingdef},
    we obtain for $e_k$,
    \begin{equation}
        \sum\limits_{w_{e}=1}^{n}\big|p_{w_e}(\eta(x),\eta(y))-\tilde{p}_{w_e}(\eta(x),\eta(y))\big|\le \Delta_n.
    \end{equation}
    For other $e\in \mathcal{F}^c$, by assumption \ref{itm:B1}, we have
    \begin{equation}
        \max\left\{\sum_{w_e=1}^{n}p_{w_e}(x, y), \sum_{w_e=1}^{n} \tilde{p}_{w_e}(x, y)\right\} \le b_n.
    \end{equation}
    Thus the total contribution from the summation over $\mathcal{F}^c$ is $b_n^{|E_{\mathrm{int}}|-|V_{\mathrm{int}}|-1}\Delta_n$.

    Then we take the rest summation following steps in \eqref{equ:3.16} and \eqref{equ:no_boundary_case}, we obtain the upper bound
    \begin{equation}\label{equ:not_in_F}
        \begin{aligned}
            \left|F_{\Gamma}^{(k)}(\{n_j\})-F_{\Gamma}^{(k-1)}(\{n_j\})\right|&\le \begin{cases}
                (1 + a^n) r^{|V_{\mathrm{b}}|} (b_n)^{|E_{\mathrm{int}}| - |V_{\mathrm{int}}|-1}\Delta_n
    \frac{n^{|E_{\mathrm{b}}| + |V_{\mathrm{int}}|}}{(|E_{\mathrm{b}}| + |V_{\mathrm{int}}|)!},& E_{\mathrm{b}}\neq\emptyset,\\
    N (b_n)^{|E|-|V|}\Delta_n\frac{n^{|V|-1}}{(|V|-1)!},&E_{\mathrm{b}}=\emptyset.
            \end{cases}
        \end{aligned}
    \end{equation}
    \item[{\bf Case II:}] $e_k\in \mathcal{F}$. In this case, we still sum over all $e\in \mathcal{F}^c$, which leads to a $b_n^{|E_{\mathrm{int}}|-|V_{\mathrm{int}}|}$ factor. We split the sum
    \begin{equation}
        \sum_{\sum\limits_{e\in E_{\mathrm{b}}\cup \mathcal{F}}w_e\le n}\big(\cdots\big)\le \sum_{w_{e_k}=1}^{n}\sum_{\sum\limits_{e\in E_{\mathrm{b}}\cup \mathcal{F}\setminus \{e_k\}}w_e\le n} \big(\cdots\big).
    \end{equation}
    Then, following the steps in \eqref{equ:3.16} yields
    \begin{equation}\label{equ:in_F}
    \small
     \begin{aligned}
     &\sum_{\eta:V(\Gamma)\rightarrow[N]}\sum_{\substack{\sum\limits_{e \in E_{\mathrm{b}} \cup \mathcal{F}} w_e \le n  }}\Big(\prod_{e\in \mathcal{F}\setminus\{e_k\}}p_{w_e}(\eta(x),\eta(y))\Big)\Big|p_{w_{e_k}}(\eta(x),\eta(y))-\tilde{p}_{w_{e_k}}(\eta(x),\eta(y))\Big|\prod_{(z,w) \in E_{\mathrm{b}}} \left|(A^{w_e})_{\eta(z) \eta(w)}\right|
        \\
        &\le \begin{cases}(1+a^n)r^{|V_{\mathrm{b}}|}\sum\limits_{\substack{w_e:\sum\limits_{e \in E_{\mathrm{b}} \cup \mathcal{F}} w_e \le n}}\epsilon_{w_{e_k}}, \quad& E_{\mathrm{b}}(\Gamma)\neq \emptyset,\\
            N\sum\limits_{\substack{w_e:\sum\limits_{e \in  \mathcal{F}} w_e \le n}}\epsilon_{w_{e_k}},\, \quad& E_{\mathrm{b}}(\Gamma)= \emptyset.
        \end{cases}\\
        &\le  \begin{cases}
            (1+a^n)r^{|V_{\mathrm{b}}|}\frac{n^{|E_{\mathrm{b}}| + |V_{\mathrm{int}}|-1}}{{(|E_{\mathrm{b}}| + |V_{\mathrm{int}}|-1)!}}\mathcal{E}_n, \quad& E_{\mathrm{b}}(\Gamma)\neq \emptyset,\\
            N\frac{n^{|V|-2}}{(|V|-2)!}\mathcal{E}_n,\, \quad& E_{\mathrm{b}}(\Gamma)= \emptyset.
        \end{cases}
         \end{aligned}
    \end{equation}
    Here in the first inequality we follow the summing over tree step in \eqref{equ:3.16} and when we sum over one end of $e_k$, we replace the $\sum_{y}p_{w_e}(x,y)=1$ by $\epsilon_{w_{e_k}}$ using assumption \ref{itm:B2}. In the second inequality we use
    \begin{equation}
        \sum\limits_{\substack{w_e:\sum w_e \le n}}\epsilon_{w_{e_k}}\le \sum\limits_{\substack{w_e:\sum w_e \le n, e\ne e_{k}}}\sum_{w_{e_k}\le n}\epsilon_{w_{e_k}}\le \sum\limits_{\substack{w_e:\sum w_e \le n, e\ne e_{k}}}\mathcal{E}_n.
    \end{equation}
    Putting back the $b_n^{|E_{\mathrm{int}}|-|V_{\mathrm{int}}|}$ factor, we obtain the same bound
    \begin{equation}
        \begin{aligned}
            \left|F_{\Gamma}^{(k)}(\{n_j\})-F_{\Gamma}^{(k-1)}(\{n_j\})\right|&\le \begin{cases}
                (1 + a^n) r^{|V_{\mathrm{b}}|} (b_n)^{|E_{\mathrm{int}}| - |V_{\mathrm{int}}|}
    \frac{n^{|E_{\mathrm{b}}| + |V_{\mathrm{int}}|-1}}{(|E_{\mathrm{b}}| + |V_{\mathrm{int}}|-1)!}\mathcal{E}_n,& E_{\mathrm{b}}\neq\emptyset,\\
    N (b_n)^{|E|-|V|+1}\frac{n^{|V|-2}}{(|V|-2)!}\mathcal{E}_n,&E_{\mathrm{b}}=\emptyset.
            \end{cases}
        \end{aligned}
    \end{equation}
\end{enumerate}

Therefore, combining \eqref{equ:not_in_F} and \eqref{equ:in_F}, for a diagram $\Gamma$, the upper bound becomes
\begin{equation}
    \left| F_{\Gamma}^{(k+1)}(\{n_j\}) - F_{\Gamma}^{(k)}(\{n_j\}) \right| \le \begin{cases}
                (1 + a^n) r^{|V_{\mathrm{b}}|} (b_n)^{|E_{\mathrm{int}}| - |V_{\mathrm{int}}|}
    \frac{n^{|E_{\mathrm{b}}| + |V_{\mathrm{int}}|}}{(|E_{\mathrm{b}}| + |V_{\mathrm{int}}|)!}(\frac{\Delta_n}{b_n}+(|E_{\mathrm{b}}| + |V_{\mathrm{int}}|)\frac{\mathcal{E}_n}{n}),& E_{\mathrm{b}}\neq\emptyset,\\
    N (b_n)^{|E|-|V|+1}\frac{n^{|V|-1}}{(|V|-1)!}(\frac{\Delta_n}{b_n}+|V|\frac{\mathcal{E}_n}{n}),&E_{\mathrm{b}}=\emptyset.
            \end{cases}.
\end{equation}
Noting that for any diagram with boundary edges, $|E_{\mathrm{b}}|+|V_{\mathrm{int}}|=|V_{\mathrm{b}}|+|V_{\mathrm{int}}|=|V|$, summing over $e_k\in E$ yields \eqref{equ:F_asy_equal} and \eqref{equ:F_asy_equal_no_open}.
This completes the proof.
\end{proof}

\section{Asymptotic equivalence of mixed moments}\label{sec:section4}

\subsection{Mixed moments in Gaussian case}

Crucially, when both the number of edges $|E|$ and vertices  $|V|$ increase by one, the upper bounds established in \eqref{equ:3.2} and \eqref{equ:3.3} remain non-decreasing.
To reduce arbitrary diagrams $\Gamma\in \mathscr{D}_{s;\beta}^*$ to trivalent forms—in which every vertex has degree at most 3 while preserving the quantity $|E|-|V|$—we introduce a vertex‑splitting map in the following lemma. This construction requires the notion of typical connected diagrams.

\begin{definition}[Typical   diagram]\label{def:typical_diagram}
A \emph{typical (connected) diagram} is a connected diagram
$\Gamma\in \mathscr{D}_{s;\beta}^*$   satisfying    two conditions: (i) exactly $s$  marked vertices have degree 2, and (ii) every other (unmarked) vertex has degree 3.
Introducing   the parameterization
\begin{equation}
  |E| = 3\ell + s,
  \quad
  |V| = 2\ell + s, \quad \ell=0,1,\ldots,
\end{equation}
we
 denote by $\mathcal{T}_{\ell,s}$ the  set of all typical connected diagrams with $\ell=|E(\Gamma)| - |V(\Gamma)|$ and   $s$ marked points:
  \begin{equation}
   \mathcal{T}_{\ell,s}
  := \Bigl\{
     \Gamma \in \mathscr{D}_{s;\beta}^*: \Gamma \  \text{is a typical diagram},
     |E(\Gamma)|=3\ell+s,    |V(\Gamma)| = 2\ell+s \Bigr\}.  \end{equation}
The set of all typical connected diagrams with $s$ marked points is then defined as
\begin{equation}
    \mathcal{T}_{s} :=  \mathcal{T}_{-1,1} \bigcup\Big(\bigcup_{\ell \ge 0} \mathcal{T}_{\ell,s}\Big),
\end{equation}
where $\mathcal{T}_{-1,1}$  only consists of a single-vertex graph  as a graph containing exactly one vertex and no edges.
\end{definition}

\begin{lemma}[Vertex splitting, {\cite[Lemma 4.2]{liu2025edge}}]\label{lem:vertex_split}
There exists a map from the set of diagrams   with all unmarked vertices of degree at least $3$ to the set of typical connected diagrams
\begin{equation}
    \phi : \mathscr{D}_{s;\beta}^* \to \mathcal{T}_{\ell, s}, \quad \Gamma \mapsto \phi(\Gamma)
\end{equation}
  such that  the following three conditions hold:
\begin{itemize}
    \item[(i)] (Preserving $|E|-|V|$)
    \begin{equation}
        |E(\phi(\Gamma))| - |V(\phi(\Gamma))| = |E(\Gamma)| - |V(\Gamma)|,
    \end{equation}

    \item[(ii)] (Monotonic decreasing)
    \begin{equation}
        |E(\phi(\Gamma))| \ge |E(\Gamma)|, \qquad |V(\phi(\Gamma))| \ge |V(\Gamma)|,
    \end{equation}

    \item[(iii)] (Multiplicity controlling)
       \begin{equation}
        |\phi^{-1}(\Gamma)| \le 2^{|E(\Gamma)|},\quad \forall \,\Gamma \in \mathcal{T}_{\ell, s}.
    \end{equation}

\end{itemize}
\end{lemma}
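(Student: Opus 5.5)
The map $\phi$ will be built by repeatedly applying one elementary local move, a \emph{vertex split}, until every unmarked vertex has degree $3$ and every marked vertex has degree $2$. Properties (i)--(iii) will then follow from what a single split does to the graph.

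\textbf{The split at an unmarked vertex.} Let $v$ be unmarked with $\deg v=d\ge 4$. In the ribbon structure its half-edges carry a cyclic order $h_1,\dots,h_d$. Replace $v$ by two vertices joined by a new interior edge $e_{\mathrm{new}}$:
\begin{itemize}
\item $v'$ carries $h_1,h_2$ and $e_{\mathrm{new}}$;
\item $v''$ carries $h_3,\dots,h_d$ and $e_{\mathrm{new}}$.
\end{itemize}
This is the standard ribbon-graph operation inverse to edge contraction. It keeps the surface $\Sigma$ and its orientability, hence stays in the $\beta$-class. It keeps the number of faces and their cyclic structure, and it keeps the marked points and connectedness. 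The degrees become $\deg v'=3$ and $\deg v''=d-1\ge 3$.

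\textbf{The split at a marked vertex.} Let $v=\phi(j)$ be marked with $d\ge 3$. The marking lies in a corner of face $D_j$ between consecutive half-edges $h_a,h_{a+1}$. Create a new marked vertex carrying $h_a$ and $e_{\mathrm{new}}$, so that the marked corner still lies in $D_j$. The remaining $d-1\ge 2$ half-edges stay at an unmarked vertex $v''$, whose degree is then $d\ge 3$. If $v$ is a boundary vertex, boundary edges stay boundary edges and $e_{\mathrm{new}}$ is declared interior. This must be arranged so that $\partial\Sigma\subset\iota(\Upsilon)$ still holds; I would check it by choosing the cut so that the boundary arcs at $v$ are not separated from the boundary cycle.

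\textbf{Effect of one split and termination.} Each split increases $|E|$ and $|V|$ by exactly one, which gives (i) and (ii). The quantity
\[
\sum_{v\ \text{unmarked}}(\deg v-3)+\sum_{v\ \text{marked}}(\deg v-2)
\]
strictly decreases under each split, so the procedure terminates. When it stops, every vertex has the required degree, so the output lies in $\mathcal{T}_{\ell,s}$ with $\ell=|E|-|V|$. To make $\phi$ a genuine map, I would fix a deterministic rule: process vertices in a fixed order (say by a canonical labelling of the diagram), and always split off the first two half-edges, or the marked corner, in the cyclic order.

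\textbf{Multiplicity (iii).} Every edge created by a split can be undone by contraction. Hence any $\Gamma'\in\phi^{-1}(\Gamma)$ is recovered from $\Gamma=\phi(\Gamma')$ by contracting the subset of edges of $\Gamma$ that were created during the splitting. The contraction of a given edge set in a ribbon graph is uniquely determined. So $\Gamma'$ is determined by a subset of $E(\Gamma)$, which gives $|\phi^{-1}(\Gamma)|\le 2^{|E(\Gamma)|}$.

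\textbf{Main obstacle.} I expect the delicate step to be the bookkeeping for marked vertices and boundary (open) edges. One must check that the split keeps the marked corner in the correct face $D_j$ and keeps the boundary of $\Sigma$ inside the graph, so that the result is again a diagram in $\mathscr{D}_{s;\beta}^*$. I would verify this face by face on the local picture around $v$. The remaining steps are routine degree counting.
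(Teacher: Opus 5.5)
Your strategy is the standard argument for this lemma, which the paper only cites from Part~I. You iterate local splits, each adding one vertex and one edge, and you recover any preimage by contracting a subset of the edges of $\phi(\Gamma)$. Properties (i)--(iii) go through as you describe, provided every vertex carries at most one mark.

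\textbf{The gap.} You tacitly assume that marks sit at distinct vertices. The assignment $\phi:[s]\to\mathcal V$ in the ribbon-graph definition need not be injective, whereas a typical diagram needs exactly $s$ distinct marked vertices.

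\emph{Counterexample.} Take a single vertex carrying one loop on the sphere, with face $1$ marked in one corner and face $2$ in the other. It satisfies every condition of Definition~\ref{def:diagram}, and $|E|-|V|=0$. Coincident starting points are produced by Wick gluings, so this is not an artifact. The lemma requires sending it into $\mathcal T_{0,2}$, i.e.\ to the two-cycle with two marked degree-$2$ vertices.

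\emph{Why your procedure fails on it.} It never touches this diagram: the only vertex is marked of degree $2$, so your stopping rule already holds. Your potential is
$\sum_{\text{unmarked}}(\deg v-3)+\sum_{\text{marked}}(\deg v-2)=2|E|-3|V|+\#\{\text{marked vertices}\}$, and it vanishes on this non-typical diagram. For the same reason, at a vertex carrying two marks your marked split does not lower the potential. Your claim that $v''$ becomes unmarked is also false there.

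\textbf{The repair.} Record marks as marked corners. Allow the marked split whenever a vertex carries $k(v)\ge 2$ marks, whatever its degree: split off one flank of a marked corner together with $e_{\mathrm{new}}$. The remaining vertex keeps degree $\deg v$ and $k(v)-1\ge1$ marks, so it is still admissible. Then use
\[
\Phi=\sum_v\bigl(\deg v-3+k(v)\bigr)=2|E|-3|V|+s .
\]
\begin{itemize}
\item Every summand is nonnegative on a diagram.
\item All summands vanish exactly when the diagram is typical.
\item Every vertex with a positive summand admits one of the splits.
\item Every split lowers $\Phi$ by exactly one.
\end{itemize}
So the procedure stops after exactly $2\ell+s-|V(\Gamma)|$ splits and lands in $\mathcal T_{\ell,s}$. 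Your contraction argument for (iii) is unaffected, since the created edges still form a forest.

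\textbf{The boundary issue you flagged.} At a boundary vertex the half-edges read $b_1,i_1,\dots,i_m,b_2$, with exactly two boundary half-edges.
\begin{itemize}
\item For an unmarked split ($m\ge 2$), split off $i_1,i_2$.
\item For a marked corner, split off its interior flank. One exists whenever $m\ge1$. If $m=0$ the vertex has a single corner and needs no split.
\end{itemize}
Then $v'$ and $e_{\mathrm{new}}$ are interior, so $|E_{\mathrm b}|=|V_{\mathrm b}|$ and $|E_{\mathrm{int}}|-|V_{\mathrm{int}}|$ are preserved.
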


We require an upper bound on the number of typical connected diagrams.

\begin{lemma}[{\cite[Proposition II.2.3]{feldheim2010universality}; \cite[Proposition 3.7]{geng2024outliers}}] \label{diagramno}
There exists a universal constant $C > 0$ such that for any  integers $\ell \ge 0$ and $s \ge 1$,
\begin{equation}
  \left| \mathcal{T}_{\ell,s} \right| \le \frac{(C(\ell+1))^{\ell+1}}{(s-1)!}.
\end{equation}
\end{lemma}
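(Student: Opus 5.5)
The plan is to strip the marked vertices, count the remaining cubic ``skeleton'' maps by a rooted-map argument, and then reinsert the marked vertices face by face, keeping track of the genus through Euler's formula. The cases $\ell=-1$ (the single vertex) and $\ell=0$ are immediate. When $\ell=0$, a typical diagram has $s$ degree-2 marked vertices and no unmarked vertex, so it is a cycle; it has two faces, so $s\le 2$, and the bound holds trivially. From now on assume $\ell\ge 1$.

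\textbf{Reduction to cubic maps.} Each marked vertex has degree $2$, so it lies in the interior of an edge of a \emph{skeleton}. The skeleton is the ribbon graph obtained by erasing every degree-2 marked vertex and merging its two incident edges. It is a connected cubic ribbon graph $K$ with $2\ell$ vertices and $3\ell$ edges, embedded in the same surface and with the same faces. Euler's formula gives $2\ell-3\ell+s=2-2g$, that is,
\begin{equation*}
s=\ell+2-2g .
\end{equation*}
In particular $s\le \ell+2$, and the genus $g=(\ell+2-s)/2$ is fixed by $(\ell,s)$. Conversely, $\Gamma$ is recovered from three pieces of data: the skeleton $K$, a bijective labelling of the $s$ faces of $K$ by $[s]$, and for each face $j$ a choice of the corner, or edge-side, on $\partial D_j$ where the marked vertex $\phi(j)$ is inserted. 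Different faces may place their marked vertices on the same edge, and the cyclic order is then determined by the face sides. So the number of choices of $\Gamma$ over a fixed $K$ is at most $s!\prod_{j=1}^s m_j$, where the $m_j$ are the face perimeters of $K$ and $\sum_j m_j=6\ell$.

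\textbf{Counting skeletons with a prescribed number of faces.} This is the main step, and where the real difficulty lies. The crude count of cubic ribbon graphs, namely rotation systems together with pairings of $6\ell$ half-edges divided by the $(2\ell)!$ vertex relabellings, gives about $(C\ell)^{\ell}$. This is the correct order only in the unicellular, maximal-genus case $s\le 2$. For larger $s$ it must be improved to $(C\ell)^{\ell+1-s}\,C^{s}$ up to harmless factors. My first approach would be the standard face-gluing bijection. I would root $K$ at a corner of face $1$, which kills all automorphisms. I would then encode $K$ by its dual: a triangulation with $2\ell$ triangles, $3\ell$ edges, and $s$ vertices of prescribed degrees $m_j$. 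A rooted triangulation of genus $g$ with $2\ell$ faces is counted by the known genus expansion, which gives at most $C^{\ell}\ell^{5(g)/2}\cdot$ const. Alternatively, I would use the recursive (Tutte/Harer--Zagier type) bound obtained by deleting the root edge, which either merges two faces or splits the map, and which yields $|\{K\}|\le C^{\ell}\,\ell^{\,\ell+1-s}/\ldots$ inductively in $\ell$. Either way, the target intermediate estimate is
\begin{equation*}
\#\{\text{rooted cubic maps with $2\ell$ vertices and $s$ faces}\}\le \frac{(C\ell)^{\ell+1-s}\,C^{s}}{?} ,
\end{equation*}
with enough room left over to pay for the insertion factors.

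\textbf{Assembling the bound.} By AM--GM, $\prod_j m_j\le (6\ell/s)^s$, and the face labelling contributes $s!$. Since we rooted at face $1$ (its marked corner), the root choice is already included in $\prod_j m_j$, so one factor $m_1\le 6\ell$ is spent on rooting. Combining the three pieces gives
\begin{equation*}
(C\ell)^{\ell+1-s}C^s\cdot s!\,(6\ell/s)^{s-1}\le (C'\ell)^{\ell}\,\frac{s!\,C'^s}{s^{s-1}} .
\end{equation*}
Using $s!\le s^{s}e^{-s+1}$ and $s\le \ell+2$, the right-hand side is $\le (C''(\ell+1))^{\ell+1}/(s-1)!$. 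This last inequality holds because $(s-1)!\,s!/s^{s-1}\le C^s s\le (C(\ell+1))^{s}$, and that factor is absorbed by the gap between $\ell+1-s$ and $\ell+1$. The delicate point throughout is to keep the genus-dependent power $\ell^{\ell+1-s}$ in the skeleton count rather than the crude $\ell^{\ell}$. If the recursion proves awkward, the fallback is to cite the genus expansion for cubic maps directly, as in \cite{feldheim2010universality}.
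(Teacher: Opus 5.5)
\textbf{There is a genuine gap, and the target bound cannot be reached as stated.} Your reduction is sound: erase the $s$ degree-two marked vertices to get a cubic skeleton with $2\ell$ vertices, $3\ell$ edges and $s$ faces, then reinsert them. The argument fails in two places.

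First, the skeleton estimate, which you rightly call the heart of the matter, is never established. You write it as a target with an unspecified denominator. The fixed-genus asymptotics you invoke are not uniform in $g$, so they say nothing when $g=(\ell+2-s)/2$ grows with $\ell$. A genuinely uniform bound of the shape $C^{\ell}\ell^{2g}$ for rooted cubic maps (equivalently, rooted triangulations) would have to be proved, for instance by induction on a root-edge recursion.

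Second, and more seriously, the final assembly is wrong even if that estimate is granted. Multiplying $(C\ell)^{\ell+1-s}C^{s}$ by $s!\,(6\ell/s)^{s-1}$ already spends the whole gain $\ell^{-s}$. What remains is $C_1^{\ell+s}\ell^{\ell}\,s!/s^{s-1}$, and nothing is left to absorb a further $(s-1)!$. The inequality $(s-1)!\,s!/s^{s-1}\le C^{s}s$ that you use to close the argument is false: by Stirling its left side is of order $s^{s}e^{-2s}$ up to polynomial factors.

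Two smaller points. Two marked vertices on the same skeleton edge can be ordered in two ways, which costs an extra $2^{s}$. For $\beta=1$ you must also allow non-orientable surfaces, where $s=\ell+\chi$.

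The missing factor $1/(s-1)!$ cannot be recovered by any refinement. Labelling the faces puts $(s-1)!$ into the numerator of any honest count, and in fact the displayed bound fails uniformly in $s$. Take $s=\ell+2$ and any planar cubic map with $2\ell$ vertices (the theta graph, $K_4$, prisms). Label its $\ell+2$ faces arbitrarily and insert one marked degree-two vertex on each face; this produces elements of $\mathcal{T}_{\ell,\ell+2}$. A connected map has at most $4|E|=12\ell$ automorphisms, so $|\mathcal{T}_{\ell,\ell+2}|\ge(\ell+2)!/(12\ell)$. On the other hand, $(C(\ell+1))^{\ell+1}/(\ell+1)!\le(Ce)^{\ell+1}$, which is eventually smaller.

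The paper gives no proof of its own. It quotes the lemma from Feldheim--Sodin and Geng et al., and uses it only in Theorem~\ref{prop:U_asy} with $s$ fixed, where $(s-1)!$ is a harmless constant. Once a uniform skeleton bound is actually proved, your route yields $|\mathcal{T}_{\ell,s}|\le(C(\ell+1))^{\ell+1}$ uniformly in $s$. That is equivalent to the stated bound with an $s$-dependent constant, and it is exactly what the application needs. So that is the statement you should aim to prove.
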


\begin{theorem}\label{prop:U_asy}
For the deformed Gaussian matrix $X$ defined in Definition~\ref{def:inhomo} under Assumptions~\ref{itm:B1}--\ref{itm:B2} of Definition~\ref{mixingdef}, with $n = \sum_{j=1}^s n_j$ and $a=\|A_N\|_{\mathrm{op}}$, the following statements hold:
\begin{enumerate}[label=(\roman*)]
\item \label{item:U_upper} For all integers $n_j \ge 1$, there exists a constant $C > 0$, independent of $n$ and $N$, such that
\begin{equation} \label{eq:Chebyshev_trace_bound}
\mathbb{E}\Big[\prod_{j=1}^s \operatorname{Tr} \Big(U_{n_j}\big(\frac{X}{2}\big)\Big)\Big]
\le C(1 + a^n)\big(rn+Nb_n\big)^s \exp\big\{C(r+1)^2 n^2b_n \big\}.
\end{equation}
  \item \label{item:U_comparison} Let $r$ be fixed. If $a \le 1 + Cn^{-1}$ for some fixed constant $C>0$, $n^2b_n=O(1)$,  $ {\Delta_n}/{b_n}=o(1)$, and $ {\mathcal{E}_n}/{n}=o(1)$ as $N\to \infty$
  , then
  \begin{equation}
   \mathbb{E}\Big[\prod_{j=1}^s \tr \big(U_{n_j}\big(\frac{X}{2}\big)\big)\Big]
    = \mathbb{E}\Big[\prod_{j=1}^s \tr \big( U_{n_j}\big(\frac{\widetilde{X}}{2}\big)\big)\Big]
    + o\big((Nb_n)^s\big).
  \end{equation}
\end{enumerate}
\end{theorem}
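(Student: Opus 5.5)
The plan is to reduce both statements to connected diagrams and then bound the resulting diagram sums using Proposition \ref{prop:F_upper_bound}, Theorem \ref{thm:F=tildeF}, the vertex-splitting map of Lemma \ref{lem:vertex_split}, and the counting bound of Lemma \ref{diagramno}.

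\textbf{Reduction to cumulants.} By Theorem \ref{Chebyshevmoment} and the moment–cumulant relation of Definition \ref{Cumu}, the mixed moment equals a sum over set partitions $\Pi$ of $[s]$ of $\prod_{P\in\Pi}\kappa_X(\{n_j\}_{j\in P})$. Lemma \ref{thm:u=F} writes each cumulant as a sum of $F_\Gamma$ over connected diagrams. It therefore suffices to prove, for each block $P$ with $n_P=\sum_{j\in P}n_j\le n$, a connected bound
\[
|\kappa_X(\{n_j\}_{j\in P})|\le C(1+a^n)(rn+Nb_n)^{|P|}e^{C(r+1)^2n^2b_n},
\]
and then to multiply over the blocks. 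Since the number of partitions is a constant depending on $s$, and $\prod_P(1+a^{n_P})\le C_s(1+a^n)$, this gives part \ref{item:U_upper}. The same reduction handles part \ref{item:U_comparison}: a difference of products is a telescoping sum of terms, each containing one cumulant difference multiplied by other cumulants. By part \ref{item:U_upper}, with $a^n\le e^C$ and $n^2b_n=O(1)$, each cumulant over a block $P$ is $O((Nb_n)^{|P|})$, using $rn\lesssim Nb_n$ from Remark \ref{remarkB1}. It is therefore enough to show that cumulant differences are $o((Nb_n)^{|P|})$.

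\textbf{Connected bound.} For $\Gamma\in\mathscr{D}^*_{s;\beta}$, I would split into closed and open diagrams and apply Proposition \ref{prop:F_upper_bound}. In the closed case, $|F_\Gamma|\le Nb_n^{\ell+1}n^{|V|-1}/(|V|-1)!$ with $\ell=|E|-|V|$. I would then group diagrams by their image under $\phi$ from Lemma \ref{lem:vertex_split}. The key monotonicity is that, when $|E|$ and $|V|$ both increase by one, the bound $n^{|V|-1}/(|V|-1)!$ does not decrease in the relevant range $|V|\lesssim n$. Diagrams with $|V|\gg n$ are handled because $n^{k}/k!$ decays superexponentially there. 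Consequently each $F_\Gamma$ is dominated by the bound for $\phi(\Gamma)\in\mathcal T_{\ell,s}$, at the cost of the multiplicity factor $2^{|E|}$.

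For typical diagrams, $|V|=2\ell+s$. Combining this with Lemma \ref{diagramno} gives
\[
\sum_{\ell}\frac{(C(\ell+1))^{\ell+1}}{(s-1)!}\,8^{\ell}\,N b_n^{\ell+1}\frac{n^{2\ell+s-1}}{(2\ell+s-1)!}.
\]
By Stirling, $(\ell+1)^{\ell+1}/(2\ell)!\lesssim C^\ell/\ell!$, so this sum is at most $Nb_n\,n^{s-1}\,b_n^{0}\cdots$; more precisely it is bounded by
\[
C^s(Nb_n)\, n^{s-1}\sum_\ell \frac{(Cn^2b_n)^\ell}{\ell!}\lesssim (Nb_n)^s e^{Cn^2b_n}\cdot (n^{s-1}/(Nb_n)^{s-1}).
\]
This is absorbed into $(rn+Nb_n)^s$.

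Open diagrams are treated the same way using \eqref{equ:3.3}. The factor $r^{|V_b|}$ together with the combinatorics of open edges produces the $(r+1)^2$ in the exponent, and the $rn$ term accounts for the trivial open contributions.

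\textbf{Comparison.} For part \ref{item:U_comparison}, I would take $\kappa_X-\kappa_{\widetilde X}=\sum_\Gamma(F_\Gamma-\widetilde F_\Gamma)$ and insert Theorem \ref{thm:F=tildeF}. Its error is the upper bound for the diagram multiplied by a relative factor of size $|E|(\Delta_n/b_n+\mathcal E_n/n)$. The factor $|E|\lesssim \ell+s$ is harmless in the Stirling-weighted series above, so the summed error is $O\big((\Delta_n/b_n+\mathcal E_n/n)(Nb_n)^s\big)=o((Nb_n)^s)$. This uses $n^2b_n=O(1)$ and $a^n=O(1)$.

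The main obstacle is the summation over unboundedly many diagrams in the connected bound. The point needing care is the monotonicity justification for replacing a general diagram by $\phi(\Gamma)$ when $|V|$ is comparable to or larger than $n$. The plan there is to use the crude but uniform bound $n^{k}/k!\le e^{n}$ only after factoring out enough $b_n$ powers, or equivalently to note that diagrams with $|V|>n$ vanish because each edge has $w_e\ge1$ and $\sum w_e\le n$.
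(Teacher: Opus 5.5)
Your proposal is correct and is essentially the paper's proof. It follows the same sequence: the cumulant/partition reduction, Proposition~\ref{prop:F_upper_bound}, domination by $\phi(\Gamma)$ with multiplicity $2^{|E|}$ via Lemma~\ref{lem:vertex_split}, the count of Lemma~\ref{diagramno} with the exponential series, and then Theorem~\ref{thm:F=tildeF} for part (ii). In part (ii) you sum the per-diagram errors over all diagrams using their polynomial dependence on $|E|,|V|$, whereas the paper uses absolute convergence to truncate to finitely many diagrams; this is an inessential variation.

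On the monotonicity point you flag, you need $|V(\phi(\Gamma))|=2\ell+s\lesssim n$, not a bound on $|V(\Gamma)|$. This holds because $\ell\le|E_{\mathrm{int}}|$ and each interior edge carries $w_e\ge 1$ on both of its sides, so $2|E_{\mathrm{int}}|\le n$ whenever $F_\Gamma\neq 0$. The superexponential decay of $n^k/k!$ plays no role there.
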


\begin{proof}[Proof of Theorem \ref{prop:U_asy}]

We begin by proving Part \ref{item:U_upper}. In view of Lemma~\ref{thm:u=F}, it suffices
    to bound each diagram function.
To this end, let $\ell = |E| - |V|$ and define the auxiliary  function $G_{\Gamma}(n)$ by
\begin{equation} \label{Gn-1}
G_{\Gamma}(n) = \begin{cases}
    \frac{n^{|V|-1}}{(|V|-1)!}\big(b_n\big)^{|E|-|V|+1} N,\quad& \Gamma\text{ has no open edge},\\
    (1+a^n)r^{|V_{\mathrm{b}}|}\frac{n^{|V|}}{(|V|)!}\big(b_n\big)^{|E_{\mathrm{int}}|-|V_{\mathrm{int}}|},\quad& \Gamma\text{ has at least one   open edge.}
\end{cases}
\end{equation}
  Then Proposition \ref{prop:F_upper_bound}  shows  that
    \begin{equation} \label{Gbound}
        F_{\Gamma}(\{n_j\}_{j=1}^s)\le G_{\Gamma}(n).
    \end{equation}
Since $|V|, |E| \le n$ and the quantity  $|E| - |V|=|E_{\mathrm{int}}|-|V_{\mathrm{int}}|$ is preserved under the map $\phi$,
Lemma~\ref{lem:vertex_split} implies
\begin{equation}
G_{\Gamma}(n) \le G_{\phi(\Gamma)}(n).
\end{equation}
Therefore, using the cumulant expansion from Lemma~\ref{thm:u=F}, we have
\begin{align}
  \kappa_X(n_1,\dots,n_s)
  &\le \sum_{\Gamma \in \mathscr{D}_{s;\beta}^*} G_\Gamma(n)
   \le \sum_{\Gamma \in \mathcal{T}_s} |\phi^{-1}(\Gamma)| \cdot G_\Gamma(n) \notag \\
  &\le \sum_{\Gamma \in \mathcal{T}_s} 2^{|E|} \cdot G_\Gamma(n).
\end{align}

Note that
\begin{equation}\label{equ:EVL}
    |E| = 3\ell + s, \quad |V| = 2\ell + s, \quad |E_{\mathrm{int}}| - |V_{\mathrm{int}}| = |E| - |V| = \ell,
\end{equation}
where we have used  the identity   $|E_{\mathrm{b}}|=|V_{\mathrm{b}}|$; see \cite[{Lemma 2.10}]{geng2024outliers}. Rewriting   $G_\Gamma(n): = G(n,\ell)$, a quantity that depends only on $\ell$ and $s$, we apply Lemma~\ref{diagramno} to further obtain
\begin{equation} \label{kappab-1}
     \kappa_X(n_1,\dots,n_s)
    \le \sum_{\ell \ge 0} \frac{(C(\ell+1))^{\ell+1}}{(s-1)!} G(n,\ell).
\end{equation}

On the other hand, for any fixed $r$,   it follows directly from    \eqref{Gn-1} that
\begin{equation}
    G(n,\ell) \le
\begin{cases}
    \frac{n^{2\ell+s-1}}{(2\ell+s-1)!}\big(b_n\big)^{\ell+1} N,\quad& \Gamma\text{ has no open edge},\\
    (1+a^n)\frac{(rn)^{2\ell+s}}{(2\ell+s)!}\big(b_n\big)^{\ell},\quad& \Gamma\text{ has at least one  open edge,}
\end{cases}
\end{equation}
where in the second case we have used the bound $|V_{\mathrm b}|\le |V|\le 2\ell+s$.
Thus, we   see from  \eqref{kappab-1} that
\begin{multline}\label{equ:4.20}
    \kappa_{X}(n_1,\ldots, n_s)
    \le \frac{(1+a^n)}{(s-1)!}
        \sum_{\ell\ge 0} \big(C(\ell+1)\big)^{\ell+1}\bigg(\frac{n^{2\ell+s-1}}{(2\ell+s-1)!}\big(b_n\big)^{\ell+1} N+ \frac{(rn)^{2\ell+s}}{(2\ell+s)!}\big(b_n\big)^{\ell} \bigg)\\
        =:(1+a^n)(\Sigma_1+\Sigma_2).
\end{multline}

For the first sum $\Sigma_{1}$ in \eqref{equ:4.20},  we apply the elementary bound    $(\ell+1)^{\ell+1} \le C^\ell \ell!$ for $\ell\ge 0$  to obtain
\begin{align}
  \Sigma_{1}
    &\le (Cn)^s \frac{Nb_n}{n}\sum_{\ell=0}^\infty \frac{1}{(\ell)!} \big(Cn^2b_n\big)^\ell
    \le (Cn)^s \frac{Nb_n}{n}\exp\big\{C n^2b_n\big\}.
\end{align}
For the second sum in \eqref{equ:4.20},   we have
    \begin{align}\label{equ:4.22}
        \Sigma_2
        &\le (Crn)^s \sum_{\ell=0}^\infty \frac{1}{\ell!}\big( Cr^2 n^2b_n \big)^\ell
        = (Crn)^{s}\exp\big\{Cr^2 n^2b_n \big\}.
    \end{align}

Observing that  $s$ is a fixed positive integer,  we combine   \eqref{equ:4.20}-\eqref{equ:4.22}  to  obtain    \begin{equation} \label{kappab-4}
     \kappa_{X}(n_1,\ldots, n_s)
    \le (1 + a^n)(C n)^s\Big(r^s+\frac{1}{n}Nb_n\Big) \exp\big\{C(r+1)^2 n^2b_n \big\}.
  \end{equation}
  By \eqref{equ:T} of Definition \ref{Cumu},  we  have
\begin{equation} \label{cumprod}
    \mathbb{E}\Big[\tr\big( U_{n_1}\big(\frac{X}{2}\big)\big)\tr \big(U_{n_2}\big(\frac{X}{2}\big)\big)\cdots \tr \big(U_{n_s}(\frac{X}{2})\big)\Big]=\sum_{\Pi}\prod_{P\in \Pi}\kappa_X\big(\{n_j\}_{j\in P}\big)
\end{equation}
where the sum runs over all partitions $\Pi$  of
 $[s]$,  including the trivial one. Noting that  Assumption \ref{itm:B1} of Definition~\ref{mixingdef} (cf. Remark \ref{remarkB1}) implies  $b_n\ge n/N$, every   product on the right-hand side of \eqref{cumprod} can be    bounded  by
\begin{equation}
    (1 + a^n)(Crn+CNb_n)^s \exp\big\{C(r+1)^2 n^2b_n \big\}.
\end{equation}
This completes the proof of Part \ref{item:U_upper}.

Next, we turn to Part \ref{item:U_comparison}. From Remark \ref{remarkB1}, we know $Nb_n\ge n$. Under the assumptions $a \le 1 + O(n^{-1})$ and $n^2b_n = O(1)$, the bounds in \eqref{kappab-4} and \eqref{Gbound} imply that the series
\begin{equation}
    \sum_{\Gamma \in \mathscr{D}_{s;\beta}^*} \frac{1}{(Nb_n)^s} F_{\Gamma}(\{n_j\}_{j=1}^s)
\end{equation}
is absolutely convergent.
 Consequently, for any sufficiently large constant $K > 0$, the contribution from those diagram functions $F_{\Gamma}$  such that   $|E(\Gamma)|\lor |V(\Gamma)| > K$ is negligible.

Since the number of diagrams with $|E|\leq K$ and $|V| \leq K$ is finite, it suffices to analyze the limit for each individual diagram. By Lemma \ref{lem:vertex_split}, the error term in Theorem \ref{thm:F=tildeF} is bounded by the typical case (after map $\phi$, $|E|-|V|$ remains unchanged and $|V|$ increases). For typical diagrams, by Theorem~\ref{thm:F=tildeF}, we obtain
\begin{equation}
    \frac{F_{\Gamma}(\{n_j\}_{j=1}^s)}{(Nb_n)^s}
    = \frac{\widetilde{F}_{\Gamma}(\{n_j\}_{j=1}^s)}{(Nb_n)^s} +O_{|E|}\Big(\frac{\Delta_n}{b_n}+\frac{\mathcal{E}_n}{n}\Big)
    = \frac{\widetilde{F}_{\Gamma}(\{n_j\}_{j=1}^s)}{(Nb_n)^s}  + o(1).
\end{equation}
Hence, under the assumptions of Part \ref{item:U_comparison} we arrive at
\begin{equation}
    \sum_{\Gamma \in \mathscr{D}_{s;\beta}^*} \frac{1}{(Nb_n)^s} F_{\Gamma}(\{n_j\}_{j=1}^s)
    = \sum_{\Gamma \in \mathscr{D}_{s;\beta}^*} \frac{1}{(Nb_n)^s} \widetilde{F}_{\Gamma}(\{n_j\}_{j=1}^s) + o(1),
\end{equation}
which completes the proof of Part \ref{item:U_comparison}.
\end{proof}

\subsection{Proof of Theorem \ref{prop:super_4}}
\begin{proof}[Proof of Theorem \ref{prop:super_4}]
For the Gaussian case, the proof of Theorem \ref{prop:super_4} follows a strategy analogous to that employed in \cite[Theorem 4.5]{liu2025edge}. To ensure the paper remains self-contained, we provide an outline of the primary arguments and the necessary adjustments.

Indeed, we utilize the linearization property of Chebyshev polynomials to derive the following expansion:
\begin{equation} \label{prodexpansion}
    \prod_{j=1}^t U_{m_j}(x) = \sum_{k \geq 0} c(\{m_j\}; k) U_k(x).
\end{equation}
By applying this expansion, the joint expectation of the product of traces in Theorem \ref{prop:super_4}, namely $\mathbb{E}\left[\prod_{i=1}^s \operatorname{Tr} \left(\prod_{j=1}^{k_i} U_{n_{i,j}}\left(\frac{X}{2}\right)\right)\right]$, can be reduced to the analysis of $\mathbb{E}\left[\prod_{j=1}^s \operatorname{Tr} \left(U_{n_j}\left(\frac{X}{2}\right)\right)\right]$, which is established in Theorem \ref{prop:U_asy}. Under our scaling assumptions, the resulting error terms are shown to be negligible, thereby completing the proof for the Gaussian case.

For the extension to the sub-Gaussian case, we adapt the moment comparison framework developed in \cite[Section 5]{liu2025edge} with a few minor modifications to account for the Short-to-Long Comparison. The principal refinement involves the estimation in \cite[Lemma 5.6, (5.22)]{liu2025edge}: specifically, the bound on the return probability $p_2(x,x)$, previously given by $ \gamma t_N/N$, is replaced by the $b_2$ ($b_n$ with $n=2$) in the setting of this  paper. Consequently, the restriction in \cite[Lemma 5.6, (5.25)]{liu2025edge} is updated to
\begin{equation}\label{equ:3.28}
    \theta n^2 b_2 \ll 1.
\end{equation}
Furthermore, in the step 2 within \cite[Lemma 5.6]{liu2025edge}, every instance of the term $ \big((\gamma\,t_N)\lor n\big)/{N}$ is substituted with $b_n$, and the factor ${\theta\,\gamma\,t_N}/{N}$ is replaced by $\theta b_2$. With these parameter substitutions, \cite[Eqn(5.30)]{liu2025edge} (the non-deformed case) becomes
\begin{equation}
    \begin{aligned}
    &\frac{n^{\,\lvert V(\overline\Gamma)\rvert-1}}
     {(\lvert V(\overline\Gamma)\rvert-1)!}
b_n^{\,\lvert E(\overline\Gamma)\rvert-\lvert V(\overline\Gamma)\rvert+1}
(\theta b_2)^{l}\le b_n^l (\theta b_2)^l n^{t-l}
\frac{n^{\,\lvert V(\Gamma)\rvert-1}}{(\lvert V(\Gamma)\rvert-1)!}
b_n^{\,\lvert E(\Gamma)\rvert-\lvert V(\Gamma)\rvert+1}.
    \end{aligned}
\end{equation}
Following (5.31)-(5.36) in \cite{liu2025edge}, we can obtain
\begin{equation}\label{equ:5.29}
\begin{aligned}
    &\sum_{t=1}^n\sum_{\substack{h_i\ge0\\\sum\limits_{i\ge2} i\,h_i=t}}
    b_n^l (\theta b_2)^l n^{t-l}\,2^t\frac{\bigl(t+\lvert E(\Gamma)\rvert\bigr)^{\,\lvert E(\Gamma)\rvert-1}}
     {(\lvert E(\Gamma)\rvert-1)!}\frac{t!}{\prod_{i=2}^\infty h_i!(i!)^{h_i}}\\
     &\le \sum_{t=1}^n
n^{t}2^t
\frac{\bigl(t+\lvert E(\Gamma)\rvert\bigr)^{\,\lvert E(\Gamma)\rvert-1}}
     {(\lvert E(\Gamma)\rvert-1)!}t
3^t\Bigl((\theta b_2 b_n/n)^{t}+(\theta b_2 b_n)^{t/2}\Bigr)\\
&\le \sum_{t=1}^n
C^t e^{t+|E(\Gamma)|}\Bigl((\theta b_2 b_n)^{t}+(n^2\theta b_2 b_n)^{t/2}\Bigr),
\end{aligned}
\end{equation}
which is $o(1)$, provided that $\theta b_2 b_n +n^2\theta b_2 b_n \ll 1.$
 However, this restriction can be deduced straightforward from \eqref{equ:3.28} and \eqref{UPB}. Moreover, we take
\begin{equation}
    b_2=\max_{x,y\in [N]} \{p_1(x,y)+p_2(x,y)\}\le 2\max_{x,y\in [N]}\sigma_{xy}^2.
\end{equation}
Thus \eqref{equ:1.11} implies \eqref{equ:3.28}.  This completes  the proof.
\end{proof}

\section{
Random band matrices
}\label{sectionrbm}

\subsection{Model and limit processes}
We first analyze random band matrices whose variance profile is given by the density of an $\alpha$-stable law, and then use Theorem~\ref{prop:super_4} to extend the results to general profile functions. The $\alpha$-stable case serves as our {role  model}, owing to  its intimate  connection with central limit theorems.
\begin{definition}[Stable theta functions]\label{def:alpha_profile}
   Let  $f_{\alpha}(x)$ denote  the  density of $d$-dimensional   $\alpha$-stable distribution    whose characteristic function is given  by
    \begin{equation}
     \phi(t)=\begin{cases}  e^{-\frac{\sigma^2 }{2}\|t\|^2}, &\alpha=2,\\
     e^{-c_{\alpha}\|t\|^\alpha}, &0<\alpha<2, \end{cases}
 \end{equation}
 for $t\in \mathbb{R}^d$, where $c_{\alpha}>0$ and  $\sigma>0$ are fixed. With   the scaling notation
 \begin{equation}\label{equ:homo_func}
     f_{\alpha}(x, \tau)=\tau^{-\frac{d}{\alpha}}f_\alpha( \tau^{-\frac{1}{\alpha}} {x}),
 \end{equation}
the corresponding $\alpha$-stable theta function is
\begin{equation} \label{theta}
    \theta_\alpha (x,\tau)=\sum_{k\in \mathbb{Z}^d} f_{\alpha}(x+k, \tau).
\end{equation}
\end{definition}

 \begin{definition}[Periodic random band matrices] \label{defmodel} Let $\Lambda_L = (\mathbb{Z}/L\mathbb{Z})^d$  the $d$-dimensional  discrete torus,  and let  $f:\mathbb{R}^d\longrightarrow\mathbb{R}$ be a non‑negative integrable function.
 A symmetric/Hermitian matrix $H=(H_{xy})_{x,y\in \Lambda_{L}}$ is called a periodic random band matrix with bandwidth $W\leq L/2$ and profile  $f$, if its   variance profile  as in Definition \ref{def:inhomo}    is given by
  \begin{equation} \label{VP}
     \sigma_{xy}^2=\frac{1}{M}\sum_{k\in \mathbb{Z}^d}f\big(\frac{x-y+kL}{W}\big), \quad  M:=\sum_{k\in \mathbb{Z}^d} f\big(\frac{k}{W}\big).
 \end{equation}
   In the special case $f = f_{\alpha}(x)$ (as defined above), the model is termed an $\alpha$-stable random band matrix.
 \end{definition}

We will employ the local limit theorem for random walks on the discrete torus associated with the $\alpha$-stable profile.
\begin{theorem}[Local limit theorem on $\mathbb{T}^d$]\label{prop:alpha_llt}
Let $p_n(x,y)$ denote the $n$-step transition probability associated with the transition matrix $P_N$ from Definition~\ref{defmodel}, with profile $f_\alpha(x)$, where $N=L^d$.
  If $ne^{-c_{\alpha}W^\alpha}=o(1)$ as $N\to \infty$, then
    \begin{equation}
        {p}_{n}(0, {x})=\frac{1}{N}\theta_{\alpha}\Big(\frac{x}{L},n\big(\frac{W}{L}\big)^\alpha\Big)+O\big(ne^{-c_{\alpha}W^{\alpha}}\big).
    \end{equation}
\end{theorem}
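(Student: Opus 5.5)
The plan is to compare the torus random walk with the corresponding walk on $\mathbb{Z}^d$ and exploit the convolution semigroup property of stable densities. The one-step kernel is
\[
p_1(0,x)=\frac{1}{M}\sum_{k\in\mathbb{Z}^d} f_\alpha\Big(\frac{x+kL}{W}\Big) = \sum_{k} q(x+kL),
\qquad q(z):=\frac{1}{M}f_\alpha(z/W),\ z\in\mathbb{Z}^d .
\]
So $p_n(0,x)=\sum_k q^{*n}(x+kL)$, where $q^{*n}$ is the $n$-fold lattice convolution. This reduces everything to the lattice walk with step law $q$. Dually, I would work through characteristic functions.

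By Poisson summation, the discrete Fourier transform of $q$ is
\[
\hat q(t)=\frac{W^d}{M}\sum_{m\in\mathbb{Z}^d}\phi\big(W(t+2\pi m)\big),\qquad t\in[-\pi,\pi]^d,
\]
and $M=W^d\sum_m \phi(2\pi W m)$, again by Poisson. Hence $\hat q(t)=\phi(Wt)+R(t)$, where the aliasing terms ($m\neq0$, together with the normalisation correction) are bounded by $C e^{-c_\alpha W^\alpha}$ uniformly in $t$, since $\phi(W(t+2\pi m))\le e^{-c_\alpha (\pi W)^\alpha}$ for $m\ne0$. Up to adjusting constants this matches the claimed $e^{-c_\alpha W^\alpha}$ error. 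This Poisson-summation aliasing bound is the estimate I expect to need the most care: one must track constants so that the exponent really is $c_\alpha W^\alpha$, possibly with a harmless rescaling of the constant.

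Next, use the inequality $|a^n-b^n|\le n\max(|a|,|b|)^{n-1}|a-b|$ with $|\hat q|,|\phi|\le1$. This gives
\[
\big|\hat q(t)^n-\phi(Wt)^n\big|\le Cn e^{-c_\alpha W^\alpha}\quad\text{uniformly in } t .
\]
The torus transition probability is the inverse transform on the dual group $\{2\pi j/L\}$:
\[
p_n(0,x)=N^{-1}\sum_{j\in\Lambda_L}\hat q(2\pi j/L)^n e^{2\pi i j\cdot x/L}.
\]
Replacing $\hat q^n$ by $\phi(2\pi W j/L)^n$ costs at most $Cne^{-c_\alpha W^\alpha}$, since the average of $N$ terms each of size $Cne^{-c_\alpha W^\alpha}$ stays of that size.

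It remains to identify the main term. Note that $\phi(Wt)^n=\phi\big(n^{1/\alpha}Wt\big)$ is the characteristic function of $f_\alpha(\cdot,n(W/L)^\alpha)$ evaluated at $2\pi j$ after rescaling by $L$. Extending the sum over $j\in\Lambda_L$ to all of $\mathbb{Z}^d$, Poisson summation gives
\[
\sum_{j\in\mathbb{Z}^d}\phi\big(2\pi n^{1/\alpha}Wj/L\big)e^{2\pi i j\cdot x/L}=\theta_\alpha\big(x/L,\,n(W/L)^\alpha\big).
\]
The discrepancy between $\Lambda_L$ (representatives in $[-L/2,L/2)^d$) and $\mathbb{Z}^d$ involves only frequencies with $\|Wt\|\ge \pi W$. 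These terms are again bounded by $e^{-c n\pi^\alpha W^\alpha}$ times a summable tail, which is absorbed into the error after dividing by $N$ (using $W\le L/2$). Combining these steps yields the stated local limit theorem. The hypothesis $ne^{-c_\alpha W^\alpha}=o(1)$ ensures the error is genuinely small.
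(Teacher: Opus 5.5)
Your proposal is correct and follows the paper's proof essentially step for step. Both arguments use the torus Fourier inversion, then Poisson summation to get $\hat p(k)=\phi(2\pi Wk/L)+O(e^{-c_\alpha W^\alpha})$ (including the normalisation of $M$), then the $n$-th power comparison, and finally extend the frequency sum from $\Lambda_L$ to $\mathbb{Z}^d$ so that a second Poisson summation produces $\theta_\alpha$. Your inequality $|a^n-b^n|\le n\max(|a|,|b|)^{n-1}|a-b|$ justifies the power step more cleanly than the paper's binomial expansion, and because $\pi^\alpha\ge 1$ your aliasing bound already gives $e^{-c_\alpha W^\alpha}$ with no rescaling of the constant.
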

\begin{proof}
Due to the translation invariance of the transition matrix $P_N$, it suffices to consider $p_n(0,x)$.

We start from the discrete Fourier representation of the $n$-step transition probability $p_n(0,x)$ on the $d$-dimensional torus $\Lambda_L = (\mathbb{Z}\cap (-\frac{L}{2}, \frac{L}{2}])^d$
, where $N = L^d$. Define the discrete Fourier transform of the one step transition probability as
\begin{equation}
    \widehat{p}(k) := \sum_{x\in \Lambda_L} p_{1}(0,x) \, e^{-2\pi i \frac{k\cdot x}{L}}, \qquad k\in\Lambda_L.
\end{equation}
Then the Fourier inversion gives
\begin{equation}\label{eq:p_n_fourier}
    p_n(0,x) = \frac{1}{N} \sum_{k\in\Lambda_L} \bigl(\widehat{p}(k)\bigr)^n e^{2\pi i \frac{k\cdot x}{L}}.
\end{equation}

Using the Poisson summation formula, we obtain
\begin{equation}\label{eq:p_hat_approx}
\begin{aligned}
    \widehat{p}(k) &= \sum_{x\in\Lambda_L} \frac{1}{W^d} \sum_{y\in\mathbb{Z}^d} f\!\Big(\frac{x+yL}{W}\Big) e^{-2\pi i \frac{k\cdot x}{L}} \\
    &= \frac{1}{W^d} \sum_{x\in\mathbb{Z}^d} f\!\big(\frac{x}{W}\big) e^{-2\pi i \frac{k\cdot x}{L}} = \frac{W^d}{M}\sum_{m\in\mathbb{Z}^d} \widehat{f}\!\Big(W\big(\frac{k}{L}+m\big)\Big).
\end{aligned}
\end{equation}

For the $\alpha$-stable profile, $\widehat{f}(\xi) = e^{-c_\alpha \|\xi\|^\alpha}$. Its exponential decay yields
\begin{equation}
    1 = \widehat{p}(0) = \frac{W^d}{M}\sum_{m\in\mathbb{Z}^d} \widehat{f}(mW) =\frac{W^d}{M}\big( 1 + O(e^{-c_\alpha W^\alpha})\big),
\end{equation}
and consequently
\begin{equation}
    \widehat{p}(k) = \widehat{f}\!\big(\frac{Wk}{L}\big) + O(e^{-c_\alpha W^\alpha}).
\end{equation}
Since $n e^{-c_\alpha W^\alpha} \ll 1$, using the binomial formula and the bound $|\widehat{f}(\xi)|\leq 1$, we may expand the $n$-th power:
\begin{equation}
\widehat{p}(k)^n = \exp\!\Big(-c_\alpha n \Bigl\|\frac{Wk}{L}\Bigr\|^\alpha\Big) + O\bigl(n e^{-c_\alpha W^\alpha}\bigr).
\end{equation}
Insert this approximation into the Fourier sum \eqref{eq:p_n_fourier} and we derive
\begin{equation}\label{eq:p_n_sum}
\begin{aligned}
    p_n(0,x) &= \frac{1}{N} \sum_{k\in\Lambda_L} \left\{ \exp\!\left(-c_\alpha n \Bigl\|\frac{Wk}{L}\Bigr\|^\alpha\right) e^{2\pi i \frac{k\cdot x}{L}} + O\bigl(n e^{-c_\alpha W^\alpha}\bigr) \right\} \\
    &= \frac{1}{N} \sum_{k\in\Lambda_L} \exp\!\left(-c_\alpha n \Bigl\|\frac{Wk}{L}\Bigr\|^\alpha\right) e^{2\pi i \frac{k\cdot x}{L}} + O\bigl(n e^{-c_\alpha W^\alpha}\bigr) \\
    &= \frac{1}{N} \sum_{k\in\mathbb{Z}^d} \exp\!\left(-c_\alpha n \Bigl\|\frac{Wk}{L}\Bigr\|^\alpha\right) e^{2\pi i \frac{k\cdot x}{L}} + O\bigl(e^{-c_\alpha n W^\alpha} + n e^{-c_\alpha W^\alpha}\bigr).
\end{aligned}
\end{equation}

Now apply the Poisson summation formula again to the principal sum:
\begin{equation}
    \frac{1}{N} \sum_{k\in\mathbb{Z}^d} \exp\!\left(-c_\alpha n \Bigl\|\frac{Wk}{L}\Bigr\|^\alpha\right) e^{2\pi i \frac{k\cdot x}{L}}
    = \frac{1}{N} \sum_{m\in\mathbb{Z}^d} \frac{1}{\bigl(n \big(\frac{W}{L}\big)^\alpha\bigr)^{\frac{d}{\alpha}}} \; f_\alpha\!\bigg(\frac{\frac{x}{L} + m}{ \bigl(n \big(\frac{W}{L}\big)^\alpha\bigr)^{\frac{1}{\alpha}}}\bigg).
\end{equation}
Here the right-hand side is exactly the $\alpha$-stable $\theta_\alpha$ function defined in \eqref{theta} with scaling parameter $\tau = n(\frac{W}{L})^\alpha$ and $N=L^d$.
Substituting back into \eqref{eq:p_n_sum}, we obtain the final asymptotic:
\begin{equation}
   p_n(0,x) = \frac{1}{N}\,\theta_\alpha\!\Big(\frac{x}{L},\, n \bigl(\tfrac{W}{L}\bigr)^\alpha\Big) + O\bigl(n e^{-c_\alpha W^\alpha}\bigr).
\end{equation}

Thus the proof is complete.
\end{proof}

The following lemma provides the asymptotics of the $\theta_{\alpha}$ function. The proof of this lemma   is postponed to   Appendix~\ref{sec:theta_function}.
\begin{lemma}\label{lem:asy_theta}
The $\alpha$-stable theta function $\theta_{\alpha}(x, \tau)$ defined in \eqref{theta} satisfies the following asymptotic properties for $x \in \mathbb{T}^d$ and $\tau > 0$.
\begin{enumerate}
     \item[(i)] \textbf{(Small $\tau$ regime)} As $\tau \to 0$,
    \begin{equation}
        \theta_{\alpha}(x, \tau) = f_{\alpha}(x, \tau) + O(\tau).
    \end{equation}
    \item[(ii)] \textbf{(Large $\tau$ regime)} As $\tau \to \infty$,
    \begin{equation}
        \theta_{\alpha}(x, \tau) = 1 + O\big(e^{-c_{\alpha} (2\pi)^{\alpha} \tau}\big).
    \end{equation}
    \item[(iii)] \textbf{(Uniform upper bound)} There exists a constant $C > 0$ such that
    \begin{equation}\label{equ:theta_upper_bound}
        \theta_{\alpha}(x, \tau) \le C\big(1 + \tau^{-\frac{d}{\alpha}}\big).
    \end{equation}
\end{enumerate}
\end{lemma}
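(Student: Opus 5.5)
The plan is to work from the Fourier representation of $\theta_\alpha$ obtained by Poisson summation, as in the proof of Theorem~\ref{prop:alpha_llt}:
\begin{equation*}
\theta_\alpha(x,\tau)=\sum_{k\in\mathbb{Z}^d} e^{-c_\alpha \tau\|2\pi k\|^\alpha} e^{2\pi i k\cdot x},
\end{equation*}
where for $\alpha=2$ the constant $c_\alpha$ is read as $\sigma^2/2$. This is valid because the Fourier transform of $f_\alpha(\cdot,\tau)$ is $\widehat f_\alpha(\tau^{1/\alpha}\xi)=e^{-c_\alpha\tau\|\xi\|^\alpha}$, which is absolutely summable over the lattice $2\pi\mathbb{Z}^d$. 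The 1-periodization of $f_\alpha(\cdot,\tau)$ therefore equals this Fourier series, pointwise and absolutely convergent. I would represent $x\in\mathbb{T}^d$ by its representative in $[-\tfrac12,\tfrac12)^d$. All three items follow by choosing which side of this identity to estimate.

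\textbf{Large $\tau$ (ii).} I would use the Fourier side. The $k=0$ term equals $1$. The remainder is bounded by $\sum_{k\neq0}e^{-c_\alpha(2\pi)^\alpha\tau\|k\|^\alpha}$. For $\tau\ge1$, write $\|k\|^\alpha=1+(\|k\|^\alpha-1)$ and use $\|k\|^\alpha-1\ge c\|k\|^\alpha$ for $\|k\|\ge 2^{1/\alpha}$ to get
\begin{equation*}
\sum_{k\ne0}e^{-c_\alpha(2\pi)^\alpha\tau\|k\|^\alpha}\le e^{-c_\alpha(2\pi)^\alpha\tau}\Big(2d+\sum_{k}e^{-c'\|k\|^\alpha}\Big).
\end{equation*}
Lattice points with $1\le\|k\|< 2^{1/\alpha}$ are finitely many and are included in the bracketed constant.

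\textbf{Small $\tau$ (i).} I would use the spatial side. The claim amounts to
\begin{equation*}
\sum_{k\ne0}f_\alpha(x+k,\tau)=O(\tau),
\end{equation*}
uniformly in $x\in[-\tfrac12,\tfrac12)^d$. For $k\ne0$ we have $\|x+k\|\ge c\|k\|$, with $\|x+k\|\ge \tfrac12$, so the argument of $f_\alpha$ is $\tau^{-1/\alpha}(x+k)$, which is large. I would then use tail bounds for the stable density.
\begin{itemize}
\item For $0<\alpha<2$, the standard bound $f_\alpha(y)\le C(1+\|y\|)^{-d-\alpha}$ gives
\begin{equation*}
f_\alpha(x+k,\tau)\le C\tau^{-d/\alpha}\big(\tau^{-1/\alpha}\|k\|\big)^{-d-\alpha}=C\tau\|k\|^{-d-\alpha},
\end{equation*}
which is summable over $k\neq 0$ and gives $O(\tau)$.
\item For $\alpha=2$, the Gaussian tail gives an even better bound, $O(\tau^{-d/2}e^{-c/\tau})=O(\tau)$.
\end{itemize}
The main point to justify carefully is the polynomial tail of stable densities, $f_\alpha(y)=O(\|y\|^{-d-\alpha})$. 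This is classical for the isotropic case; I would cite it, or prove it by integrating by parts in the Fourier integral. For the isotropic symbol $e^{-c_\alpha\|t\|^\alpha}$, the singularity at $t=0$ determines the decay. An alternative is subordination: write $f_\alpha$ as a mixture of Gaussians with respect to a positive $\alpha/2$-stable law. That bounds $f_\alpha$ via Gaussian tails and the known tail $P(S>s)\sim c s^{-\alpha/2}$ of the subordinator. I expect this to be the only nonroutine step.

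\textbf{Uniform bound (iii).} I would split at $\tau=1$.
\begin{itemize}
\item For $\tau\ge1$, bound the Fourier side by $\sum_k e^{-c\|k\|^\alpha}=C$.
\item For $\tau\le1$, use the spatial side. The $k=0$ term is at most $\tau^{-d/\alpha}\|f_\alpha\|_\infty$, and $\|f_\alpha\|_\infty\le \int \widehat f_\alpha<\infty$. The $k\ne0$ terms are $O(\tau)\le C$ by the computation in (i).
\end{itemize}
Combining the two ranges gives $\theta_\alpha(x,\tau)\le C(1+\tau^{-d/\alpha})$.
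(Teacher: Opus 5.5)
Your proposal is correct and follows essentially the same route as the paper's proof. Both use Poisson summation to get the Fourier series, the Fourier side for large $\tau$, the spatial side with the stable tail bound $f_\alpha(y)\le C(1+\|y\|)^{-d-\alpha}$ (giving $C\tau\|k\|^{-d-\alpha}$ per term) for small $\tau$, and a split at $\tau=1$ combining $\|f_\alpha\|_\infty<\infty$ with the previous two items for the uniform bound. Your version is somewhat more careful than the paper's: you control all lattice terms with $\|k\|>1$ explicitly in (ii), and you flag the stable-density tail estimate as the one external input.
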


We now introduce the limiting diagram functions, focusing in particular on the subcritical and critical regimes under the condition $\alpha > d = 1$.

\begin{definition}[Limiting diagram function]\label{def:limiting_function}
Given a typical diagram $\Gamma$, for $t_1,\dots,t_s>0$ we introduce a family of linear constraints for edge–weighted variables $\{\alpha_e\geq 0\}$
\begin{equation}
\mathfrak{C}(\{t_i\})
:\quad
\sum_{e\in E(\Gamma)} c_i(e)\,\alpha_e \;\le\; t_i,
\quad i=1,\dots,s,
\end{equation}
where \(c_i(e)\in \{0,1,2\}\)  counts how many times the edge
$e$ appears in the boundary $\partial D_i$.
   Let \(C_\Gamma>0\) be a combinatorial constant depending only on \(\Gamma\) defined by
\begin{equation}\label{equ:C_Gamma}
    C_\Gamma=\lim_{n_j\rightarrow\infty}\frac{\#\{(w_e)_{e\in E(\Gamma)}|
        m_j, w_e\in \mathbb{N},~m_j \ge 0,\; w_e \ge 1,~
        2m_j + \sum\limits_{e \in \partial D_j} w_e = n_j,~ j=1,\dots,s\}}{\mathrm{Vol}\{(\alpha_e)_{e\in E(\Gamma)}|\alpha_e\in \mathbb{R}^+, \sum_{e\in E(\Gamma)} c_j(e)\,\alpha_e \;\le\; n_j,
\quad j=1,\dots,s\}}.
\end{equation}
The three limiting  diagram functions are then defined separately for each phase regime as follows:
\begin{enumerate}
    \item[(i)] \textbf{(Supercritical case)}
    \begin{equation}\label{eq:cor-F-super}
\mathcal{F}^{\mathrm{(Super)}}_\Gamma(t_1,\dots,t_s)
\;=\frac{C_\Gamma}{\,t_1\cdots t_s\,}\mathrm{Vol}_{\mathfrak{C}}(\{t_i\}),
\end{equation}
where
\begin{equation}
    \mathrm{Vol}_{\mathfrak{C}}(\{t_i\})=\int_{\mathfrak{C}(\{t_i\})}1\,\prod_{e\in E(\Gamma)}d\alpha_e.
\end{equation}
\item[(ii)]  \textbf{(Subcritical case, $\alpha>d=1$)}
\begin{equation}\label{eq:F-sub}
    \mathcal{F}^{\mathrm{(Sub)}}_\Gamma(t_1,\dots,t_s)
    \;=\;
    \frac{C_\Gamma}{\,t_1\cdots t_s\,}\,
    \int_{x_v\in \mathbb{R}^d, v\ne v_0}\int_{\mathfrak{C}(\{t_i\})}
    \prod_{e\in E(\Gamma)}f_{\alpha}(x_{u}-x_{v},\alpha_e ) \prod_{e\in E(\Gamma)}d\alpha_e \prod_{v_0\neq v\in V(\Gamma)} dx_v,
\end{equation} where
the first integral is over all $x_v$ in $\mathbb{R}^d$ except for  $x_{v_0}=0$, with   some $v_0\in V(\Gamma)$.
\item[(iii)] \textbf{(Critical case, $\alpha>d=1$)}
    \begin{equation}\label{eq:F-critical}
    \mathcal{F}^{\mathrm{(Crit)}}_\Gamma(t_1,\dots,t_s;\tau)
    \;=\;
    \frac{C_\Gamma}{\,t_1\cdots t_s\,}\,
    \int_{x_v\in \mathbb{T}^d}\int_{\mathfrak{C}(\{t_i\})}
    \prod_{e\in E(\Gamma)}\theta_{\alpha}(x_{u}-x_{v},\alpha_e\tau) \prod_{e\in E(\Gamma)}d\alpha_e \prod_{v\in V(\Gamma)}dx_v.
\end{equation}
\end{enumerate}
\end{definition}

\subsection{Diagram asymptotics}
\begin{theorem}[Fixed diagram asymptotics]\label{thm:homogeneous_theorem}
For the $\alpha$-stable random band  matrices   in Definition \ref{defmodel}, but with Gaussian entries and  $\alpha>d=1$, assuming that $\sum_{i=1}^s n_i$ is even   and   $t_i\in (0,\infty)$ with   $i=1,\ldots,s$,   for any given {typical} connected diagram $\Gamma\in \mathcal{T}_s$ the following hold:
    \begin{itemize}
        \item[(i)]
({\bf Supercritical case})
    If  $W\gg N^{1-\frac{1}{3\alpha}}$  and all $n_i\sim t_i N^{\frac{1}{3}}$, then
    \begin{equation}\label{equ:f_sup}
        \frac{1}{n_1\cdots n_s}F_\Gamma(\{n_i\}_{i=1}^s)=
        \big(1+o(1)\big)\mathcal{F}^{\mathrm{(Super)}}_\Gamma(t_1,\dots,t_s).
    \end{equation}

 \item[(ii)]({\bf Subcritical case}) If $(\log N)^{\frac{1}{\alpha}}\ll W\ll N^{1-\frac{1}{3\alpha}}$ and all $n_i\sim t_i W^{\frac{\alpha}{3\alpha-1}}$, then
    \begin{equation}\label{equ:f_sub}
    \begin{aligned}
\frac{1}{n_1\cdots n_s} F_\Gamma(\{n_i\}_{i=1}^s)
 &=   \big(1+o(1)\big)NW^{-\frac{3\alpha}{3\alpha-1}}\mathcal{F}^{\mathrm{(Sub)}}_\Gamma(t_1,\dots,t_s).
\end{aligned}
    \end{equation}

\item[(iii)] ({\bf Critical case})
       If $W\sim   (\gamma N)^{1-\frac{1}{3\alpha}}$ for some $\gamma>0$ and all  $n_i\sim t_i W^{\frac{\alpha}{3\alpha-1}}$, then
    \begin{equation}\label{equ:f_critical}
        \frac{1}{n_1\cdots n_s}F_\Gamma(\{n_i\}_{i=1}^k)=   \big(1+o(1)\big)\gamma^{|E|-|V|}\mathcal{F}^{\mathrm{(Crit)}}_\Gamma(t_1,\dots,t_s;\gamma^\alpha).
    \end{equation}
 \end{itemize}
\end{theorem}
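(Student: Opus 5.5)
We work directly from the definition \eqref{equ:F_formula} with $A_N=0$, so that only closed diagrams contribute and every edge is interior. The plan is a Riemann-sum argument in which the local limit theorem (Theorem~\ref{prop:alpha_llt}) converts each $p_{w_e}$ into a theta kernel, and the heat-kernel scaling then fixes the normalisation. Concretely, we write
\begin{equation*}
F_\Gamma(\{n_i\})=\sum_{(w_e,m_j)}\ \sum_{\eta:V\to\Lambda_L}\prod_{e=(u,v)\in E}p_{w_e}(\eta(u),\eta(v)),
\end{equation*}
where the outer sum runs over $w_e\ge1$, $m_j\ge0$ with $2m_j+\sum_{e\in\partial D_j}w_e=n_j$.

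For each fixed $(w_e)$ we substitute $p_{w}(x,y)=N^{-1}\theta_\alpha\bigl((x-y)/L,\,w(W/L)^\alpha\bigr)+O(we^{-c_\alpha W^\alpha})$. The error term is negligible because $W\gg(\log N)^{1/\alpha}$ makes $ne^{-c_\alpha W^\alpha}$ smaller than any power of $N$. We then treat the three regimes separately.

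\emph{Critical case.} Here $n_i\sim t_iW^{\alpha/(3\alpha-1)}$ and $W\sim(\gamma N)^{1-1/(3\alpha)}$. Writing $w_e=\alpha_e n$, with $n$ normalised so that $n(W/N)^\alpha\to\gamma^\alpha$, the time argument becomes $\alpha_e\gamma^\alpha$. The sum over $\eta$ becomes $N^{|V|}\int_{\mathbb{T}^{|V|}}$, and the kernels contribute $N^{-|E|}$. The sum over $(w_e)$ subject to the constraints is a Riemann sum: it equals $C_\Gamma$ times $n^{|E|-s}\int_{\mathfrak C}$, where $C_\Gamma$ absorbs the parity and lattice-counting factor defined in \eqref{equ:C_Gamma}. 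The remaining task is bookkeeping of powers: we must check that $n^{|E|-s}N^{|V|-|E|}$, divided by $n_1\cdots n_s$, produces exactly $\gamma^{|E|-|V|}/(t_1\cdots t_s)$. For typical diagrams we have $|E|-|V|=\ell$, so the exponents must combine as $n^{3\ell}N^{-\ell}\sim n^{3\ell}(n^3/\gamma)^{-\ell}$, which we verify.

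\emph{Subcritical case.} Here $\tau=n(W/N)^\alpha\to0$. By Lemma~\ref{lem:asy_theta}(i), $\theta_\alpha$ is replaced by $f_\alpha(x,\tau)$. We then rescale the spatial variables by $x_v=(y_v/W)\,n^{-1/\alpha}$. Only the translation mode yields a factor $N$: we fix $x_{v_0}$, and the remaining $|V|-1$ variables range over $\mathbb{R}$. The power counting $N\cdot(n^{1/\alpha}W)^{-(|E|-|V|+1)}n^{|E|-s}$ then gives $NW^{-3\alpha/(3\alpha-1)}$ times the limit. It is consistent only because of the relation $n^{3-1/\alpha}\sim W$ encoded in $n\sim W^{\alpha/(3\alpha-1)}$, together with $|E|=3\ell+s$ and $|V|=2\ell+s$.

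\emph{Supercritical case.} Here $\tau\to\infty$, so Lemma~\ref{lem:asy_theta}(ii) gives $\theta_\alpha\to1$ and the spatial integral is trivial. Only the Riemann sum of the volume survives, yielding \eqref{eq:cor-F-super}.

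The main obstacle is justifying the interchange of limit and sum, i.e.\ dominated convergence. There are two sources of difficulty: short edges with $w_e$ small, where $\tau_e\to0$ and the kernel is singular, and the non-uniformity of Lemma~\ref{lem:asy_theta}(i) near $\alpha_e=0$. For domination we would use the uniform bound \eqref{equ:theta_upper_bound}, $\theta_\alpha\le C(1+\tau^{-1/\alpha})$, whose singularity $\alpha_e^{-1/\alpha}$ is integrable when $\alpha>1=d$. This is exactly where the hypothesis $\alpha>d$ enters. A spanning-tree argument, as in Proposition~\ref{prop:F_upper_bound}, integrates out the tree edges exactly; each non-tree edge is then bounded by $C(1+\tau_e^{-1/\alpha})$, and the product is integrable over the simplex $\mathfrak C$. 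Edges with $w_e=O(1)$, where the local limit theorem is not yet accurate, form a region of vanishing relative volume and are handled by the same bound.
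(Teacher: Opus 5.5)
Your proposal follows essentially the same route as the paper's proof: you replace $p_{w_e}$ by $N^{-1}\theta_\alpha$ via the local limit theorem, use the three regimes of Lemma~\ref{lem:asy_theta}, pass to Riemann sums over $(w_e)$ and $\eta$, and do the power counting with $|E|=3\ell+s$, $|V|=2\ell+s$. On top of this you add a domination argument (the bound $\theta_\alpha\le C(1+\tau^{-1/\alpha})$ combined with a spanning tree), which the paper's proof leaves implicit; its ingredients appear only in Lemma~\ref{lem:limit_function_upper_bound}. One small bookkeeping slip: the constrained lattice sum over $(w_e)$ is $C_\Gamma n^{|E|}\int_{\mathfrak C}$, and $n^{|E|-s}$ arises only after dividing by $n_1\cdots n_s$, so your critical-case sentence divides by $n_1\cdots n_s$ twice --- although the identity $n^{3\ell}N^{-\ell}=\gamma^{\ell}$ that you actually check is the correct one.
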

\begin{proof}
First, we rewrite the diagram function from Definition~\ref{defsDF}, given in \eqref{equ:F_formula}, in a simpler form:

\begin{equation}
F_{\Gamma}(\{n_j\})=
\sum_{\eta: V(\Gamma) \to [N]}
{\sum_{
  \sum\limits_{e \in \partial D_j} w_e \leq  n_j
}}'
\prod_{(x, y) \in E(\Gamma)}
 {p}_{w_e}(\eta(x), \eta(y)),
\end{equation}
where the second summation ${\sum}'$ is taken over all integers   $w_e \ge 1$
  for which each difference
  $n_j-\sum_{e \in \partial D_j} w_e$ is even.

Using   the  inequality, for all $|a_i|\le 1$ and  $|b_i|\le 1$,
\begin{equation}
    \left|\prod_{i=1}^n a_i - \prod_{i=1}^n b_i\right|\le \left|\sum_{j=1}^{n}\Big(\prod_{i=1}^j a_i\prod_{i=j+1}^n b_i-\prod_{i=1}^{j-1} a_i\prod_{i=j}^n b_i\Big)\right|\le \sum_{i=1}^n |a_i-b_i|,
\end{equation}
combining this   with Theorem \ref{prop:alpha_llt}, under the condition  $W\gg (\log N)^{ {1}/{\alpha}}$, we obtain
\begin{equation}
    \left|\prod_{(x, y) \in E(\Gamma)}{p}_{w_e}(\eta(x), \eta(y))-\prod_{(x, y) \in E(\Gamma)}\frac{1}{N}\theta_{\alpha}\Big(\frac{x-y}{L},w_e\big(\frac{W}{L}\big)^\alpha\Big)\right|= O(|E|ne^{-c_{\alpha}W^{\alpha}}).
\end{equation}
Thus  we may replace $p_{w_e}(x,y)$ by the corresponding values of the $\theta_{\alpha}$ function and
  consequently obtain
\begin{equation}
    F_{\Gamma}(\{n_j\})=\sum_{\eta: V(\Gamma) \to [N]}
{\sum_{
  \sum\limits_{e \in \partial D_j} w_e \leq  n_j
}}'
\prod_{(x, y) \in E(\Gamma)}\frac{1}{N}\theta_{\alpha}\Big(\frac{x-y}{L},w_e\big(\frac{W}{L}\big)^\alpha\Big)+O\big(\sum_{\eta: V(\Gamma) \to [N]}
\sum_{
  \sum\limits_{e \in \partial D_j} w_e \leq  n_j
}|E|ne^{-c_{\alpha}W^{\alpha}}\big).
\end{equation}
For the second summation,  since
\begin{equation}
    \sum_{\eta: V(\Gamma) \to [N]}
{\sum_{
  \sum\limits_{e \in \partial D_j} w_e \leq  n_j
}}'|E|ne^{-c_{\alpha}W^{\alpha}}\le n|E|N^{|V|}e^{-c_{\alpha}W^{\alpha}}\sum_{ w_e \leq  n}1\le |E|n^{|E|+1}N^{|V|}e^{-c_{\alpha}W^{\alpha}},
\end{equation}
 this simplifies to
\begin{equation}
    F_{\Gamma}(\{n_j\})=\sum_{\eta: V(\Gamma) \to [N]}
{\sum_{
  \sum\limits_{e \in \partial D_j} w_e \leq  n_j
}}'
\prod_{(x, y) \in E(\Gamma)}\frac{1}{N}\theta_{\alpha}\Big(\frac{x-y}{L},w_e\big(\frac{W}{L}\big)^\alpha\Big)+ O\big(|E|n^{|E|+1}N^{|V|}e^{-c_{\alpha}W^{\alpha}}\big).
\end{equation}
For any fixed diagram, the error term vanishes as $N\rightarrow \infty$, provided that $n=N^{O(1)}$ and $W\gg (\log N)^{\frac{1}{\alpha}}$.

Using the properties of the theta function from Lemma \ref{lem:asy_theta}, we obtain the following asymptotic estimates:
\begin{enumerate}
    \item If $w_e(\frac{W}{L})^\alpha\gg 1$, then
    \begin{equation}
        \frac{1}{N}\theta_{\alpha}\Big(\frac{x}{L},w_e\big(\frac{W}{L}\big)^\alpha\Big)=\frac{1+o(1)}{N}.
    \end{equation}
    \item If $w_e(\frac{W}{L})^\alpha\ll 1$, setting $w_e=\alpha_e W^{\frac{\alpha}{3\alpha-1}}$,   then
    \begin{equation}
        \frac{1}{N}\theta_{\alpha}\Big(\frac{x}{L},w_e\big(\frac{W}{L}\big)^\alpha\Big)=\frac{(1+o(1))}{N}f_{\alpha}\big(\frac{x}{L},w_e\big(\frac{W}{L}\big)^{\alpha}\big)=\frac{1+o(1)}{W^{\frac{3\alpha}{3\alpha-1}}}f_{\alpha}\big(\frac{x}{W^{\frac{3\alpha}{3\alpha-1}}},\alpha_e\big).
    \end{equation}
    \item If $w_e(\frac{W}{L})^\alpha=\alpha_e W^{\frac{\alpha}{3\alpha-1}}(\frac{W}{L})^{\alpha}\sim\alpha_e\gamma^\alpha$, then
    \begin{equation}
        \frac{1}{N}\theta_{\alpha}\big(\frac{x}{L},w_e\big(\frac{W}{L}\big)^\alpha\big)=\frac{1+o(1)}{N}\theta_{\alpha}\big(\frac{x}{L},\alpha_e\gamma^\alpha\big).
    \end{equation}

\end{enumerate}

 Converting two summations into double integrals gives the following results:
\begin{enumerate}

\item {\bf Supercritical case:} When  $w_e({W}/{L})^\alpha\gg 1$ for all $e\in E(\Gamma)$, we have
        \begin{equation}
        \begin{aligned}
                   &{\sum_{
  \sum\limits_{e \in \partial D_j} w_e \leq  n_j
}}'\sum_{\eta: V(\Gamma) \to [N]}\prod_{e\in E(\Gamma)}\frac{1+o(1)}{N}= (1+o(1)) {\sum_{
  {\sum\limits_{e \in \partial D_j} w_e \leq  n_j
}}}'N^{|V|-|E|}\\
&=(1+o(1))N^{|V|-|E|}    {\sum_{
  \sum\limits_{e \in \partial D_j} w_e \leq  n_j
}}'
1
=(1+o(1))N^{\frac{s}{3}-\frac{1}{3}|E|} C_{\Gamma} \mathrm{Vol}(\{t_iN^{1/3}\})\\
&=(1+o(1))N^{\frac{s}{3}}C_{\Gamma}\mathrm{Vol}(\{t_i\}).
        \end{aligned}
    \end{equation}

    The contribution from configurations in which
 $w_e({W}/{L})^\alpha=O(1)$ for at least one $e\in E(\Gamma)$ is negligible. Dividing by $n_1n_2\ldots n_s$ gives
    \begin{equation}
        \frac{N^{\frac{s}{3}}}{n_1\cdots n_s}=\frac{1+o(1)}{t_1\cdots t_s},
    \end{equation}
    which completes the analysis of the supercritical case.

\item \textbf{Subcritical case:}
The sum can be regarded as a Riemann sum and  approximated by the corresponding integral. Let $w_e=\alpha_eW^{\frac{\alpha}{3\alpha-1}}$, we  have
        \begin{equation}
        \begin{aligned}&F_\Gamma(\{n_i\}_{i=1}^s)={\sum_{
  \sum\limits_{e \in \partial D_j} w_e \leq  n_j
}}'\sum_{\eta: V(\Gamma) \to [N]}\prod_{e\in E(\Gamma)}\frac{1+o(1)}{W^{\frac{3\alpha}{3\alpha-1}}}f_{\alpha}\big(\frac{\eta(x_e)-\eta(y_e)}{W^{\frac{3\alpha}{3\alpha-1}}},\alpha_e\big)\\
&= (1+o(1)) NW^{\frac{3\alpha}{3\alpha-1}(|V|-1-|E|)} {\sum_{
  {\sum\limits_{e \in \partial D_j} w_e \leq  n_j
}}}'\int_{x_v\in \mathbb{R}^d, v\ne v_0}
    \prod_{e\in E(\Gamma)}f_{\alpha}(x_{u}-x_{v},\alpha_e ) \prod_{e\in E(\Gamma)}\prod_{v_0\neq v\in V(\Gamma)} dx_v.
        \end{aligned}
    \end{equation}
    Here $N$ comes from the translation invariance and we can fix $v_0$ to be $0$ with an extra factor $N$. Now we change the sum using $n_i=t_i W^{\frac{\alpha}{3\alpha-1}}$
    \begin{equation}
        {\sum_{
  {\sum\limits_{e \in \partial D_j} w_e \leq  n_j
}}}'(\cdots)\longrightarrow W^{\frac{\alpha}{3\alpha-1}|E|}C_{\Gamma}\int_{\mathfrak{C}(\{t_i\})}(\cdots).
    \end{equation}
Thus
\begin{equation}
\begin{aligned}
    \frac{1}{n_1\cdots n_s} F_\Gamma(\{n_i\}_{i=1}^s)&=NW^{-\frac{3\alpha}{3\alpha-1}}\frac{(1+o(1))C_\Gamma}{\,t_1\cdots t_s\,}\,\\
    &\times
    \int_{x_v\in \mathbb{R}^d, v\ne v_0}\int_{\mathfrak{C}(\{t_i\})}
    \prod_{e\in E(\Gamma)}f_{\alpha}(x_{u}-x_{v},\alpha_e ) \prod_{e\in E(\Gamma)}d\alpha_e \prod_{v_0\neq v\in V(\Gamma)} dx_v,
\end{aligned}
\end{equation}
where in the exponent of $W$ we have used the simple identity  \begin{equation}\frac{3\alpha}{3\alpha-1}(|V|-|E|)+\frac{\alpha}{3\alpha-1}|E|-\frac{s\alpha}{3\alpha-1}=0.\end{equation}
\item \textbf{Critical case:}  The critical case follows a similar argument. Let $w_e=\alpha_eW^{\frac{\alpha}{3\alpha-1}}$, then
 \begin{equation}
        \begin{aligned}F_\Gamma(\{n_i\}_{i=1}^s)&={\sum_{
  \sum\limits_{e \in \partial D_j} w_e \leq  n_j
}}'\sum_{\eta: V(\Gamma) \to [N]}\prod_{e\in E(\Gamma)}\frac{1+o(1)}{N}\theta_{\alpha}\big(\frac{\eta(x_e)-\eta(y_e)}{L},\alpha_e\gamma^\alpha\big)\\
&= (1+o(1))N^{-|E|+|V|}{\sum_{
  {\sum\limits_{e \in \partial D_j} w_e \leq  n_j
}}}'\int_{x_v\in \mathbb{T}^d}
    \prod_{e\in E(\Gamma)}\theta_{\alpha}(x_{u}-x_{v},\alpha_e\gamma^\alpha) \prod_{v\in V(\Gamma)}dx_v.
        \end{aligned}
    \end{equation}
Now we change the sum using $n_i=t_i W^{\frac{\alpha}{3\alpha-1}}$
    \begin{equation}
        {\sum_{
  {\sum\limits_{e \in \partial D_j} w_e \leq  n_j
}}}'(\cdots)\longrightarrow W^{\frac{\alpha}{3\alpha-1}|E|}C_{\Gamma}\int_{\mathfrak{C}(\{t_i\})}(\cdots),
    \end{equation}
and thus obtain
\begin{equation}
\begin{aligned}
     \frac{1}{n_1\cdots n_s} F_\Gamma(\{n_i\}_{i=1}^s)&=W^{\frac{\alpha}{3\alpha-1}|E|-s\frac{\alpha}{3\alpha-1}}N^{-|E|+|V|} \frac{(1+o(1))C_\Gamma}{\,t_1\cdots t_s\,}\\
    &\times \,
    \int_{x_v\in \mathbb{T}^d}\int_{\mathfrak{C}(\{t_i\})}
    \prod_{e\in E(\Gamma)}\theta_{\alpha}(x_{u}-x_{v},\alpha_e\gamma^\alpha) \prod_{e\in E(\Gamma)}d\alpha_e \prod_{v\in V(\Gamma)} dx_v.
\end{aligned}
\end{equation}
This case immediately follows from
\begin{equation}
    W^{\frac{\alpha}{3\alpha-1}|E|-s\frac{\alpha}{3\alpha-1}}N^{-|E|+|V|}=(\gamma N)^{|E|-|V|}N^{-|E|+|V|}=\gamma^{|E|-|V|}.
\end{equation}
\end{enumerate}

   We   thus complete  the proof.
\end{proof}

The limiting diagram functions admit the following uniform upper bounds.
\begin{lemma}\label{lem:limit_function_upper_bound} Let $\Gamma$ be a connected typical diagram. Let   $t=\sum_{i=1}^s t_i$,
   the following upper bounds for limiting diagram functions  hold  for some constant $C>0$.
    \begin{enumerate}
    \item[(i)] \textbf{(Supercritical case)}
    \begin{equation}\label{eq:cor-F-super-upper}
\mathrm{Vol}_{\mathfrak{C}}(\{t_i\})\le \frac{(Ct)^{|E|}}{(|E|)!}.
\end{equation}
\item[(ii)]  \textbf{(Subcritical case, $\alpha>d=1$)}  For    a chosen vertex $v_0$ and   $x_{v_0}=0$,
\begin{equation}\label{eq:F-sub-upper}
    \int_{x_v\in \mathbb{R}^d, v\ne v_0}\int_{\mathfrak{C}(\{t_i\})}
    \prod_{e\in E(\Gamma)}f_{\alpha}(x_{e}-y_{e},\alpha_e ) \prod_{e\in E(\Gamma)}d\alpha_e \prod_{v\in V(\Gamma),v\neq v_0}dx_v\le \frac{(Ct)^{|E|-\frac{1}{\alpha}(|E|-|V|+1)}}{(|E|-\frac{1}{\alpha}(|E|-|V|+1))!}.
\end{equation}
Here, $x_e$ and $y_e$ denote the variables $x_u$ and $x_v$ associated with the endpoints of the edge $e=\{u,v\}$.
\item[(iii)] \textbf{(Critical case, $\alpha>d=1$)}
   For some constant  $C_{\tau}$ depending on  $\tau>0$, \begin{equation}\label{eq:F-critical-upper}
    \begin{aligned}
    &\int_{x_v\in \mathbb{T}^d}\int_{\mathfrak{C}(\{t_i\})}
    \prod_{e\in E(\Gamma)}\theta_{\alpha}(x_{e}-y_{e},\alpha_e \tau) \prod_{e\in E(\Gamma)}d\alpha_e \prod_{v\in V(\Gamma)}dx_v\le \frac{(Ct)^{|E|}}{(|E|)!}+\frac{(C_{\tau}t)^{|E|-\frac{1}{\alpha}(|E|-|V|+1)}}{(|E|-\frac{1}{\alpha}(|E|-|V|+1))!}.
    \end{aligned}
\end{equation}
Here, $x_e$ and $y_e$ denote the variables $x_u$ and $x_v$ associated with the endpoints of the edge $e=\{u,v\}$.
\end{enumerate}
\end{lemma}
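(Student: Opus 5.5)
Each of the three bounds will be obtained by the same spanning-tree integration that gave Proposition~\ref{prop:F_upper_bound}, now in the continuum. The $\alpha$-stable density $f_\alpha(\cdot,\alpha_e)$ plays the role of $p_{w_e}$. Its sup norm $\sup_x f_\alpha(x,\alpha_e)=C\alpha_e^{-1/\alpha}$ (with $d=1$) plays the role of the $b_n$-type bound. Throughout, set $\ell=|E|-|V|+1$, the number of independent cycles.

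\textbf{(i) Supercritical case.} Since every edge lies on the boundary of some face, $\sum_i c_i(e)\ge 1$ for each $e$. Summing the constraints of $\mathfrak{C}(\{t_i\})$ therefore gives $\sum_e\alpha_e\le t$. Hence $\mathfrak{C}(\{t_i\})$ is contained in the simplex $\{\alpha_e\ge0,\ \sum_e\alpha_e\le t\}$, whose volume is $t^{|E|}/|E|!$. This gives \eqref{eq:cor-F-super-upper} with $C=1$.

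\textbf{(ii) Subcritical case.} Choose a spanning tree $\mathcal{T}$ of $\Gamma$ rooted at $v_0$, and write $\mathcal{T}^c$ for the remaining $\ell$ edges.
\begin{itemize}
\item For $e\in\mathcal{T}^c$, bound $f_\alpha(x_u-x_v,\alpha_e)\le C\alpha_e^{-1/\alpha}$.
\item Integrate the $x_v$ leaf by leaf along $\mathcal{T}$, using $\int f_\alpha(x,\alpha_e)\,dx=1$.
\end{itemize}
The spatial integral is then bounded by $C^{\ell}\prod_{e\in\mathcal{T}^c}\alpha_e^{-1/\alpha}$. Relaxing $\mathfrak{C}$ to the simplex $\sum_e\alpha_e\le t$ as in (i), it remains to evaluate a Dirichlet integral:
\[
\int_{\sum\alpha_e\le t}\prod_{e\in\mathcal{T}^c}\alpha_e^{-1/\alpha}\prod_e d\alpha_e=\frac{\Gamma(1-1/\alpha)^{\ell}\, t^{|E|-\ell/\alpha}}{\Gamma(|E|-\ell/\alpha+1)}.
\]
This is finite precisely because $\alpha>1$, so that each exponent $-1/\alpha>-1$. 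Absorbing $\Gamma(1-1/\alpha)^\ell\le C^{|E|}$ gives \eqref{eq:F-sub-upper}, with $(\cdot)!$ read as the Gamma function.

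\textbf{(iii) Critical case.} Start from Lemma~\ref{lem:asy_theta}(iii), which for $d=1$ gives
\[
\theta_\alpha(x,\alpha_e\tau)\le C\bigl(1+(\alpha_e\tau)^{-1/\alpha}\bigr).
\]
On $\mathbb{T}$ we have $\int_{\mathbb{T}}\theta_\alpha(x,\cdot)\,dx=1$, so the tree edges again integrate out. The total torus volume is $1$.

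For each $e\in\mathcal{T}^c$, expand the product $\prod_{e\in\mathcal{T}^c}C\bigl(1+(\alpha_e\tau)^{-1/\alpha}\bigr)$. This produces $2^\ell$ terms. A term in which $j$ of the edges carry the singular factor contributes, by the same Dirichlet computation,
\[
C^\ell\,\tau^{-j/\alpha}\,\frac{\Gamma(1-1/\alpha)^j\,t^{|E|-j/\alpha}}{\Gamma(|E|-j/\alpha+1)}.
\]
The remaining step, and the main (mild) obstacle, is to dominate these intermediate terms by the two extremes $j=0$ and $j=\ell$ shown in \eqref{eq:F-critical-upper}. This should follow by convexity or interpolation in $j$. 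Concretely, $t^{-j/\alpha}/\Gamma(|E|-j/\alpha+1)$ is log-convex in $j$, so each intermediate term is at most a constant multiple of the larger endpoint term. The factor $2^\ell$ and the powers of $\tau^{-1/\alpha}$ are absorbed into $C^{|E|}$ and $C_\tau$, using $\ell\le |E|$ for typical diagrams.
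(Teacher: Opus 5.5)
Your parts (i) and (ii), and the setup of (iii), coincide with the paper's argument. In (iii) this means the sup-norm bound on the $\ell$ non-tree edges via Lemma~\ref{lem:asy_theta}(iii), integrating out the tree edges, the binomial expansion, and the Dirichlet integral for each $j$.

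The gap is in the final step of (iii), the one you flag yourself, and it has the wrong sign. Write $T_j$ for the $j$-th term. The factors $t^{-j/\alpha}$, $\tau^{-j/\alpha}$ and $\Gamma(1-\frac1\alpha)^j$ are log-linear in $j$, while
\[
\frac{d^2}{dj^2}\Bigl[-\log\Gamma\bigl(|E|-\tfrac{j}{\alpha}+1\bigr)\Bigr]=-\frac{1}{\alpha^2}\,\psi'\bigl(|E|-\tfrac{j}{\alpha}+1\bigr)<0 ,
\]
with $\psi'$ the positive trigamma function. So $T_j$ is log-concave, not log-convex, and can peak in the interior. This does happen. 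The ratio $T_{j+1}/T_j\approx\Gamma(1-\frac1\alpha)(\tau t)^{-1/\alpha}(|E|-\frac j\alpha)^{1/\alpha}$ drops by the fixed factor $(1-\frac{\ell}{\alpha|E|})^{1/\alpha}<1$ over $0\le j\le\ell$, with $\ell\approx|E|/3$ for typical diagrams. If $\tau t$ is tuned so that $T_0\approx T_\ell$, the middle terms exceed both endpoints by a factor $e^{c|E|}$ with $c=c(\alpha)>0$. Hence ``at most a constant multiple of the larger endpoint term'' is false for a constant independent of $\Gamma$.

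The conclusion survives because the loss is only $e^{O(|E|)}$, but this needs an argument. The paper's device is to make the sequence geometric. Since $(1-\frac1\alpha)|E|\le|E|-\frac j\alpha\le|E|$, Stirling gives $\Gamma(|E|-\frac j\alpha+1)^{-1}\le C^{|E|}|E|^{-(|E|-j/\alpha)}$ uniformly in $j$. Therefore
\[
T_j\le C^{|E|}\Bigl(\frac{t}{|E|}\Bigr)^{|E|}q^{\,j},\qquad q:=\Gamma\bigl(1-\tfrac1\alpha\bigr)\Bigl(\frac{\tau t}{|E|}\Bigr)^{-1/\alpha},
\]
and so $\sum_j\binom{\ell}{j}T_j\le 2^\ell C^{|E|}(t/|E|)^{|E|}\max(1,q^{\ell})$.

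To convert back, use $|E|!\le|E|^{|E|}$, $\Gamma(x+1)\le 2x^x$, and $|E|-\ell/\alpha\le|E|$. This gives exactly the two terms $\frac{(Ct)^{|E|}}{|E|!}$ and $\frac{(C_\tau t)^{|E|-\ell/\alpha}}{(|E|-\ell/\alpha)!}$. The factors $C^{|E|}$, $2^\ell$, $\Gamma(1-\frac1\alpha)^\ell$ and $\tau^{-\ell/\alpha}$ are absorbed because both exponents are at least $(1-\frac1\alpha)|E|\ge(1-\frac1\alpha)\ell$.

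An equivalent route is to bound the concavity defect of $\log T_j$ above its chord on $[0,\ell]$ by $\ell^2\sup\psi'/(8\alpha^2)=O(|E|)$. The same lower bound on the exponent, $|E|-\ell/\alpha\ge(1-\frac1\alpha)|E|$, is also what justifies absorbing $\Gamma(1-\frac1\alpha)^\ell$ into $C^{|E|-\ell/\alpha}$ in your part (ii).
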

\begin{proof}
 Let $T$ be a spanning tree of $\Gamma$, so $|E(T)|=|V|-1$. Choose the spatial dimension   $d=1$ in both the subcritical and critical cases.

\begin{enumerate}
 \item \textbf{(Super-critical case)}
 Since the constraint region $\mathfrak{C}(\{t_i\})$ is defined by $\sum_{e\in E(\Gamma)} c_i(e)\,\alpha_e \;\le\; t_i$ with $c_i(e) \ge 0$, we may relax the constraint to an $|E|$-dimensional simplex:
    \begin{equation}
    \sum_{e\in E(\Gamma)} \alpha_e \le C t,
    \end{equation}
    where $C$ is a constant that
     absorbs the coefficients. The volume of this simplex  is bounded by the standard formula:
    \begin{equation}
    \mathrm{Vol}_{\mathfrak{C}}(\{t_i\}) \le \int_{\alpha_e\ge 0, \sum \alpha_e \le Ct} 1\,\prod_{e\in E(\Gamma)}d\alpha_e = \frac{(Ct)^{|E|}}{|E|!}.
    \end{equation}

 \item \textbf{(Subcritical case)}
Let $I_{\mathrm{Sub}}$ denote  the integral  on the left hand side. We relax  the  constraints on $\alpha_e$ to $\sum_{e} \alpha_e \le C t$. Next, we estimate the integral over the vertex positions    $x_v \in \mathbb{R}^d$. Fixing one vertex at the origin, say   $x_0=0$, the integral over the remaining $|V|-1$ degrees of freedom can be bounded by the convolution properties of $f_{\alpha}$  together with    the spanning
   tree $T$:
    \begin{equation}
    \begin{aligned}
&\int_{x_v\in \mathbb{R}^d,x_{v_0}=0}  \prod_{e\in E(\Gamma)}f_{\alpha}(x_{e}-y_{e},\alpha_e ) \prod_{v\in V(\Gamma)}dx_v \\
 &\le C_{1}^{|E|-|V|+1} \prod_{e \notin T} \alpha_e^{-\frac{d}{\alpha}}\int_{x_v\in \mathbb{R}^d,x_{v_0}=0}  \prod_{e\in E(\Gamma)\setminus T}f_{\alpha}(x_{e}-y_{e},\alpha_e ) \prod_{v\in V(\Gamma)}dx_v\\
 &= C_{1}^{|E|-|V|+1} \prod_{e \notin T} \alpha_e^{-\frac{d}{\alpha}}.
    \end{aligned}
    \end{equation}

   When    $d=1$,
    substituting this bound into the integral over
$\alpha_e$ (a generalized Dirichlet integral over a simplex) gives
    \begin{equation}
    I_{\mathrm{Sub}} \le C_{1}^{|E|-|V|+1} \int_{\alpha_e \ge 0, \sum_e \alpha_e \le Ct} 1\prod_{e \in T} d\alpha_e \prod_{e \notin T} (\alpha_e^{-\frac{1}{\alpha}} d\alpha_e).
    \end{equation}
    Thus we obtain
    \begin{equation}
    I_{\mathrm{Sub}} \le  \frac{C_{1}^{|E|-|V|+1}(Ct)^{|E|-\frac{1}{\alpha}(|E|-|V|+1)}}{\Gamma(|E|-\frac{1}{\alpha}(|E|-|V|+1)+1)} \leq \frac{(C't)^{|E|-\frac{1}{\alpha}(|E|-|V|+1)}}{(|E|-\frac{1}{\alpha}(|E|-|V|+1))!}.
    \end{equation}

 \item \textbf{(Critical case)} Let $I_{\mathrm{Crit}}$ denote the integral on the left hand side. We relax the constraints on $\alpha_e$ to $\sum_{e} \alpha_e \le C t$ again.  By \eqref{equ:theta_upper_bound} in Lemma \ref{lem:asy_theta}, we have

    \begin{equation}
        \theta_{\alpha}(x_{e}-y_{e},\alpha_e \tau)\le C(1+(\alpha_e\tau)^{-\frac{d}{\alpha}}).
    \end{equation}
    By translation invariance, we obtain
    \begin{equation}
    \begin{aligned}
&\int_{x_v\in \mathbb{T}^d}  \prod_{e\in E(\Gamma)}\theta_{\alpha}(x_{e}-y_{e},\alpha_e ) \prod_{v\in V(\Gamma)}dx_v\\
&=\int_{x_v\in \mathbb{T}^d,v_0=0}  \prod_{e\in E(\Gamma)}\theta_{\alpha}(x_{e}-y_{e},\alpha_e ) \prod_{v\in V(\Gamma)}dx_v \\
 &\le C_{1}^{|E|-|V|+1} \prod_{e \notin T} (1+(\alpha_e\tau)^{-\frac{d}{\alpha}})\int_{x_v\in \mathbb{T}^d, x_{v_0}=0}  \prod_{e\in E(\Gamma)\setminus T}\theta_{\alpha}(x_{e}-y_{e},\alpha_e ) \prod_{v\in V(\Gamma)}dx_v\\
 &= C_{1}^{|E|-|V|+1} \prod_{e \notin T} (1+(\alpha_e\tau)^{-\frac{d}{\alpha}}).
    \end{aligned}
    \end{equation}
When $d=1$, substituting this bound into the integral over
$\alpha_e$ gives
    \begin{equation}
    I_{\mathrm{Crit}} \le C_{1}^{|E|-|V|+1} \int_{\alpha_e \ge 0, \sum_e \alpha_e \le Ct} 1\prod_{e \in T} d\alpha_e \prod_{e \notin T} (1+(\alpha_e\tau)^{-\frac{1}{\alpha}}) d\alpha_e.
    \end{equation}

    Let $k = |E|-|V|+1$ be the number of cycles in the connected diagram $\Gamma$, then
    \begin{equation}
    \begin{aligned}
        I_{\mathrm{Crit}}&\le C_{1}^{|E|-|V|+1}\sum_{j=1}^k \binom{k}{j} \int_{\alpha_i \ge 0, \sum_i^{|E|} \alpha_i \le Ct} 1\prod_{i=1}^{j}(\alpha_i\tau)^{-\frac{1}{\alpha}} \prod_{i=1}^{|E|}d\alpha_i\\
        &=C_{1}^{|E|-|V|+1}\sum_{j=1}^k \binom{k}{j} \frac{\left[\Gamma\left(1 - \frac{1}{\alpha}\right)\right]^j\tau^{-\frac{j}{\alpha}}}{\Gamma\left(1 + |E| - \frac{j}{\alpha}\right)} (Ct)^{|E| - \frac{j}{\alpha}}.
    \end{aligned}
    \end{equation}

    Moreover, for a typical diagram, we have $|E| - k \ge  |E|/10$ and $k \ge |E|/10$. The binomial coefficient satisfies the elementary bound $\binom{k}{j} \le 2^k$. Consequently,
    \begin{equation}
        \begin{aligned}
            I_{\mathrm{Crit}}&\le (2C_1)^{|E|-|V|+1}\sum_{j=1}^k  \frac{\left[\Gamma\left(1 - \frac{1}{\alpha}\right)\right]^j\tau^{-\frac{j}{\alpha}}}{\Gamma\left(1 + |E| - \frac{j}{\alpha}\right)} (Ct)^{|E| - \frac{j}{\alpha}}\\
            &\le (C_2)^{|E|-|V|+1}\sum_{j=1}^{k}\frac{(Ct)^{|E|}(\tau t)^{-\frac{j}{\alpha}}}{(|E|)^{|E|-\frac{j}{\alpha}}}\\
            &\le (C_2)^{|E|-|V|+1}\frac{(Ct)^{|E|}}{(|E|)^{|E|}}\sum_{j=1}^{k}(\frac{\tau t}{|E|})^{-\frac{j}{\alpha}}.
        \end{aligned}
    \end{equation}
    The last sum is dominated by the first and last term, which corresponding to the supercritical and subcritical cases.
    \end{enumerate}

    Thus, we finish the proof.

\end{proof}

\begin{theorem}\label{thm:limit_cumulant}
Consider the mixed cumulant $\kappa_X(n_1,\dots,n_s)$ as   in  Definition \ref{Cumu}   for the $\alpha$-stable random band Gaussian matrices (Definition \ref{defmodel}) with $ \alpha>d=1$. Assume that $n = \sum_{i=1}^s n_i$ is even and $t_i \in (0,\infty)$ for $i=1,\dots,s$. Suppose further that the parameters $(\{n_i\}, W, N)$ satisfy the following scalings:
\begin{itemize}
    \item \textbf{Supercritical regime:} $n_i \sim t_i N^{1/3}$;
    \item \textbf{Subcritical/Critical regimes:} $n_i \sim t_i W^{\alpha/(3\alpha-1)}$.
\end{itemize}
Then, as $N \to \infty$,
\begin{equation}\label{equ:asy_kappa}
\frac{1}{n_1\cdots n_s}\,\kappa_X(n_1,\dots,n_s) = \big(1+o(1)\big)\sum_{\Gamma\in \mathcal{T}_{s}} \mathcal{L}_\Gamma \bigl(\{t_i\}, (W,N)\bigr),
\end{equation}
where $\mathcal{T}_s$ is the set of typical connected diagrams on $s$ marked points (Definition \ref{def:typical_diagram}), and
\begin{equation} \label{Ldef}
\mathcal{L}_\Gamma (\{t_i\}, (W,N)): = \begin{cases}
    \mathcal{F}_{\Gamma}^{\mathrm{(Super)}}(t_1,\dots,t_s), & \mathrm{Supercritical ~case,} \\
    N W^{-\frac{3\alpha}{3\alpha-1}}\mathcal{F}_{\Gamma}^{\mathrm{(Sub)}}(t_1,\dots,t_s), & \mathrm{Subcritical ~case,} \\
    \gamma^{|E|-|V|}\mathcal{F}_{\Gamma}^{\mathrm{(Crit)}}(t_1,\dots,t_s;\gamma^{\alpha}), & \mathrm{Critical ~case ~} (W \sim (\gamma N)^{1-\frac{1}{3\alpha}}).
\end{cases}
\end{equation}
Furthermore, there exist  constants $C, C_{\gamma} > 0$ such that the sum is uniformly bounded by
\begin{equation}\label{equ:limit_upper_bound}
\sum_{\Gamma\in \mathcal{T}_{s}} \mathcal{L}_\Gamma \le \begin{cases}
    e^{Ct^{3/2}}, & \mathrm{Supercritical ~case,} \\
    N W^{-\frac{3\alpha}{3\alpha-1}}e^{Ct^{\frac{3\alpha-1}{2\alpha-1}}}, & \mathrm{Subcritical ~case,} \\
    e^{Ct^{3/2}} + e^{C_{\gamma}t^{\frac{3\alpha-1}{2\alpha-1}}}, & \mathrm{Critical ~case.}
\end{cases}
\end{equation}
\end{theorem}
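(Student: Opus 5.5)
The plan has three steps. First expand the cumulant over connected diagrams, then reduce to typical diagrams and pass to the limit diagram by diagram. Finally, justify exchanging the limit with the infinite sum by a uniform summable bound, which also yields \eqref{equ:limit_upper_bound}.

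By Lemma~\ref{thm:u=F}, $\kappa_X(n_1,\dots,n_s)=\sum_{\Gamma\in\mathscr{D}^*_{s;\beta}}F_\Gamma(\{n_j\})$ with $A_N=0$, so only diagrams without open edges occur. We split this sum into typical and non-typical diagrams.

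\textbf{Non-typical diagrams.} Via the vertex-splitting map $\phi$ of Lemma~\ref{lem:vertex_split}, a non-typical $\Gamma$ has $|V(\Gamma)|<|V(\phi(\Gamma))|$ while $|E|-|V|$ is unchanged. By Proposition~\ref{prop:F_upper_bound}\ref{item:upper_noboundary}, its contribution carries at least one fewer factor of $n$ than the corresponding typical bound. We therefore expect it to be $o(n_1\cdots n_s\,\mathcal{L})$ in each regime.

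This needs a check on the normalisation. In the subcritical regime $b_n\asymp n^{1-1/\alpha}W^{-1}$ for $\alpha$-stable walks with $n\ll W^\alpha$. With $n\sim W^{\alpha/(3\alpha-1)}$ this gives $n^2b_n\asymp n^{3-1/\alpha}/W=O(1)$, and $Nb_n/n\asymp NW^{-3\alpha/(3\alpha-1)}\,n^{s}$-normalised. So each loop factor $n^2b_n$ is $O(1)$, and losing one power of $n$ makes the term negligible. Counting typical diagrams with multiplicity $2^{|E|}$ and using Lemma~\ref{diagramno} controls the whole non-typical sum, exactly as in the proof of Theorem~\ref{prop:U_asy}.

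\textbf{Typical diagrams, fixed $\Gamma$.} For each fixed typical $\Gamma$, Theorem~\ref{thm:homogeneous_theorem} gives $F_\Gamma/(n_1\cdots n_s)=(1+o(1))\mathcal{L}_\Gamma$. It remains to interchange the limit with the infinite sum over $\mathcal{T}_s$. I would use dominated convergence in $\ell=|E|-|V|$: the finite-$N$ bound from Proposition~\ref{prop:F_upper_bound} with the sharper $b_n$ above yields, uniformly in $N$, a tail bound summable in $\ell$. Truncating at $\ell\le K$ then leaves finitely many diagrams, each converging.

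\textbf{Uniform bound \eqref{equ:limit_upper_bound}.} Combine Lemma~\ref{lem:limit_function_upper_bound} with Lemma~\ref{diagramno}, using $|E|=3\ell+s$, $|V|=2\ell+s$, and $C_\Gamma\le C^{|E|}$, so that $\mathrm{Vol}$ and the constraint counting agree up to exponential factors.

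In the supercritical case the sum is bounded by
\[
\sum_\ell (C\ell)^{\ell}\,\frac{(Ct)^{3\ell+s}}{(3\ell+s)!},
\]
which behaves like $\sum_\ell (Ct^{3})^\ell/(2\ell)!\le e^{Ct^{3/2}}$.

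In the subcritical case the exponent is $|E|-(\ell+1)/\alpha=\ell(3-1/\alpha)+O(1)$. This gives
\[
\sum_\ell \ell^\ell\,\frac{(Ct)^{\ell(3\alpha-1)/\alpha}}{\Gamma\!\big(\ell(3\alpha-1)/\alpha\big)},
\]
which behaves like $\sum_\ell (Ct^{(3\alpha-1)/\alpha})^\ell/\ell!^{(2\alpha-1)/\alpha}\le e^{Ct^{(3\alpha-1)/(2\alpha-1)}}$.

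The critical case is the sum of these two estimates, with the factor $\gamma^{\ell}$ absorbed into $C_\gamma$.

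\textbf{Main obstacle.} The hardest step is the uniform-in-$N$ domination needed for the interchange of limit and sum. Assumption~\ref{itm:B1} with the crude $b_n$ must be sharp enough, namely $b_n\asymp n^{1-1/\alpha}/W$ (up to the $n/N$ term in the critical case), to reproduce the $\ell!^{(2\alpha-1)/\alpha}$ decay. I would first establish this $b_n$ bound from Theorem~\ref{prop:alpha_llt} and Lemma~\ref{lem:asy_theta}(iii), since $\sum_{i\le n}\theta_\alpha(\cdot,i(W/L)^\alpha)/N\lesssim n/N+n^{1-1/\alpha}/W$.

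If the Gamma-function gain is lost this way, the fallback is to redo the bound in Proposition~\ref{prop:F_upper_bound} with the per-edge decay $p_{w}(x,y)\lesssim 1/N+w^{-1/\alpha}/W$ inserted before summing over $w_e$. This mirrors the Dirichlet-integral argument of Lemma~\ref{lem:limit_function_upper_bound} at the discrete level.
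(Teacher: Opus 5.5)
Your proposal is correct and follows essentially the same route as the paper. The paper expands over connected diagrams, obtains uniform summability from the bounds \eqref{equ:4.20}--\eqref{kappab-4} with $n^2b_n=O(1)$ (and $1+Nb_n/n\asymp NW^{-3\alpha/(3\alpha-1)}$ in the subcritical case), takes termwise limits from Theorem~\ref{thm:homogeneous_theorem}, and gets \eqref{equ:limit_upper_bound} from Lemma~\ref{lem:limit_function_upper_bound} combined with Lemma~\ref{diagramno}. Your explicit argument that non-typical diagrams lose a factor of $n$ fills in a step the paper leaves implicit.

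The step you flag as the main obstacle is milder than you think, and your fallback is unnecessary. The interchange of limit and sum needs only $n^2b_n=O(1)$, because the $(2\ell+s-1)!$ in Proposition~\ref{prop:F_upper_bound} already beats the diagram count $(C(\ell+1))^{\ell+1}$. The $(\ell!)^{(2\alpha-1)/\alpha}$ decay is needed only for the limiting bound, and there it comes from Lemma~\ref{lem:limit_function_upper_bound}, not from \ref{itm:B1}.
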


\begin{proof}
We first verify   \eqref{equ:limit_upper_bound}. Set
\begin{equation}
    \mathcal{G}_{\Gamma}(t):=\begin{cases}
        \frac{(Ct)^{|E|}}{(|E|)!}, &\text{Supercritical case},\\
        NW^{-\frac{3\alpha}{3\alpha-1}}\frac{(Ct)^{|E|-\frac{1}{\alpha}(|E|-|V|+1)}}{(|E|-\frac{1}{\alpha}(|E|-|V|+1))!}, &\text{Subcritical case},\\
        \frac{(Ct)^{|E|}}{(|E|)!}+\frac{(C_{\gamma}t)^{|E|-\frac{1}{\alpha}(|E|-|V|+1)}}{(|E|-\frac{1}{\alpha}(|E|-|V|+1))!}, &\text{Critical case},
    \end{cases}
\end{equation}
we see from  Definition \ref{def:limiting_function} and Lemma \ref{lem:limit_function_upper_bound} that
\begin{equation}
    \sum_{\Gamma\in \mathcal{T}_{s}} \mathcal{L}_\Gamma (\{t_i\}, (W,N))\le \sum_{\Gamma\in \mathcal{T}_{s}} \mathcal{G}_{\Gamma}(t).
\end{equation}
Then following the lines of \eqref{kappab-1}-\eqref{kappab-4}, we obtain
\begin{equation}
\sum_{\Gamma\in \mathcal{T}_{s}} \mathcal{G}_{\Gamma}(t)\le \begin{cases}
        e^{Ct^{\frac{3}{2}}},&\text{Supercritical case}\\
        NW^{-\frac{3\alpha}{3\alpha-1}}e^{Ct^{\frac{3\alpha-1}{2\alpha-1}}},&\text{Subcritical case},\\
        e^{Ct^{\frac{3}{2}}} + e^{C_{\gamma}t^{\frac{3\alpha-1}{2\alpha-1}}}&\text{Critical case}.
    \end{cases}
\end{equation}

Next for \eqref{equ:asy_kappa}, we need to consider the cumulant $\kappa$, which corresponds to connected diagrams. By \eqref{equ:4.20}, \eqref{equ:4.22} and \eqref{kappab-4}, we know that
\begin{equation}
\sum_{\Gamma \in \mathscr{D}_{s;\beta}^*}
\frac{1}{n^s\big(1+\frac{1}{n}Nb_n\big)}
F_{\Gamma}(\{n_j\}_{j=1}^s)
\end{equation}
is absolutely convergent. Hence it suffices to consider the termwise limit of the diagram functions, which is given termwise by $\mathcal{L}_\Gamma \bigl(\{t_i\}; (W,N)\bigr)$ according to Theorem~\ref{thm:homogeneous_theorem}   and the established result \eqref{equ:limit_upper_bound}. Here, for the subcritical case, we  have used
\begin{equation}
    \big(1+\frac{1}{n}Nb_n\big)\sim \frac{1}{n}N \frac{1}{W}\sum_{i=1}^n n^{-\frac{1}{\alpha}}\sim \frac{N}{n^{\frac{1}{\alpha}}W}\sim N W^{-\frac{3\alpha}{3\alpha-1}}.
\end{equation}
This completes   the proof of \eqref{equ:asy_kappa}.
\end{proof}

To derive the limit correlation measure, we need the relationship between cumulants and mixed moments.
\begin{theorem}\label{prop:mix_prod}
For   polynomials $P_i(X)$ defined via Chebyshev polynomials of  second kind
\begin{equation}
P_i(X) :=
\begin{cases}
\displaystyle \Bigl(\frac{1}{n_i+1}\, U_{n_i}\bigl(\tfrac{X}{2}\bigr)\Bigr)^{\!m}, & i = 1,\ldots, q, \\[8pt]
\displaystyle \Bigl(\frac{1}{n_i+1}\, U_{n_i}\bigl(\tfrac{X}{2}\bigr)\Bigr)^{\!m-1} \frac{1}{n_i+2}\, U_{n_i+1}\bigl(\tfrac{X}{2}\bigr), & i = q+1,\ldots, s,
\end{cases}
\end{equation} we define $\tau(I)$ recursively  for any subset $I \subset [s]$ by
\begin{equation}\label{equ:tau}
\tau(I) := \mathbb{E}\!\Big[\,\prod_{i \in I} \operatorname{Tr} P_i(X)\Big] \;-\; \sum_{\Pi \in \mathscr{P}(I)} \prod_{\pi \in \Pi} \tau(\pi),
\end{equation}
where the sum runs over all \emph{proper} partitions of $I$.
\begin{enumerate}
    \item[(i)] {\bf Even case:}  If   $\displaystyle S(I) := \sum_{i \in I \cap [q]} m n_i \;+\!\! \sum_{i \in I \setminus [q]} (m n_i + 1)$ is even, and $t_i \in (0,\infty)$ for $i=1,\dots,s$ such that
    \begin{itemize}
    \item Supercritical regime: $n_i \sim t_i N^{1/3}$;
    \item Subcritical/Critical regimes: $n_i \sim t_i W^{\alpha/(3\alpha-1)}$.
    \end{itemize}
    Then
    \begin{equation}\label{equ:asy_tau}
    \tau(I) = \bigl(1+o(1)\bigr) \sum_{\Gamma \in \mathcal{T}_{s}}
    \int_{\xi_1}\! \cdots\! \int_{\xi_s}
    \Bigl[\,\prod_{i \in I} \mathcal{Q}_m(\xi_i)\Bigr]\,
    \mathcal{L}_\Gamma\bigl(\{\xi_i t_i\}_{i\in I}; (W,N)\bigr)
    \,d\xi_1 \cdots d\xi_{s},
    \end{equation}
     where
     \begin{equation}\label{eq:Pt_def}
        \mathcal{Q}_m(\xi) := \xi \, \mathcal{P}_m(\xi), \quad \mathcal{P}_m(\xi) = \frac{1}{2^{m-1}(m-2)!} \sum_{j=0}^{m} (-1)^j \binom{m}{j} \left( m - 2j - \xi \right)_+^{m-2}.
    \end{equation}

    \item[(ii)] {\bf Odd case:} If $S(I)$ is odd, then
    $
    \tau(I) = 0.$
\end{enumerate}

\end{theorem}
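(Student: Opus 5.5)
We reduce $\tau(I)$ to the Chebyshev cumulants $\kappa_X$ of Theorem~\ref{thm:limit_cumulant} by linearizing each $P_i$ into single Chebyshev polynomials.

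\textbf{Step 1: linearization.} Apply \eqref{prodexpansion} to each $P_i$. This gives
\[
\operatorname{Tr} P_i(X)=\sum_{k\ge0} c_i(k)\operatorname{Tr} U_k\bigl(\tfrac X2\bigr),
\]
with explicit nonnegative coefficients $c_i(k)$. These coefficients are supported on $k\le mn_i$ (respectively $k\le mn_i+1$) and on $k$ of the same parity as $mn_i$ (respectively $mn_i+1$). Since $U_aU_b=\sum_{j} U_{a+b-2j}$, the coefficient $c_i(k)$ counts lattice configurations. After normalizing by $(n_i+1)^{-m}$ (or $(n_i+1)^{-(m-1)}(n_i+2)^{-1}$), we have
\[
c_i(k)=\frac{2}{n_i}\,\mathcal{P}_m\Bigl(\frac{k}{n_i}\Bigr)\bigl(1+o(1)\bigr)
\]
uniformly for $k/n_i$ in compacts. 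Here $\mathcal{P}_m$ is the density of $m-\sum_{j=1}^m 2U_j$ with $U_j$ uniform on $[0,1]$, a B-spline. This is the piecewise polynomial in \eqref{eq:Pt_def}, and the factor $2$ accounts for parity. I would prove this local limit by viewing $(a_1,\dots,a_m)\mapsto$ indices of the product as a sum over $m-1$ independent uniform reductions, which yields the $(m-1)$-fold convolution of uniforms. This step belongs to the Chebyshev combinatorics of Appendix~\ref{sec:chebyshev}.

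\textbf{Step 2: from $\tau$ to $\kappa$.} The recursion \eqref{equ:tau} is exactly the moment–cumulant relation, and each $\operatorname{Tr}P_i$ is linear in the $\operatorname{Tr}U_k$. Hence $\tau(I)$ is the joint cumulant of $\{\operatorname{Tr}P_i\}_{i\in I}$, and multilinearity of cumulants gives
\[
\tau(I)=\sum_{\{k_i\}_{i\in I}}\prod_{i\in I}c_i(k_i)\,\kappa_X\bigl(\{k_i\}_{i\in I}\bigr).
\]

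\textbf{Step 3: odd case.} Each $k_i$ has the parity of $S_i$, so $\sum_i k_i\equiv S(I)$. If this sum is odd, every $\kappa_X(\{k_i\})$ vanishes. Indeed, every diagram function requires $\sum_j n_j=\sum_j\bigl(2t_j+\sum_{e\in\partial D_j}w_e\bigr)$ to be even, because each edge is counted twice in $\sum_j\sum_{e\in\partial D_j}w_e$. Equivalently, the Gaussian measure is symmetric under $X\mapsto -X$. Hence $\tau(I)=0$, which gives (ii).

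\textbf{Step 4: even case and the Riemann sum.} Write $k_i=\xi_i n_i$. By Theorem~\ref{thm:limit_cumulant},
\[
\kappa_X(\{k_i\})=\Bigl(\prod_i k_i\Bigr)\bigl(1+o(1)\bigr)\sum_{\Gamma}\mathcal{L}_\Gamma\bigl(\{\xi_it_i\};(W,N)\bigr).
\]
The factor $k_i\cdot c_i(k_i)$ is approximately $\xi_i\mathcal{P}_m(\xi_i)\cdot(2/n_i)\,n_i=2\,\mathcal{Q}_m(\xi_i)$. The sum over $k_i$ in a fixed parity class, with mesh $2/n_i$, absorbs the factor $2$ and becomes $\int d\xi_i$. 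This produces \eqref{equ:asy_tau}.

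\textbf{Main obstacle: uniformity.} The main difficulty is justifying the exchange of limit and summation, i.e.\ uniformity of the $(1+o(1))$. Theorem~\ref{thm:limit_cumulant} is pointwise in $\{t_i\}$, so we need:
\begin{itemize}
\item[(a)] small $k_i$ (e.g.\ $\xi_i<\varepsilon$), where the scaling fails;
\item[(b)] dominated convergence over $\xi$ and over diagrams.
\end{itemize}
For (b), the key input is Theorem~\ref{prop:U_asy}\ref{item:U_upper}. Together with the uniform bound \eqref{equ:limit_upper_bound}, it gives a dominating function. Since the total $\sum k_i\le Cn$, the factor $e^{C(n^2b_n)}$ stays bounded. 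The $\xi$-range is compact, $[0,m]$, and $\mathcal{Q}_m$ is bounded. For (a), the region $\xi_i<\varepsilon$ contributes $O(\varepsilon)$ times the normalizing scale, via the same upper bound applied with smaller $n$. Here one must check that $Nb_k$ for $k<\varepsilon n$ is still controlled by $Nb_n$, which holds by monotonicity of $b_n$ in $n$. Finally, I would verify that the local limit for $c_i(k)$ is uniform near the breakpoints of $\mathcal{P}_m$. This holds because $\mathcal{P}_m$ is continuous for $m\ge2$, and for $m=2$ the bounded jumps are handled by the Riemann-sum argument.
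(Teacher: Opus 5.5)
Your architecture is the paper's. You linearize each $P_i$ and use that \eqref{equ:tau} is the moment--cumulant recursion, so that $\tau(I)=\sum_{\{k_i\}}\prod_{i\in I}c_i(k_i)\,\kappa_X(\{k_i\})$. You kill the odd case by parity, insert Theorem~\ref{thm:limit_cumulant}, and pass to an integral. Your uniformity discussion is more explicit than the paper's ``interchanging integration and summation''.

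\textbf{Step~1 is false.} Its key asymptotic is wrong both in order of magnitude and in limiting profile.
\begin{itemize}
\item \emph{Order.} The $c_i(k)$ are nonnegative, and $P_i(x)=\sum_k c_i(k)U_k(x/2)$ evaluated at $x=2$ gives $1=\sum_k c_i(k)(k+1)$. So you cannot have $c_i(k)\asymp n_i^{-1}$ on $\asymp n_i$ indices with $k\asymp n_i$; the coefficients are $O(n_i^{-2})$.
\item \emph{Profile.} $\mathcal{P}_m$ in \eqref{eq:Pt_def} is not the density $g_m$ of $m-2\sum_{j=1}^m U_j$. That density has exponent $m-1$ and prefactor $1/(2^m(m-1)!)$. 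One checks instead that $\mathcal{P}_m=-2g_m'$, a spline of degree $m-2$.
\item \emph{Example $m=2$.} Here $U_n^2=U_0+U_2+\cdots+U_{2n}$, so $c(k)=(n+1)^{-2}$ is flat. This matches $\frac{2}{n^2}\mathcal{P}_2$ with $\mathcal{P}_2\equiv\frac12$ on $(0,2)$, not a triangle of height $\asymp n^{-1}$.
\end{itemize}
The correct statement is $c_i(k)=\frac{2}{n_i^2}\mathcal{P}_m(k/n_i)(1+o(1))$. Equivalently, the coefficient of $\frac{1}{k+1}U_k$ is $\frac{2}{n_i}\mathcal{Q}_m(k/n_i)(1+o(1))$, which is exactly Lemma~\ref{lem:c_asymptotics}.

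Your ``convolution of uniforms'' heuristic drops the Clebsch--Gordan constraint $k'\ge|k-n|$ in $U_kU_n=\sum_jU_{k+n-2j}$. Keeping it, the multiplicity of $U_k$ in $U_n^m$ is $w(k)-w(k+2)$, where $w(k)$ counts tuples $(a_j)\in\{-n,-n+2,\dots,n\}^m$ summing to $k$. This is a discrete derivative of $g_m$, which gives the correct $\mathcal{P}_m=-2g_m'$. Alternatively, localize the orthogonality integral \eqref{eq:exact_c} near $\theta=0,\pi$ as the paper does. This gives $\frac{2}{\pi}\int_0^\infty z^{1-m}\sin^m z\,\sin(\xi z)\,dz=-2g_m'(\xi)$.

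\textbf{Step~4 contains a compensating slip.} A sum over one parity class of $k_i$ equals $\frac{n_i}{2}\int d\xi_i\,(1+o(1))$, not $\frac12\int d\xi_i$. With your $k_ic_i(k_i)\approx2\mathcal{Q}_m(\xi_i)$, you would actually obtain an extra factor $\prod_{i\in I}n_i$. You arrive at \eqref{equ:asy_tau} only because the two factor-$n_i$ errors cancel. With the corrected coefficients, $k_ic_i(k_i)\approx\frac{2}{n_i}\mathcal{Q}_m(\xi_i)$ and $\frac{2}{n_i}\cdot\frac{n_i}{2}=1$, which gives \eqref{equ:asy_tau}. The rest of your argument (parity, multilinearity of cumulants, domination via Theorem~\ref{prop:U_asy}\ref{item:U_upper} and \eqref{equ:limit_upper_bound}) then goes through as in the paper. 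Note that the corrected $\mathcal{P}_m$ is continuous only for $m\ge3$; this covers the case $m\in\{4,8,10\}$ used later.
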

\begin{proof}
We prove only the simplified case \(q = s\) and \(I = [s]\) since the general case is very similar. By Lemmas \ref{lem:c_asymptotics} and \ref{lem:c_perturbed}, we can expand each
\begin{equation}
    \bigl( U_{n_i}(x) \bigr)^{m} = \sum_{k \ge 0} c_m(n_i; k) \, U_k(x).
\end{equation}

Hence, we obtain
\begin{equation}
    \mathbb{E}\!\Big[\prod_{i=1}^s\operatorname{Tr} P_i(X)\Big]
= \sum_{k_1,\dots,k_s} \,\prod_{i=1}^s \frac{c_m(n_i; k_i)}{(n_i+1)^m}
\; \mathbb{E}\!\Big[\prod_{i=1}^s\frac{1}{k_i+1}U_{k_i}(X)\Big].
\end{equation}
For each inner term, the recurrence in \eqref{equ:tau} is identical to that  in \eqref{equ:T}. Consequently,
\begin{equation}
\tau([s]) = \sum_{k_1,\dots,k_s} \,\bigg(\prod_{i=1}^s \frac{c_m(n_i; k_i)}{(n_i+1)^m}\bigg)
\; \kappa(k_1,\ldots,k_s).
\end{equation}
By Lemma \ref{lem:c_asymptotics}, the product \(\prod_{i=1}^s c_m(n_i; k_i)\) vanishes unless \(k_i \equiv m n_i \pmod 2\) for every \(i\in [s]\), and the following asymptotic relation holds:
\begin{equation}
\prod_{i=1}^s\frac{c_m(n_i; k_i)}{(n_i+1)^m}
\sim \prod_{i=1}^s\frac{2\mathcal{Q}_m(\xi_i)}{n_i+1}.
\end{equation}

Finally, expanding \(\kappa(k_1,\ldots,k_s)\)
via \eqref{equ:asy_kappa} and  interchanging integration and summation completes the proof.
\end{proof}
\begin{definition}[Limit $k$-point correlation measure in critical regime] \label{def:Rcrit}
For a fixed $\gamma > 0$, the \textbf{critical correlation measure} $R_{k}^{\mathrm{(Crit)}}(\gamma; dx_1, \dots, dx_k)$ is the unique Radon measure on $\mathbb{R}^k$ whose mixed moments are determined by its sinc transform. Specifically, for any $t_1, \dots, t_k > 0$ and $m \in \{4, 8, 10\}$, the measure satisfies
\begin{equation}\label{equ:def_Rcrit}
    \int_{\mathbb{R}^k} \prod_{j=1}^k \frac{\sin^{m} (t_j\sqrt{-x_j})}{(t_j\sqrt{-x_j})^{m}} \, R_{k}^{\mathrm{(Crit)}}(\gamma; dx_1, \dots, dx_k) = \sum_{\Pi \in \mathscr{P}([k])} \prod_{\pi \in \Pi} \tau_{\gamma}(\pi),
\end{equation}
where the right-hand side is expressed via the partition sum of limiting cumulants, with the critical cumulant limit $\tau_{\gamma}$ given by the sum over connected typical diagrams
\begin{equation*}
    \tau_{\gamma}(\{t_j\}_{j \in \pi}) = \sum_{\Gamma \in \mathcal{T}_{|\pi|}} \gamma^{|E|-|V|} \int \left[ \prod_{j \in \pi} \mathcal{Q}_m(\xi_j) \right] \mathcal{F}^{\mathrm{(Crit)}}_{\Gamma}( \{\xi_j t_j\}_{j\in \pi};\gamma^\alpha) \prod d\xi_j.
\end{equation*}
\end{definition}

\begin{definition}[Limit $1$-point measure in subcritical regime] \label{def:Rsub}
The \textbf{subcritical 1-point measure} $R_{1}^{\mathrm{(Sub)}}(dx)$ is the unique Radon measure on $\mathbb{R}$ determined by its sinc transform: for any $t > 0$,
\begin{equation}\label{equ:def_Rsub}
    \int_{-\infty}^{\infty} \frac{\sin^{m} (t\sqrt{-x})}{(t\sqrt{-x})^{m}} \, R_{1}^{\mathrm{(Sub)}}(dx) = \sum_{\Gamma \in \mathcal{T}_{1}} \int_{0}^{1} \mathcal{Q}_m(\xi) \mathcal{F}^{\mathrm{(Sub)}}_{\Gamma}(\xi t) \, d\xi.
\end{equation}
\end{definition}
The rescaled $k$-point correlation measure is defined as the pushforward of $R_{N;k}$ under the map $T_{s_N}(x_1,\ldots,x_k) = (2 + s_N x_1, \ldots, 2 + s_N x_k)$, denoted by $(T_{s_N})_* R_{N;k}(dx_1, \dots, dx_k)$. Specifically, for any continuous function $g$ with compact support,
\begin{equation}
\int_{\mathbb{R}^k} g(y) \, (T_{s_N})_* R_{N;k}(dy_1, \ldots, dy_k) = \int_{\mathbb{R}^k} g(T_{s_N}(x)) \, R_{N;k}(dx_1, \ldots, dx_k).
\end{equation}
\begin{theorem}[Edge statistics of $\alpha$-stable band matrices] \label{thm:convergence_proof}
For the $\alpha$-stable Gaussian band matrices defined in Definition \ref{defmodel} with $\alpha>d=1$, the scaling limits hold as $N \to \infty$:
    \begin{enumerate}
        \item[(i)] \textbf{(Supercritical regime)} When  $W \gg N^{1-\frac{1}{3\alpha}}$, with $s_N=N^{-2/3}$
        the rescaled $k$-point correlation   measure
        \begin{equation}
       (T_{s_N})_*R_{N;k}(dx_1, \ldots, dx_k)
        \end{equation}

        converges  vaguely to those of the GOE/GUE  Airy point  process.

        \item[(ii)] \textbf{(Critical regime)} When  $W \sim \gamma N^{1-\frac{1}{3\alpha}}$ with some   constant $\gamma > 0$, with  $s_N = W^{-\frac{2\alpha}{3\alpha-1}}$ the rescaled $k$-point correlation measure
        \begin{equation}
            (T_{s_N})_*R_{N;k}(dx_1, \ldots, dx_k)
        \end{equation}

        converges vaguely  to $R^{\mathrm{(Crit)}}_{k}(\gamma;d y_1,\ldots,d y_k)$.

\item[(iii)] \textbf{(Subcritical regime)} When  $(\log N)^{1/\alpha} \ll W \ll N^{1-\frac{1}{3\alpha}}$, let $s_N = W^{-\frac{2\alpha}{3\alpha-1}}$ and $\gamma_N = N W^{-\frac{3\alpha}{3\alpha-1}}$. As $N \to \infty$, for any $k \ge 1$, the rescaled $k$-point correlation measures satisfy the  factorization property:
\begin{equation}
    \gamma_N^{-k}  (T_{s_N})_* R_{N;k}(dx_1, \dots, dx_k) \rightarrow \prod_{j=1}^k R^{\mathrm{(Sub)}}_1(dy_j).
\end{equation}
    \end{enumerate}
\end{theorem}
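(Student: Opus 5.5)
The plan is the moment method in its Chebyshev form. The limiting mixed Chebyshev moments are computed from Theorem~\ref{prop:mix_prod}, and the correlation measures are then identified through their sinc transforms, as in Definitions~\ref{def:Rcrit}--\ref{def:Rsub}. The starting observation concerns the normalized test polynomials $\frac{1}{n+1}U_n(X/2)$. With $n\sim t\,s_N^{-1/2}$ and an eigenvalue written as $\lambda=2+s_N x$, one sets $\lambda/2=\cos\phi$, so that $\phi\approx\sqrt{-s_N x}$. This gives
\[
\frac{1}{n+1}U_n(\lambda/2)=\frac{\sin((n+1)\phi)}{(n+1)\sin\phi}\longrightarrow \frac{\sin(t\sqrt{-x})}{t\sqrt{-x}}
\]
uniformly for $x$ in compacts. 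Hence $\mathbb{E}\prod_i \mathrm{Tr}\,P_i(X)$, with the $m$-th powers used there, is asymptotically the integral of $\prod_j \frac{\sin^m(t_j\sqrt{-x_j})}{(t_j\sqrt{-x_j})^m}$ against the rescaled correlation measures $(T_{s_N})_*R_{N;k}$, summed over coincidence patterns. One then passes from moments of traces to $k$-point correlation measures by Möbius inversion over set partitions, and the partition sum on the right-hand side of \eqref{equ:def_Rcrit} is exactly what this inversion produces. The choice $m\in\{4,8,10\}$ is made so that the test functions decay like $|x|^{-m/2}$ on the right of the edge, where $x>0$ and $\sqrt{-x}$ is imaginary. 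On that side the functions become $\sinh$-type and grow, so one needs the a priori bound \eqref{eq:Chebyshev_trace_bound}, together with the Gaussian-type tails \eqref{equ:limit_upper_bound}, to control the mass beyond the edge.

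The steps are as follows. First, apply Theorem~\ref{prop:mix_prod} in each regime. Since $n_i\sim t_i s_N^{-1/2}$, the scaling $s_N=W^{-2\alpha/(3\alpha-1)}$ (respectively $N^{-2/3}$) matches Theorem~\ref{thm:limit_cumulant}, so the $\tau(I)$ converge:
\begin{itemize}
\item in the critical regime, to $\tau_\gamma$;
\item in the subcritical regime, to $\gamma_N^{|I|}$ times a product over singletons, because each cumulant with $|I|\ge2$ is only $O(\gamma_N)=o(\gamma_N^{|I|})$ (this uses $\gamma_N\to\infty$, which follows from $W\ll N^{1-1/(3\alpha)}$);
\item in the supercritical regime, to the diagram sums $\mathcal{F}^{(\mathrm{Super})}$, which coincide with the GOE/GUE ones. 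Indeed, the same diagram functions arise for Wigner matrices, where the Airy limit is known from \cite{liu2025edge}.
\end{itemize}
Second, prove tightness and uniqueness. The uniform bounds $e^{Ct^{3/2}}$ and $e^{Ct^{(3\alpha-1)/(2\alpha-1)}}$ give growth strictly below $e^{ct^2}$, so the family of sinc transforms (in $t$ complexified, equivalently via Laplace transforms of the $x>0$ part) determines a unique Radon measure. Combined with vague compactness, which comes from positivity of the $\sin^m$ test functions near $x\le 0$, this makes every subsequential limit equal to the stated $R_k$. Third, carry out the subcritical factorization: divide by $\gamma_N^k$ and observe that only the all-singleton partition survives.

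The main obstacle is the second step, namely turning convergence of sinc-type transforms into vague convergence of correlation measures. The transforms are neither bounded nor compactly supported on the right side, and at each finite $N$ the test polynomials only approximate the sinc functions in the window $|x|\ll s_N^{-1}$. I would first try to localize. On $\{x\le -K\}$ the test functions are $O(K^{-m/2})$, so those eigenvalues contribute only $O(N s_N^{-m/2}\cdot)$ relative terms, which the normalization controls. On $\{x\ge K\}$ I would use exponential moment bounds derived from \eqref{eq:Chebyshev_trace_bound} with enlarged $t$. After localization, I would approximate compactly supported continuous functions by finite linear combinations of products of $\frac{\sin^m(t\sqrt{-x})}{(t\sqrt{-x})^m}$ over varying $t$, via a Stone--Weierstrass argument on the algebra they generate; this is the step I expect to require the most care.
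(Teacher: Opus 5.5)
Up to the last step your plan is the paper's proof. It uses the sinc limit of $\frac{1}{n+1}U_n$ (\cite[Lemma 4.8]{liu2025edge}) and takes the limits of $\tau(I)$ from Theorem~\ref{prop:mix_prod}. It then passes from trace moments to $R_{N;k}$ by inclusion--exclusion; the paper writes this as an induction on $k$ starting from $\int f\otimes g\,dR_{N;2}=\mathbb{E}[\operatorname{Tr}f\operatorname{Tr}g]-\int fg\,dR_{N;1}$. In the subcritical regime, only the all-singleton partition survives after dividing by $\gamma_N^k$. Your identification of the supercritical limit with the GOE/GUE diagram sums is a fine way to supply the case the paper omits.

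The gap is in the step you flagged. The paper does not prove the passage from sinc transforms to vague convergence; it invokes the continuity theorem \cite[Theorem B.10]{liu2023edge}. Your substitute, Stone--Weierstrass on the algebra generated by the kernels, cannot be run with the available input, for three reasons.
\begin{enumerate}
\item \emph{Products are not available.} Stone--Weierstrass needs limits of integrals of products $\prod_j\frac{\sin^m(t_j\sqrt{-x})}{(t_j\sqrt{-x})^m}$ with different $t_j$ in the same variable $x$. But each trace in Theorem~\ref{prop:mix_prod} contains a single $U_{n_i}$ (to the $m$-th power, or with one factor replaced by $U_{n_i+1}$). So in each variable you only control the dilates $x\mapsto\frac{\sin^m(t\sqrt{-x})}{(t\sqrt{-x})^m}$, $t>0$, whose linear span is not an algebra. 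Getting products would require new linearization asymptotics for $\prod_jU_{n_j}$ with distinct $n_j\sim t_js_N^{-1/2}$, which are not in the paper.
\item \emph{The left tail is uncontrolled.} A uniform approximation on $[-K,K]$ says nothing on $(-\infty,-K)$, where the rescaled measures carry mass growing like $K^{3/2}$.
\item \emph{The growth bound does not give uniqueness.} The bound $e^{Ct^{(3\alpha-1)/(2\alpha-1)}}$ controls the right tail but does not determine the measure by itself. For non-real $t$ the defining integral diverges, since $|\sin(t\sqrt{-x})|$ grows like $e^{|\mathrm{Im}\,t|\sqrt{|x|}}$ on the left, where the measure has infinite mass.
\end{enumerate}
Recovering a measure from the dilates of a fixed kernel is an inversion problem of Tauberian type, and that is what the cited continuity theorem supplies. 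Use it as the paper does, both for $k=1$ and at each stage of the induction.

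Two further corrections. First, the $|x|^{-m/2}$ decay of the kernels holds on the left of the edge ($x<0$), not on the right as you wrote. Second, for even $m$ the polynomial $\bigl(\frac{1}{n+1}U_n(X/2)\bigr)^m$ is even in $X$, so it also sees the eigenvalues near $-2$. Hence $\mathbb{E}\prod_i\operatorname{Tr}P_i(X)$ is not an integral against the upper-edge measures alone. The companions $\bigl(\frac{1}{n_i+1}U_{n_i}\bigr)^{m-1}\frac{1}{n_i+2}U_{n_i+1}$ in Theorem~\ref{prop:mix_prod} are odd under $X\mapsto -X$. Averaging the two kinds of $P_i$ therefore gives a test function that is asymptotically the sinc kernel near $+2$ and cancels near $-2$. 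This, together with $\tau(I)=0$ for odd $S(I)$, is what lets you isolate the upper edge.
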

We remark that in the subcritical regime this product limit indicates that the local eigenvalue statistics in the subcritical regime become asymptotically independent within the spectral window of scale $s_N$. Consequently, the rescaled $k$-point correlation measures decouple into the $k$-fold tensor product of the limiting $1$-point measure $R^{\mathrm{(Sub)}}_1$.
\begin{proof}

We prove the \textbf{critical case} first. We can proceed along the same lines  as in the critical one to complete the proof of   the \textbf{supercritical} case,  therefore, we omit the details.

Starting from the recursive definition \eqref{equ:tau},  for $m \in \{4,8,10\}$ we have
\begin{equation}
    \mathbb{E}\!\left[\,\prod_{i=1}^{s} \operatorname{Tr} P_{i}(X)\right]
    = \sum_{\Pi \in \mathscr{P}([s])} \prod_{\pi \in \Pi} \tau(\pi),
\end{equation}
where the sum runs over all partitions of $[s]$.

\textbf{Base case: $s=1$.}
Take $n_1 \sim t_1 W^{\frac{\alpha}{3\alpha-1}}$. In this case, $\mathcal{L}_\Gamma\bigl(\{\xi_1 t_1\}; (W,N)\bigr)$ is independent of $W$ and $N$, we derive from the asymptotic formula \eqref{equ:asy_tau}  that
\begin{equation}\label{equ:limit_P}
    \lim_{N \to \infty} \mathbb{E}\bigl[\operatorname{Tr} P_{1}(X)\bigr]
    = \sum_{\Gamma \in \mathcal{T}_{1}} \int_{\xi_1} \mathcal{Q}_m(\xi_1) \,
    \mathcal{L}_\Gamma\bigl(\{\xi_1 t_1\}; (W,N)\bigr) \, d\xi_1 .
\end{equation}
On the other hand, by \cite[Lemma 4.8]{liu2025edge},
\begin{equation}
    \lim_{n \to \infty} \frac{1}{n+1} \, U_n\!\Bigl(1 + \frac{t^2 y}{2 n^2}\Bigr)
    = \frac{\sin\bigl(t\sqrt{-y}\,\bigr)}{t\sqrt{-y}} .
\end{equation}
Consequently, the rescaled moment can be written as
\begin{equation}\label{eq:moment_integral}
    \widehat{\sigma}_{N}^{(m)}(t)
    := \int_{-\infty}^{\infty} \frac{\sin^{m}\!\bigl(t\sqrt{-\lambda}\,\bigr)}
           {\bigl(t\sqrt{-\lambda}\,\bigr)^{m}} \,
      dR_{N;1}(\lambda)
    = \bigl(1+o(1)\bigr) \,
      \mathbb{E}\bigl[\operatorname{Tr} P_{1}(X)\bigr].
\end{equation}
Applying the continuity theorem \cite[Theorem B.10]{liu2023edge}, we deduce the existence of a unique limit measure $R_{1}(\gamma;\cdot)$ such that
\begin{equation}\label{eq:limit_measure}
    \int_{-\infty}^{\infty} \frac{\sin^{m}\!\bigl(t\sqrt{-\lambda}\,\bigr)}
           {\bigl(t\sqrt{-\lambda}\,\bigr)^{m}} \,
      R_{1}(\gamma;d\lambda)
    = \sum_{\Gamma \in \mathcal{T}_{1}} \int_{\xi_1} \mathcal{Q}_m(\xi_1) \,
      \mathcal{L}_\Gamma\bigl(\{\xi_1 t_1\}; (W,N)\bigr) \, d\xi_1,
\end{equation}
which implies  the convergence of the  one-point measure.

 \textbf{Induction to $k\ge 2$.}
For $k=2$, by the definition of the two‑point correlation measure we have
\begin{equation}\label{equ:R_2}
\begin{aligned}
\int f(x_1)g(x_2) \, dR_{N;2}(x_1,x_2)
&= \mathbb{E}\bigl[\operatorname{Tr} f(H)\,\operatorname{Tr} g(H)\bigr]
   - \int f(x)g(x) \, dR_{N;1}(x)
\end{aligned}
\end{equation}
for  nice functions, say  polynomials.
The right‑hand side is precisely a mixed moment of Chebyshev polynomials, which can be expressed through the partition sums $\tau(I)$ already controlled in the $s=1$ case. Iterating this argument for higher $k$,  using the recurrence \eqref{equ:tau},  yields the convergence of the full $k$-point correlation measure  $R_{N;k}$ to  $R^{\mathrm{(Crit)}}_{k}$.
For the \textbf{subcritical case},
we see from \eqref{Ldef} that
\begin{equation}
    \mathcal{L}_\Gamma\bigl(\{\xi_1 t_1\}; (W,N)\bigr)=O(NW^{-\frac{3\alpha}{3\alpha-1}})=O(\gamma_N).
\end{equation}
Thus for the rescaled quantity \begin{equation}
    \gamma_N^{-s}\mathbb{E}\!\left[\,\prod_{i=1}^{s} \operatorname{Tr} P_{i}(X)\right]
    = \gamma_N^{-s}\sum_{\Pi \in \mathscr{P}([s])} \prod_{\pi \in \Pi} \tau(\pi),
\end{equation}
 in the large $N$ limit the only non-vanishing term is $\gamma_N^{-s} \prod_{j=1}^s \tau(\{j\}).$ That is,
\begin{equation}
    \lim\limits_{N\rightarrow\infty}
    \int_{\mathbb{R}^k}\prod_{i=1}^s\frac{\sin^{m}\!\bigl(t_i\sqrt{-x_i}\,\bigr)}
           {\bigl(t_i\sqrt{-x_i}\,\bigr)^{m}} \gamma_N^{-k} R_{N;k}(2 + s_N x_1, \dots, 2 + s_N x_k) = \prod_{i=1}^k \int_{\mathbb{R}}\frac{\sin^{m}\!\bigl(t_i\sqrt{-x_i}\,\bigr)}
           {\bigl(t_i\sqrt{-x_i}\,\bigr)^{m}} R^{\mathrm{(Sub)}}_1(d x_i).
\end{equation}

Hence, by the continuity theorem \cite[Theorem B.10]{liu2023edge} again, we finish the proof in  the subcritical case.

This finally completes   the proof.
\end{proof}

By invoking Theorem \ref{prop:super_4}, we extend the edge statistics results to random matrices with general profile functions.
\begin{theorem}[Universality for general power-law profiles] \label{thm:band_main}
For $d=1$ and $\alpha > 1$, consider the random band matrix $H_N$ defined in Definition \ref{defmodel} with profile function $f$. Assume $f$ satisfies the comparison conditions in Definition \ref{ass:ft} with respect to the $\alpha$-stable density $f_{\alpha}$ in Definition \ref{def:alpha_profile}. Let $R_{N;k}$ denote the $k$-point correlation measure of $H_N$, and let $(T_{s_N})_* R_{N;k}$ be its rescaling through the map $T_{s_N}(x_i) = 2 + s_N x_i$. Then, the following scaling limits hold as $N \to \infty$:
    \begin{enumerate}
        \item[(i)] \textbf{(Supercritical regime)} When  $W \gg N^{1-\frac{1}{3\alpha}}$, with $s_N=N^{-2/3}$ the rescaled $k$-point correlation   measure
        \begin{equation}
       (T_{s_N})_*R_{N;k}(dx_1, \ldots, dx_k)
        \end{equation}
        converges vaguely to those of the GOE/GUE  Airy point  process.

        \item[(ii)] \textbf{(Critical regime)} When  $W \sim \gamma N^{1-\frac{1}{3\alpha}}$ with some   constant $\gamma > 0$, with  $s_N = W^{-\frac{2\alpha}{3\alpha-1}}$ the rescaled $k$-point correlation measure
        \begin{equation}
            (T_{s_N})_*R_{N;k}(dx_1, \ldots, dx_k)
        \end{equation}
        converges vaguely  to $R^{\mathrm{(Crit)}}_{k}(\gamma;d y_1,\ldots,d y_k)$ defined in Definition \ref{def:Rcrit}.

        \item[(iii)] \textbf{(Subcritical regime)} When  $(\log N)^{1/\alpha} \ll W \ll N^{1-\frac{1}{3\alpha}}$, let $s_N = W^{-\frac{2\alpha}{3\alpha-1}}$ and $\gamma_N = N W^{-\frac{3\alpha}{3\alpha-1}}$. As $N \to \infty$, for any $k \ge 1$, the rescaled $k$-point correlation measures satisfy the  factorization property:
        \begin{equation}
            \gamma_N^{-k}  (T_{s_N})_* R_{N;k}(dx_1, \dots, dx_k) \rightarrow \prod_{j=1}^k R^{\mathrm{(Sub)}}_1(dy_j).
        \end{equation}
        Here $R^{\mathrm{(Sub)}}_1(dy_j)$ is given in Definition \ref{def:Rsub}.
    \end{enumerate}
\end{theorem}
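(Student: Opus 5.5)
The plan is to deduce Theorem~\ref{thm:band_main} from Theorem~\ref{thm:convergence_proof} by the Markov Chain Comparison Theorem~\ref{prop:super_4}. Take $P_N$ to be the variance profile built from $f$ and $\widetilde P_N$ the one built from the $\alpha$-stable density $f_\alpha$, both on the torus $\Lambda_L$ with $N=L$ and $A_N=0$.

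In every regime the correlation measures are identified through their sinc transforms. These are limits of the mixed moments $\mathbb{E}[\prod_i \tr P_i(X)]$, where each $P_i$ is a product of $m\in\{4,8,10\}$ Chebyshev polynomials $U_{n_i}$ or $U_{n_i+1}$. So it suffices to show two things: first, that these mixed moments for $H_N$ agree with those of the $\alpha$-stable model up to $o\big((Nb_n)^{\sum k_i}\big)$; second, that this error is negligible against the normalization used in Theorem~\ref{thm:convergence_proof}. Once these hold, the continuity theorem \cite[Theorem B.10]{liu2023edge} transfers the vague limits verbatim. The prefactors $(n_i+1)^{-m}$ are exactly what turn $(Nb_n)^{\sum k_i}$ into the natural scale: $O(1)$ in the supercritical and critical cases, and $\gamma_N^{k}$ in the subcritical case. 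This follows from Theorem~\ref{thm:limit_cumulant}, using $Nb_n/n\sim 1+NW^{-3\alpha/(3\alpha-1)}$.

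The core step is verifying the hypotheses of Theorem~\ref{prop:super_4} with $n\asymp N^{1/3}$ in the supercritical regime and $n\asymp W^{\alpha/(3\alpha-1)}$ otherwise.

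\textbf{Condition (B1).} For $\tilde p$, Theorem~\ref{prop:alpha_llt} and Lemma~\ref{lem:asy_theta}(iii) give
\[
\tilde p_i(x,y)\le C\big(N^{-1}+(iW^\alpha)^{-1/\alpha}\big).
\]
Summing over $i\le n$ yields
\[
b_n\asymp \frac{n}{N}+\frac{n^{1-1/\alpha}}{W}.
\]
The same bound for $p_i$ should follow from the comparison conditions of Definition~\ref{ass:ft}, which I expect to supply a local limit theorem, or at least a uniform heat-kernel bound, for the $f$-walk. One then checks directly that $n^2b_n=O(1)$ in each regime. In the subcritical and critical cases this holds because $n^{3-1/\alpha}\asymp W$, and in the supercritical case because $n^3\asymp N$.

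\textbf{Condition (B2).} Here I would use Fourier analysis as in the proof of Theorem~\ref{prop:alpha_llt}. Write $\widehat p(k)=\widehat f(Wk/L)+\text{error}$ and $\widehat{\tilde p}(k)=e^{-c_\alpha|Wk/L|^\alpha}+\text{error}$. Definition~\ref{ass:ft} presumably gives
\[
\widehat f(\xi)=1-c_\alpha|\xi|^\alpha+o(|\xi|^\alpha)\quad\text{near } 0,
\]
together with $|\widehat f|\le 1-c$ away from $0$. Comparing $\widehat p(k)^i$ with $\widehat{\tilde p}(k)^i$ mode by mode then gives a sup-norm bound $\delta_i=o\big(N^{-1}+(i^{1/\alpha}W)^{-1}\big)$, so that $\Delta_n=o(b_n)$. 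For the $\ell^1$ bound $\epsilon_i$ I would follow a standard local-CLT route. Split $y$ into $|y|\le K i^{1/\alpha}W$, where the sup bound times the volume gives $o(1)$, and the complement, whose mass is controlled by tightness of both walks at scale $i^{1/\alpha}W$. This gives $\epsilon_i\to0$ as $i\to\infty$, hence $\mathcal E_n/n\to0$. For small $i$ the trivial bound $\epsilon_i\le 2$ suffices.

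The non-Gaussian condition~\eqref{equ:1.11} amounts to $\theta n^2/W\to0$, which holds since $n\ll W^{1/2}$ in all regimes. Theorem~\ref{prop:super_4} then gives the comparison, and the three limits follow.

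I expect the main obstacle to be the $\ell^1$ estimate (B2) uniformly in $i\le n$, together with establishing (B1) for the general $f$-walk. In particular, the boundary case $\alpha=2$ and heavy-tailed $f$ with $1<\alpha<2$ require that the tail assumptions in Definition~\ref{ass:ft} be strong enough to give tightness at scale $i^{1/\alpha}W$. I would try to rely on the local limit theorems in Appendix~\ref{sec:markov_chain} for this.
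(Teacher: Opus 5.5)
\textbf{Overall approach.} Your route matches the paper's: you verify short-to-long comparability of the $f$-walk with the $\alpha$-stable walk using a mode-by-mode Fourier sup-norm bound and an inner/outer $\ell^1$ split with tightness. These are Propositions~\ref{thm:clt-upper} and~\ref{prop:L1_difference}. Your checks of $n^2b_n=O(1)$ and of \eqref{equ:1.11} are fine. Condition (B1) for $p_i$ also needs no separate heat-kernel estimate, since $p_i\le\tilde p_i+\delta_i$.

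\textbf{The gap in (iii).} The problem is the transfer step in the subcritical regime. You use Theorem~\ref{prop:super_4} as a black box with error $o\big((Nb_n)^{\sum_i k_i}\big)$. Each $P_i$ is a product of $m$ Chebyshev polynomials, so $\sum_i k_i=mk$. After multiplying by $\prod_i(n_i+1)^{-m}$, the error is
\[
o\Big(\prod_{i=1}^k\big(Nb_n/n_i\big)^{m}\Big)=o\big((1+\gamma_N)^{mk}\big),
\]
not $o(\gamma_N^k)$ as you claim. In (i)--(ii) we have $\gamma_N=O(1)$, so this is harmless. In (iii), however, $\gamma_N\to\infty$ and $m\ge4$, so the error swamps the main term of order $\gamma_N^k$. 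That main term is exactly what you need to identify $\gamma_N^{-k}(T_{s_N})_*R_{N;k}$ and its factorization. The stated bound in Theorem~\ref{prop:super_4} is simply too coarse when $Nb_n\gg n$: it charges a full factor $Nb_n$ to every Chebyshev factor, whereas a trace of a product of $m$ polynomials of degree $\sim n$ lives on the scale $n^{m-1}Nb_n$.

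\textbf{How to fix it.} Do the comparison before linearizing, which is what the paper's proof does.
\begin{itemize}
\item Apply Theorem~\ref{thm:F=tildeF} diagram by diagram, using your bounds $\Delta_n/b_n\to0$ and $\mathcal E_n/n\to0$. Then the $f$-model's typical diagram functions have the same limits $\mathcal L_\Gamma$ as in Theorem~\ref{thm:homogeneous_theorem}.
\item Consequently Theorems~\ref{thm:limit_cumulant} and~\ref{prop:mix_prod} hold verbatim for $H_N$. Each connected cumulant carries only a single factor $\gamma_N$.
\item The proof of Theorem~\ref{thm:convergence_proof}, including the subcritical factorization, then goes through unchanged.
\end{itemize}
Alternatively, compare single-Chebyshev mixed moments via Theorem~\ref{prop:U_asy}(ii), whose error is $o((Nb_n)^k)$. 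Then sum against the nonnegative linearization weights $c_m(n_i;k_i)/(k_i+1)$, whose total mass is $O(n_i^{-1})$. This gives an error $o((1+\gamma_N)^k)=o(\gamma_N^k)$, after handling small $k_i$ by crude bounds.
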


\begin{proof}
This result is a direct application of the Short-to-Long Comparison principle established in Theorem \ref{prop:super_4}. As demonstrated in Appendix \ref{sec:band_comparison}, for any profile function $f$ satisfying the conditions in Definition \ref{ass:ft}, the associated Markov transition probabilities $p_n(x, y)$ satisfy the $\ell^\infty$ and $\ell^1$ distance constraints required by the conditions \ref{itm:B1} and \ref{itm:B2} relative to the $\alpha$-stable  model, with
\begin{equation}
    \epsilon_n=\sum_{y\in [N]}\left| p_n(x,y) - \tilde{p}_n(x,y) \right|=o(1),
\end{equation}
and
\begin{equation}
    \delta_n=o\big(W^{-d} n^{-\tfrac{d}{\alpha}}\big) +\; O( n W^{-K}),
\end{equation}
respectively due to Proposition \ref{prop:L1_difference} and Proposition \ref{thm:clt-upper}.

Consequently, the two asymptotic conditions in \eqref{equ:negligible} are satisfied for $n$ with the same scale in Theorem \ref{thm:limit_cumulant} and the diagram functions $F_\Gamma$ for the general power-law matrix converge to the same limits $\mathcal{L}_\Gamma$ as those derived for the $\alpha$-stable case in Theorem \ref{thm:homogeneous_theorem}. This implies that Theorem \ref{prop:mix_prod} also holds.
     Following the same steps as in the proof of Theorem~\ref{thm:convergence_proof}, we conclude that the \(k\)-point correlation measures  admit identical universal limits in  the  three different   regimes.
\end{proof}
\subsection{
Poisson-Airy transition}\label{sec:poisson}
Now we examine the crossover phenomenon that emerges from the critical limits associated with the critical bandwidth. The limit diagram functions in the  supercritical and subcritical regimes, defined in Definition \ref{def:limiting_function},    are recovered when  $\gamma$ tends to $\infty$ or $0$.
\begin{lemma}\label{lem:crossover_analysis} For any diagram  $\Gamma=(V,E)\in \mathcal{T}_s$,  the critical  diagram functions   admit a transition
\begin{equation}\label{equ:critical=sup}
    \lim\limits_{\gamma\rightarrow\infty}\gamma^{|E|-|V|}\mathcal{F}_{\Gamma}^{\mathrm{(Crit)}}(\gamma^{-\frac{1}{3}}t_1,\ldots,\gamma^{-\frac{1}{3}}t_s;\gamma^{\alpha})=\mathcal{F}_{\Gamma}^{\mathrm{(Super)}}(t_1,\ldots,t_s),
\end{equation}
and
\begin{equation}\label{equ:critical=sub}
    \lim\limits_{\gamma\rightarrow0}\gamma^{|E|-|V|+1}\mathcal{F}_{\Gamma}^{\mathrm{(Crit)}}(t_1,\ldots,t_s;\gamma^{\alpha})=\mathcal{F}_{\Gamma}^{\mathrm{(Sub)}}(t_1,\ldots,t_s).
\end{equation}
\end{lemma}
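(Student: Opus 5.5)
We prove both limits directly from the integral representation \eqref{eq:F-critical}, using the asymptotics of $\theta_\alpha$ in Lemma~\ref{lem:asy_theta} for pointwise convergence and the bounds of Lemma~\ref{lem:limit_function_upper_bound} (and the pointwise estimate \eqref{equ:theta_upper_bound}) for domination. Throughout $d=1$ and $\Gamma$ is a fixed typical diagram, so $|E|,|V|$ are fixed and only finitely many integrals appear.

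\textbf{Supercritical limit $\gamma\to\infty$.} We substitute $\alpha_e=\gamma^{-1/3}\beta_e$ in \eqref{eq:F-critical}. The constraint $\mathfrak{C}(\{\gamma^{-1/3}t_i\})$ becomes $\mathfrak{C}(\{t_i\})$ in the $\beta$ variables, and the Jacobian is $\gamma^{-|E|/3}$. The prefactor $C_\Gamma/\prod_i(\gamma^{-1/3}t_i)$ contributes $\gamma^{s/3}$. The $\theta$ arguments become $\theta_\alpha(x_u-x_v,\beta_e\gamma^{\alpha-1/3})$, and $\alpha-1/3>0$. Using $|E|=3\ell+s$ and $|V|=2\ell+s$, the total power of $\gamma$ is
\[
\ell+\tfrac{s}{3}-\tfrac{3\ell+s}{3}=0.
\]
By Lemma~\ref{lem:asy_theta}(ii), for each fixed $\beta_e>0$ we have $\theta_\alpha(\cdot,\beta_e\gamma^{\alpha-1/3})\to1$ uniformly in $x$. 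The vertex integral over $\mathbb{T}^{|V|}$ therefore tends to $1$, and what remains is $\frac{C_\Gamma}{t_1\cdots t_s}\mathrm{Vol}_{\mathfrak{C}}(\{t_i\})$, which is $\mathcal{F}^{\mathrm{(Super)}}_\Gamma$. For dominated convergence we fix a spanning tree $T$ and integrate out the tree vertices, using $\int_{\mathbb{T}}\theta_\alpha=1$. For the non-tree edges we use $\theta_\alpha\le C(1+(\beta_e\gamma^{\alpha-1/3})^{-1/\alpha})\le C(1+\beta_e^{-1/\alpha})$ once $\gamma\ge1$. Since $1/\alpha<1$, this bound is integrable over the bounded simplex, exactly as in the proof of Lemma~\ref{lem:limit_function_upper_bound}(iii).

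\textbf{Subcritical limit $\gamma\to0$.} Here $t_i$ is kept fixed and the spatial variables are rescaled instead. We use translation invariance to fix $x_{v_0}=0$ at the cost of a factor $1$, since $|\mathbb{T}|=1$. For each $v\ne v_0$ we put $x_v=\gamma y_v$ with $y_v\in\gamma^{-1}\mathbb{T}$, a torus of length $\gamma^{-1}$; this gives a Jacobian $\gamma^{|V|-1}$. By Lemma~\ref{lem:asy_theta}(i) and scaling \eqref{equ:homo_func},
\[
\theta_\alpha(\gamma y,\alpha_e\gamma^\alpha)=\sum_k f_\alpha(\gamma y+k,\alpha_e\gamma^\alpha)=\gamma^{-1}\sum_k f_\alpha(y+k/\gamma,\alpha_e).
\]
Each edge therefore contributes $\gamma^{-1}$ times a periodized $f_\alpha(y_u-y_v,\alpha_e)$ with period $\gamma^{-1}\to\infty$. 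The total power of $\gamma$ is
\[
(|E|-|V|+1)+(|V|-1)-|E|=0,
\]
and pointwise the periodized kernels converge to $f_\alpha(y_u-y_v,\alpha_e)$, while the domain $(\gamma^{-1}\mathbb{T})^{|V|-1}$ exhausts $\mathbb{R}^{|V|-1}$. This yields exactly \eqref{eq:F-sub}.

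\textbf{Main obstacle: domination in the subcritical limit.} Convergence of the periodized kernels is only pointwise, and the domain grows with $\gamma^{-1}$, so we need a $\gamma$-uniform integrable majorant. We plan to handle this by unfolding the periodization: the integral over $(\gamma^{-1}\mathbb{T})^{|V|-1}$ of products of periodized kernels equals a sum over lifts of the cycles (winding vectors $k\in\mathbb{Z}^{|E|-|V|+1}$ attached to the non-tree edges) of integrals over $\mathbb{R}^{|V|-1}$. The term with $k=0$ converges monotonically to the $\mathbb{R}$-integral in \eqref{eq:F-sub}. For $k\ne0$, a non-tree edge with nonzero winding carries a displacement of size $\gtrsim|k|/\gamma$. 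We bound its kernel by $\sup_{|z|\ge c|k|/\gamma}f_\alpha(z,\alpha_e)$, using the polynomial tail decay of $f_\alpha(\cdot,\alpha_e)$ when $\alpha<2$ and Gaussian decay when $\alpha=2$. The remaining structure is then bounded through the spanning-tree/Dirichlet-integral argument of Lemma~\ref{lem:limit_function_upper_bound}(ii). The resulting sum over $k\ne0$ tends to $0$ as $\gamma\to0$. The only delicate point is integrability in $\alpha_e$ near $0$ against the factors $\alpha_e^{-1/\alpha}$, which holds because $\alpha>1$.
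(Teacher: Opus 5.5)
\textbf{Overall.} Your route is the paper's. For $\gamma\to\infty$ you rescale $\alpha_e=\gamma^{-1/3}\beta_e$ and use Lemma~\ref{lem:asy_theta}(ii); for $\gamma\to0$ you rescale $x=\gamma y$ and use the small-$\tau$ behaviour of $\theta_\alpha$. Your supercritical argument is correct, and in fact more careful than the paper's, which replaces $\theta_\alpha$ by $1$ without controlling the region $\alpha_e\gamma^\alpha=O(1)$. The subcritical domination step, however, does not work as written.

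\textbf{Where it fails.} After unfolding along a spanning tree $T$, the coordinates $y_v$ range over all of $\mathbb{R}$. A non-tree edge $e=(u,v)$ with winding $k_e\neq0$ has kernel $f_\alpha(y_u-y_v+k_e/\gamma,\alpha_e)$, and nothing makes $|y_u-y_v+k_e/\gamma|$ large: the tree path from $u$ to $v$ can itself carry displacement $\approx -k_e/\gamma$. Only the total displacement around the fundamental cycle of $e$ is pinned to $k_e/\gamma$. So bounding the kernel of $e$ by $\sup_{|z|\ge c|k|/\gamma}f_\alpha(z,\alpha_e)$ is unjustified. If instead the long edge is a tree edge, you cannot replace it by a sup without losing the tree integration you rely on. Moreover, decay $|k|^{-1-\alpha}$ extracted from a single edge does not make the sum over $k\in\mathbb{Z}^{|E|-|V|+1}\setminus\{0\}$ converge once there are three or more independent cycles and $\alpha<2$. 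A smaller point: after this unfolding the $k=0$ term is exactly the $\mathbb{R}^{|V|-1}$-integral in \eqref{eq:F-sub} and does not depend on $\gamma$, so no monotone limit is involved.

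\textbf{How to repair it.} Unfold only the tree edges and keep the non-tree kernels periodized. With $\phi_e(z)=\sum_{k\in\mathbb{Z}}f_\alpha(z+k/\gamma,\alpha_e)$, the rescaled vertex integral equals $\int_{\mathbb{R}^{|V|-1}}\prod_{e\in T}f_\alpha(y_u-y_v,\alpha_e)\prod_{e\notin T}\phi_e(y_u-y_v)\,dy$. Telescope $\prod_{e\notin T}\phi_e-\prod_{e\notin T}f_\alpha(\cdot,\alpha_e)$. In the term where $e$ carries $\phi_e-f_\alpha(\cdot,\alpha_e)$:
\begin{enumerate}
\item bound every other non-tree kernel by its sup norm, which is $\le C(1+\alpha_{e'}^{-1/\alpha})$ uniformly in $\gamma\le1$;
\item integrate out the tree. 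Under the tree measure, $y_u-y_v$ is a sum of independent symmetric stable increments along the tree path. By the semigroup property $f_\alpha(\cdot,a)*f_\alpha(\cdot,b)=f_\alpha(\cdot,a+b)$ and symmetry, what remains is
\[
\sum_{k\neq0}f_\alpha(k/\gamma,\tau_e)=\gamma\bigl[\theta_\alpha(0,\tau_e\gamma^\alpha)-f_\alpha(0,\tau_e\gamma^\alpha)\bigr]=O(\tau_e\gamma^{1+\alpha}),
\]
where $\tau_e$ is the sum of the $\alpha_{e'}$ over the fundamental cycle of $e$.
\end{enumerate}
This is exactly the $O(\tau)$ remainder of Lemma~\ref{lem:asy_theta}(i) that the paper implicitly invokes when it drops the periodization. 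The total error is then $O(\gamma^{1+\alpha})\prod_{e'\notin T}(1+\alpha_{e'}^{-1/\alpha})$, which is integrable over the bounded region $\mathfrak{C}(\{t_i\})$ because $\alpha>1$. The main term is exactly the integrand of $\mathcal{F}^{\mathrm{(Sub)}}_\Gamma$. This yields \eqref{equ:critical=sub} with no dominated convergence in the spatial variables at all.
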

\begin{proof}

We begin directly  with  Definition \ref{def:limiting_function}    and  use Lemma \ref{lem:asy_theta} for small and large $\tau=\gamma^\alpha$ respectively.  As $\gamma\rightarrow\infty$, we have
\begin{equation}
\begin{aligned}
    &\gamma^{|E|-|V|}\mathcal{F}_{\Gamma}^{\mathrm{(Crit)}}(t_1,\ldots,t_s;\gamma^{\alpha})\\
    &=\gamma^{|E|-|V|+\frac{s}{3}}\frac{C_\Gamma}{\,t_1\cdots t_s\,}\int_{x_v\in \mathbb{T}^d}\int_{\mathfrak{C}(\{t_i\})}
    \prod_{e\in E(\Gamma)}\theta_{\alpha}(x_{u}-x_{v},\alpha_e\gamma^{\alpha}) \prod_{e\in E(\Gamma)}d\alpha_e \prod_{v\in V(\Gamma)}dx_v\\
    &=(1+o(1)) \gamma^{|E|-|V|+\frac{s}{3}}\frac{C_\Gamma}{\,t_1\cdots t_s\,}\int_{x_v\in \mathbb{T}^d}\int_{\mathfrak{C}(\{\gamma^{-\frac{1}{3}}t_i\})}
    1 \prod_{e\in E(\Gamma)}d\alpha_e \prod_{v\in V(\Gamma)}dx_v\\
    &=(1+o(1)) \gamma^{|E|-|V|+\frac{s}{3}-\frac{|E|}{3}} \frac{C_\Gamma}{\,t_1\cdots t_s\,}\int_{\mathfrak{C}(\{t_i\})}
    1 \prod_{e\in E(\Gamma)}d\alpha_e\\
    &=(1+o(1))\mathcal{F}_{\Gamma}^{\mathrm{(Super)}}(t_1,\ldots,t_s).
\end{aligned}
\end{equation}
Thus we obtain \eqref{equ:critical=sup}.

As $\gamma\rightarrow 0$, we have
\begin{equation}
\begin{aligned}
    &\gamma^{|E|-|V|+1}\mathcal{F}_{\Gamma}^{\mathrm{(Crit)}}(\gamma^{-\frac{1}{3}}t_1,\ldots,\gamma^{-\frac{1}{3}}t_s;\gamma^{\alpha})\\
    &=\gamma^{|E|-|V|+1}\frac{C_\Gamma}{\,t_1\cdots t_s\,}\int_{x_v\in \mathbb{T}^d}\int_{\mathfrak{C}(\{t_i\})}
    \prod_{e\in E(\Gamma)}\theta_{\alpha}(x_{u}-x_{v},\alpha_e\gamma^{\alpha}) \prod_{e\in E(\Gamma)}d\alpha_e \prod_{v\in V(\Gamma)}dx_v.\\
\end{aligned}
\end{equation}
Now we treat
\begin{equation}
    \int_{x_v\in \mathbb{T}^d}\prod_{e\in E(\Gamma)}\theta_{\alpha}(x_{u}-x_{v},\alpha_e\gamma^{\alpha})\prod_{v\in V(\Gamma)}dx_v.
\end{equation}
Due to the translation invariance, we can fix $v_0=0$. Then we replace
\begin{equation}
    \theta_{\alpha}(x_{u}-x_{v},\alpha_e\gamma^{\alpha})\longrightarrow \alpha_e^{-\frac{1}{\alpha}}\gamma^{-1}f_\alpha( \alpha_e^{-\frac{1}{\alpha}}\gamma^{-1} {(x_{u}-x_{v})}),
\end{equation}
and
\begin{equation}
    \gamma^{-1} (x_{u}-x_{v}) \rightarrow y_{u}-y_{v}.
\end{equation}
The integral reduces to
\begin{equation}
\begin{aligned}
    &\int_{x_v\in \mathbb{T}^d,x_{v_0}=0}\prod_{e\in E(\Gamma)}\alpha_e^{-\frac{1}{\alpha}}\gamma^{-1}f_\alpha( \alpha_e^{-\frac{1}{\alpha}}{(y_{u}-y_{v})})\prod_{v\in V(\Gamma),v\ne v_0}\gamma dy_v\\
    &=\gamma^{|V|-1-|E|}\int_{x_v\in \mathbb{T}^d,x_{v_0}=0}\prod_{e\in E(\Gamma)}\alpha_e^{-\frac{1}{\alpha}}f_\alpha( \alpha_e^{-\frac{1}{\alpha}}{(y_{u}-y_{v})})\prod_{v\in V(\Gamma),v\ne v_0}dy_v\\
    &=\gamma^{|V|-1-|E|}\int_{x_v\in \mathbb{T}^d,x_{v_0}=0}\prod_{e\in E(\Gamma)}f_\alpha(y_{u}-y_{v},\alpha_e)\prod_{v\in V(\Gamma),v\ne v_0}dy_v.
\end{aligned}
\end{equation}
Thus we get
\begin{equation}
    \lim\limits_{\gamma\rightarrow0}\gamma^{|E|-|V|+1}\mathcal{F}_{\Gamma}^{\mathrm{(Crit)}}(t_1,\ldots,t_s;\gamma^{\alpha})=\mathcal{F}_{\Gamma}^{\mathrm{(Sub)}}(t_1,\ldots,t_s).
\end{equation}

Hence we finish the proof.
\end{proof}
Lemma~\ref{lem:crossover_analysis} indeed shows that the critical measures \( R_{k}^{\mathrm{(Crit)}}(\gamma;x_1,\dots,x_k) \) interpolate between the correlation function of the Airy point process and a Poisson point process with intensity \( R^{(\mathrm{Sub})}_{1}(x) \).

\begin{corollary}\label{coro:poisson_statistics}
Let $R_{k}^{\mathrm{(Crit)}}(\gamma;dx_1,\ldots,dx_k)$ be the critical $k$-point correlation measure in Definition \ref{def:Rcrit}, then
\begin{equation} \label{supertransition}
   \lim_{\gamma\rightarrow\infty} R^{\mathrm{(Crit)}}_{k}\big(\gamma; d\gamma^{-\frac{2}{3}}x_1,\ldots,d \gamma^{-\frac{2}{3}}x_k\big){=} R^{\mathrm{Airy}}_{k}(dx_1,\ldots, dx_k),\quad\text{vaguely,}
\end{equation}
and  \begin{equation} \label{subtransition}
     \lim_{\gamma\rightarrow 0}  \gamma^{k} R^{\mathrm{(Crit)}}_{k}(\gamma;dx_1,\ldots,dx_k){=} \prod_{i=1}^k  R^{(\mathrm{Sub})}_{1}(dx_i),\quad\text{vaguely.}
\end{equation}
\end{corollary}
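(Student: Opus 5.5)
We prove both limits at the level of sinc transforms and then apply the continuity theorem \cite[Theorem B.10]{liu2023edge}, as in the proof of Theorem \ref{thm:convergence_proof}. By Definition \ref{def:Rcrit}, the sinc transform of $R_k^{\mathrm{(Crit)}}(\gamma;\cdot)$ is a partition sum of the cumulants $\tau_\gamma(\pi)$, and each $\tau_\gamma$ is a sum over $\Gamma\in\mathcal{T}_{|\pi|}$ of $\xi$-integrals of $\gamma^{|E|-|V|}\mathcal{F}^{\mathrm{(Crit)}}_\Gamma$. The whole argument therefore reduces to diagram-by-diagram limits, which Lemma \ref{lem:crossover_analysis} supplies, together with a dominated-convergence bound that lets us exchange the limit with the sums over diagrams and partitions.

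\textbf{Supercritical transition \eqref{supertransition}.} Rescaling $x_j\mapsto\gamma^{-2/3}x_j$ in the sinc kernel turns $t_j\sqrt{-x_j}$ into $\gamma^{-1/3}t_j\sqrt{-x_j}$. So the transform of the rescaled measure at $(t_j)$ equals the transform of $R_k^{\mathrm{(Crit)}}(\gamma;\cdot)$ at $(\gamma^{-1/3}t_j)$. Substituting into $\tau_\gamma$ and using \eqref{equ:critical=sup}, each summand converges to the corresponding supercritical term $\int\prod\mathcal{Q}_m(\xi_j)\mathcal{F}^{\mathrm{(Super)}}_\Gamma(\{\xi_jt_j\})\,d\xi$. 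These limits are exactly the limiting cumulants in the supercritical case of Theorem \ref{thm:limit_cumulant}, whose correlation measures are identified with the Airy process in Theorem \ref{thm:convergence_proof}(i). Since the limiting transforms agree, the continuity theorem gives vague convergence to $R_k^{\mathrm{Airy}}$.

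\textbf{Subcritical transition \eqref{subtransition}.} By \eqref{equ:critical=sub}, $\gamma^{|E|-|V|}\mathcal{F}^{\mathrm{(Crit)}}_\Gamma=\gamma^{-1}\bigl(\mathcal{F}^{\mathrm{(Sub)}}_\Gamma+o(1)\bigr)$. Hence every block satisfies $\tau_\gamma(\pi)=\gamma^{-1}\bigl(c_\pi+o(1)\bigr)$, and a partition $\Pi$ of $[k]$ contributes at order $\gamma^{-|\Pi|}$. After multiplying by $\gamma^k$, only the finest partition survives, because it is the only one with $|\Pi|=k$. Its limit is $\prod_j\sum_{\Gamma\in\mathcal{T}_1}\int\mathcal{Q}_m\mathcal{F}^{\mathrm{(Sub)}}_\Gamma$, which by \eqref{equ:def_Rsub} is the sinc transform of $\prod_j R_1^{\mathrm{(Sub)}}$. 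This is the same mechanism as in the subcritical part of the proof of Theorem \ref{thm:convergence_proof}.

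\textbf{Main obstacle: uniformity in $\gamma$.} Lemma \ref{lem:crossover_analysis} holds only for each fixed $\Gamma$, so we need a summable bound over $\mathcal{T}_s$ that is uniform as $\gamma\to\infty$ or $\gamma\to0$. Lemma \ref{lem:limit_function_upper_bound}(iii) is not directly adequate, since its constant $C_\tau$ depends on $\tau$. We would redo that estimate keeping the explicit factor $\tau^{-j/\alpha}$ for the $j$ non-tree edges that use the small-time part of $\theta_\alpha$, with $\tau=\gamma^\alpha$.
\begin{itemize}
\item For $\gamma\to\infty$, time arguments $\gamma^{-1/3}t$ with the prefactor $\gamma^{|E|-|V|}$ give a per-diagram bound of the form $(Ct)^{|E|}/|E|!$ times powers of $\gamma$. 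These powers are nonpositive precisely because $|E|/3\ge |E|-|V|+s/3$ for typical diagrams, with equality only in the pure-volume term.
\item For $\gamma\to0$, the bound is $\gamma^{-1}$ times the subcritical-type bound $(Ct)^{|E|-\frac{1}{\alpha}(|E|-|V|+1)}/(\cdots)!$, since the terms with fewer singular factors carry extra positive powers of $\gamma$.
\end{itemize}
Combining either bound with Lemma \ref{diagramno} and the $\xi$-integrability of $\mathcal{Q}_m$, which has compact support, yields a dominating series as in \eqref{equ:limit_upper_bound}. Dominated convergence then justifies the termwise limits above.
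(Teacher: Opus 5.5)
Your proposal is the paper's proof: you take sinc transforms via Definition \ref{def:Rcrit}, rescale $t_j\mapsto\gamma^{-1/3}t_j$, take the per-diagram limits of Lemma \ref{lem:crossover_analysis}, use the $\gamma^{-|\Pi|}$ counting that leaves only the finest partition as $\gamma\to0$, and close with the continuity theorem. The uniform-in-$\gamma$ domination you add correctly fills the limit--sum interchange that the paper skips with ``it is easy to see'': you keep the explicit $\tau^{-j/\alpha}$ factors from the proof of Lemma \ref{lem:limit_function_upper_bound}(iii), so the $j$-th term carries $\gamma^{-j(1-\frac{1}{3\alpha})}$ as $\gamma\to\infty$ and $\gamma^{\ell-j}\le\gamma^{-1}$ (with $j\le\ell+1$) as $\gamma\to0$; note, though, that the nonpositivity comes from $j$, since $|E|/3=|E|-|V|+s/3$ holds with equality on every $\mathcal{T}_{\ell,s}$.
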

\begin{proof}

Recalling from Definition \ref{def:Rcrit}, we have
\begin{equation}
    \int_{\mathbb{R}^k} \prod_{j=1}^k \frac{\sin^{m} (t_j\sqrt{-x_j})}{(t_j\sqrt{-x_j})^{m}} \, R_{k}^{\mathrm{(Crit)}}(\gamma; dx_1, \dots, dx_k) = \sum_{\Pi \in \mathscr{P}([k])} \prod_{\pi \in \Pi} \tau_{\gamma}(\pi),
\end{equation}
with
\begin{equation}
    \tau_{\gamma}(\{t_j\}_{j \in \pi}) = \sum_{\Gamma \in \mathcal{T}_{|\pi|}} \gamma^{|E|-|V|} \int \left[ \prod_{j \in \pi} \mathcal{Q}_m(\xi_j) \right] \mathcal{F}^{\mathrm{(Crit)}}_{\Gamma}( \{\xi_j t_j\}_{j\in \pi};\gamma^\alpha) \prod d\xi_j.
\end{equation}
From Lemma \ref{lem:crossover_analysis}, we have
\begin{equation}
    \lim\limits_{\gamma\rightarrow\infty}\gamma^{|E|-|V|}\mathcal{F}_{\Gamma}^{\mathrm{(Crit)}}(\gamma^{-\frac{1}{3}}t_1,\ldots,\gamma^{-\frac{1}{3}}t_s;\gamma^{\alpha})=\mathcal{F}_{\Gamma}^{\mathrm{(Super)}}(t_1,\ldots,t_s).
\end{equation}
Therefore, it is easy to see that
\begin{equation}
    \begin{aligned}
         &\lim_{\gamma\rightarrow\infty}\int_{\mathbb{R}^k} \prod_{j=1}^k \frac{\sin^{m} (t_j\sqrt{-x_j})}{(t_j\sqrt{-x_j})^{m}} \, R_{k}^{\mathrm{(Crit)}}(\gamma; d\gamma^{-2/3}x_1, \dots, d\gamma^{-2/3}x_k)
        \\
        &=\lim_{\gamma\rightarrow\infty}\int_{\mathbb{R}^k} \prod_{j=1}^k \frac{\sin^{m} (\gamma^{-1/3}t_j\sqrt{-x_j})}{(\gamma^{-1/3}t_j\sqrt{-x_j})^{m}} \, R_{k}^{\mathrm{(Crit)}}(\gamma; dx_1, \dots, dx_k)\\
        &=\lim_{\gamma\rightarrow\infty}\int_{\mathbb{R}^k} \prod_{j=1}^k \frac{\sin^{m} (t_j\sqrt{-x_j})}{(t_j\sqrt{-x_j})^{m}} \, R_{k}^{\mathrm{Airy}}( dx_1, \dots, dx_k).
    \end{aligned}
\end{equation}
Hence combining the continuity theorem \cite[Theorem B.10]{liu2023edge} yields \eqref{supertransition}.

Now consider the limit \(\gamma \to 0\). By Lemma \ref{lem:crossover_analysis},
\begin{equation}
    \lim\limits_{\gamma\rightarrow0}\gamma^{|E|-|V|+1}\mathcal{F}_{\Gamma}^{\mathrm{(Crit)}}(t_1,\ldots,t_s;\gamma^{\alpha})=\mathcal{F}_{\Gamma}^{\mathrm{(Sub)}}(t_1,\ldots,t_s).
\end{equation}
Thus we have
\begin{equation}
    \begin{aligned}
        &\lim\limits_{\gamma\rightarrow 0}\int_{\mathbb{R}^k} \prod_{j=1}^k \frac{\sin^{m} (t_j\sqrt{-x_j})}{(t_j\sqrt{-x_j})^{m}} \,\gamma^k R_{k}^{\mathrm{(Crit)}}(\gamma; dx_1, \dots, dx_k)\\
        &=\lim\limits_{\gamma\rightarrow 0}\gamma^k\sum_{\Pi \in \mathscr{P}([k])} \prod_{\pi \in \Pi} \tau_{\gamma}(\pi).
    \end{aligned}
\end{equation}
By Lemma \ref{lem:crossover_analysis}, we know that only the term of partition $\pi=\{\{1\},\{2\},\ldots,\{k\}\}$ does not vanish. This implies
\begin{equation}
    \lim\limits_{\gamma\rightarrow 0}\int_{\mathbb{R}^k} \prod_{j=1}^k \frac{\sin^{m} (t_j\sqrt{-x_j})}{(t_j\sqrt{-x_j})^{m}} \,\gamma^k R_{k}^{\mathrm{(Crit)}}(\gamma; dx_1, \dots, dx_k)=\prod_{j=1}^k\int_{\mathbb{R}} \frac{\sin^{m} (t_j\sqrt{-x_j})}{(t_j\sqrt{-x_j})^{m}}R_{1}^{\mathrm{(Sub)}}(dx_j).
\end{equation}
Hence combining the continuity theorem \cite[Theorem B.10]{liu2023edge} yields \eqref{subtransition}.

This finally completes   the proof.

\end{proof}

\subsection{Tricritical analysis}\label{sec:doubly_critical}

In this subsection, we consider deformed random band matrices in the triply critical regime, where the bandwidth scaling, the perturbation strength, and the position of the perturbation all compete. For simplicity, we consider a finite-rank perturbation $A_{\mathcal{M}}$, supported on a set of coordinates
 $\mathcal{M}=\{m_1, m_2, \dots, m_r\} \subset [N]$,
\begin{equation}
    A_{\mathcal{M}} = \sum_{i,j=1}^r a_{ij} E_{m_i m_j},
\end{equation}
where $E_{m_im_j}$ is the matrix with entry $1$ at $(m_i,m_j)$ and   zeros elsewhere.
We also denote the reduced $r \times r$ matrix and its associated spectral decomposition by
\begin{equation}
    \widetilde{A}=\sum_{i,j=1}^r a_{ij} E_{ij}=\sum_{i=1}^r a_i \boldsymbol{v}_i \boldsymbol{v}_i^T,
\end{equation}
 where $\boldsymbol{v}_i$ is a column eigenvector of size $r$ associated with eigenvalue $a_i$. We assume the top $q$ eigenvalues of $\widetilde{A}$ are critically scaled near the critical value and the remaining eigenvalues stay away,    i.e.,
\begin{equation} \label{perturbationscaling}
    a_i = 1 + \tau_i W^{-\frac{\alpha}{3\alpha-1}},  i=1, \dots, q,  \quad \mathrm{and} \quad|a_j| \le 1 - \epsilon_0, j=q+1, \dots, r,
\end{equation}
where $\epsilon_0 \in (0,1)$  and all $\tau_i\in  \mathbb{R}$   are   fixed numbers.

  With the above notations,  we introduce the concept of deformed limiting Diagram Function in the triply critical regime.
\begin{definition}[Deformed Limiting Diagram Function]\label{def:F-double-critical}
For    an  $r\times r$ matrix
\begin{equation}
    \mathfrak{A}(t) := \sum_{i=1}^q e^{\tau_i t} \boldsymbol{v}_i \boldsymbol{v}_i^T, \quad t \ge 0,
\end{equation}
we   define a quadratic form  in real variables $\{z_i\}$ by
\begin{equation}
    \mathfrak{A}[t;\{z_i\}_{i=1}^r](x,y)=\sum_{i,j=1}^r (\mathfrak{A}(t))_{ij}\sqrt{\delta(x-z_i)}\sqrt{\delta(y-z_j)}.
\end{equation}
For a typical connected diagram $\Gamma = (V, E)$,  we define the weight function  $\mathcal{W}_\Gamma$ as
\begin{equation}
    \mathcal{W}_\Gamma(x, \{\alpha_e\}) = \prod_{e=(u,v) \in E_{\mathrm{int}}} \theta_{\alpha}(x_u - x_v, \alpha_e \gamma^\alpha) \prod_{e=(u,v) \in E_b} \mathfrak{A}[\alpha_e;\{z_i\}_{i=1}^r](x_u,x_v),
\end{equation}
and the corresponding limiting diagram function $\mathcal{F}^{\mathfrak{A}}_\Gamma$   for  $t_1, \dots, t_s > 0$ as the integral
\begin{equation}\label{eq:F-double-critical_refined}
    \mathcal{F}^{\mathfrak{A}}_\Gamma(t_1, \dots, t_s; \{z_i\}_{i=1}^r;\gamma^\alpha) =
    \frac{C_\Gamma}{\prod_{j=1}^s t_j}  \int_{\mathbb{T}^{|V|}} \int_{\mathfrak{C}(\{t_i\})} \mathcal{W}_\Gamma(x, \{\alpha_e\}) \prod_{e \in E} d\alpha_e \prod_{v \in V} dx_v.
\end{equation}
Here in the product $\prod_{e=(u,v) \in E_b}$, each vertex  $u\in V_b$ carries   the variable $x_u$   exactly twice and   the square-root delta functions are interpreted according to the rule
\begin{equation}
    \sqrt{\delta(x_u-z_1)}\sqrt{\delta(x_u-z_2)}=\begin{cases}
        \delta(x_u-z_1),&z_1=z_2,\\
        0,&z_1\ne z_2.
    \end{cases}
\end{equation}
  \end{definition}

\begin{definition}[Tricritical point process]\label{def:triple_crit_process}
For  any fixed number  $\gamma > 0$ and  a perturbation operator $\mathfrak{A}$, we define  a \textbf{tricritical point process}  as  the random point process on $\mathbb{R}$ whose $k$-point correlation measures $R_{k}^{\mathfrak{A}}(\gamma; dx_1, \dots, dx_k)$ are the unique Radon measures determined by the  sinc-transform identity: for any $t_1, \dots, t_k > 0$ and $m \in \{4, 8, 10\}$,
\begin{equation}\label{equ:def_R_triple}
    \int_{\mathbb{R}^k} \prod_{j=1}^k \frac{\sin^{m}(t_j \sqrt{-x_j})}{(t_j \sqrt{-x_j})^{m}} \, R_{k}^{\mathfrak{A}}(\gamma; dx_1, \dots, dx_k) = \sum_{\Pi \in \mathscr{P}([k])} \prod_{\pi \in \Pi} \tau_{\mathfrak{A}}(\{t_j\}_{j \in \pi}).
\end{equation}
Here the deformed limiting cumulants $\tau_{\mathfrak{A}}$ are defined by the sum over connected typical diagrams:
\begin{equation}
    \tau_{\mathfrak{A}}(\{t_j\}_{j \in \pi}) = \sum_{\Gamma \in \mathcal{T}_{|\pi|}} \gamma^{|E|-|V|} \int_{0}^1 \cdots \int_{0}^1 \left[ \prod_{j \in \pi} \mathcal{Q}_m(\xi_j) \right] \mathcal{F}^{\mathfrak{A}}_{\Gamma}\bigl(\{\xi_j t_j\}_{j \in \pi}; \{z_i\}_{i=1}^r; \gamma^\alpha\bigr) \prod_{j \in \pi} d\xi_j,
\end{equation} with $\mathcal{Q}_m$  defined in \eqref{eq:Pt_def}.
\end{definition}

\begin{remark}
The term \textbf{tricritical point} emphasizes the confluence of three distinct physical scales that govern the local eigenvalue statistics:
\begin{enumerate}
    \item[(i)] The \textbf{spectral edge} scale, establishing the base for local density analysis;
    \item[(ii)] The \textbf{critical bandwidth} $W \sim N^{1-\frac{1}{3\alpha}}$, which interpolates the transition between Airy (determinantal) and Poisson (independent) statistics;
    \item[(iii)] The \textbf{critical spike strength} $a_i = 1 + \tau_i W^{-\frac{\alpha}{3\alpha-1}}$ ($i=1,\ldots,q$), which determines the birth or absorption of outliers relative to the bulk edge.
\end{enumerate}
This process provides a unified framework that interpolates between the BBP transition and the pure variance-profile transition statistics.
\end{remark}

We are now ready to state our result in the tricritical regime.
\begin{theorem}[Tricritical edge statistics] \label{thm:triple_critical_convergence}
For the $\alpha$-stable random band  matrices   in Definition \ref{defmodel} but with Gaussian entries, subject to a finite-rank perturbation $A_{\mathcal{M}}$ with $\mathcal{M}=\{m_1,\ldots,m_r\}$ and with critically scaled eigenvalues  in \eqref{perturbationscaling}.
Assume  that $\sum_{i=1}^s n_i$ is even
and the parameters $(n_i, W, N)$ satisfy
\begin{itemize}
    \item $W \sim (\gamma N)^{1-\frac{1}{3\alpha}}$ for some fixed constant $\gamma > 0$;
    \item $n_i \sim t_i W^{\frac{\alpha}{3\alpha-1}}$ for all  $t_i \in (0, \infty)$.
    \item $m_i\sim z_iN$ for all $z_i\in [0,1)$.
\end{itemize}
Then, as $N \to \infty$, the following asymptotic properties hold:
\begin{enumerate}
    \item[(i)] \textbf{(Deformed diagram asymptotics)} For any given typical connected diagram $\Gamma \in \mathcal{T}_s$,
    \begin{equation}\label{equ:f_triple_critical}
        \frac{1}{n_1\cdots n_s}F^{\mathfrak{A}}_\Gamma(\{n_i\}_{i=1}^s) = (1+o(1)) \gamma^{|E|-|V|} \mathcal{F}^{\mathfrak{A}}_\Gamma(t_1, \dots, t_s; \{z_i\}_{i=1}^r; \gamma^\alpha),
    \end{equation}
    where $\mathcal{F}^{\mathfrak{A}}_\Gamma$ is given  in Definition \ref{def:F-double-critical}.
  \item[(ii)] \textbf{(Convergence to tricritical point process)} With   $s_N = W^{-\frac{2\alpha}{3\alpha-1}}$, the rescaled $k$-point correlation measure
    \begin{equation}
        (T_{s_N})_*R_{N;k}\bigl(dx_1, \ldots, dx_k\bigr)
    \end{equation}
    converges vaguely to the \textbf{tricritical point process} $R_{k}^{\mathfrak{A}}(\gamma; dy_1, \dots, dy_k)$.

    \item[(iii)] \textbf{(Crossover and Consistency)} The limiting measure $R_{k}^{\mathfrak{A}}$
    provides a unified interpolation: as $\tau_i \to -\infty$, it recovers the non-deformed critical measure $R_{k}^{\mathrm{(Crit)}}(\gamma; \cdot)$ of Theorem \ref{thm:convergence_proof}; as $\gamma \to \infty$, it converges to the universal BBP transition statistics under appropriate scaling of the parameters $\tau_i$; as $\gamma\rightarrow-\infty$, $\gamma^kR_{k}^{\mathfrak{A}}$ converges to $\prod_{j=1}^k R_1^{\mathrm{(Sub)}}(dx_j)$.
\end{enumerate}
\end{theorem}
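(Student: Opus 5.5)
The plan follows the non-deformed critical case (Theorem \ref{thm:homogeneous_theorem}(iii) and Theorem \ref{thm:convergence_proof}(ii)). The only new ingredient is the treatment of the open (boundary) edges. Part (i) is the core computation; parts (ii) and (iii) then follow from the cumulant and continuity machinery already in place.

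\textbf{Step 1: deformed diagram asymptotics.} For part (i), start from \eqref{equ:F_formula}. Replace each interior factor $p_{w_e}$ by $N^{-1}\theta_\alpha\big(\frac{x-y}{L},w_e(W/L)^\alpha\big)$ using Theorem \ref{prop:alpha_llt}. With $w_e=\alpha_e W^{\alpha/(3\alpha-1)}$ and $W\sim(\gamma N)^{1-1/(3\alpha)}$, this becomes $N^{-1}\theta_\alpha(\cdot,\alpha_e\gamma^\alpha)$ exactly as in the critical case.

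For the boundary factors, write $A_{\mathcal M}^{w}=\sum_{i,j}(\widetilde A^{w})_{ij}E_{m_im_j}$. Since $a_i^{w_e}=(1+\tau_iW^{-\alpha/(3\alpha-1)})^{\alpha_eW^{\alpha/(3\alpha-1)}}\to e^{\tau_i\alpha_e}$ for $i\le q$, while $|a_j|^{w_e}\le(1-\epsilon_0)^{w_e}\to0$ for $j>q$ uniformly on $\alpha_e\ge\delta$, we get $\widetilde A^{w_e}\to\mathfrak A(\alpha_e)$. Each boundary vertex $u$ is forced to take a value in $\mathcal M$, and its two incident open edges must use the same index $m_i$; this is exactly the $\sqrt\delta$ rule, with $x_u=m_i/N\to z_i$.

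Next sum $\eta$ over the interior vertices. Each interior vertex produces a factor $N$ against the $N^{-|E_{\mathrm{int}}|}$ coming from the interior edges; boundary vertices carry no factor $N$. Then convert the $w_e$-sums into $W^{\frac{\alpha}{3\alpha-1}|E|}C_\Gamma\int_{\mathfrak C}$. The power count reproduces $\gamma^{|E|-|V|}$ after dividing by $n_1\cdots n_s$, using $|E_{\mathrm b}|=|V_{\mathrm b}|$ and $|E_{\mathrm{int}}|-|V_{\mathrm{int}}|=|E|-|V|$. To justify the Riemann-sum limits I would use dominated convergence: Proposition \ref{prop:F_upper_bound}\ref{item:upper_boundary} gives an integrable majorant, since $a^n\le e^{C\max\tau_i t}$ is bounded. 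The region where some $w_e$ is small contributes $o(1)$ by the same simplex bound with one variable restricted.

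\textbf{Step 2: cumulants and moments.} Repeat the argument of Theorem \ref{thm:limit_cumulant} in the deformed setting. Theorem \ref{prop:U_asy}\ref{item:U_upper} with $a\le1+Cn^{-1}$ and $n^2b_n=O(1)$ gives absolute convergence of $\sum_\Gamma F_\Gamma/(n_1\cdots n_s)$ uniformly in $N$, where $Nb_n/n=O(1)$ in the critical regime. We may therefore pass to the limit termwise and, via vertex splitting (Lemma \ref{lem:vertex_split}), restrict to typical diagrams. This yields $\kappa/(n_1\cdots n_s)\to\sum_{\Gamma\in\mathcal T_s}\gamma^{|E|-|V|}\mathcal F^{\mathfrak A}_\Gamma$.

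The power-expansion step of Theorem \ref{prop:mix_prod} applies verbatim and gives $\tau(I)\to\tau_{\mathfrak A}$. Part (ii) then follows as in the proof of Theorem \ref{thm:convergence_proof}: use the sinc-limit of $U_n/(n+1)$, the partition identity, and the continuity theorem \cite[Theorem B.10]{liu2023edge}. Uniqueness of $R_k^{\mathfrak A}$ requires a growth bound on $\tau_{\mathfrak A}$ of the form in \eqref{equ:limit_upper_bound}, multiplied by $e^{C(\max_i\tau_i)_+t}$. I obtain this from Lemma \ref{lem:limit_function_upper_bound}(iii), bounding each boundary factor by $e^{(\max\tau_i)_+\alpha_e}$ and the $\sqrt\delta$ sums by $r^{|V_{\mathrm b}|}$.

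\textbf{Step 3: crossovers.} For part (iii), the limits are taken at the level of $\mathcal F^{\mathfrak A}_\Gamma$, justified by dominated convergence with the bounds from Step 2. As $\tau_i\to-\infty$, $\mathfrak A(\alpha_e)\to0$ pointwise for $\alpha_e>0$, so only diagrams without open edges survive, which recovers $R^{(\mathrm{Crit})}_k$. As $\gamma\to\infty$ (with $t\mapsto\gamma^{-1/3}t$ and $\tau_i\mapsto\gamma^{1/3}\tau_i$), $\theta_\alpha\to1$ by Lemma \ref{lem:asy_theta}(ii), which yields the flat deformed supercritical functions of BBP type from Part I. As $\gamma\to0$ (the statement's ``$\gamma\to-\infty$'' presumably means this), Lemma \ref{lem:crossover_analysis} applies to the interior part. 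Diagrams with open edges carry an extra power of $\gamma$, since boundary vertices lack the rescaling factor, and so they vanish after multiplying by $\gamma^k$, leaving the product of $R_1^{(\mathrm{Sub})}$.

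\textbf{Main obstacle.} I expect the main difficulty to lie in Step 1, specifically the uniform control of the boundary edges. Open edges with small $w_e$ are not governed by the local limit theorem, and $(\widetilde A^{w})_{ij}$ is not yet close to $\mathfrak A$ there. In addition, the non-critical spikes $j>q$ must be shown negligible uniformly. I would handle this by splitting each boundary $w_e$ into the ranges $w_e\le\delta W^{\alpha/(3\alpha-1)}$ and its complement. The small range I would bound by Proposition \ref{prop:F_upper_bound}, where the restricted simplex volume produces a factor $O(\delta)$; I would then let $\delta\to0$ after $N\to\infty$.
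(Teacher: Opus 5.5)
Your proposal is correct and follows the same route as the paper. The paper's proof of (i) rests on exactly your three ingredients: $a_i^{w_e}\to e^{\tau_i\alpha_e}$, the pinning of boundary vertices at $m_i/N\to z_i$ that produces the $\sqrt{\delta}$ kernel, and the identity $|E_{\mathrm{int}}|-|V_{\mathrm{int}}|=|E|-|V|$ preserving the $\gamma$-power; it then gets (ii) and (iii) by rerunning the arguments of Theorem~\ref{thm:convergence_proof} and Corollary~\ref{coro:poisson_statistics}, and your extra details (truncating small $w_e$, discarding the spikes $j>q$, the growth bound for uniqueness, and the extra power of $\gamma$ for open diagrams as $\gamma\to 0$) fill in steps the paper leaves implicit rather than changing the approach.
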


\begin{proof}

    Part (i). The proof of (i) is structurally analogous to that of the critical case in Theorem \ref{thm:homogeneous_theorem}, with necessary modifications to accommodate the boundary edges and vertices introduced by the perturbation. We break down the derivations of the new terms as follows:

    First, regarding the origin of the exponential factor $e^{\tau_i \alpha_e}$: In the expansion of the discrete diagram function, each boundary edge $e \in E_b$ contributes a matrix weight governed by $\widetilde{A}^{w_e}$. Under the critical scaling, the eigenvalues of the perturbation are $a_i = 1 + \tau_i W^{-\frac{\alpha}{3\alpha-1}}$, and the edge summation variable scales as $w_e = \alpha_e W^{\frac{\alpha}{3\alpha-1}}$ (identically to the substitution made in the critical regime proof of Theorem \ref{thm:homogeneous_theorem}). Taking the limit $W \to \infty$, the spectral contribution of the $i$-th critical spike asymptotically behaves as:
    \begin{equation}
        a_i^{w_e} = \left(1 + \tau_i W^{-\frac{\alpha}{3\alpha-1}}\right)^{\alpha_e W^{\frac{\alpha}{3\alpha-1}}} = e^{\tau_i \alpha_e} \big(1 + o(1)\big).
    \end{equation}
    This continuous exponential limit naturally yields the continuous-time operator $\mathfrak{A}(\alpha_e) = \sum_{i=1}^q e^{\tau_i \alpha_e} \boldsymbol{v}_i \boldsymbol{v}_i^T$.

    Second, regarding the appearance of the delta functions and the condition $m_i \sim z_i N$: In the discrete sum, the spatial coordinates of any boundary vertex $u \in V_b$ are strictly restricted to the perturbation index set $\mathcal{M} = \{m_1, \dots, m_r\}$. When we transition from the discrete Riemann sum to the continuous integral over the torus $\mathbb{T}$ (as executed in the critical case of Theorem \ref{thm:homogeneous_theorem}), the sum over $\eta(u)$ is replaced by an integral over $x_u = \eta(u)/N$. Because the discrete spike positions scale macroscopically as $m_i \sim z_i N$, their continuous coordinates on the torus tightly concentrate at $z_i \in [0,1)$. Consequently, the discrete spatial restriction to $\mathcal{M}$ transforms rigorously into an integration against the Dirac delta distributions $\delta(x_u - z_i)$ in the continuum limit. This justifies the integral kernel $\mathfrak{A}[t;\{z_i\}_{i=1}^r](x_u,x_v)$ defined via $\sqrt{\delta(x_u-z_i)}\sqrt{\delta(x_v-z_j)}$.

    Finally, separating the graph into interior and boundary components replaces the global scaling exponent $\gamma^{|E|-|V|}$ with $\gamma^{|E_{\mathrm{int}}|-|V_{\mathrm{int}}|}$. Since the boundary structure consists of tree-like attachments to the interior vertices, the topological identity $|E_{\mathrm{int}}|-|V_{\mathrm{int}}| = |E|-|V|$ holds, preserving the exact $\gamma$-scaling.

    Part (ii). The convergence to the tricritical point process follows the same recursive mixed-moment argument as established in the proof of Theorem \ref{thm:convergence_proof}, replacing the critical diagram function with the deformed diagram function $\mathcal{F}^{\mathfrak{A}}_\Gamma$.

    Part (iii). The crossover consistency follows the identical continuity and decoupling logic applied in the proof of Corollary \ref{coro:poisson_statistics}. When $\tau_i \to -\infty$, the spike operator $\mathfrak{A}$ vanishes, degenerating to the unperturbed critical measure.
\end{proof}

Finally, by combining the results of \cite{geng2024outliers, liu2025edge}, we can now present the phase diagram for the edge statistics of deformed random band matrices, as summarized in Table~\ref{table:phase_diagram}.

\begin{table}[htbp]
\centering
\caption{Edge statistics for spiked $1$D random band matrices
}
\label{table:phase_diagram}
\label{tab:final_phase_diagram}
\renewcommand{\arraystretch}{1.8}
\begin{tabularx}{\textwidth}{@{} >{\bfseries}l >{\centering\arraybackslash}X >{\centering\arraybackslash}X @{} p{4cm} @{}}
\toprule
Bandwidth $\backslash$ Spike & Weak signal
$a < 1$ & Intermediate signal $a = 1$ & \centering\arraybackslash Strong signal
$a > 1$ \cr
\midrule

\textbf{Subcritical} ~ $W \ll N^{1-\frac{1}{3\alpha}}$ &
\multicolumn{2}{c}{Poisson Statistics, Theorem  \ref{thm:convergence_proof} and \ref{thm:triple_critical_convergence}
}  &
 \cr
\cmidrule(lr){1-3}

\textbf{Critical} ~ $W = (\gamma N)^{1-\frac{1}{3\alpha}}$ &
{Transition Statistics,} Theorem \ref{thm:convergence_proof} & \textbf{Tricritical phase,
} Theorem \ref{thm:triple_critical_convergence}
& \multirow{3}{4cm}{\centering \textbf{Non-universal Outliers} \cite{geng2024outliers} \\ \vspace{2pt}
 }\cr
\cmidrule(lr){1-3}

\textbf{Supercritical} ~ $W \gg N^{1-\frac{1}{3\alpha}}$ &  Airy Statistics \cite{liu2025edge}&   BBP Transition \cite{liu2025edge}& \cr
\bottomrule
\end{tabularx}
\end{table}

\section{Several applications}\label{sec:application}
In this section, we present several applications of Theorem~\ref{prop:super_4}, focusing primarily on the identification and analysis of prototypical toy models. Unless otherwise indicated, we restrict our attention to the non-deformed setting.

\subsection{Wegner orbital model}\label{sec:wegner_model}

We consider a generalization of the Wegner orbital model from one dimension
to an arbitrary dimension \(d \ge 1\) on the discrete torus
\(\mathbb{T}_D^d = (\mathbb{Z}/D\mathbb{Z})^d\), where \(D \ge 2\) is an integer.
This model interpolates between localized and delocalized phases
\cite{wegner1979disordered,MR3915294}.
Similar high-dimensional block models have been studied in
\cite{fan2025localization,stone2025random,yang2025delocalization,khang2025localization}.

\begin{definition}[Wegner orbital model] \label{wegner}
Define a block random Hermitian matrix by
\begin{equation}\label{equ:Wegner_model}
X = \sqrt{1-\lambda}\,H + \sqrt{\lambda}\,\Lambda,
\end{equation}
where $\lambda\in[0,1)$ may depend on $M$,
for any $i, j \in \mathbb{T}_D^d$, the $(i, j)$-th block of size $M\times M$ is given by
\begin{equation}
    (H)_{i,j} = \delta_{i,j} H_i,
\end{equation}
and
\begin{equation}
    (\Lambda)_{i,j} = \begin{cases}
        \frac{1}{\sqrt{2d}}A_{i,j}, & \text{if } |i-j|_1 = 1, \\
        0, & \text{otherwise}.
    \end{cases}
\end{equation}
Here $|i-j|_1$ denotes the $\ell^1$ distance on the discrete torus (nearest neighbors), and  all non-zero block matrices are independent up to  symmetry  such that diagonal blocks $\{H_i\}$ are   rescaled $M\times M$ GUE (or GOE) matrices and
 off-diagonal blocks $A_{i,j}$ are  rescaled $M\times M$  Ginibre matrices.
\end{definition}

For any two indices $x = (i, a)$ and $y = (j, b)$, where $i, j \in \mathbb{T}_D^d$ represent the block coordinates on the torus and $a, b \in \{1, \dots, M\}$ denote the internal indices within each block, the variance entries are given by:
\begin{equation}
\sigma_{(i,a), (j,b)}^2 = \mathbb{E}[|X_{(i,a), (j,b)}|^2] =
\begin{cases}
\frac{1-\lambda}{M}, & \text{if } i = j, \\
\frac{\lambda}{2dM}, & \text{if } |i-j|_1 = 1, \\
0, & \text{otherwise}.
\end{cases}
\end{equation}
Consequently, the associated   transition matrix $P_N = (\sigma_{xy}^2)$ can be decomposed as $P_N = \mathcal{P}_D \otimes \frac{1}{M}\mathbf{J}_M$, where $\mathbf{J}_M$ is the all-ones matrix (in the sense of entry-wise variance) and $\mathcal{P}_D$ is the transition matrix of a random walk on $\mathbb{T}_D^d$. Specifically, this random walk stays at its current block with probability $1-\lambda$ and jumps to any of its $2d$ nearest neighbors with probability $\frac{\lambda}{2d}$.

The parameter $\lambda$ governs the coupling strength between blocks, leading to
the following structural interpretations:
\begin{itemize}
    \item $\lambda = 0$: The matrix $X$ consists of  $D^d$   independent GUE/GOE blocks.
    \item $\lambda = 1/2$: The model reduces to a block-structured random band matrix over the $d$-dimensional lattice (with unit bandwidth $W = 1$ in block units).
    \item $\lambda = 1$: The model coincides with the block adjacency matrix of the $d$-dimensional torus, interpretable as an Anderson-type model featuring block disorder and vanishing on-site potential.
\end{itemize}

To describe   the limiting diagram functions in the critical regime, we introduce, for any $\tau > 0$, the periodic \textbf{Skellam transition probability} on the torus $\mathbb{T}_D^d$, defined by
\begin{equation}\label{equ:skellam}
p^{\mathrm{(Skellam)}}(\boldsymbol{k},\tau)
:= \sum_{\boldsymbol{n} = (n_1, \ldots, n_d) \in \mathbb{Z}^d} \prod_{j=1}^d \left( e^{-\frac{2\tau}{d}} I_{|k_j + n_j D|}\!\left(\frac{2\tau}{d}\right) \right),
\end{equation}
where $\boldsymbol{k} = (k_1, \ldots, k_d) \in \mathbb{Z}^d$ and $I_{\nu}(x)$ denotes the modified Bessel function of the first kind.
In the one-dimensional case ($d = 1$), this expression reduces to the periodic version of the probability mass function of the \textbf{Skellam distribution}---the distribution of the difference of two independent Poisson random variables each with mean $\tau$; see, e.g., \cite{MR20750}.
\begin{theorem} \label{Wegnerlimits}
For any fixed  diagram   $\Gamma\in \mathcal{T}_s$,
        the  diagram function associated with   the Wegner model in Definition \ref{wegner},  denoted by $F^{(\mathrm{Wegner})}_{\Gamma}(\{n_i\};\lambda)$,  admits the following asymptotics      as $M\rightarrow\infty$.
        \begin{enumerate}
        \item[(i)] \textbf{Frozen regime: $M^{\frac{1}{3}}\lambda \ll 1$}. Given   any  fixed integer $D\geq 2$,  if all $n_i\le CM$ for any fixed $C>0$,  then
        \begin{equation}
            \frac{1}{n_1\cdots n_s}F^{(\mathrm{Wegner})}_{\Gamma}(\{n_i\};\lambda)=\frac{1+o(1)}{n_1\cdots n_s}F^{(\mathrm{Wegner})}_{\Gamma}(\{n_i\};0)=\frac{1+o(1)}{n_1\cdots n_s}D^dF^{(\mathrm{G\beta E})}_{\Gamma}(\{n_i\},M),
        \end{equation} where   $F^{(\mathrm{G\beta E})}_{\Gamma}(\{n_i\},M)$ denotes  the diagram function of G$\beta$E  matrix of size $M$.
        \item[(ii)] \textbf{Skellam regime: $M^{\frac{1}{3}}\lambda \to \mu \in (0,\infty)$}. Given   any  fixed integer $D\geq 2$,  if all $n_i\lambda\rightarrow t_i>0$,
        \begin{equation} \label{SkellamPP}
            \frac{1}{n_1\cdots n_s}F^{(\mathrm{Wegner})}_{\Gamma}(\{n_i\})=\frac{(1+o(1))C_\Gamma}{\,t_1\cdots t_s\,}\,\mu^{-|E|+s}\sum_{\alpha_v\in \mathbb{T}_D^d}\int_{\lambda_e:\mathfrak{C}(\{t_i\})}
    \prod_{e\in E(\Gamma)}p^{\mathrm{(Skellam)}}(\alpha_{u}-\alpha_{v},\lambda_e) \prod_{e\in E(\Gamma)}d\lambda_e.
        \end{equation}

        \item[(iii)] \textbf{Diffusive regime: $M^{\frac{1}{3}}\lambda \gg 1$}: When   $d=1$,  with the same notations, assumptions and scalings on $n_i$ as in Theorem \ref{thm:limit_cumulant} where    $\alpha=2$, $N=DM$ and the effective bandwidth  $W_{\mathrm{eff}}=\sqrt{\lambda}M$, then
\begin{equation}
    \frac{1}{n_1\cdots n_s}F^{(\mathrm{Wegner})}_{\Gamma}(\{n_i\};\lambda)=(1+o(1))\frac{1}{n_1\cdots n_s}\mathcal{L}_\Gamma (\{t_i\}; (W_{\mathrm{eff}},N)).
\end{equation}
\end{enumerate}
\end{theorem}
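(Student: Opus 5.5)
The starting point is the tensor structure $P_N=\mathcal{P}_D\otimes \frac1M\mathbf{J}_M$. Since $\frac1M\mathbf{J}_M$ is idempotent, the $n$-step kernel factorizes:
\begin{equation*}
p_n((i,a),(j,b))=\frac1M\,(\mathcal{P}_D^n)_{ij}.
\end{equation*}
Here $\mathcal{P}_D^n$ is the $n$-step kernel of the lazy walk on $\mathbb{T}_D^d$ that moves with probability $\lambda$ per step. Inserting this into \eqref{equ:F_formula} (non-deformed case), the internal indices $a\in[M]$ of each vertex sum out, giving a factor $M^{|V|}$. Hence
\begin{equation*}
F^{(\mathrm{Wegner})}_\Gamma(\{n_i\};\lambda)=M^{|V|-|E|}\sum_{\alpha_v\in\mathbb{T}_D^d}{\sum_{w_e}}'\prod_{e}(\mathcal{P}_D^{w_e})_{\alpha_u\alpha_v}.
\end{equation*}
So all three regimes reduce to a local limit theorem for $\mathcal{P}_D^{w}$ on the appropriate time scale $w$, followed by the Riemann-sum-to-integral conversion used in the proof of Theorem \ref{thm:homogeneous_theorem}. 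Typical diagrams have $|E|-|V|=\ell$ and $|E|=3\ell+s$, and $F_\Gamma$ is normalized by $n_1\cdots n_s$.

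\textbf{Frozen regime.} For $w\le CM$ we have $\mathcal{P}_D^w=I+O(w\lambda)$ entrywise. In the sum, $\sum w_e\le n=O(M)$, and the relevant edge lengths are of order $M^{1/3}$ (the GUE edge scale). On this scale $w\lambda\ll1$, so the kernel can be replaced by $\delta_{\alpha_u\alpha_v}$, and what remains is $D^d$ times the GUE diagram function with $p_w=1/M$. To make this rigorous I would bound the error by $\sum_e w_e\lambda$ times the unperturbed summand, using the product-difference inequality from the proof of Theorem \ref{thm:homogeneous_theorem}. The extra factor of $w_e$ costs one more power of $n$ relative to the GUE scale $M^{1/3}$, which is affordable since $M^{1/3}\lambda\ll1$. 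Edge lengths of order $M$ give a subleading contribution, by the same comparison argument as in the supercritical case.

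\textbf{Skellam regime.} Put $w_e=\lambda_e/\lambda$. The lazy walk is a sum of $w$ steps, each moving with probability $\lambda\to0$. By Poisson approximation of the binomial number of moves (with $w\lambda=\lambda_e$ fixed), its law converges to the continuous-time walk at time $\lambda_e$ with rate $1$ split over the $2d$ directions. Each coordinate of this walk is a Skellam variable, and projecting onto the torus gives $p^{(\mathrm{Skellam})}(\cdot,\lambda_e)$; the parametrization in \eqref{equ:skellam} should be checked against this. Convergence is uniform on compact $\lambda_e$-sets, with total-variation error $O(\lambda)$. The sum over $w_e$ becomes $\lambda^{-|E|}C_\Gamma\int_{\mathfrak{C}(\{t_i\})}$, and dividing by $n_1\cdots n_s\sim\lambda^{-s}t_1\cdots t_s$ leaves the prefactor $M^{|V|-|E|}\lambda^{-|E|+s}$. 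With $|V|-|E|=-\ell$, $|E|=3\ell+s$ and $\lambda\sim\mu M^{-1/3}$, this gives $M^{-\ell}\mu^{-|E|+s}M^{(|E|-s)/3}=\mu^{-|E|+s}$, which is \eqref{SkellamPP}.

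\textbf{Diffusive regime.} Here the walk takes many steps on the relevant time scale, so the Gaussian local CLT should hold. The plan is to apply Theorem \ref{prop:super_4}, or more directly Theorem \ref{thm:F=tildeF}, to compare with the $\alpha=2$ band matrix of Definition \ref{defmodel} with $N=DM$ and $W_{\mathrm{eff}}=\sqrt\lambda M$. The comparison requires verifying \ref{itm:B1}--\ref{itm:B2}. By Fourier analysis on $\mathbb{T}_D$, the eigenvalues are $1-\lambda(1-\cos(2\pi k/D))$, which matches the heat kernel at time $w\lambda$ on the block scale, i.e. $\theta_2$ at $\tau\asymp w\lambda/D^2$. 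This matches $w(W_{\mathrm{eff}}/N)^2$ up to the variance constant $\sigma$ in $f_2$.

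The errors to control are the following. The $\ell^1$ error $\epsilon_w$ is $O((w\lambda)^{-1/2})$, from the Edgeworth or lattice correction. The $\ell^\infty$ error $\delta_w$ is $O(M^{-1}(w\lambda)^{-1})$, obtained from the local CLT error against the comparison kernel, after spreading it over the block indices (the comparison kernel must be taken block-averaged). Summing to the scale $n$ then gives $\mathcal{E}_n/n\to0$ and $\Delta_n/b_n\to0$, since $n\lambda\gg1$; this follows from $M^{1/3}\lambda\gg1$ in each sub-regime of Theorem \ref{thm:limit_cumulant}. Theorem \ref{thm:F=tildeF} together with Theorem \ref{thm:homogeneous_theorem} then gives the claim.

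I expect this last verification to be the main obstacle. The difficulty is near the crossover $M^{1/3}\lambda\to\infty$ slowly, where $n\lambda$ is only barely large. There the lattice nature of the walk makes $\delta_w$ non-small for short times $w\lesssim\lambda^{-1}$. To handle this, I would split the sum $\Delta_n$ at $w=K/\lambda$: bound the short-time part trivially by $K/(\lambda M)$, and show this is $o(b_n)$ using $b_n\gtrsim n^{1/2}/(M\sqrt\lambda)$.
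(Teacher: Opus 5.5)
\textbf{Frozen regime: a false step.} You claim that the relevant edge lengths are of order $M^{1/3}$, and that edge lengths of order $M$ are subleading ``by the same comparison argument as in the supercritical case''. For a fixed diagram this is false. At $\lambda=0$ the block sum $\sum_{\alpha_v}\prod_e\delta_{\alpha_u\alpha_v}=D^d$ does not depend on $(w_e)$. So the $w$-sum is a pure lattice-point count over $\{(w_e):\sum_{e\in\partial D_j}w_e\le n_j\}$, and its mass sits at $w_e\asymp n$. In the supercritical case of Theorem \ref{thm:homogeneous_theorem} it is the \emph{short} edges that are negligible, not the long ones.

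Your bound $|\prod_e\overline{p}_{w_e}-\prod_e\delta|\le\lambda\sum_e w_e$ therefore gives a relative error of order $n\lambda$, which is $o(1)$ only when $n\lambda\to0$. No argument can do better, because the claim fails outside that range. Take $n_i\asymp M$ and $M^{-1}\ll\lambda\ll M^{-1/3}$. Then $w_e\lambda\to\infty$ on the bulk of the $w$-polytope, and the lazy block walk mixes, so $\overline{p}_{w_e}\to D^{-d}$. Consequently
$F^{(\mathrm{Wegner})}_\Gamma(\{n_i\};\lambda)/F^{(\mathrm{Wegner})}_\Gamma(\{n_i\};0)\to D^{d(|V|-|E|)}/D^{d}=D^{-d(\ell+1)}\neq1$ for every $\Gamma\in\mathcal{T}_{\ell,s}$ with $\ell\ge0$.

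The paper's own proof has the same limitation. It uses Theorem \ref{thm:F=tildeF} with Proposition \ref{prop:wegner_asymptotics}(1), whose hypothesis is $n\lambda\ll1$ and which gives $\Delta_n/b_n+\mathcal{E}_n/n=O(n\lambda)$. So it covers only $n\lambda\to0$, i.e.\ $n_i=O(M^{1/3})$ when $M^{1/3}\lambda\to0$, and the ``$n_i\le CM$'' in the statement must be read that way. Under that restriction your argument is sound. Your direct perturbation bound, summed over the $D^{d|V|}$ block labellings with $D$ fixed, is a valid and slightly more elementary substitute for the comparison theorem. The identity $F^{(\mathrm{Wegner})}_\Gamma(\cdot;0)=D^dF^{(\mathrm{G\beta E})}_\Gamma(\cdot,M)$ is exact.

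\textbf{Skellam and diffusive regimes.} These follow the paper's route:
\begin{itemize}
\item Skellam: Poisson approximation, then the Riemann-sum conversion of Theorem \ref{thm:homogeneous_theorem}.
\item Diffusive: Theorem \ref{thm:F=tildeF} against the $\alpha=2$ band chain with $W_{\mathrm{eff}}=\sqrt{\lambda}M$ and $N=DM$.
\end{itemize}
Your split of $\Delta_n$ and $\mathcal{E}_n$ at $w=K/\lambda$ supplies the quantitative verification the paper omits.

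Your doubt about \eqref{equ:skellam} is justified. Each coordinate of the limiting walk is a difference of two independent Poisson$(\lambda_e/(2d))$ variables. In the paper's normalization that is $p^{(\mathrm{Skellam})}(\cdot,\lambda_e/2)$, not $p^{(\mathrm{Skellam})}(\cdot,\lambda_e)$.

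You also do not need to block-average the reference chain. At time $w$ the intra-block variation of the band kernel is already $O(1/(Mw\lambda))$, within the budget you set for $\delta_w$. So the unmodified band chain satisfies \ref{itm:B2} directly, and Theorem \ref{thm:homogeneous_theorem} applies to it as is.
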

\begin{proof}
Proofs for the frozen and diffusive regimes follow immediately from Theorem~\ref{thm:F=tildeF} combined with the Markov-chain comparison established in Proposition~\ref{prop:wegner_asymptotics}.The proof for the Skellam regime proceeds analogously to that of Theorem~\ref{thm:homogeneous_theorem}, except that the local limit theorem (Proposition~\ref{prop:alpha_llt}) is replaced by Proposition~\ref{prop:wegner_asymptotics}.
\end{proof}
\begin{definition}[Skellam point process] \label{def:skellam_process}
The \textbf{Skellam point process} is the random point process on $\mathbb{R}$ whose $k$-point correlation measures $R^{\mathrm{(Skellam)}}_{\mu;k}(dx_1,\ldots,dx_k)$ are the unique Radon measures determined by the following Sine-transform identity. For any $t_1, \dots, t_k > 0$ and $m \in \{4, 8, 10\}$,
\begin{equation} \label{equ:def_R_Skellam}
    \int_{\mathbb{R}^k} \prod_{j=1}^k \frac{\sin^{m} (t_j\sqrt{-x_j})}{(t_j\sqrt{-x_j})^{m}} \, R^{\mathrm{(Skellam)}}_{\mu;k}(dx_1,\ldots,dx_k) = \sum_{\Pi \in \mathscr{P}([k])} \prod_{\pi \in \Pi} \tau_{\mathrm{(Skellam)}}(\{t_j\}_{j \in \pi}),
\end{equation}
where the deformed limiting cumulants $\tau_{\mathfrak{A}}$ are defined by the sum over connected typical diagrams:
\begin{equation*}
    \tau_{\mathrm{(Skellam)}}(\{t_j\}_{j \in \pi}) = \sum_{\Gamma \in \mathcal{T}_{|\pi|}} \int_{0}^1 \cdots \int_{0}^1 \left[ \prod_{j \in \pi} \mathcal{Q}_m(\xi_j) \right] \mathcal{F}^{\mathrm{(Skellam)}}_{\Gamma}(\{\xi_j t_j\}_{j \in \pi}; \lambda) \prod_{j \in \pi} d\xi_j.
\end{equation*}
Here $\mathcal{F}^{\mathrm{(Skellam)}}_{\Gamma}$ is the deformed limiting diagram function defined as
\begin{equation}
    \mathcal{F}^{\mathrm{(Skellam)}}_{\Gamma}(\{t_j\}_{j \in \pi}; \mu)=\mu^{-|E|+s} \sum_{\alpha_v\in \mathbb{T}_D^d}\int_{\lambda_e:\mathfrak{C}(\{t_i\})}
    \prod_{e\in E(\Gamma)}p^{\mathrm{(Skellam)}}(\alpha_{u}-\alpha_{v},\lambda_e) \prod_{e\in E(\Gamma)}d\lambda_e.
\end{equation}
\end{definition}

The analogue of Theorem \ref{thm:convergence_proof} for the block Wegner orbital model is stated as follows.
\begin{corollary}\label{coro:Wegner_orbital}
\begin{enumerate}
    \item[(i)] \textbf{Frozen regime:} Given   any  fixed integer $D\geq 2$,  if
         \(M^{\frac13}\lambda \ll 1\), then for any fixed integer  $k\geq 1$ with $s_N=M^{-2/3}$,
       $ (T_{s_N})_*  R_{N,\lambda}\bigl(dx_1,\dots,dx_k\bigr)$
        converges vaguely to the corresponding limits  of $D$ independent GUE/GOE matrices.

    \item[(ii)] \textbf{Skellam regime:} Given   any  fixed integer $D\geq 2$,  if
         \(M^{\frac13}\lambda \to \mu \in (0,\infty)\) with $s_N=M^{-2/3}$, then
       $ (T_{s_N})_*  R_{N,\lambda}\bigl(dx_1,\dots,dx_k\bigr)$converges vaguely to a new point process \(R^{\mathrm{(Skellam)}}_{\mu;k}(dy_1,\ldots,dy_k)\) defined by definition \ref{def:skellam_process} that interpolates between Airy and Poisson  statistics.

    \item[(iii)] \textbf{Diffusive regime:}  If $d=1$, \(M^{\frac13}\lambda \gg 1\),
       then  the $k$-point correlation measure  converges
       to those of the corresponding $1d$ 2-stable random band matrix with effective bandwidth $W=\sqrt{\lambda}M$ and $N=DM$.
\end{enumerate}
\end{corollary}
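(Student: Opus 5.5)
The plan is to repeat the scheme of Theorem~\ref{thm:convergence_proof}, using the diagram asymptotics of Theorem~\ref{Wegnerlimits} in place of Theorem~\ref{thm:homogeneous_theorem}.

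\textbf{Step 1: sinc-transform reduction.} By the recursion \eqref{equ:tau} and the Chebyshev linearization argument in the proof of Theorem~\ref{prop:mix_prod}, the sinc transforms
\[
\int \prod_j \frac{\sin^m(t_j\sqrt{-x_j})}{(t_j\sqrt{-x_j})^m}\,(T_{s_N})_*R_{N,\lambda}(dx)
\]
are expressed through cumulants $\kappa_X(k_1,\dots,k_s)$. The $\xi$-weights $\mathcal{Q}_m(\xi_j)$ appear after rescaling $k_j=\xi_j n_j$.

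\textbf{Step 2: scaling and uniform control.} Choose $n_j\sim t_jM^{1/3}$, matching $s_N=M^{-2/3}$. I then need two ingredients.
\begin{itemize}
\item \emph{Termwise limits.} Each diagram function $F^{(\mathrm{Wegner})}_\Gamma/(n_1\cdots n_s)$ converges termwise by Theorem~\ref{Wegnerlimits}.
\item \emph{Summable majorant.} The series over $\Gamma\in\mathscr{D}^*_{s;\beta}$ must be dominated uniformly, so that limit and sum may be interchanged.
\end{itemize}
For the majorant I use Theorem~\ref{prop:U_asy}\ref{item:U_upper}, which requires $n^2b_n=O(1)$. For the block chain $P_N=\mathcal{P}_D\otimes\frac1M\mathbf{J}_M$ one has $p_i(x,y)\le \frac1M\max_{\boldsymbol{k}}(\mathcal{P}_D^i)_{\boldsymbol{k}}\le \frac1M$. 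Hence $b_n\le n/M$, and $n^2b_n\le n^3/M=O(1)$ at $n\sim M^{1/3}$.

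\textbf{Step 3: the three regimes.}
\begin{itemize}
\item \emph{Frozen regime.} Since $n\lambda\to0$, the block walk does not move during $n$ steps, so $p_n$ is within $\ell^1$-distance $O(n\lambda)$ of the $\lambda=0$ chain. Summing gives $\mathcal{E}_n/n=O(n\lambda)=o(1)$. Likewise $\Delta_n\le n\cdot O(n\lambda)/M$, so $\Delta_n/b_n=o(1)$. Theorem~\ref{prop:super_4} then compares $X$ with $\bigoplus_i H_i$, i.e.\ $D^d$ independent G$\beta$E blocks, whose rescaled correlation measures converge to superpositions of independent Airy processes. (The statement says "$D$"; I read it as $D^d$ blocks.)
\item \emph{Skellam regime.} Here $n\lambda\sim t\mu$ is of order one. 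The $n$-step block kernel converges to the periodic Skellam law \eqref{equ:skellam} via Poissonization, which is Proposition~\ref{prop:wegner_asymptotics}. Inserting the limits \eqref{SkellamPP} into the cumulant expansion gives $\tau_{(\mathrm{Skellam})}$. The continuity theorem \cite[Theorem B.10]{liu2023edge} then identifies the vague limit with $R^{(\mathrm{Skellam})}_{\mu;k}$, following the base case and induction in $k$ exactly as in Theorem~\ref{thm:convergence_proof}.
\item \emph{Diffusive regime.} Part (iii) of Theorem~\ref{Wegnerlimits} shows that the diagram functions agree with those of the $2$-stable band model with $W_{\mathrm{eff}}=\sqrt\lambda M$ and $N=DM$. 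Since the majorants are common to both models, the correlation measures share the limits of Theorem~\ref{thm:convergence_proof}/\ref{thm:band_main} with those parameters.
\end{itemize}

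\textbf{Main obstacle.} I expect the hardest step to be the uniform domination in the Skellam regime (and the verification that the termwise limit is nondegenerate). The limit contains $\mu^{-|E|+s}$, which grows geometrically in $|E|$, and $D^{d|V|}$ vertex sums. I would control the sum over $\ell$ through Lemma~\ref{diagramno}, as in \eqref{kappab-1}--\eqref{kappab-4}, using $p^{(\mathrm{Skellam})}\le1$, the factorial gain from the simplex volume $\mathrm{Vol}_{\mathfrak{C}}\le (Ct)^{|E|}/|E|!$, and the resulting $e^{Ct^{3/2}}$-type bound. That bound is summable even though it depends on $\mu$, and this is enough to justify the interchange and the uniqueness of the limiting Radon measures.
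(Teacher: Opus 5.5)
Your proposal is correct and follows essentially the same route as the paper, whose proof simply reruns the argument of Theorem~\ref{thm:convergence_proof} with the diagram asymptotics of Theorem~\ref{Wegnerlimits} (themselves obtained from Theorem~\ref{thm:F=tildeF} and Proposition~\ref{prop:wegner_asymptotics}), remarking that finiteness of $D$ is what keeps the one-point limit \eqref{equ:limit_P} finite; in your argument this enters through $b_n\le n/M$, i.e.\ $Nb_n/n\le D^d$. Your reading of ``$D$ independent'' as $D^d$ blocks matches Theorem~\ref{Wegnerlimits}(i); note only that the interchange of $M\to\infty$ with the diagram sum is justified by the uniform pre-limit majorant from Theorem~\ref{prop:U_asy}, whereas your bound on the limiting Skellam functions is what makes the limit finite and gives growth slow enough for the uniqueness step.
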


\begin{proof}
The proof follows the same line of argument as that of Theorem~\ref{thm:convergence_proof}.
The finiteness of
$D$  is required to guarantee that the limit  corresponding to   \eqref{equ:limit_P} is finite.
\end{proof}

\subsection{Hankel-profile random matrices}
\label{sec:hankel}

 The model of random band matrices studied in Section~\ref{sectionrbm} is indeed an inhomogeneous random matrix   whose variance profile $\Sigma_N = (\sigma_{xy}^2)$ possesses a Toeplitz structure: $\sigma_{xy}^2$ depends only on the relative distance between the indices $x$ and $y$. In contrast, we define a \textbf{Hankel-profile random matrix} as an IRM matrix  whose variance profile matrix exhibits a Hankel structure; equivalently, $\sigma_{xy}^2$ depends only on the sum of the indices $x$ and $y$ modulo $L$. Such a structure arises naturally in systems with reflection symmetry or under specific spatial constraints on the torus.

\begin{definition}[Hankel-profile random matrices]
\label{def:hankel_irm}
Let $\Lambda_L = \mathbb{T}_L^d$ be the $d$-dimensional discrete torus. An inhomogeneous random matrix $X_N$ is said to be of  \textbf{Hankel-type profile} if its variance profile $\Sigma_N = (\sigma_{xy})$ satisfies
\begin{equation}
\sigma_{xy}^2 = \frac{1}{M} \sum_{k \in \mathbb{Z}^d} f\left(\frac{x+y-x_0+kL}{W}\right), \quad  M:=\sum_{k\in \mathbb{Z}^d} f\big(\frac{k}{W}\big),
\end{equation}
where $f$ is a non-negative integrable and  symmetric function, and  $x_0 \in \mathbb{T}_L^d$ is a fixed center of reflection.
\end{definition}

The fundamental distinction between Toeplitz and Hankel profiles lies in the dynamics of the underlying Markov chain $([N], P_N)$. While a Toeplitz profile induces a standard random walk, recursively defined by $Y_{n+1} = Y_n + \xi_{n+1} \pmod L$, the Hankel-type transition $p(x, y)$
induces an \textbf{alternating walk}:
\begin{equation}
\label{eq:alternating_walk}
Y_{n+1} = -Y_n + x_0 + \xi_{n+1} \pmod L,
\end{equation}
where $\{\xi_i\}$ are i.i.d. random variables governed by the density $f$. Iterating \eqref{eq:alternating_walk}, the state after $n$ steps starting from $Y_0$ is given by
\begin{equation}
Y_n = (-1)^n Y_0 + \frac{1-(-1)^n}{2}x_0 + \sum_{j=1}^n (-1)^{n-j} \xi_j \pmod L.
\end{equation}
For an even number of steps $n=2k$, the state is $Y_{2k} = Y_0 + \sum_{j=1}^{k} (\xi_{2j} - \xi_{2j-1})$, which reduces to a standard random walk with i.i.d. symmetric increments $\zeta_j = \xi_{2j} - \xi_{2j-1}$. Consequently, the local central limit theorem (Theorem \ref{prop:alpha_llt}, Lemma \ref{lem:asy_theta} and Proposition~\ref{thm:clt-upper}) directly applies.

For odd steps $n=2k+1$, the distribution of $Y_{2k+1}$ is centered around $x_0 - Y_0$, and its behavior depends more sensitively on the potential asymmetry of $f$. For simplicity, we assume $f$ is an $\alpha$-stable profile centered at the origin.

\begin{lemma}[LCLT for Alternating Walks]\label{lem:hankel_clt} For
  a Hankel-profile random matrix  $X_N$ with an $\alpha$-stable profile $f$ as in Definition~\ref{def:alpha_profile},  let $p_n(x,y)$ be the $n$-step transition probability on $\mathbb{T}_L^d$. Suppose that  $W \ll L$ and   $n (W/L)^\alpha \to \tau$ as $N \to \infty$, the following asymptotics hold.
\begin{enumerate}
    \item[(i)] \textbf{Even steps ($n=2k$):} The transition probability is concentrated around $y = x$,
    \begin{equation}
        p_{2k}(x, y) = \frac{1}{L^d} \theta_\alpha\Big( \frac{y-x}{L}, 2k \big(\frac{W}{L}\big)^\alpha \Big) + O(k e^{-c W^\alpha}).
    \end{equation}
    \item[(ii)] \textbf{Odd steps ($n=2k+1$):} The transition probability is concentrated around $y = x_0 - x$,
    \begin{equation}
        p_{2k+1}(x, y) = \frac{1}{L^d} \theta_\alpha\Big( \frac{y+x-x_0}{L}, (2k+1) \big(\frac{W}{L}\big)^\alpha \Big) + O(k e^{-c W^\alpha}).
    \end{equation}
\end{enumerate}
\end{lemma}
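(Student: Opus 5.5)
The plan is to reduce both parts to the Toeplitz local limit theorem, Theorem~\ref{prop:alpha_llt}. First write the one-step kernel as $p_1(x,y)=q(x+y-x_0)$, where
\[
q(z):=\frac{1}{M}\sum_{k\in\mathbb{Z}^d} f\Big(\frac{z+kL}{W}\Big)
\]
is exactly the one-step kernel $p^{\mathrm{T}}_1(0,z)$ of the periodic $\alpha$-stable band matrix of Definition~\ref{defmodel}. Then $p_1=R\,P^{\mathrm{T}}$, where $R$ is the reflection $(Rg)(x)=g(x_0-x)$ and $P^{\mathrm{T}}$ is the translation-invariant Toeplitz transition matrix with $P^{\mathrm{T}}(u,y)=q(y-u)$. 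Indeed, $p_1(x,y)=q(y-(x_0-x))=P^{\mathrm{T}}(x_0-x,y)$.

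The key algebraic fact is that $R$ commutes with $P^{\mathrm{T}}$, because $f$ (hence $q$) is symmetric: $q(y-u)=q((x_0-u)-(x_0-y))$. Since $R^2=I$, this gives $p_n=R^n (P^{\mathrm{T}})^n$.

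\begin{itemize}
\item For $n=2k$, we get $p_{2k}(x,y)=p^{\mathrm{T}}_{2k}(x,y)$.
\item For $n=2k+1$, we get $p_{2k+1}(x,y)=p^{\mathrm{T}}_{2k+1}(x_0-x,y)$.
\end{itemize}

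This matches the alternating-walk description in \eqref{eq:alternating_walk}: two steps compose to a translation by $\xi_{2j}-\xi_{2j-1}$. Because the $\xi_j$ are i.i.d. and symmetric, the law of $\xi_{2j}-\xi_{2j-1}$ is the two-fold convolution of $q$, so no new limit law appears.

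Next, apply Theorem~\ref{prop:alpha_llt} with $N$ replaced by $L^d$ and $n$ replaced by $2k$ or $2k+1$. By translation invariance, $p^{\mathrm{T}}_n(u,y)=p^{\mathrm{T}}_n(0,y-u)$. Hence
\[
p_{2k}(x,y)=L^{-d}\,\theta_\alpha\big((y-x)/L,\;2k(W/L)^\alpha\big)+O\big(ke^{-cW^\alpha}\big),
\]
and, taking $u=x_0-x$,
\[
p_{2k+1}(x,y)=L^{-d}\,\theta_\alpha\big((y+x-x_0)/L,\;(2k+1)(W/L)^\alpha\big)+O\big(ke^{-cW^\alpha}\big).
\]
The error $O(ne^{-c_\alpha W^\alpha})$ is $O(ke^{-cW^\alpha})$ after adjusting constants. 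The hypothesis $n(W/L)^\alpha\to\tau$ with $W\ll L$ gives $n\sim\tau(L/W)^\alpha$. This is polynomial in $L$, so the condition $ne^{-c_\alpha W^\alpha}=o(1)$ of Theorem~\ref{prop:alpha_llt} holds whenever $W\gg(\log L)^{1/\alpha}$. I would state that requirement explicitly, as is implicit throughout Section~\ref{sectionrbm}; otherwise the error terms must simply be read as non-asymptotic.

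The main point requiring care is the commutation $RP^{\mathrm{T}}=P^{\mathrm{T}}R$, and this is precisely where the symmetry of $f$ is used. For a non-symmetric profile, the odd steps would instead involve a reflected increment $-\xi$, and the cleaner statement would fail.

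A secondary check concerns the interpretation of $(y-x)/L$ and $(y+x-x_0)/L$ as points of $\mathbb{T}^d$. This is consistent because $\theta_\alpha(\cdot,\tau)$ is $\mathbb{Z}^d$-periodic, being the periodization in \eqref{theta}. One should also confirm that the normalizing constant $M$ is identical in the Hankel and Toeplitz definitions, which it is.

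Finally, a remark: the LCLT of Theorem~\ref{prop:alpha_llt} is stated for $\Lambda_L$ with $N=L^d$, and the Hankel chain lives on the same state space $\Lambda_L$. So the $1/L^d$ prefactor is exactly the $1/N$ of Theorem~\ref{prop:alpha_llt}.
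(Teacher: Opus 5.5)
Your proof is correct and uses the same idea as the paper: the reflection $x\mapsto x_0-x$, together with the symmetry of $f$, reduces the Hankel chain to the Toeplitz chain of Definition~\ref{defmodel}. The difference is that you work in position space, while the paper works in Fourier space.

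The paper computes $\widehat{P\psi}(k)=\widehat p(k)e^{-2\pi i k\cdot x_0/L}\widehat\psi(-k)$. Two applications give the multiplier $\widehat p(k)\widehat p(-k)=\widehat p(k)^2$, and Theorem~\ref{prop:alpha_llt} then handles even steps. For odd steps, the paper writes the inverse Fourier formula for $p_{2m+1}(x,y)$ and asserts that the sum converges to $\theta_\alpha$ at $(x+y-x_0)/L$. In effect it reruns the Poisson-summation argument without writing it out.

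Your factorization $P=RP^{\mathrm T}$ with $[R,P^{\mathrm T}]=0$ is the Fourier dual of that computation. Here $R$ acts as $\widehat\psi(k)\mapsto e^{-2\pi i k\cdot x_0/L}\widehat\psi(-k)$, and $P^{\mathrm T}$ as multiplication by $\widehat p(k)$. What your version buys is the exact identities $p_{2k}=p^{\mathrm T}_{2k}$ and $p_{2k+1}(x,y)=p^{\mathrm T}_{2k+1}(0,x+y-x_0)$. So Theorem~\ref{prop:alpha_llt} applies verbatim in both parities, and the odd case needs no separate asymptotic analysis. The Fourier route, in turn, shows exactly where symmetry enters ($\widehat p(k)=\widehat p(-k)$) and exposes the spectral structure of the Hankel operator.

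One small slip: the identity you display, $q(y-u)=q((x_0-u)-(x_0-y))$, is a tautology and does not use symmetry. Commutation is equivalent to $RP^{\mathrm T}R=P^{\mathrm T}$. Since $(RP^{\mathrm T}R)(u,y)=P^{\mathrm T}(x_0-u,x_0-y)=q(u-y)$, you need $q(u-y)=q(y-u)$, and that is where the symmetry of $f$ is used.

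Your remarks on the implicit hypothesis $W\gg(\log L)^{1/\alpha}$, and on absorbing the factor $2k+1$ into $O(k)$, are appropriate; the latter is legitimate because $n\to\infty$ in this regime.
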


\begin{proof}  Write the discrete Fourier transform as
  \begin{equation}\widehat{\psi}(k) = \sum_{x\in\Lambda_L} \psi(x) e^{-\frac{2\pi i k\cdot x}{L}},\quad k\in\Lambda_L,\end{equation}  we will employ Fourier analysis on the torus $\Lambda_L$. The Hankel operator $P$ acts as $(P\psi)(x) = \sum_{y\in\Lambda_L} p(x,y)\psi(y)$, then    the Hankel structure at  $x+y-x_0$ gives
\begin{equation}
\widehat{P\psi}(k) = \widehat{p}(k) e^{-\frac{2\pi i k\cdot x_0}{L}} \widehat{\psi}(-k).
\end{equation}
Here we denote
\begin{equation}
    \widehat{p}(k):=\sum_{y\in\Lambda_L} p(x,y) e^{- \frac{2\pi i k\cdot (x+y-x_0)}{L}}=\sum_{y\in\Lambda_L} p(0,y) e^{-\frac{2\pi i k\cdot (y-x_0)}{L}}.
\end{equation}
Note that  $\widehat{p}(k) = \frac{W^d}{M} \widehat{f}\bigl(\frac{Wk}{L}\bigr) + O(e^{-c W^\alpha})$, with $\widehat{f}$ the continuous Fourier transform of $f$.  Using    the frequency reflection $k \to -k$ and  applying $P$ twice yields
\begin{equation}
\widehat{P^2\psi}(k) = \widehat{p}(k) e^{-\frac{2\pi i k\cdot x_0}{L}} \widehat{P\psi}(-k) = \widehat{p}(k)\widehat{p}(-k) \widehat{\psi}(k).
\end{equation}
Since we assume that $f$ is symmetric, $\widehat{p}(k)=\widehat{p}(-k)$,   $\widehat{P^{2m}\psi}(k) = (\widehat{p}(k))^{2m} \widehat{\psi}(k)$, proving part  (i) via the local central limit theorem for normal random walks (Theorem \ref{prop:alpha_llt}).

For odd steps, the $(2m+1)$-step transition probability can be expressed via inverse Fourier transform as
\begin{equation}
p_{2m+1}(x,y) = \frac{1}{L^d} \sum_{\ell\in\Lambda_L} (\widehat{p}(\ell))^{2m+1} e^{-\frac{2\pi i \ell\cdot x_0}{L}} e^{\frac{2\pi i \ell\cdot (x+y)}{L}}.
\end{equation}
Under the $\alpha$-stable scaling $\widehat{f}(\xi) \sim e^{-c|\xi|^\alpha}$, the sum converges to the $\theta_\alpha$ function evaluated at the rescaled coordinate $(x+y-x_0)/L$. This yields concentration around the reflection point $y = x_0 - x$, completing the proof of part (ii).
\end{proof}
By applying the local central limit theorem (Lemma \ref{lem:hankel_clt}), the corresponding limit for the $k$-point correlation measures in Hankel-profile case can be established through an argument analogous to that of Theorem \ref{thm:convergence_proof}.

\subsection{Deviation inequalities}
In the classical mean-field setting, the edge fluctuations of random matrices obey the Tracy--Widom law with scaling $N^{-2/3}$.
For inhomogeneous Gaussian matrices, Brailovskaya and  van Handel~\cite{brailovskaya2024extremal} established a deviation scale of $(\max_{ij}\sigma_{ij})^{4/3}$,
whereas our previous paper \cite{liu2025edge} demonstrated that a \emph{short-time average mixing} condition restores the $N^{-2/3}$ scaling.
Nevertheless, as observed by Sodin~\cite{sodin2010spectral} for random band matrices with unimodular entries,  the fluctuation scale may deviate substantially from $N^{-2/3}$, depending on the geometry of the variance profile.

The following theorem provides a general deviation bound for inhomogeneous random matrices, which serves as the foundation for exploring non-standard fluctuation scales.

\begin{theorem}[Deviation inequalities]\label{thm:deviation}
Let $X$ be a Gaussian IRM   matrix with mean matrix  $A_N=0$. Under Assumption~\ref{itm:B1}, there exist absolute constants $C, c > 0$ such that for any $t > 0$ and any integer  $n \geq 1$,
\begin{equation}
    \mathbb{P}\big(\|X\|\ge 2 + t\big) \le C n N b_{n} \,e^{C n^2 b_n - c n \sqrt{t}}.
\end{equation}
\end{theorem}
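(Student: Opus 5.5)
We will run a Chebyshev-moment Markov argument, using the trace bound of Theorem~\ref{prop:U_asy}\ref{item:U_upper} with $s=1$, $r=0$ and $a=0$. For $s=1$ and $A_N=0$, that bound gives
\begin{equation*}
\mathbb{E}\,\tr U_{n}\big(\tfrac{X}{2}\big)\le C\,N b_n\,e^{Cn^2 b_n}.
\end{equation*}
The one issue is that $U_n(X/2)$ can be negative on eigenvalues inside $[-2,2]$, so its trace does not dominate the contribution of an extremal eigenvalue. We will therefore work with a positive Chebyshev combination.

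\textbf{Step 1 (positive test function).} We use the identity $\sum_{k=0}^{n}U_k(x)^2 \ge 0$, or more simply
\begin{equation*}
U_k(x)^2=\sum_{j=0}^{k}U_{2j}(x),
\end{equation*}
which follows from the linearization \eqref{prodexpansion}. Set $\Phi_n(x)=\sum_{j=0}^{n}U_{2j}(x)=U_n(x)^2\ge 0$. Then
\begin{equation*}
\tr \Phi_n\big(\tfrac{X}{2}\big)=\sum_i U_n\big(\tfrac{\lambda_i}{2}\big)^2\ge U_n\big(\tfrac{\lambda_{\max}}{2}\big)^2,
\end{equation*}
where $\lambda_{\max}$ is the eigenvalue of largest modulus; by parity $U_n(-x)^2=U_n(x)^2$, so both edges are covered. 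Applying Theorem~\ref{prop:U_asy}\ref{item:U_upper} to each $U_{2j}$ with $2j\le 2n$, and using that $b_m$ is nondecreasing in $m$ together with $b_{2n}\le 2b_n$ (which follows from the Markov bound $p_{i+j}(x,y)\le\max_z p_j(z,y)$), we get
\begin{equation*}
\mathbb{E}\,\tr\Phi_n\big(\tfrac{X}{2}\big)\le C\,n\,N b_n\,e^{C n^2 b_n}.
\end{equation*}
This is the source of the prefactor $nNb_n$. If one prefers to avoid the $b_{2n}$ comparison, one may instead run the argument with $\Phi_{\lfloor n/2\rfloor}$ and adjust constants.

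\textbf{Step 2 (growth of $U_n$ outside $[-2,2]$).} For $x=1+\frac{t}{2}\ge 1$, write $x=\cosh\phi$. Then
\begin{equation*}
U_n(x)=\frac{\sinh((n+1)\phi)}{\sinh\phi}\ge e^{n\phi}/2,
\end{equation*}
and $\phi\ge c\sqrt{t}$ for $t\le 1$, while $\phi\ge c\log(1+t)\ge c'\sqrt{t}$ on bounded ranges. For very large $t$ we only need $\phi\gtrsim \min(\sqrt t,\log t)$. To get the clean $e^{-cn\sqrt t}$ for all $t$, we would instead first note that $\mathbb{P}(\|X\|\ge 2+t)$ is monotone in $t$, and handle $t\ge 1$ by the bound at $t=1$ combined with the Gaussian concentration of $\|X\|$ (its Lipschitz constant is $\max\sigma_{xy}\le 1$), absorbing the resulting term into $C$. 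On $\{\|X\|\ge 2+t\}$ this gives $\tr\Phi_n(X/2)\ge \tfrac14 e^{2cn\sqrt t}$.

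\textbf{Step 3 (Markov).} Markov's inequality gives
\begin{equation*}
\mathbb{P}(\|X\|\ge 2+t)\le 4e^{-2cn\sqrt t}\,\mathbb{E}\,\tr\Phi_n\big(\tfrac X2\big)\le C\,nNb_n\,e^{Cn^2b_n-cn\sqrt t}.
\end{equation*}

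The main obstacle is the positivity issue in Step 1, namely justifying that summing Theorem~\ref{prop:U_asy}\ref{item:U_upper} over the positive Chebyshev expansion of $U_n^2$ costs only a factor $n$ and keeps $b_{2n}$ comparable to $b_n$. The large-$t$ regime of Step 2 is a secondary technical point; it can be sidestepped, since for $t\gg1$ the inequality can be weakened by choosing a smaller constant $c$.
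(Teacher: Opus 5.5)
\textbf{The gap is in Step~1, and it is not the one you flagged.} Your handling of the factor $n$ and of $b_{2n}\le 2b_n$ (via $p_{n+i}(x,y)=\sum_z p_n(x,z)p_i(z,y)$) is fine. The problem is the $j=0$ summand of $\Phi_n=\sum_{j=0}^{n}U_{2j}$. It equals $\tr U_0(X/2)=N$ deterministically, and Theorem~\ref{prop:U_asy}\ref{item:U_upper} (stated for $n_j\ge 1$) does not cover it. What you actually get is
\[
\mathbb{E}\,\tr\Phi_n\big(\tfrac{X}{2}\big)\le N+C\,nNb_n\,e^{Cn^2b_n}.
\]
The extra $N$ cannot be absorbed. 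In the relevant regime $n^2b_n=O(1)$ one has $nNb_n\le CN/n$. Your claimed bound is in fact false there: for $A_N=0$, Theorem~\ref{Chebyshevmoment} writes each $\mathbb{E}\,\tr U_{2j}(X/2)$ as a sum of nonnegative diagram functions, so $\mathbb{E}\,\tr\Phi_n(X/2)\ge N$. This is the bulk eigenvalues' contribution to $\sum_i U_n(\lambda_i/2)^2$, not slack in the estimate. Steps 1--3 therefore prove only
\[
\mathbb{P}(\|X\|\ge 2+t)\le C(N+nNb_n)e^{Cn^2b_n-cn\sqrt t}.
\]
The loss can be reduced but not removed within this method:
\begin{itemize}
\item Keep the factor $(n+1)^2$ in Step~2, i.e.\ use $U_n(\cosh\phi)\ge c\,(n+1)e^{(n+1)\phi/2}$ for $\phi\le 1$. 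The bulk term becomes $CNn^{-2}e^{-cn\sqrt t}$, which is $O(nNb_n)$ precisely when $n^3b_n\gtrsim 1$. This holds, for example, whenever $n^2b_n$ is bounded below, as in Corollary~\ref{cor:power_law_dev}.
\item A fourth power $(U_n/(n+1))^4$ lowers the bulk term further, to order $Nn^{-3}$. It still exceeds $nNb_n$ when $n^4b_n\ll 1$.
\end{itemize}

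\textbf{Comparison with the paper.} The paper takes the same route (Theorem~\ref{prop:U_asy}\ref{item:U_upper} plus Markov) but restores positivity with $U_{2n}(X/2)+4n\mathbf{I}$ instead of $U_n^2$. Its identity part contributes $4nN$ to the trace. So the paper's displayed bound $\mathbb{E}[\tr(U_{2n}(X/2)+4n\mathbf{I})]\le CnNb_ne^{Cn^2b_n}$ has the same defect in a worse form: the left side is at least $4nN$, since $\mathbb{E}\,\tr U_{2n}(X/2)\ge 0$. Your sum-of-squares test function handles negativity more economically. Neither argument as written yields the prefactor $CnNb_n$ for all $n\ge1$.

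\textbf{The large-$t$ regime.} Your closing remark that large $t$ can be handled by shrinking $c$ is wrong. Since $\operatorname{arccosh}(1+t/2)\sim\log t$, any Chebyshev--Markov bound decays only polynomially in $t$, and no fixed $c>0$ changes that. The Gaussian-concentration route is the right repair; the Lipschitz constant is even $O(\max_{x,y}\sigma_{xy})=O(\sqrt{b_n})$. But it needs an a priori bound placing the median of $\|X\|$ below $2+O(1)$. Your $t=1$ bound, with the corrected numerator of order $N$, supplies that only when $n\gtrsim\log N$. The paper's one-line proof glosses over this point as well.
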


\begin{proof}
By applying the Chebyshev-type trace bound and the properties of Chebyshev polynomials (see e.g., \cite[Lemma 4.6]{liu2025edge}), we have
\begin{equation}
    \mathbb{E}\Big[e^{cn\sqrt{\|X\|-2}}\Big] \le \mathbb{E}\Big[\operatorname{Tr} \big(U_{2n}\big(\frac{1}{2}X\big) + 4n \mathbf{I}\big)\Big] \le C n N b_{n} e^{C n^2 b_n}.
\end{equation}
  Applying    Markov's inequality, we get
\begin{equation}
    \mathbb{P}\big(\|X\|\ge 2 + t\big) \le e^{-cn\sqrt{t}} \mathbb{E}\Big[e^{cn\sqrt{\|X\|-2}}\Big]  \le C n N b_n \,e^{C n^2 b_n - c n \sqrt{t}}.
\end{equation}
This indeed completes the proof.
\end{proof}

Applying Theorem~\ref{thm:deviation} to the power-law band matrix model, we observe a fluctuation scale determined by the power-law index $\alpha$, which differs from the standard $N^{-2/3}$ or $(\max_{ij}\sigma_{ij})^{4/3}=W^{-2/3}$ scales.
\begin{corollary}[Power-law deviation scale]\label{cor:power_law_dev}
Let $\alpha>1$. Consider the one-dimensional random band matrix $H$  in definition \ref{defmodel} with Gaussian entries and the bandwidth $W \ll N^{1-\frac{1}{3\alpha}}$. There exist absolute constants $C_1, C_2, C_3,C_4 > 0$,  when   $y>C_4$ and $W$ satisfy the constraint $y^{\frac{\alpha}{4\alpha-2}} W^{\frac{3\alpha}{3\alpha-1}} \le C_3 N$, then
\begin{equation}\label{eq:PL_scaling_bound}
    \mathbb{P}\left(\|H\| \ge 2 + W^{-\frac{2\alpha}{3\alpha-1}}y\right) \le C_1 y^{\frac{\alpha-1}{4\alpha-2}} N W^{-\frac{2\alpha}{3\alpha-1}} e^{-C_2 y^{\frac{3\alpha-1}{4\alpha-2}}}.
\end{equation}
\end{corollary}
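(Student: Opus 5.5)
The plan is to apply Theorem~\ref{thm:deviation} with a well-chosen $n$, using the local limit theorem to control $b_n$. The first step is to bound $b_n$ for the 1D band matrix with $\alpha$-stable-type profile. By Theorem~\ref{prop:alpha_llt} (or, for general $f$, Proposition~\ref{thm:clt-upper}) and the uniform bound \eqref{equ:theta_upper_bound} of Lemma~\ref{lem:asy_theta}(iii) with $d=1$, $L=N$,
\begin{equation*}
p_i(x,y)\le \frac{C}{N}\Big(1+\big(i(W/N)^\alpha\big)^{-1/\alpha}\Big)+O(ie^{-cW^\alpha})\le \frac{C}{N}+\frac{C}{W i^{1/\alpha}},
\end{equation*}
the exponential error being negligible once $W\gg(\log N)^{1/\alpha}$. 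Summing over $i\le n$ and using $\alpha>1$ gives
\begin{equation*}
b_n\le C\Big(\frac{n}{N}+\frac{n^{1-1/\alpha}}{W}\Big).
\end{equation*}

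Next, substitute into Theorem~\ref{thm:deviation} with $t=W^{-\frac{2\alpha}{3\alpha-1}}y$. Write $n=\lambda W^{\frac{\alpha}{3\alpha-1}}$. Then $n\sqrt t=\lambda\sqrt y$. Moreover, $n^{2-1/\alpha}/W=\lambda^{2-1/\alpha}$, since $\frac{\alpha}{3\alpha-1}\cdot\frac{2\alpha-1}{\alpha}=\frac{2\alpha-1}{3\alpha-1}$ and $\frac{2\alpha-1}{3\alpha-1}-1=-\frac{\alpha}{3\alpha-1}$. Let me double-check: this gives $W^{\frac{2\alpha-1}{3\alpha-1}-1}\cdot W^{\frac{\alpha}{3\alpha-1}}=W^0$, so indeed $n^{2-1/\alpha}/W=\lambda^{2-1/\alpha}$. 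The term $n^3/N=\lambda^3W^{\frac{3\alpha}{3\alpha-1}}/N$ is the one the constraint must control. The exponent then reads
\begin{equation*}
Cn^2b_n-cn\sqrt t\le C\lambda^{2-1/\alpha}+C\lambda^3\frac{W^{\frac{3\alpha}{3\alpha-1}}}{N}-c\lambda\sqrt y .
\end{equation*}
Optimizing $C\lambda^{2-1/\alpha}-c\lambda\sqrt y$ suggests $\lambda=\kappa\, y^{\frac{\alpha}{2(\alpha-1)}}$, but the claimed exponent $y^{\frac{3\alpha-1}{4\alpha-2}}$ instead matches $\lambda\sim y^{\frac{\alpha}{4\alpha-2}}$. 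With that choice, $\lambda\sqrt y=y^{\frac{3\alpha-1}{4\alpha-2}}$ and $\lambda^{2-1/\alpha}=y^{1/2}$, which is lower order, so this choice also works and matches the constraint $y^{\frac{\alpha}{4\alpha-2}}W^{\frac{3\alpha}{3\alpha-1}}\le C_3N$.

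I will therefore choose $\lambda=\kappa y^{\frac{\alpha}{4\alpha-2}}$ with $\kappa$ small and $n$ an integer, so $n\ge1$ for $y>C_4$. The term $\lambda^3W^{\frac{3\alpha}{3\alpha-1}}/N\le C_3\kappa^3\lambda^2$ must be absorbed. Here the main obstacle appears: $\lambda^2\sim y^{\frac{\alpha}{2\alpha-1}}$, which exceeds $\lambda\sqrt y$ only when $\alpha<1$, so for $\alpha>1$ it is absorbed after taking $C_3$ small relative to $c$. The term $\lambda^{2-1/\alpha}$ is similarly dominated for $y>C_4$. This leaves an exponent $\le -C_2y^{\frac{3\alpha-1}{4\alpha-2}}$.

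For the prefactor, $nNb_n\le C(n^2+Nn^{2-1/\alpha}/W)$. With the choice above, $Nn^{2-1/\alpha}/W=N\lambda^{2-1/\alpha}W^{-\frac{\alpha}{3\alpha-1}}$. To reach the stated form $y^{\frac{\alpha-1}{4\alpha-2}}NW^{-\frac{2\alpha}{3\alpha-1}}$, the cleaner route is to bound $\mathbb{E}\operatorname{Tr}$ via the sharper estimate of Theorem~\ref{prop:U_asy}(i), namely $N b_n\cdot n^{-1}$ times the prefactor $n$. Here $Nb_n/n\sim N n^{-1/\alpha}/W=N\lambda^{-1/\alpha}W^{-\frac{3\alpha}{3\alpha-1}}\cdot W^{\frac{\alpha}{3\alpha-1}}$, and the $n^2$ part is absorbed by $N b_n$ as well. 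Tracking exact powers of $y$ in this prefactor is the delicate bookkeeping step. Any polynomial discrepancy in $y$ can be absorbed into the exponential by shrinking $C_2$, so I would simply check that the powers agree up to such factors, giving \eqref{eq:PL_scaling_bound}.
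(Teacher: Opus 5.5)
Your route is the paper's: bound $b_n\lesssim n/N+n^{1-1/\alpha}/W$ by the local limit theorem, feed it into Theorem~\ref{thm:deviation}, take $n=\lambda W^{\alpha/(3\alpha-1)}$ with $\lambda\asymp y^{\alpha/(4\alpha-2)}$, and use the constraint on $W$. (The paper uses the constraint to discard the $n/N$ part of $b_n$; you absorb $n^3/N$ into the exponent instead, and either works.)

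The exponent, however, is miscomputed. The term coming from $n^2b_n$ is $n^2\cdot n^{1-1/\alpha}/W=n^{3-1/\alpha}/W=\lambda^{(3\alpha-1)/\alpha}$, not $\lambda^{2-1/\alpha}$. Your ``double-check'' multiplies by the $W$-part of the extra factor $n$ but not by its $\lambda$. At $\lambda\asymp y^{\alpha/(4\alpha-2)}$ this term is of order $y^{(3\alpha-1)/(4\alpha-2)}$, the \emph{same} order as $\lambda\sqrt y$, not lower order. That balance is exactly how the paper selects $\lambda$, and the rival optimizer $y^{\alpha/(2(\alpha-1))}$ you found is an artifact of the slip. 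The step is rescued only by your choice $\lambda=\kappa y^{\alpha/(4\alpha-2)}$ with $\kappa$ small enough that $C\kappa^{(3\alpha-1)/\alpha}\le \frac{c}{2}\kappa$. That condition, not $y>C_4$, is what makes the exponent $\le -C_2 y^{(3\alpha-1)/(4\alpha-2)}$.

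The genuine gap is the prefactor. Theorem~\ref{thm:deviation} supplies $nNb_n\asymp N\lambda^{(2\alpha-1)/\alpha}W^{-\alpha/(3\alpha-1)}$. This exceeds the claimed $y^{(\alpha-1)/(4\alpha-2)}NW^{-2\alpha/(3\alpha-1)}$ by a factor $n\asymp \lambda W^{\alpha/(3\alpha-1)}$. Since that factor grows with $W$, no shrinking of $C_2$ absorbs it, so your closing step of checking that the powers agree up to factors of $y$ fails.

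The claimed prefactor is precisely $Nb_n\asymp Nn^{1-1/\alpha}/W=N\lambda^{(\alpha-1)/\alpha}W^{-2\alpha/(3\alpha-1)}$ at the chosen $n$; your expression for $Nb_n/n$ carries a stray $W^{\alpha/(3\alpha-1)}$. The paper's proof reaches it simply by taking $n^{1-1/\alpha}NW^{-1}$ as the pre-exponential factor when invoking Theorem~\ref{thm:deviation}. So the mismatch you spotted is real, but your remedy does not resolve it. Theorem~\ref{prop:U_asy}(i) bounds $\mathbb{E}\operatorname{Tr}U_{2n}(X/2)$, and $U_{2n}$ dips to about $-cn$ on $[-1,1]$. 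Turning that into a bound on $U_{2n}(\|X\|/2)$ therefore costs an additive $O(nN)$ (this is the $4n\mathbf{I}$ in the proof of Theorem~\ref{thm:deviation}), which is much worse.

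One way to close the gap is to work with the nonnegative polynomial $U_n(x)^2=\sum_{k=0}^{n}U_{2k}(x)$:
\begin{itemize}
\item Theorem~\ref{prop:U_asy}(i) gives $\mathbb{E}\operatorname{Tr}U_n(X/2)^2\le N+CnNb_{2n}e^{Cn^2b_{2n}}$.
\item Keep the factor $1/t$ in the lower bound $U_n(1+t/2)^2\gtrsim e^{cn\sqrt{t}}/t$, valid for $n\sqrt t\ge 1$ and $t\le 1$.
\item The $k=0$ term then contributes $Nt=yNW^{-2\alpha/(3\alpha-1)}$, which has the stated $W$-power. Using the constraint, the remaining terms are at most a power of $y$ times this, and any surplus power of $y$ can be absorbed into the exponential.
\end{itemize}
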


\begin{proof}
For power-law band matrices, the diagrammatic control function $b_n$  is bounded,
\begin{equation}
    b_n \le C \sum_{j=1}^{n} (W^{-1} j^{-\frac{1}{\alpha}} + N^{-1}) \le C' n^{1-\frac{1}{\alpha}} W^{-1} + C n N^{-1}.
\end{equation}
Under the condition $n^{1/\alpha}W \le N$, the term $W^{-1}n^{-1/\alpha}$ dominates $N^{-1}$, rendering the latter negligible. Substituting the simplified $b_n \sim n^{1-1/\alpha}W^{-1}$ into the general bound from Theorem~\ref{thm:deviation}, we obtain
\begin{equation}
    \mathbb{P}(\|H\| \ge 2 + t) \le C' n^{1-\frac{1}{\alpha}} N W^{-1} \exp\left( C n^{3-\frac{1}{\alpha}} W^{-1} - c n \sqrt{t} \right).
\end{equation}

To identify the optimal fluctuation scale, we set $t = W^{- {2\alpha}/{(3\alpha-1)}} y$.
We choose $n = \lambda W^{{\alpha}/{(3\alpha-1)}}$ and the exponent then scales as
\begin{equation}
    C n^{3-\frac{1}{\alpha}} W^{-1} - c n \sqrt{t} = C \lambda^{\frac{3\alpha-1}{\alpha}} - c \lambda \sqrt{y},
\end{equation}
where the bandwidth $W$ cancels out exactly. Optimizing over $\lambda$ yields $\lambda \sim y^{{\alpha}/{(4\alpha-2)}}$, which balances the two terms at the scale $y^{(3\alpha-1)/(4\alpha-2)}$.

Substituting this optimal $\lambda$ back into the pre-exponential factor $n^{1-1/\alpha} N W^{-1}$ gives the coefficient $C_1 y^{(\alpha-1)/(4\alpha-2)} N W^{-2\alpha/(3\alpha-1)}$. Finally, the initial assumption $n^{1/\alpha}W \le N$ translates directly into the condition $y^{\alpha/(4\alpha-2)} W^{3\alpha/(3\alpha-1)} \le C_3 N$, ensuring the validity of the power-law approximation throughout the deviation range.
\end{proof}
\begin{remark}\label{rmk:band_tail}
It is worth stressing that \eqref{eq:PL_scaling_bound} does not provide the optimal exponent for the tail probability.
For instance, in the case of random band matrices with $y>0$, \eqref{equ:limit_upper_bound} yields
\begin{equation}\label{equ:band_decay}
    \mathbb{P}\left(\|X\| \ge 2 + W^{-\frac{2\alpha}{3\alpha-1}}y\right) \le C_1 N e^{-C_2 y^{\frac{3\alpha-1}{2\alpha}}},
\end{equation}
which is conjectured to be the correct exponent.
\end{remark}

\section{Concluding remarks and further questions}\label{sec:remarks}
This paper, together with Part I \cite{liu2025edge}, presents a systematic investigation of the spectral properties of general inhomogeneous random matrix models via Markov chain comparison theorems. By incorporating the established spectral properties of several proximal models—including Toeplitz (band) profile, Hankel profile, and block  matrix ensembles—we have obtained a reasonably comprehensive understanding of the local universality and phase transition phenomena at the spectral edge. Nevertheless, deeper mechanisms governing the behavior of eigenvalues and eigenvectors in the presence of inhomogeneity remain largely unresolved. We conclude this final section with a selection of remarks and open questions.

\begin{enumerate}
    \item \textbf{A zoo of edge statistics.} Theorem \ref{prop:super_4} reveals that different Markov chains with similar limit behaviors ensure the same edge point processes of their corresponding random matrices. Our analysis   in this paper and in \cite{liu2025edge} suggests that the classical Tracy--Widom distribution, which corresponds to rapidly mixing Markov chains, is but one "attractor" in a significantly larger zoo of distributions.

    For slowly mixing Markov chains, various types of central limit theorems may apply. We propose the “One CLT, One Statistics” correspondence: \textit{Every specific Local Central Limit Theorem for the variance-profile Markov chain dictates a corresponding universal pattern  of edge statistics}, as suggested in this paper. Examples include (a) random band matrices (Section \ref{sectionrbm}); (b) the block Wegner orbital model (Section \ref{sec:wegner_model}); and (c) Hankel-profile random matrices (Section \ref{sec:hankel}).

    It is now clear how the different phases of the underlying random walk affect the limiting edge statistics. Since the central limit theorem is inherently universal, these statistics are also expected to be universal under very mild assumptions.   The remaining challenge is to characterize the central limit theorem for general Markov chains.
     \item  \textbf{Understanding the new point processes.} Using Feynman diagram expansions and the continuity theorem for the sinc transform, we have obtained many new statistics. The critical processes interpolate between Poisson and Airy point processes. However, beyond these identifications, much remains unknown. A key open question is whether these critical point processes admit more intrinsic descriptions or a deeper characterization, analogous to the Tracy–Widom law.

    \item \textbf{
    Dyson Brownian motion and  relaxation time.} Dyson Brownian motion (DBM) plays a crucial role in establishing the universality of random matrices \cite{erdos2017dynamical}. In essence, we just need to consider matrix flow defined by
     \begin{equation}
        H_{\lambda}=\sqrt{1-\lambda}H_0+\sqrt{\lambda}H_1,
    \end{equation}
     where $H_1$ represents a GOE/GUE matrix. We evaluate our comparison results against these two natural reference objects: $H_0$ and $H_1$, with   transition   matrices $P_0$ and $P_1$, respectively.
Then the transition probability matrix of $H_{\lambda}$ is given by the convex combination
    \begin{equation}
        P_{\lambda} := \lambda P_1 + (1-\lambda)P_0, \quad P_1 := J = \frac{1}{N}\mathbf{1}\mathbf{1}^T.
    \end{equation}
    Since $JP_0=P_0J=J$, the $n$-step transition matrix satisfies the exact identity:
    \begin{equation}
        P_{\lambda}^n = (1-\lambda)^n P_0^n + (1 - (1-\lambda)^n) P_1.
    \end{equation}
    Consequently, the deviations from our two reference objects are explicitly given by
    \begin{align}
        P_{\lambda}^n - P_0^n = \big(1 - (1-\lambda)^n\big) (P_1 - P_0^n),  \quad P_{\lambda}^n - P_1^n = -(1-\lambda)^n (P_1 - P_0^n).
    \end{align}

This exact algebraic decomposition beautifully captures the critical relaxation time of DBM at the spectral edge. To detect edge statistics using the moment method, the length of the random walks must scale as $n \sim N^{1/3}$ \cite{feldheim2010universality,sodin2010spectral,liu2023edge,liu2025edge}. Therefore, the phase transition between the $H_0$ regime and the GOE/GUE regime is entirely governed by the decay factor $(1-\lambda)^n \approx e^{-\lambda N^{1/3}}$, which naturally identifies $\lambda_c \sim N^{-1/3}$ as the critical time:

\begin{itemize}
    \item \textbf{Universality regime ($\lambda \gg N^{-1/3}$).} In this regime, $P_{\lambda}^n$ converges to $P_1^n$ at an exponential rate, so the memory of the initial profile is lost. The comparison with GOE/GUE using the moment method was derived as a Short-to-Long mixing condition in \cite{liu2025edge}; the GOE/GUE universality for $\lambda \gg N^{-1/3+\epsilon}$ can be established by verifying the mixing properties of the variance profile of $H_{\lambda}$. See \cite[Theorem 7.1]{liu2025edge} with $c_N=N^{-1/3+\epsilon}$ and $C_N=o(N^{\epsilon}/\log N)$. This aligns with the continuous DBM result of \cite{landon2017edge}, which shows that $\lambda=N^{-1/3+\epsilon}$ is sufficient to yield edge universality for highly general initial conditions $H_0$.

    \item \textbf{$H_0$ regime ($\lambda \ll N^{-1/3}$).} In this regime, the perturbation is too weak to induce mixing before the critical step count $n \sim N^{1/3}$ is reached. Under the assumption of \eqref{UPB}, where $n^2\frac{n}{N}\le n^2b_n=O(1)$, we naturally have $n\lambda \ll 1$. Consequently, $P_{\lambda}^n$ remains asymptotically close to $P_0^n$. By Theorem \ref{prop:super_4}, the mixed moments of $H_{\lambda}$ are equivalent to those of the unperturbed matrix $H_{0}$. To illustrate the sharpness of this transition, if we take $H_0$ to be random band matrices or block Wegner matrices in the critical regime, their distinct, non-universal edge statistics will persist as long as $\lambda \ll N^{-1/3}$. This explicitly demonstrates that $\lambda \sim N^{-1/3}$ is precisely the critical relaxation time required for Dyson Brownian motion to fully mix and erase the initial spatial structure at the spectral edge.
\end{itemize}

    \item

 \textbf{From Gumbel to Tracy-Widom.} Under the same assumption in Theorem \ref{thm:convergence_proof}, the limit distribution of  the largest eigenvalue $\lambda_{\max}$  in the  critical regime can be defined  via the
Fredholm series expression  (see e.g. \cite[Example 5.4(a)]{MR1950431})
    \begin{equation}
F^{\mathrm{(Crit)}}(\gamma;\xi):= 1
  +\sum_{k=1}^{\infty} \frac{(-1)^k}{k!}
  \int_{\xi}^{\infty} \cdots  \int_{\xi}^{\infty}
  R_{k}^{\mathrm{(Crit)}}(\gamma; dx_1, \dots, dx_k).
    \end{equation}
    According to Corollary \ref{coro:poisson_statistics}, if we could  interchange the summation and the limit (which is one of the most difficult steps), as $\gamma\rightarrow0$ we expect
    \begin{equation}
       F^{\mathrm{(Crit)}}(\gamma;\xi)\sim e^{-\frac{1}{\gamma}\int_{\xi}^{\infty}R^{(\mathrm{Sub})}_{1}(dx)}.
    \end{equation}

This indeed indicates that the largest eigenvalue follows a type of extreme-value distribution.
Due to the exponential decay, $F^{\mathrm{(Crit)}}(\gamma;\xi)$ is expected to converge to the Gumbel distribution after an appropriate rescaling of $\xi$.
Conversely, as $\gamma\rightarrow\infty$, $F^{\mathrm{(Crit)}}(\gamma;\xi)$ converges to the Tracy--Widom law.

Thus, $F^{\mathrm{(Crit)}}(\gamma;\xi)$  may interpolate between the Gumbel distribution and the Tracy--Widom distribution.
We note, however, that a more refined understanding of the critical correlation measure is required to rigorously establish the Gumbel convergence.

\item \textbf{Numerical simulations.} For symmetric random matrices, setting $\alpha=2$ and $d=1$,   we conduct numerical simulations for random band matrices. Figure \ref{fig:eigenvalue_transition} illustrates the transition of the largest eigenvalue  from the Gumbel  law to the Tracy--Widom  law as the bandwidth increases. Furthermore, Figure \ref{fig:eigenvector_simulation} demonstrates the localization--delocalization transition of the eigenvector associated with the largest eigenvalue, where the Inverse Participation Ratio (IPR) characterizes the crossover between regimes.
\begin{figure}[h]
     \centering
     \begin{subfigure}[b]{0.32\textwidth}
         \centering
         \includegraphics[width=\textwidth]{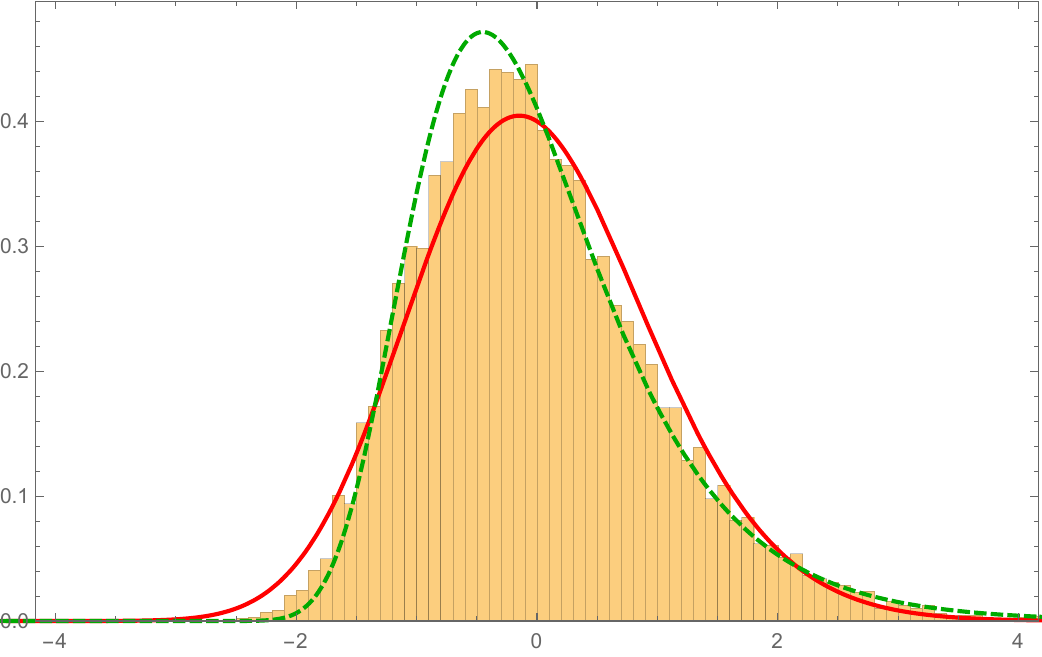}
         \caption{$W=5$ (Subcritical)}
     \end{subfigure}
     \hfill
     \begin{subfigure}[b]{0.32\textwidth}
         \centering
         \includegraphics[width=\textwidth]{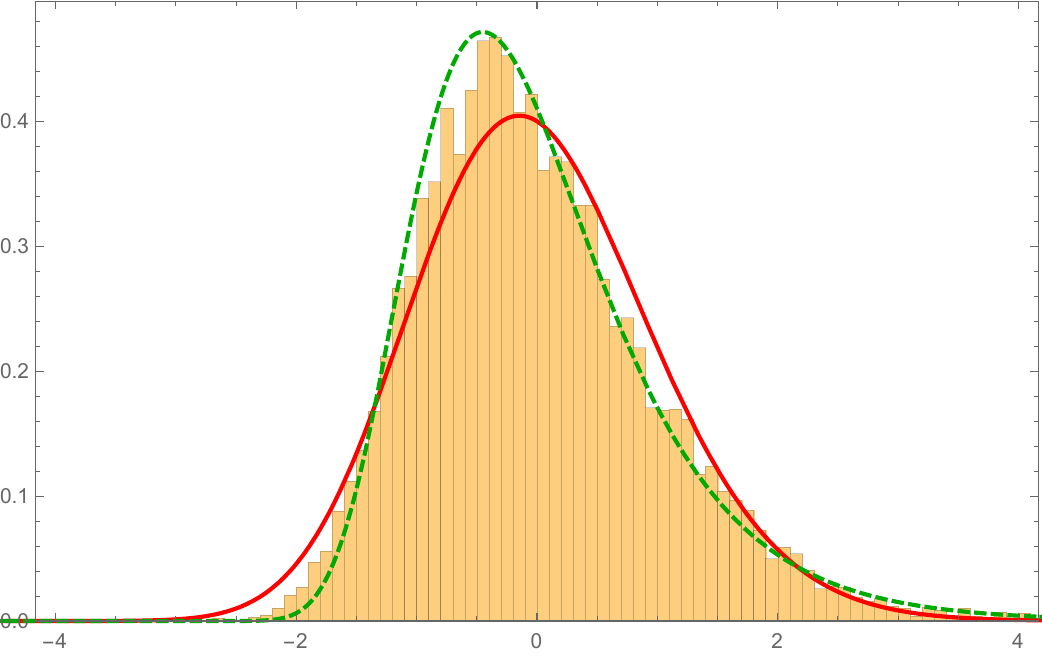}
         \caption{$W=10$ (Subcritical)}
     \end{subfigure}
     \hfill
     \begin{subfigure}[b]{0.32\textwidth}
         \centering
         \includegraphics[width=\textwidth]{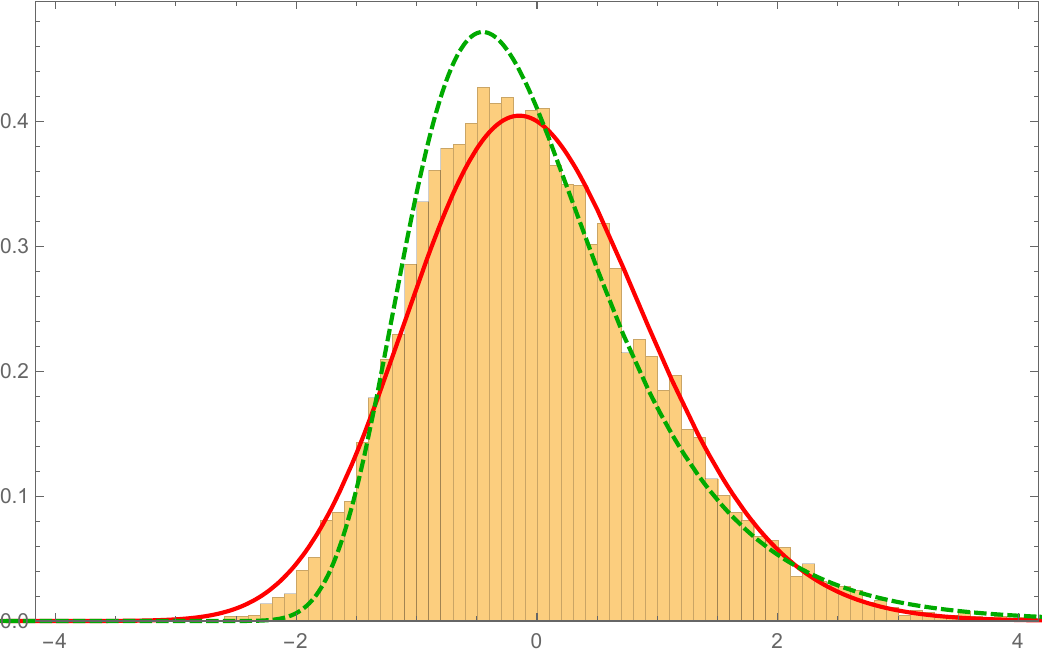}
         \caption{$W=50$ (Near Subcritical)}
     \end{subfigure}

     \vspace{0.5cm}
     \begin{subfigure}[b]{0.32\textwidth}
         \centering
         \includegraphics[width=\textwidth]{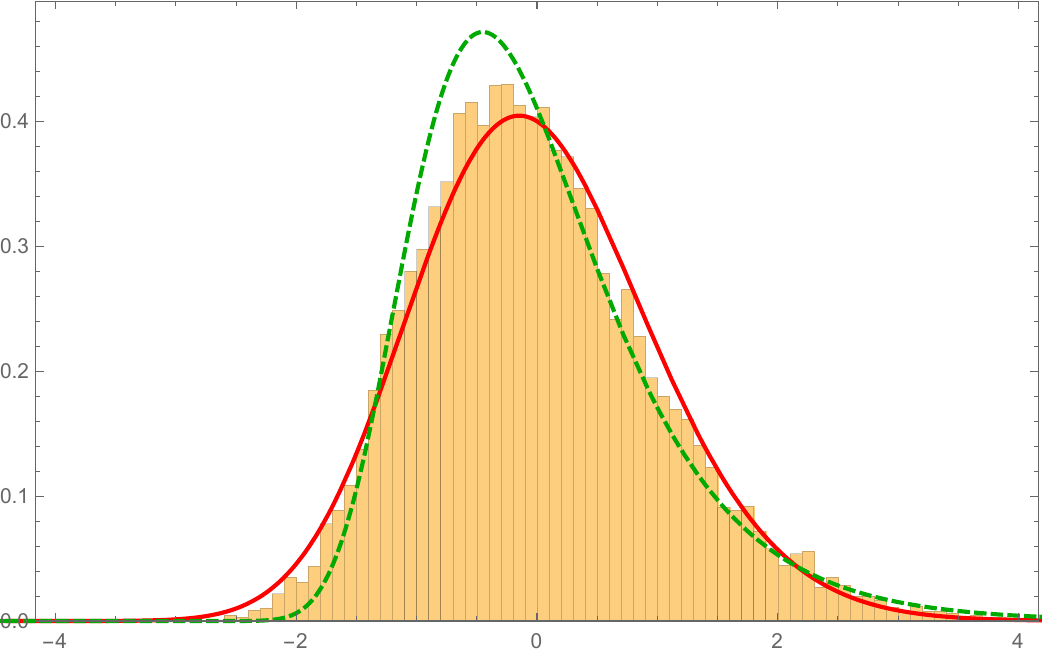}
         \caption{$W=100$ (Transition)}
     \end{subfigure}
     \hfill
     \begin{subfigure}[b]{0.32\textwidth}
         \centering
         \includegraphics[width=\textwidth]{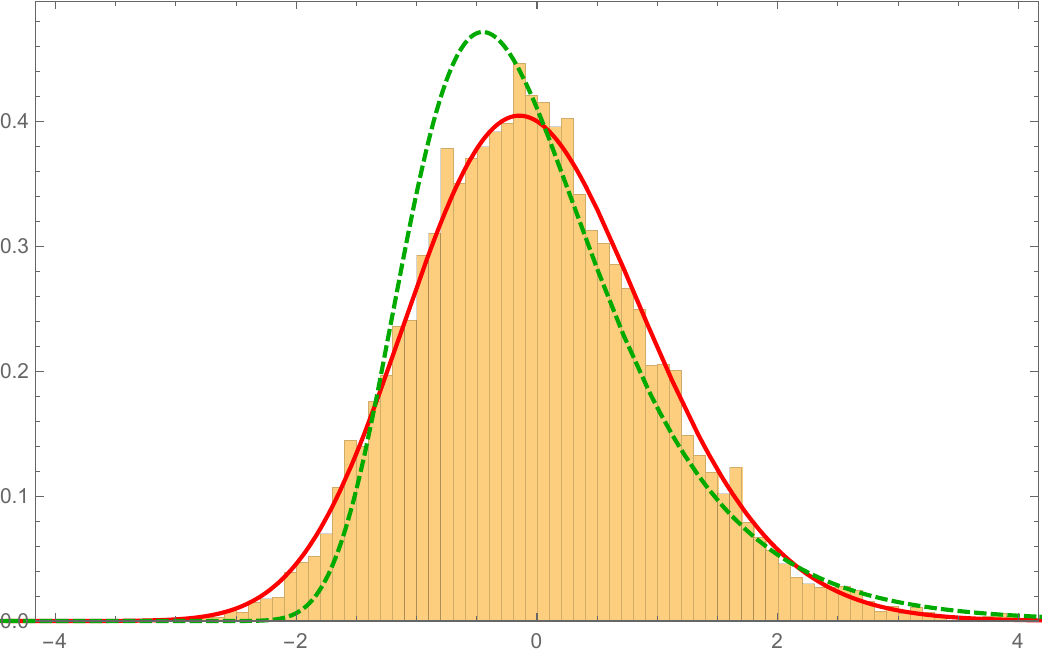}
         \caption{$W=200$ (Near Supercritical)}
     \end{subfigure}
     \hfill
     \begin{subfigure}[b]{0.32\textwidth}
         \centering
         \includegraphics[width=\textwidth]{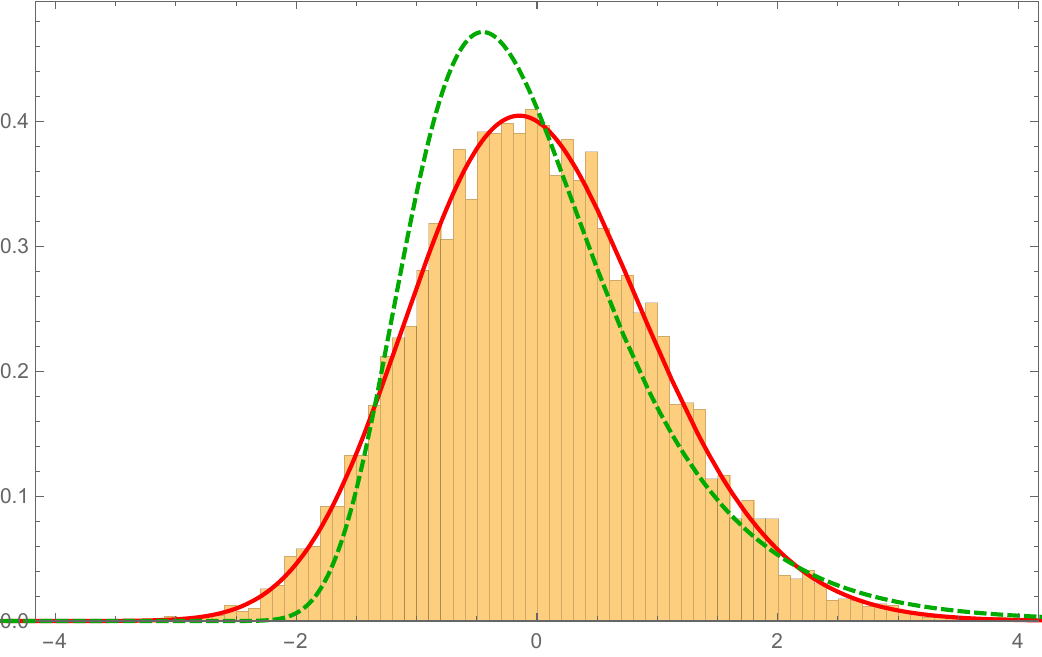}
         \caption{$W=500$ (Supercritical)}
     \end{subfigure}

     \caption{
     Comparison of the largest eigenvalue distributions for $N = 2000$ with varying bandwidth $W$. Each value of $W$ is based on $10000$ trials. The red curve represents the standard Tracy--Widom (TW1) distribution, and the green curve represents the Gumbel distribution. All data and distributions are normalized to have mean zero and unit variance. As $W$ increases, the empirical distribution shifts from the Gumbel law toward the Tracy--Widom law.
     }
     \label{fig:eigenvalue_transition}
\end{figure}

\begin{figure}[htbp]
    \centering
    \includegraphics[width=\textwidth]{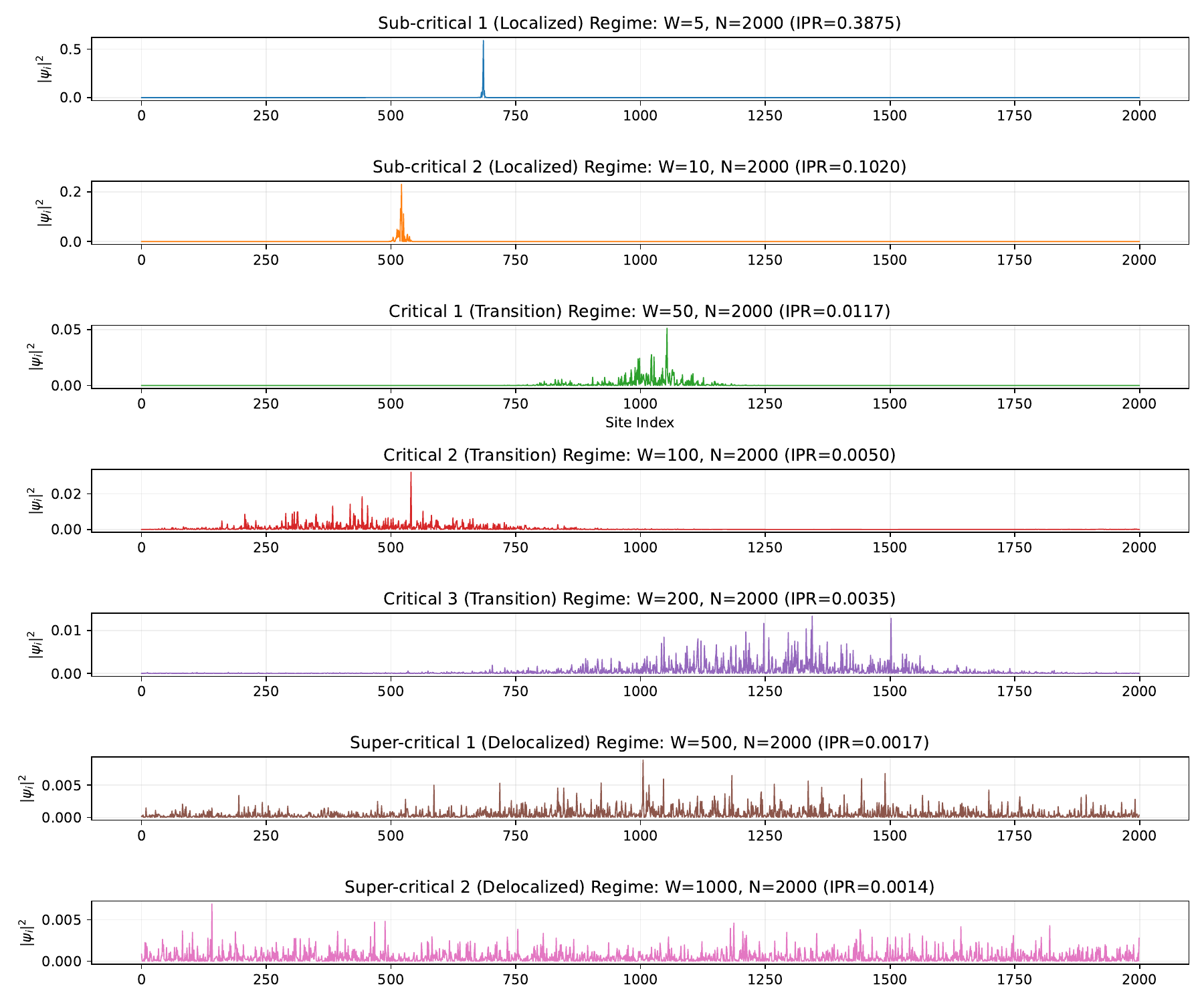}
    \caption{{Localization–delocalization transition of the eigenvector $\psi$ corresponding to the largest eigenvalue.}
The horizontal axis represents the coordinate $i$, and the vertical axis shows the squared amplitude $|\psi(i)|^2$ at each coordinate. As the bandwidth $W$ increases from $5$ to $1000$, the inverse participation ratio (IPR) drops significantly, marking the crossover from the subcritical (localized) regime to the universal supercritical (delocalized) regime.
    }
    \label{fig:eigenvector_simulation}
\end{figure}

\end{enumerate}

\section*{Acknowledgments} The authors are supported by the National Natural Science Foundation of China (Grant No. 12371157) and are grateful to Jiaqi Fan for valuable discussions. G. Zou is greatly indebted to his advisor, Roman Vershynin, for his support and for the excellent research environment provided at UC Irvine.

\let\oldthebibliography\thebibliography
\let\endoldthebibliography\endthebibliography
\renewenvironment{thebibliography}[1]{
\begin{oldthebibliography}{#1}
\setlength{\itemsep}{0.5em}
\setlength{\parskip}{0em}
}
{
\end{oldthebibliography}
}

\bibliographystyle{alpha}
\begin{spacing}{0}
\small
\newcommand{\etalchar}[1]{$^{#1}$}

\end{spacing}

\appendix

\section{  Chebyshev polynomial products}\label{sec:chebyshev}
\begin{lemma}[Linearization coefficients for Chebyshev products]\label{lem:c_asymptotics}
   For any given integer  $t \ge 2$,   the product of $t$   Chebyshev polynomials of the second kind admits a linearization
    \begin{equation}
        \Big(\frac{1}{m+1} U_m(x)\Big)^t = \sum_{k \ge 0} c_t(m; k) \, \frac{1}{k+1}U_k(x),
    \end{equation}
    where
    $c_t(m; k) = 0$ unless $k \equiv tm \pmod 2$.
    As $m \to \infty$, if   $k \equiv tm \pmod 2$ and
    $k = \lfloor \xi m \rfloor$ with $\xi \in (0, t)$,  then  the coefficients satisfy the asymptotic
    \begin{equation}
        c_t(m; k) \sim \frac{2}{m}   \, \xi \, \mathcal{P}_t(\xi),
    \end{equation}
    where   $\mathcal{P}_t(\xi)$ is defined  by \eqref{eq:Pt_def}.
\end{lemma}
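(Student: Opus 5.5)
The plan is to diagonalize the Chebyshev basis with the angle substitution and then read off a Fourier coefficient. Put $x=\cos\theta$, so that $U_k(\cos\theta)=\sin((k+1)\theta)/\sin\theta$. The functions $\sin((k+1)\theta)/\sin\theta$ are orthogonal for the semicircle weight $\frac{2}{\pi}\sin^2\theta\,d\theta$. This gives the exact formula
\begin{equation*}
\frac{c_t(m;k)}{k+1}=\frac{2}{\pi}\int_0^\pi \Big(\frac{\sin((m+1)\theta)}{(m+1)\sin\theta}\Big)^t \frac{\sin((k+1)\theta)}{\sin\theta}\,\sin^2\theta\,d\theta .
\end{equation*}

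The parity claim comes first and is cheap. Under $\theta\mapsto\pi-\theta$ we have $U_j(-x)=(-1)^jU_j(x)$, so the left side of the linearization has parity $(-1)^{tm}$. Since each $U_k$ has parity $(-1)^k$, the coefficient $c_t(m;k)$ vanishes unless $k\equiv tm \pmod 2$. When the parity matches, the integral over $[0,\pi]$ is twice the integral over $[0,\pi/2]$, with the neighborhood of $\theta=\pi$ mapped onto that of $\theta=0$.

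For the asymptotics, I would expand $\sin^{t-1}\theta\,\sin((k+1)\theta)$ and rescale $\theta=u/m$. The main contribution comes from $\theta=O(1/m)$, where $\sin\theta\approx\theta$ and $(m+1)\theta\approx u$, with $(k+1)\theta\approx \xi u$. Formally this gives
\begin{equation*}
c_t(m;k)\approx (k+1)\cdot\frac{4}{\pi}\cdot\frac{1}{m}\int_0^\infty \Big(\frac{\sin u}{u}\Big)^t \frac{\sin(\xi u)}{u}\,u^2\,du\cdot\frac{1}{m}\cdot m^{0},
\end{equation*}
and I would track the powers of $m$ carefully. The weight $\sin^2\theta/\sin\theta=\sin\theta\approx u/m$ and $d\theta=du/m$ combine with $k+1\approx \xi m$ to give exactly one net factor $1/m$. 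The result is $c_t(m;k)\sim \frac{2}{m}\,\xi\cdot\frac{2}{\pi}\int_0^\infty (\sin u/u)^t\, u\sin(\xi u)\,du$.

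It then remains to identify $\frac{2}{\pi}\int_0^\infty (\sin u/u)^t\,u\sin(\xi u)\,du$ with $\mathcal{P}_t(\xi)$ from \eqref{eq:Pt_def}. Since $(\sin u/u)^t$ is the Fourier transform of the $t$-fold convolution of uniform densities on $[-1,1]$ (an Irwin–Hall type B-spline), the sine integral with the extra factor $u$ is, up to constants, minus the derivative of that density. I would write $\sin^t u=(2i)^{-t}\sum_j(-1)^j\binom{t}{j}e^{i(t-2j)u}$ and use the standard formula $\int_0^\infty u^{1-t}\sin(au)\,du\propto |a|^{t-2}\operatorname{sgn}(a)/(t-2)!$, interpreted by regularization. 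This produces the sum $\frac{1}{2^{t-1}(t-2)!}\sum_j(-1)^j\binom{t}{j}(t-2j-\xi)_+^{t-2}$. I would confirm the normalization by checking small cases such as $t=2$ directly against the triangle density.

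The main obstacle is justifying the localization at $\theta=O(1/m)$, uniformly for $\xi$ in compact subsets of $(0,t)$. On $[\delta,\pi/2]$ the integrand is $O(m^{-t})\cdot k$, which is negligible against $1/m$ once $t\ge2$. On the intermediate range $A/m\le\theta\le\delta$, I would bound $|\sin((m+1)\theta)/((m+1)\sin\theta)|\le C/(m\theta)$. This gives a tail of order $m^{-1}\int_A^\infty u^{1-t}\,du$, which is fine for $t\ge3$. For $t=2$ this bound is not integrable, so there I would integrate by parts once in the oscillatory factor $\sin(\xi u)$, or use the exact finite Clebsch–Gordan formula for $U_mU_m$, to control the conditionally convergent tail. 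Dominated convergence on $[0,A]$ then finishes the argument.
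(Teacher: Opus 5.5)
Your proposal is correct and follows the paper's route: the same orthogonality formula for $c_t(m;k)$, the same $\theta\mapsto\pi-\theta$ parity argument, and the same rescaling $\theta=u/m$ near the endpoint. You go further than the paper in two places. First, you justify the localization, using tail bounds for $t\ge3$ and, for $t=2$, either integration by parts in the oscillatory factor or the exact formula $c_2(m;k)=(k+1)/(m+1)^2$. Second, you verify $\frac{2}{\pi}\int_0^\infty \sin^t u\,\sin(\xi u)\,u^{1-t}\,du=\mathcal{P}_t(\xi)$ via the derivative of the Irwin--Hall density, which the paper simply asserts. One small slip: the crude $O(m^{1-t})$ bound on $[\delta,\pi/2]$ is negligible only for $t\ge3$, not $t\ge2$, but your separate $t=2$ treatment already covers this.
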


\begin{proof}
    The coefficients are extracted via orthogonality,
    \begin{equation}\label{eq:exact_c}
        c_t(m; k) = \frac{2(k+1)}{\pi (m+1)^t} \int_0^\pi \frac{\sin^t((m+1)\theta) \sin((k+1)\theta)}{(\sin\theta)^{t-1}} \, d\theta.
    \end{equation}
    First, observe the parity symmetry: the integrand is invariant under $\theta \to \pi - \theta$ only if $k$ and $tm$ have the same parity; otherwise, it is antisymmetric around $\pi/2$, making the integral exactly zero.
    So in the non-zero case ($k \equiv tm \pmod 2$),  it suffices to consider the integral  over $ (0,\pi/2)$. Moreover,  as $m \to \infty$
    the leading asymptotic contribution comes from the region \(\theta \approx 0\).
         As $m \to \infty$, rescaling $\theta = z/m$, we obtain
    \begin{align}
        c_t(m; k)  &\sim \frac{4\xi m}{\pi m^t} \int_0^\infty \frac{(\sin z)^t \sin(\xi z)}{(z/m)^{t-1}} \frac{dz}{m} =  \frac{2\xi}{m} \left( \frac{2}{\pi} \int_0^\infty \frac{\sin^t z \sin(\xi z)}{z^{t-1}} dz \right) = \frac{2\xi}{m} \mathcal{P}_t(\xi).
    \end{align}
    This completes the proof.
\end{proof}

Similarly, we have the following lemma.
\begin{lemma}
\label{lem:c_perturbed}
    \begin{equation}
        \left(\frac{1}{m} U_{m-1}(x)\right) \left(\frac{1}{m+1} U_m(x)\right)^{t-1} = \sum_{k \ge 0} \tilde{c}_t(m; k) \, \frac{1}{k+1}U_k(x),
    \end{equation}
    where $\tilde{c}_t(m; k) = 0$ unless $k \equiv tm-1 \pmod 2$.
    As $m \to \infty$, for $k \equiv tm-1 \pmod 2$ and $k = \lfloor \xi m \rfloor$,
    \begin{equation}
        \tilde{c}_t(m; k) \sim \frac{2}{m}   \xi \, \mathcal{P}_t(\xi).
    \end{equation}
\end{lemma}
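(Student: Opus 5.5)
Following the proof of Lemma \ref{lem:c_asymptotics}, I will extract the coefficients by orthogonality of the $U_k$ with respect to the semicircle weight. With $x=\cos\theta$ and $U_j(\cos\theta)=\sin((j+1)\theta)/\sin\theta$, this gives
\begin{equation*}
\tilde c_t(m;k)=\frac{2(k+1)}{\pi\, m\,(m+1)^{t-1}}\int_0^\pi \frac{\sin(m\theta)\,\sin^{t-1}((m+1)\theta)\,\sin((k+1)\theta)}{(\sin\theta)^{t-1}}\,d\theta .
\end{equation*}
The parity claim comes first. Under $\theta\mapsto\pi-\theta$ the factor $\sin(j\theta)$ acquires the sign $(-1)^{j+1}$, while $\sin\theta$ is invariant. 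The integrand therefore picks up $(-1)^{(m+1)+m(t-1)+(k+2)}=(-1)^{tm+k+1}$. The integral vanishes unless $k\equiv tm-1 \pmod 2$, and in the nonvanishing case it equals twice the integral over $(0,\pi/2)$. Equivalently, $U_{m-1}$ has parity $m-1$ and each $U_m$ has parity $m$, so the product has parity $tm-1$.

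For the asymptotics, fix $k=\lfloor \xi m\rfloor$ with the admissible parity and split $(0,\pi/2)$ into $(0,\delta)$ and $(\delta,\pi/2)$. On $(\delta,\pi/2)$ the integrand is bounded by $(\sin\delta)^{1-t}$, so after the prefactor $(k+1)/m^{t}\sim \xi m^{1-t}$ this piece contributes $O(m^{1-t})=o(1/m)$ for $t\ge 3$. On $(0,\delta)$ I substitute $\theta=z/m$. Then $\sin(m\theta)=\sin z$ and $\sin((m+1)\theta)=\sin(z+z/m)$, which converges to $\sin z$ locally uniformly. Also $\sin((k+1)\theta)\to\sin(\xi z)$ and $(\sin\theta)^{t-1}\sim (z/m)^{t-1}$. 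The formal limit is exactly the integral obtained in Lemma \ref{lem:c_asymptotics}:
\begin{equation*}
\tilde c_t(m;k)\sim \frac{4\xi m}{\pi m^{t}}\,m^{t-1}\,\frac{1}{m}\int_0^\infty \frac{\sin^t z\,\sin(\xi z)}{z^{t-1}}\,dz=\frac{2\xi}{m}\,\mathcal{P}_t(\xi),
\end{equation*}
using the same identification $\frac{2}{\pi}\int_0^\infty z^{1-t}\sin^t z\,\sin(\xi z)\,dz=\mathcal{P}_t(\xi)$. That identification is a standard inverse Fourier computation: the integral is the $t$-fold convolution of box densities, which produces the spline formula \eqref{eq:Pt_def}.

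The main obstacle is justifying the limit interchange on $(0,\delta m)$, i.e.\ dominated convergence. I would use the bound $|\sin(az)/\sin(z/m)|\le C\min(a m, m/z)$ for $z\le \delta m$. After the factor $m^{1-t}$ from the prefactor is absorbed, the rescaled integrand is dominated by $C\min(1,z^{1-t})\min(z,1)$, which is integrable on $(0,\infty)$ for $t\ge 3$. The shift $\sin(z+z/m)$ versus $\sin z$ causes no trouble: its error is $O(z/m)$ for $z\le\sqrt m$, and the tail $z>\sqrt m$ is negligible under the dominating function.

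The borderline case $t=2$ needs extra care, since the domination gives only $z^{-1}$ decay. There I would instead compute exactly, since $U_{m-1}U_m=\sum_j U_{2j+1}$ over $0\le j\le m-1$ is a finite Clebsch–Gordan sum. This gives $\tilde c_2(m;k)=(k+1)/(m(m+1))\sim \xi/m$ for odd $k\le 2m-1$, matching $\frac{2}{m}\xi\mathcal{P}_2(\xi)$ with $\mathcal{P}_2=\tfrac12\mathbf 1_{(0,2)}$ from \eqref{eq:Pt_def}.
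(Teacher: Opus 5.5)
Your proposal is correct and follows the route the paper intends. The paper gives no separate proof, only ``Similarly'' with a pointer back to Lemma \ref{lem:c_asymptotics}, and you reproduce that method: extract the coefficients by orthogonality, use the $\theta\mapsto\pi-\theta$ parity symmetry, and rescale $\theta=z/m$ near the origin. Your dominated-convergence bound for $t\ge 3$ and the exact Clebsch--Gordan computation for $t=2$, where absolute domination fails, supply rigor that the paper's heuristic localization step omits.
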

\section{Asymptotics of Jacobi $\theta_{\alpha}$ function}\label{sec:theta_function}

\begin{proof}[Proof of Lemma \ref{lem:asy_theta}]
We use  the dual representations of $\theta_{\alpha}$ provided by the {Poisson Summation Formula}. First, recall that the characteristic function of the $\alpha$-stable distribution is $ e^{-c_{\alpha}\|t\|^{\alpha}}$. For the scaled density $f_{\alpha}(x, \tau)$, its Fourier transform is given by
\begin{equation}
    \hat{f}_{\alpha}(\xi, \tau) = \int_{\mathbb{R}^d} e^{i \xi \cdot x} f_{\alpha}(x, \tau) dx = e^{-c_{\alpha} \tau \|\xi\|^{\alpha}}.
\end{equation}
Applying  the Poisson Summation Formula leads to   the following Fourier expansion on the torus $\mathbb{T}^d$:
\begin{equation}\label{eq:poisson_theta_refined}
    \theta_{\alpha}(x, \tau) = \sum_{m \in \mathbb{Z}^d} \hat{f}_{\alpha}(2\pi m, \tau) e^{-i 2\pi m \cdot x} = \sum_{m \in \mathbb{Z}^d} e^{-c_{\alpha} \tau \|2\pi m\|^{\alpha}} e^{-i 2\pi m \cdot x}.
\end{equation}

\textit{Proof of Part (i) (Small $\tau$ regime):} We revert to the spatial summation. Separating the $k=0$ term yields
\begin{equation}\label{eq:theta_spatial_split}
    \theta_{\alpha}(x, \tau) = \tau^{-\frac{d}{\alpha}} f_{\alpha}(x \tau^{-\frac{1}{\alpha}}) + \tau^{-\frac{d}{\alpha}} \sum_{k \in \mathbb{Z}^d \setminus \{0\}} f_{\alpha}((x+k)\tau^{-\frac{1}{\alpha}}).
\end{equation}
The $\alpha$-stable density satisfies the tail bound $f_{\alpha}(z) \le C(1 + \|z\|)^{-(d+\alpha)}$. For any $x \in \mathbb{T}^d$ of form $ (-1/2,1/2]^d$  and $k \neq 0$, we have $\|x+k\| \ge c\|k\|$. Thus, the remainder in \eqref{eq:theta_spatial_split} is bounded by
\begin{equation*}
    \tau^{-\frac{d}{\alpha}} \sum_{k \neq 0} \frac{C}{1 + (c\|k\|\tau^{-\frac{1}{\alpha}})^{d+\alpha}} \le C \tau^{-\frac{d}{\alpha}} \cdot \tau^{1+\frac{d}{\alpha}} \sum_{k \neq 0} \frac{1}{\|k\|^{d+\alpha}} = O(\tau).
\end{equation*}
 This confirms that $\theta_{\alpha}$ is localized around the $\alpha$-stable density as $\tau \to 0$.

\textit{Proof of Part (ii) (Large $\tau$ regime):} In \eqref{eq:poisson_theta_refined}, the term for $m=0$ is exactly $1$. For $\tau \geq 1$, the sum over $m \neq 0$ is dominated by the terms with $\|m\|=1$. Since there are only finitely many such terms and the rest decay even faster, we have
\begin{equation*}
    |\theta_{\alpha}(x, \tau) - 1| \le \sum_{m \in \mathbb{Z}^d \setminus \{0\}} e^{-c_{\alpha} (2\pi)^{\alpha} \tau \|m\|^{\alpha}} \le C e^{-c_{\alpha} (2\pi)^{\alpha} \tau},
\end{equation*}
which proves the exponential convergence to the uniform distribution.

\textit{Proof of Part (iii) (Uniform bound):}
For $\tau \ge 1$, $|\theta_{\alpha}(x, \tau)| \le 1 + \epsilon$ from Part (ii). For $\tau < 1$, the estimate in Part (i) together with the fact that $\|f_{\alpha}\|_{\infty} < \infty$ implies $\theta_{\alpha}(x, \tau) \le C \tau^{-d/\alpha}$. Combining these, we obtain $\theta_{\alpha}(x, \tau) \le C(1 + \tau^{-d/\alpha})$ for all $\tau > 0$.
\end{proof}

\section{Markov chain comparisons}\label{sec:markov_chain}

\subsection{Markov Chain comparison: random band matrices}\label{sec:band_comparison}
We impose the following assumptions on two integrable functions $f$ and $\tilde{f}$.
\begin{definition}\label{ass:ft}
There exist constants $ \alpha \in (0, 2]$, $c_0 > 0$, $\varepsilon,\delta > 0$, and $0 < \rho < 1$ such that the following assumptions are satisfied:
\begin{enumerate}
  \item (Normalization) Both  $f$ and $\widetilde{f}$ are probability densities
  \begin{equation}\label{eq:prob-density}
    f(x),~\widetilde{f}(x)\ge 0, \quad \int_{\mathbb R^d} f(u)\,du \;= \int_{\mathbb R^d} \widetilde{f}(u)\,du=1;
  \end{equation}
  \item (Small-frequency control) For all $|\xi|\le \varepsilon$,
  \begin{equation}\label{eq:small-freq}
    |\widehat f\!(\xi)|,~|\widehat {\tilde{f}}(\xi)| \le \exp(-c_0|\xi|^{\alpha}),\quad\Bigl|\widehat f\!(\xi)-\widehat {\tilde{f}}(\xi)\Bigr|\le C|\xi|^{\alpha+\delta}.
  \end{equation}
  \item (High-frequency contraction) For all $|\xi| > \varepsilon$,
  \begin{equation}\label{eq:high-freq}
    |\widehat f(\xi)|,|\widehat{ \tilde{f}}(\xi)| \;\le\; \rho < 1;
  \end{equation}
  \item (Long-range decay) For some   constants $T, K > d$ and $C_T, C_{K} > 1$,
  \begin{equation}\label{equ:D1}
      f(x),\tilde{f}(x)\le \frac{C_T}{(1+|x|)^{T}},
    \qquad |\widehat f(\xi)|, |\widehat {\tilde{f}}(\xi)| \;\le\; \frac{C_K}{(1+|\xi|)^{K}}
      \qquad  x,\xi \in \mathbb R^d.
  \end{equation}
\end{enumerate}
\end{definition}

\begin{proposition}[$\ell^{\infty}$-difference]\label{thm:clt-upper}
Let $p_n(x,y)$ and $\widetilde{p}_n(x,y)$ be the $n$-step transition probability given by the Markov chain in Definition \ref{defmodel}, with profile functions $f$ and $\widetilde{f}$  satisfying the assumptions in Definition~\ref{ass:ft}.
If $nW^{-K}=o(1)$, then  for all $x \in \Lambda_L$,
\begin{equation}\label{equ:band_mixing}
    \Bigl|\, p_n(0,x) - \tilde{p}_n(0,x)\,\Bigr|
    = O(W^{-d} n^{-\tfrac{d+\delta}{\alpha}})  +\; O( n W^{-K}).
\end{equation}
\end{proposition}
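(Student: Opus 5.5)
The plan is to argue entirely in Fourier space on the torus $\Lambda_L$, following the proof of Theorem~\ref{prop:alpha_llt}. By translation invariance and discrete Fourier inversion,
\begin{equation*}
p_n(0,x)-\tilde p_n(0,x)=\frac{1}{N}\sum_{k\in\Lambda_L}\Big(\widehat p(k)^n-\widehat{\tilde p}(k)^n\Big)e^{2\pi i \frac{k\cdot x}{L}},
\end{equation*}
so it suffices to bound $\frac1N\sum_k|\widehat p(k)^n-\widehat{\tilde p}(k)^n|$. The first step is the Poisson summation identity \eqref{eq:p_hat_approx}, which gives $\widehat p(k)=\frac{W^d}{M}\sum_{m\in\mathbb Z^d}\widehat f(W(k/L+m))$. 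The long-range decay \eqref{equ:D1} gives $\sum_{m\ne0}|\widehat f(W(k/L+m))|=O(W^{-K})$ for $k\in\Lambda_L$, using $|k/L+m|\ge 1/2$. Applying the same identity at $k=0$ gives $W^d/M=1+O(W^{-K})$. Hence $\widehat p(k)=\widehat f(\xi_k)+O(W^{-K})$ with $\xi_k=Wk/L$, and the same holds for $\tilde p$.

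Next I pass to $n$-th powers. Since all quantities are bounded by $1+O(W^{-K})$ and $nW^{-K}=o(1)$, the telescoping inequality $|a^n-b^n|\le n\max(|a|,|b|)^{n-1}|a-b|$ gives $\widehat p(k)^n=\widehat f(\xi_k)^n+O(nW^{-K})$. After dividing by $N$ and summing over the $N$ frequencies, this produces the $O(nW^{-K})$ error term. It remains to bound
\begin{equation*}
\frac1N\sum_{k\in\Lambda_L}|\widehat f(\xi_k)^n-\widehat{\tilde f}(\xi_k)^n|,
\end{equation*}
which I split into low frequencies $|\xi_k|\le\varepsilon$ and high frequencies $|\xi_k|>\varepsilon$.

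For the high frequencies, the contraction \eqref{eq:high-freq} bounds each term by $2\rho^{n}$. If $n\ge 2$, I would bound one factor by $\rho^{n-2}$ and keep $|\widehat f|^2$, whose sum over $\xi_k$ spaced at $W/L$ is $O(L^dW^{-d})$ by the decay in \eqref{equ:D1} (since $K>d/2$). The total is then $O(W^{-d}\rho^{n-2})$, which is smaller than any power of $n$.

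For the low frequencies, I use the telescoping inequality together with \eqref{eq:small-freq}:
\begin{equation*}
|\widehat f^n-\widehat{\tilde f}^n|\le n e^{-c_0(n-1)|\xi|^\alpha}C|\xi|^{\alpha+\delta}.
\end{equation*}
The Riemann sum $\frac1N\sum_k$ over $\xi_k\in(W/L)\mathbb Z^d$ is comparable to $W^{-d}\int_{\mathbb R^d}(\cdot)\,d\xi$. Substituting $\xi=n^{-1/\alpha}\eta$ gives
\begin{equation*}
W^{-d}\, n\cdot n^{-\frac{\alpha+\delta}{\alpha}}\,n^{-\frac d\alpha}\int|\eta|^{\alpha+\delta}e^{-c|\eta|^\alpha}d\eta=O\big(W^{-d}n^{-\frac{d+\delta}{\alpha}}\big).
\end{equation*}
This is exactly the claimed main term.

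The main obstacle is making the Riemann-sum comparison rigorous uniformly in $n$ when the lattice spacing $W/L$ is not small compared with the scale $n^{-1/\alpha}$, that is, when $n(W/L)^\alpha\gtrsim 1$. In that regime the $k=0$ term contributes $0$, since both transforms equal $1$ there up to $O(W^{-K})$, and the remaining terms are exponentially small in $n(W/L)^\alpha$. I would first try to bound the sum by the integral of a monotone radial majorant, of the form $\sup_{|\xi'-\xi|\le W/L}$ of the integrand, which handles both regimes at once. The additive $O(W^{-K})$ errors that enter the exponents are absorbed by the assumption $nW^{-K}=o(1)$.
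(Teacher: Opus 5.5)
Your proposal is correct and follows essentially the paper's route. You use Fourier inversion on $\Lambda_L$ and Poisson summation to get $\widehat p(k)=\widehat f(Wk/L)+O(W^{-K})$, which gives the $O(nW^{-K})$ error, and then a low/high-frequency split with the telescoping bound and the rescaling $\xi=n^{-1/\alpha}\eta$ for the main term; your single high-frequency estimate, which keeps two factors of $|\widehat f|$ for summability, just merges the paper's $I_2$ and $I_3$.

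The paper simply passes over the Riemann-sum uniformity you flag, and your proposed fix needs one correction. A local supremum over radius $W/L$ would pick up the peak of the integrand near the origin when $n(W/L)^\alpha\gg1$, so it does not handle both regimes. What works is to treat $k=0$ separately (it vanishes, as you note) and dominate the $k\neq0$ terms by the decreasing radial majorant $\sup_{|\xi'|\ge|\xi|}$ of the integrand, which gives $O(W^{-d}n^{-(d+\delta)/\alpha})$ uniformly in $n(W/L)^\alpha$.
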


\begin{proof}
We begin with the discrete Fourier representation
\begin{equation}\label{equ:D2}
p_n(0,x)=\frac{1}{N}\sum_{k\in\Lambda_L}\bigl(\widehat p(k)\bigr)^n e^{\frac{2\pi i k\cdot x}{L}},
\end{equation}
where the discrete Fourier transform of the variance profile
\begin{align}
    \widehat{p}(k)&=\frac{1}{M}\sum_{x\in \Lambda}\sum_{m\in \mathbb{Z}^d}
    f\!\Big(\frac{x+mL}{W}\Big)e^{-\frac{2\pi i k\cdot x}{L}}
    =\frac{1}{M}\sum_{x\in \mathbb{Z}^d} f\!\Big(\frac{x}{W}\Big)e^{-\frac{2\pi i k\cdot x}{L}}\notag\\
    &=\frac{W^d}{M}\sum_{m\in \mathbb{Z}^d}\widehat{f}\!\Big(W\bigl(\tfrac{k}{L}+m\bigr)\Big)
    =\frac{W^d}{M}\widehat{f}\!\Big(\tfrac{Wk}{L}\Big)+O(W^{-K}),
\end{align} where in the last two equalities  we have used {Poisson summation} and dropped $|m|>0$ terms via \eqref{equ:D1}. In the Poisson summation step, both series converge absolutely by \eqref{equ:D1}, hence uniformly.   {Furthermore,  they define bounded continuous functions. By Plancherel identity  they agree in the sense of $L^2$ distance, and since two continuous functions that are equal a.e. must coincide everywhere on $\mathbb{R}^d$, the identities hold pointwise.}

Taking absolute values in \eqref{equ:D2} and applying the triangle inequality, we obtain
\begin{equation}\label{equ:D5}
|p_n(0,x)-\tilde{p}_n(0,x)| \le {{\frac{W^d}{M}}} \frac{1}{N}\sum_{k\in\Lambda_L}\Bigl|(\widehat f\!\Bigl(\tfrac{Wk}{L}\Bigr))^n-(\widehat {\tilde{f}}\!\Bigl(\tfrac{Wk}{L}\Bigr))^n\Bigr| \;+\; O\!\left(nW^{-K}\right),
\end{equation}
where the factor $n$ in the error term comes from expanding $(\widehat f+O(W^{-K}))^n$.
Next, we estimate the main sum. For this, we obtain
\begin{align}
    &\frac{1}{N}\sum_{k\in\Lambda_L}\Bigl|(\widehat f\!\Bigl(\tfrac{Wk}{L}\Bigr))^n-(\widehat {\tilde{f}}\!\Bigl(\tfrac{Wk}{L}\Bigr))^n\Bigr|
    \quad\le \frac{1}{L^d}\sum_{k\in\mathbb{Z}^d}\Bigl|(\widehat f\!\Bigl(\tfrac{Wk}{L}\Bigr))^n-(\widehat {\tilde{f}}\!\Bigl(\tfrac{Wk}{L}\Bigr))^n\Bigr|\notag\\
    &\quad=\Biggl(\sum_{|\tfrac{Wk}{L}|<\epsilon} \;+\;
      \sum_{\epsilon\le |\tfrac{Wk}{L}|<100 C_K} \;+\;
      \sum_{100 C_K\le |\tfrac{Wk}{L}|}\Biggr) \frac{1}{L^d}
      \Bigl|(\widehat f\!\Bigl(\tfrac{Wk}{L}\Bigr))^n-(\widehat {\tilde{f}}\!\Bigl(\tfrac{Wk}{L}\Bigr))^n\Bigr|\notag \\
    &\quad:= I_1+I_2+I_3.
\end{align}

For $I_1$,
\begin{equation}
\begin{aligned}
        \Bigl|(\widehat f\!\Bigl(\tfrac{Wk}{L}\Bigr))^n-(\widehat {\tilde{f}}\!\Bigl(\tfrac{Wk}{L}\Bigr))^n\Bigr| &\le n \max\Big\{\Bigl|\widehat f\!\Bigl(\tfrac{Wk}{L}\Bigr)\Bigl|^{n-1},~\Bigl|\widehat {\tilde{f}}\!\Bigl(\tfrac{Wk}{L}\Bigr)\Bigr|^{n-1}\Big\} \Bigl|\widehat f\!\Bigl(\tfrac{Wk}{L}\Bigr)-\widehat {\tilde{f}}\!\Bigl(\tfrac{Wk}{L}\Bigr)\Bigr|.
\end{aligned}
\end{equation}
Thus, taking $\xi=\frac{Wk}{L}$, we have
\begin{equation}
\begin{aligned}
        I_1 &\le \frac{n}{L^d}\frac{L^d}{W^d}\int_{|\xi|\le \epsilon}C|\xi|^{\alpha+\delta}\exp\!\bigl(-c_0 n|\xi|^\alpha\bigr)\,d\xi\le \; C W^{-d} n^{-\tfrac{d+\delta}{\alpha}}=O\big(W^{-d} n^{-\tfrac{d+\delta}{\alpha}}\big).
\end{aligned}
\end{equation}

For $I_2$, Assumption~\ref{ass:ft}(3) gives
\begin{equation}
\begin{aligned}
    I_2 & =\;
      \frac{1}{L^d}\sum_{\epsilon\le |\tfrac{Wk}{L}|<100 C_K} \Bigl|(\widehat f\!\bigl(\tfrac{Wk}{L}\bigr)\bigr)^n-\bigl(\widehat {\tilde{f}}\!\bigl(\tfrac{Wk}{L}\bigr)\bigr)^n\Bigr|\\
      &\le \frac{1}{L^d}\sum_{\epsilon\le |\tfrac{Wk}{L}|<100 C_K} \Bigl|(\widehat f\!\bigl(\tfrac{Wk}{L}\bigr)\bigr)^n\Bigr|+\Bigl|\bigl(\widehat {\tilde{f}}\!\bigl(\tfrac{Wk}{L}\bigr)\bigr)^n\Bigr|\\
      &\le \frac{1}{L^d}\sum_{\epsilon\le |\tfrac{Wk}{L}|<100 C_K}2\rho^n\\
      &\;\le\; \frac{1}{L^d}\Bigl(\tfrac{100C_K L}{W}\Bigr)^d \rho^n \;\le\; C'_K W^{-d}\rho^n
    =O(W^{-d} n^{-\tfrac{d+\delta}{\alpha}}).
\end{aligned}
\end{equation}

For $I_3$, we have
\begin{equation}
\begin{aligned}
    &\sum_{100 C_K\le |\tfrac{Wk}{L}|} \Bigl|(\widehat f\!\bigl(\tfrac{Wk}{L}\bigr)\bigr)^n-\bigl(\widehat {\tilde{f}}\!\bigl(\tfrac{Wk}{L}\bigr)\bigr)^n\Bigr|
    \le \sum_{100 C_K\le |\tfrac{Wk}{L}|}\Bigl|\widehat f\!\Bigl(\tfrac{Wk}{L}\Bigr)\Bigr|^n+\Bigr|\widehat {\tilde{f}}\!\Bigl(\tfrac{Wk}{L}\Bigr)\Bigr|^n.
\end{aligned}
\end{equation}
Using Assumption~\ref{ass:ft}(4) and taking $\xi=\frac{Wk}{L}$, we derive
\begin{equation}
    I_3 \;\le\; C W^{-d}\int_{|\xi|>50C_K}\Bigl(\tfrac{C_K}{(1+|\xi|)^{K}}\Bigr)^n d\xi
    \;=O(W^{-d} n^{-\tfrac{d+\delta}{\alpha}}).
\end{equation}

Combining these bounds with \eqref{equ:D5}, we conclude the proof.
\end{proof}

\begin{proposition}[$\ell^1$-difference]\label{prop:L1_difference}If $f$ and $\widetilde{f}$ satisfy the assumptions in Definition  ~\ref{ass:ft}, then as $n\rightarrow\infty$,
\begin{equation}\label{equ:band_mixing_L_infinity}
    \sum_{x\in \Lambda_L}\Bigl|\, p_n(0,x) - \tilde{p}_n(0,x)\,\Bigr|
    = o(1).
\end{equation}
\end{proposition}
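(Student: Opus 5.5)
The plan is to work in physical space, with the Fourier representation used only for local pieces. By translation invariance it suffices to treat $x$ started at $0$.

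\textbf{Step 1: continuous model.} Let $\xi_1,\xi_2,\dots$ be i.i.d.\ with density $f$ on $\mathbb{R}^d$, and let $\tilde\xi_j$ be the analogous variables with density $\tilde f$. The chain $p_n$ is (up to normalization by $M$) the lattice discretization of $W\xi$ folded onto $\Lambda_L$. Folding by the torus projection is an $\ell^1$ contraction: the pushforward of a signed measure under $\mathbb{Z}^d\to\Lambda_L$ does not increase total variation. It therefore suffices to bound the total variation on $\mathbb{Z}^d$ between the laws of the unfolded walks $S_n=\sum_{j\le n}X_j$ and $\tilde S_n$, where $X_j$ has law $M^{-1}f(\cdot/W)$ on $\mathbb{Z}^d$.

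\textbf{Step 2: local limit plus tightness.} Split the $\ell^1$ sum into the region $|y|\le R\,Wn^{1/\alpha}$ and its complement.
\begin{itemize}
\item \emph{Bulk region.} Use the $\mathbb{Z}^d$ version of the Fourier estimate from Proposition~\ref{thm:clt-upper}, namely
\[
|P(S_n=y)-P(\tilde S_n=y)|\le C W^{-d}n^{-(d+\delta)/\alpha}+CnW^{-K},
\]
where the bound on $I_1$ yields the gain $n^{-\delta/\alpha}$ relative to the natural scale $W^{-d}n^{-d/\alpha}$. Multiplying by the number of sites, $(RWn^{1/\alpha})^d$, gives a contribution $O(R^dn^{-\delta/\alpha})+O(R^dW^dn^{1+d/\alpha}W^{-K})$. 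The first term vanishes as $n\to\infty$. The second requires $W$ large; if $W$ stays bounded, I would instead use the lattice LCLT directly with $W$ fixed.
\item \emph{Tail region.} Bound by $P(|S_n|>RWn^{1/\alpha})+P(|\tilde S_n|>RWn^{1/\alpha})$. This is small uniformly in $n$ once $R$ is large, by tightness of $S_n/(Wn^{1/\alpha})$. Tightness follows from the small-frequency control $|\hat f(\xi)|\le e^{-c_0|\xi|^\alpha}$ via the standard truncation inequality
\[
P(|Z|>u)\le Cu\int_{|t|<1/u}\bigl(1-\operatorname{Re}\phi_Z(t)\bigr)\,dt,
\]
applied with $\phi_Z(t)=\hat p(t/(Wn^{1/\alpha}))^n$, together with the comparison of $\hat p$ with $\hat f$ from the proof of Proposition~\ref{thm:clt-upper}.
\end{itemize}
Letting $n\to\infty$ first and then $R\to\infty$ gives $o(1)$.

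\textbf{Main obstacle.} The main difficulty is uniformity in the auxiliary parameters $W$ and $L$. The first issue is that the LCLT error $nW^{-K}$ summed over the bulk region may not be small when $n$ is large relative to $W$. To handle this, I would avoid the Poisson-summation error by working directly with the lattice characteristic function $\hat p$ on $[-\pi,\pi]^d$, which satisfies the same small- and high-frequency bounds up to constants. The second issue is the tightness bound when $\alpha=2$ and the second moment is only guaranteed through the decay $T>d$ in Definition~\ref{ass:ft}. There I would rely solely on the characteristic-function bound above, which needs only the small-frequency control, rather than on moments.
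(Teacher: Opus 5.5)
There is a genuine gap in the tail-region step. Your bulk/tail decomposition and your use of Proposition~\ref{thm:clt-upper} on the bulk match the paper's proof, and unfolding to $\mathbb{Z}^d$ (the torus projection is an $\ell^1$ contraction) is a clean way to remove $L$. The problem is that you claim tightness of $S_n/(Wn^{1/\alpha})$ from the small-frequency control $|\widehat f(\xi)|\le e^{-c_0|\xi|^\alpha}$, and you stress that only this bound is needed. That is the wrong direction. The truncation inequality requires $1-\operatorname{Re}\phi_Z(t)$ to be small for $|t|<1/u$, uniformly in $n$. This needs a \emph{lower} bound on the characteristic function near $0$, of the form $1-\operatorname{Re}\widehat f(\xi)\le C|\xi|^\alpha$. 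An upper bound on $|\widehat f|$ only gives $1-\operatorname{Re}\phi_Z(t)\ge 1-e^{-c_0|t|^\alpha}$. That is the anti-concentration input for the local limit theorem, not concentration.

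The hypotheses you invoke really do not give tightness. Take $f=\tilde f$ a $\beta$-stable density with $\beta<\alpha$. Then $e^{-c|\xi|^\beta}\le e^{-c_0|\xi|^\alpha}$ on $|\xi|\le\varepsilon$ once $c_0\le c\,\varepsilon^{\beta-\alpha}$. The tail exponent is $T=d+\beta>d$, and the high-frequency and Fourier-decay conditions hold, so every condition of Definition~\ref{ass:ft} is satisfied. Yet the walk lives on scale $Wn^{1/\beta}\gg Wn^{1/\alpha}$, so $P(|S_n|>RWn^{1/\alpha})\to 1$ for every fixed $R$.

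The missing ingredient is the one the paper uses: the reference profile $\tilde f$ is the $\alpha$-stable density. Hence $\tilde p_n$ puts mass $1-o_\Delta(1)$ on $|x|\le r:=n^{1/\alpha}W\Delta$. The paper then needs no tightness statement for $p_n$ at all, because mass conservation gives
\[
\sum_{|x|>r}\bigl|p_n(0,x)-\tilde p_n(0,x)\bigr|\le 2\Bigl(1-\sum_{|x|\le r}\tilde p_n(0,x)\Bigr)+\sum_{|x|\le r}\bigl|p_n(0,x)-\tilde p_n(0,x)\bigr|,
\]
and the last sum is the bulk term you already control. Alternatively, you can repair your own route. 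Combine $\widehat{\tilde f}(\xi)=e^{-c_\alpha|\xi|^\alpha}$ with $|\widehat f-\widehat{\tilde f}|\le C|\xi|^{\alpha+\delta}$ to get $1-\operatorname{Re}\widehat f(\xi)\le C|\xi|^\alpha$ near $0$; then your truncation argument works for both walks. Either way, the stable (or at least lower-bound) property of the reference profile must enter, and the upper bound in \eqref{eq:small-freq} alone cannot supply it. Separately, your concern about summing the $nW^{-K}$ error over the bulk is legitimate: the paper's proof silently drops that term when it cites \eqref{equ:band_mixing}.
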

\begin{proof}
    Let \(\Delta\) tend to infinity very slowly. Split the sum into inner and outer parts:
    \begin{align}
        \sum_{x}\Bigl| p_n(0,x) - \tilde{p}_n(0,x) \Bigr| & = \sum_{|x|\le n^{\frac{1}{\alpha}} W \Delta} \Bigl| p_n(0,x) - \tilde{p}_n(0,x) \Bigr|
           + \sum_{|x| > n^{\frac{1}{\alpha}} W \Delta} \Bigl| p_n(0,x) - \tilde{p}_n(0,x) \Bigr| \notag \\
        &= \sum_{|x|\le n^{\frac{1}{\alpha}} W \Delta} o\!\left(W^{-d} n^{-\frac{d}{\alpha}}\right)
           + \sum_{|x| > n^{\frac{1}{\alpha}} W \Delta} \Bigl| p_n(0,x) - \tilde{p}_n(0,x) \Bigr| \tag{by \eqref{equ:band_mixing}} \\
        &= \sum_{|x| > n^{\frac{1}{\alpha}} W \Delta} \Bigl| p_n(0,x) - \tilde{p}_n(0,x) \Bigr| + o(1).
    \end{align}
    By the triangle inequality,
    \begin{align}
        &\sum_{|x| > n^{\frac{1}{\alpha}} W \Delta} \Bigl| p_n(0,x) - \tilde{p}_n(0,x) \Bigr|
        \le \sum_{|x| > n^{\frac{1}{\alpha}} W \Delta} \bigl( p_n(0,x) + \tilde{p}_n(0,x) \bigr) \notag \\
        &= 2 - \sum_{|x|\le n^{\frac{1}{\alpha}} W \Delta} \bigl( p_n(0,x) + \tilde{p}_n(0,x) \bigr) \notag\\
        &\le 2 - 2\sum_{|x|\le n^{\frac{1}{\alpha}} W \Delta} \tilde{p}_n(0,x)
           + \sum_{|x|\le n^{\frac{1}{\alpha}} W \Delta} \bigl| p_n(0,x) - \tilde{p}_n(0,x) \bigr|=o(1).
    \end{align}
    Here for the stable transition probability \(\tilde{p}_n\), the sum over the ball \(|x|\le n^{\frac{1}{\alpha}} W \Delta\) tends to \(1\) as \(\Delta\to\infty\) (since the total mass is \(1\) and the tail is negligible).
    Moreover, the inner sum of differences is already \(o(1)\) by \eqref{equ:band_mixing}. Consequently, we obtain the desired estimate and complete the proof.
\end{proof}

\subsection{Markov chain comparison: Wegner model}

For the block Wegner model, we index the blocks by the discrete torus $\mathbb{T}_D^d := (\mathbb{Z}/D\mathbb{Z})^d$. Let $M$ be the size of each block, such that the total number of states is $N = M \cdot D^d$. For each $x \in [N]$, we denote its corresponding block index by $\alpha(x) \in \mathbb{T}_D^d$, and let $\mathcal{I}_{\alpha} = \{x : \alpha(x) = \alpha\}$ be the set of microstates within block $\alpha$.

The transition probability $p_n(x,y)$ of the associated Markov chain is determined by a transition kernel that is uniform within each block. Specifically, let $\overline{P}$ be the transition matrix of a lazy random walk on $\mathbb{T}_D^d$ with single-step probabilities:
\begin{equation}
    \overline{p}(\alpha,\beta) = \begin{cases}
        1-\lambda, & \text{if } \alpha=\beta, \\
        \frac{\lambda}{2d}, & \text{if } |\alpha-\beta|=1, \\
        0, & \text{otherwise}.
    \end{cases}
\end{equation}

\begin{lemma}\label{lem:reduction}
Let $p_n(x,y)$ be the $n$-step transition probability of the Wegner model. For any $x \in \mathcal{I}_{\alpha}$ and $y \in \mathcal{I}_{\beta}$, we have
\begin{equation}
    p_n(x,y) = \frac{1}{M} \overline{p}_n(\alpha, \beta),
\end{equation}
where $\overline{p}_n(\alpha, \beta)$ is the $n$-step transition probability associated with $\overline{P}$.
\end{lemma}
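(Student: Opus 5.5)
The plan is to prove Lemma~\ref{lem:reduction} by induction on $n$, using the tensor-product structure $P_N=\overline{P}\otimes \frac{1}{M}\mathbf{J}_M$ already noted after Definition~\ref{wegner}. The only input is the one-step formula: by the variance computation preceding the Skellam definition, for $x\in\mathcal{I}_\alpha$ and $y\in\mathcal{I}_\beta$ we have $p_1(x,y)=\sigma_{xy}^2=\frac{1}{M}\overline{p}(\alpha,\beta)$. This is exactly the case $n=1$.

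\textbf{Inductive step.} Assume $p_n(x,z)=\frac1M\overline{p}_n(\alpha,\gamma)$ for all $z\in\mathcal{I}_\gamma$. Then I would write
\begin{equation*}
p_{n+1}(x,y)=\sum_{z\in[N]}p_n(x,z)\,p_1(z,y)=\sum_{\gamma\in\mathbb{T}_D^d}\sum_{z\in\mathcal{I}_\gamma}\frac{1}{M}\overline{p}_n(\alpha,\gamma)\,\frac{1}{M}\overline{p}(\gamma,\beta).
\end{equation*}
Since $|\mathcal{I}_\gamma|=M$, the inner sum over $z$ cancels one factor $1/M$. The right-hand side then becomes $\frac1M\sum_\gamma\overline{p}_n(\alpha,\gamma)\overline{p}(\gamma,\beta)=\frac1M\overline{p}_{n+1}(\alpha,\beta)$ by Chapman--Kolmogorov for $\overline{P}$. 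This closes the induction.

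\textbf{Alternative route.} One can also argue purely algebraically. Because $\big(\frac1M\mathbf{J}_M\big)^2=\frac1M\mathbf{J}_M$ and the Kronecker product is multiplicative, $P_N^n=\overline{P}^n\otimes\frac1M\mathbf{J}_M$, and reading off entries gives the claim.

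\textbf{Main obstacle.} There is essentially no obstacle. The one point to check carefully is that block blocks are counted correctly on the torus. For $D=2$, a block's neighbours in direction $\pm e_j$ coincide, so that block receives weight $\lambda/d$ rather than $\lambda/(2d)$. The one-step identity then holds provided $\overline{p}$ is interpreted with the same multiplicity convention as $\Lambda$; it is the definition of $\overline{p}$ that must be matched to the model, and the induction itself is unaffected.
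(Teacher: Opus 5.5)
Your proof is correct and follows the paper's argument. The paper also proceeds by induction on $n$, using that the one-step kernel is uniform within blocks, so the mass is spread evenly over the $M$ microstates of each target block. Your explicit Chapman--Kolmogorov computation (or equivalently the Kronecker identity $P_N^n=\overline{P}^n\otimes\frac1M\mathbf{J}_M$) simply spells this out. Your $D=2$ caveat, where the $\pm e_j$ neighbours coincide, is a fair point about matching $\overline{p}$ to the model's one-step profile, and it does not affect the induction.
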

\begin{proof}
By the block-tridiagonal structure of the variance profile, the transition probability $\sum_{y \in \mathcal{I}_\beta} p(x,y)$ depends only on the block index $\alpha(x)$. Since the entries are uniform within each block, a simple induction on $n$ shows that the probability is distributed uniformly over the microstates of the target block, yielding the factor $1/M$.
\end{proof}

The asymptotic behavior of $p_n(x,y)$ is governed by the effective coupling scale $n\lambda$. Let $\mathcal{H}_t(\alpha, \beta)$ denote the continuous-time random walk kernel (heat kernel) on $\mathbb{T}_D^d$, and let $\mathcal{G}_{\sigma^2}(u)$ denote the Gaussian density on $\mathbb{R}^d$ with variance $\sigma^2$.

\begin{proposition}\label{prop:wegner_asymptotics}
For $x \in \mathcal{I}_{\alpha}$ and $y \in \mathcal{I}_{\beta}$, the following asymptotic regimes hold as $n, N \to \infty$:
\begin{enumerate}
    \item \textbf{Frozen regime} ($n\lambda \ll 1$):
    The transition probability is dominated by the initial state:
    \begin{equation}
        \left| p_n(x,y) - \frac{1}{M}\delta_{\alpha\beta} \right| = O(n\lambda).
    \end{equation}

    \item \textbf{Skellam regime} ($n\lambda \to \mu \in (0,\infty)$):
    The transition probability converges to the continuous-time random walk kernel:
    \begin{equation}
        \left| p_n(x,y) - \frac{1}{M}p^{\mathrm{(Skellam)}}(\beta-\alpha,\mu) \right| = O(n^{-1} M^{-1}).
    \end{equation}
    Here $p^{\mathrm{(Skellam)}}$ was defined in \eqref{equ:skellam}.
    \item \textbf{Diffusive regime} ($n\lambda \to \infty$):
    The transition probability satisfies a local central limit theorem. For $\sigma^2 = n\lambda/d$, we have
    \begin{equation}
        \sup_{x,y} \left| p_n(x,y) - \frac{1}{M}\mathcal{G}_{n\lambda/d}(\alpha-\beta) \right| = o\Big( \frac{1}{M (n\lambda)^{d/2}} \Big).
    \end{equation}
\end{enumerate}
\end{proposition}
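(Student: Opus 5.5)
Lemma~\ref{lem:reduction} reduces everything to the lazy walk $\overline{P}$ on the fixed torus $\mathbb{T}_D^d$: it suffices to prove the three statements for $\overline{p}_n(\alpha,\beta)$ with the factor $1/M$ removed. Write $\overline{P}=(1-\lambda)I+\lambda Q$, where $Q$ is the simple random walk kernel on $\mathbb{T}_D^d$ (jump to each of the $2d$ neighbours with probability $1/(2d)$). Since $I$ and $Q$ commute,
\begin{equation}
\overline{P}^n=\sum_{j=0}^{n}\binom{n}{j}(1-\lambda)^{n-j}\lambda^j Q^j ,
\end{equation}
so $\overline{p}_n(\alpha,\cdot)$ is the law of $Q$-walk run for a $\mathrm{Bin}(n,\lambda)$ number $J$ of steps. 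All three regimes will be read off from the behaviour of $J$.

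\textbf{Frozen regime.} I split off $j=0$. Then $\overline{p}_n(\alpha,\beta)-\delta_{\alpha\beta}=((1-\lambda)^n-1)\delta_{\alpha\beta}+\sum_{j\ge1}(\cdots)$. The first term is at most $n\lambda$ by Bernoulli's inequality. The remaining sum has total mass $\mathbb{P}(J\ge1)\le n\lambda$. This gives $O(n\lambda)$ directly.

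\textbf{Skellam regime.} Here $J$ is compared with $\mathrm{Poisson}(\mu)$. The continuous-time walk $e^{\mu(Q-I)}=\sum_j e^{-\mu}\mu^jQ^j/j!$ has a product structure, because $Q$ is the average of the $d$ coordinate walks. Its kernel on $\mathbb{Z}^d$ factorises into one-dimensional pieces $e^{-\mu/d}I_{|k|}(\mu/d)$, and periodizing over $D\mathbb{Z}^d$ gives \eqref{equ:skellam}. I would check that the normalisation in \eqref{equ:skellam}, which reads $2\tau/d$, matches the time parametrisation; this is presumably where the paper's convention comes in. The error is then bounded by $\|\mathrm{Bin}(n,\lambda)-\mathrm{Poisson}(n\lambda)\|_{TV}\le\lambda=O(1/n)$ (Le Cam), plus $|e^{n\lambda(Q-I)}-e^{\mu(Q-I)}|\le 2|n\lambda-\mu|$. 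The second piece is only $o(1)$ unless $n\lambda-\mu=O(1/n)$. So I would state the comparison with $\mu$ replaced by $n\lambda$, or assume that rate.

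\textbf{Diffusive regime.} Since $D$ is fixed, I use Fourier analysis on $\mathbb{T}_D^d$. The eigenvalues of $\overline{P}$ are $1-\lambda+\frac{\lambda}{d}\sum_i\cos(2\pi k_i/D)$, and every nonzero mode contracts like $e^{-c n\lambda}$. Hence $\overline{p}_n\to D^{-d}$ exponentially fast in $n\lambda$. The Gaussian $\mathcal{G}_{n\lambda/d}$ must be interpreted as its periodization on the torus, i.e.\ a theta function as in Theorem~\ref{prop:alpha_llt}. That periodized Gaussian also tends to $D^{-d}$, and the claimed $o((n\lambda)^{-d/2})$ follows from the exponential decay of both.

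If instead one wants the unperiodized LCLT (say $D$ growing), I would condition on $J$. Chernoff bounds give $J=n\lambda(1+o(1))$ with overwhelming probability, and for such $J$ the standard LCLT for the simple random walk applies, handling periodicity by pairing parities. Averaging the Gaussians over the concentrated $J$ then gives the result.

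The \textbf{main obstacle} is this last step, carried out uniformly: controlling the Gaussian variance mismatch from fluctuations of $J$ at relative order $(n\lambda)^{-1/2}$, and the parity issue of the simple walk (avoided here by laziness when $\lambda<1$).
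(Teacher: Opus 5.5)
Your route is the paper's argument made quantitative. The paper's proof only cites the Poisson limit theorem for the $\mathrm{Bin}(n,\lambda)$ number of non-lazy block jumps (regimes 1--2) and a local CLT (regime 3). Your identity $\overline{P}^n=\mathbb{E}[Q^J]$ with a total-variation Poisson bound, plus Fourier analysis on $\mathbb{T}_D^d$, is exactly that argument with rates. The defects you flag are genuine defects of the statement, not of your argument, and the paper's qualitative citations do not address them:
the $O(n^{-1}M^{-1})$ rate needs $|n\lambda-\mu|=O(n^{-1})$, or $\mu$ replaced by $n\lambda$;
for fixed $D$ one has $\overline{p}_n\to D^{-d}$ while the unperiodized $\mathcal{G}_{n\lambda/d}(\alpha-\beta)\to 0$, so the Gaussian must be periodized.

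Two things still need finishing.

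\textbf{Normalization.} Carry out the check instead of deferring to ``the paper's convention''. By Poisson thinning, each of the $2d$ directions receives an independent $\mathrm{Poisson}(\mu/(2d))$ number of jumps, which gives your one-dimensional factor $e^{-\mu/d}I_{|k|}(\mu/d)$. Since \eqref{equ:skellam} has argument $2\tau/d$, the limit is $p^{\mathrm{(Skellam)}}(\beta-\alpha,\mu/2)$. So the statement as printed, with $\tau=\mu$, is off by a factor $2$. This agrees with the paper's own remark that for $d=1$ the kernel at time $\tau$ is a difference of two $\mathrm{Poisson}(\tau)$ variables, i.e.\ $2\tau$ jumps on average, whereas the lazy walk makes $\mu$.

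\textbf{Uniform contraction.} Your claim that every nonzero mode contracts like $e^{-cn\lambda}$ needs $\lambda\le 1-c$. For even $D$ the mode $k=(D/2,\dots,D/2)$ has eigenvalue $1-2\lambda$, and $|1-2\lambda|^n\approx e^{-2n(1-\lambda)}$ as $\lambda\to 1$; the model allows $\lambda\in[0,1)$ to depend on $M$. The same quantity governs parity in your conditioning-on-$J$ argument, since $\mathbb{P}(J\text{ even})-\mathbb{P}(J\text{ odd})=(1-2\lambda)^n$. So the diffusive regime needs $n(1-\lambda)\to\infty$ as well, equivalently $n\lambda(1-\lambda)\to\infty$. ``Avoided by laziness when $\lambda<1$'' is not uniform. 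Without this condition the diffusive claim itself fails when $D$ is even or $\sqrt{n}\ll D$: wrong-parity sites carry mass $\approx 0$ and right-parity sites $\approx 2\mathcal{G}_{n\lambda/d}$.
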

\begin{proof}
The results for the first two regimes follow from the Poisson Limit Theorem for the number of non-lazy jumps (see e.g., \cite[Theorem 3.6.1]{durrett2019}). The third regime follows from the local central limit theorem for random walks (see e.g., \cite[Theorem 3.4.10]{durrett2019}).
\end{proof}
Similar to Proposition \ref{prop:L1_difference}, we can obtain the $\ell^1$-difference for Wegner orbital model.
\begin{corollary}
Let $\widetilde{p}$ be the reference kernel defined in Theorem \ref{prop:alpha_llt} with $\alpha=2$. In the diffusive regime ($n\lambda \to \infty$), we have
\begin{equation}
    \sup_{x,y} |p_n(x,y) - \widetilde{p}_n(x,y)| = o\Big( \frac{1}{M (n\lambda)^{d/2}} \Big),
\end{equation}
and
\begin{equation}
    \sup_{x} \sum_{y \in [N]} |p_n(x,y) - \widetilde{p}_n(x,y)| = o(1).
\end{equation}
\end{corollary}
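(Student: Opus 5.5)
The plan is to reduce the corollary to Proposition~\ref{prop:wegner_asymptotics}(3) and to the local limit theorem in Theorem~\ref{prop:alpha_llt}. The $\ell^1$ bound then follows by the truncation argument of Proposition~\ref{prop:L1_difference}.

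First I identify the reference kernel. With $N=DM$ and effective bandwidth $W_{\mathrm{eff}}=\sqrt{\lambda}M$ (so that a single Wegner step has spatial variance comparable to $\lambda M^2/d$ in microscopic units), Theorem~\ref{prop:alpha_llt} with $\alpha=2$ gives
\[
\widetilde p_n(x,y)=\frac1N\,\theta_2\Big(\frac{x-y}{L},\,n\big(\tfrac{W_{\mathrm{eff}}}{L}\big)^2\Big)+O\big(ne^{-cW_{\mathrm{eff}}^2}\big).
\]
The error term is negligible because $W_{\mathrm{eff}}\to\infty$ polynomially. On the block scale this theta function is a periodized Gaussian of variance $n\lambda/d$ in block units. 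I will normalize $\sigma$ in Definition~\ref{def:alpha_profile} so that the variances match.

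Next I would apply Lemma~\ref{lem:reduction}, which gives $p_n(x,y)=M^{-1}\overline p_n(\alpha,\beta)$. Proposition~\ref{prop:wegner_asymptotics}(3) then gives $p_n=M^{-1}\mathcal G_{n\lambda/d}(\alpha-\beta)+o\big(M^{-1}(n\lambda)^{-d/2}\big)$, with $\mathcal G$ understood periodized on the torus.

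It remains to show that $\widetilde p_n(x,y)$ equals the same Gaussian profile, up to $o\big(M^{-1}(n\lambda)^{-d/2}\big)$, uniformly in microstates $x\in\mathcal I_\alpha$, $y\in\mathcal I_\beta$. This step is the main obstacle. The Wegner kernel is exactly constant on blocks, whereas $\widetilde p_n$ varies smoothly inside a block, by a relative amount of order $M/(\sqrt{n\lambda}\,M)=(n\lambda)^{-1/2}\to0$. I would bound this variation using the gradient of the Gaussian density. The variation is then $O\big((n\lambda)^{-1/2}\cdot M^{-1}(n\lambda)^{-d/2}\big)$, which is $o$ of the target scale precisely because $n\lambda\to\infty$. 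The triangle inequality then gives the $\sup$ bound.

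For the $\ell^1$ statement I follow Proposition~\ref{prop:L1_difference}. Fix a slowly growing $\Delta$ and split the sum over $y$ into the region where the block distance satisfies $|\beta-\alpha|\le\Delta\sqrt{n\lambda}$ and its complement. The inner region contains $O\big(M(\Delta\sqrt{n\lambda})^d\big)$ microstates. The sup bound contributes $o(\Delta^d)$ there, and this is $o(1)$ once $\Delta$ grows slowly enough relative to the $o(\cdot)$ rate. For the outer region, $\sum(p_n+\widetilde p_n)$ is bounded by $2$ minus the inner masses. The inner mass of $\widetilde p_n$ tends to $1$ by Gaussian tails, and the inner mass of $p_n$ differs from it by the already controlled inner $\ell^1$ error. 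Hence the outer contribution is also $o(1)$. If $D$ is finite, so that $\sqrt{n\lambda}$ exceeds the torus size, the periodized Gaussian becomes essentially uniform and the inner region is the whole torus, so the argument simplifies.
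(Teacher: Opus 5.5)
Your proposal is correct and follows the route the paper indicates. The $\ell^\infty$ bound comes from Proposition~\ref{prop:wegner_asymptotics}(3) combined with Theorem~\ref{prop:alpha_llt}; your periodization of $\mathcal{G}$ and your within-block gradient estimate $O\big(M^{-1}(n\lambda)^{-(d+1)/2}\big)$ supply details the paper leaves implicit. The $\ell^1$ bound then follows from the truncation argument of Proposition~\ref{prop:L1_difference}. One point to tighten: making the $O\big(ne^{-cW_{\mathrm{eff}}^2}\big)$ error $o\big(M^{-1}(n\lambda)^{-d/2}\big)$ requires $\lambda M^2\gg\log(nM)$, which does not follow from $n\lambda\to\infty$ alone. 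It comes from the diffusive-regime assumption $M^{1/3}\lambda\gg 1$ with $d=1$, and you should say so rather than asserting polynomial growth of $W_{\mathrm{eff}}$.
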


\end{document}